\newcounter{dummy} 
\numberwithin{dummy}{section}
\theoremstyle{plain}
\newtheorem{thm}[dummy]{Theorem}
\newtheorem{lem}[dummy]{Lemma}
\newtheorem{claim}[dummy]{Claim}
\newtheorem{prop}[dummy]{Proposition}
\newtheorem{cor}[dummy]{Corollary}
\theoremstyle{definition}
\newtheorem{definition}[dummy]{Definition}
\newtheorem{assumption}[dummy]{Assumption}
\newtheorem{construction}[dummy]{Construction}
\newtheorem{remark}[dummy]{Remark}
\newcommand{\D}{\mathbb{D}}
\newcommand{\E}{\mathbb{E}}
\newcommand{\N}{\mathbb{N}}
\newcommand{\PR}{\mathbb{P}}
\newcommand{\R}{\mathbb{R}}
\newcommand{\Q}{\mathbb{Q}}
\newcommand{\bT}{\mathbf{T}}
\newcommand{\bbU}{{\mathbb{U}}}
\newcommand{\Z}{\mathbb{Z}}
\newcommand{\TYPE}{{\tt TYPE}}
\newcommand{\PAR}{{\tt PAR}}
\newcommand{\bzer}{\boldsymbol{0}}
\newcommand{\cD}{{\mathcal{D}}}
\newcommand{\T}{\mathcal{T}}
\newcommand{\U}{\mathcal{U}}
\newcommand{\cX}{\mathcal{X}}
\newcommand{\XX}{\mathcal{X}}
\newcommand{\YY}{{\mathcal{Y}}}
\newcommand{\Zz}{\mathcal{Z}}
\newcommand{\sG}{{\mathscr{G}}}
\newcommand{\F}{\mathscr{F}}
\newcommand{\sT}{{\mathscr{T}}}
\newcommand{\M}{\mathfrak{M}}
\newcommand{\eps}{\varepsilon}
\newcommand{\tr}{{\tt t}}
\newcommand{\tred}{{\tt t}_{\tt red}}
\newcommand{\red}{{\tt red}}
\newcommand{\hgt}{{\operatorname{hgt}}}
\newcommand{\bX}{\boldsymbol{X}}
\newcommand{\bh}{\boldsymbol{h}}
\newcommand{\bN}{\boldsymbol{N}}
\newcommand{\bM}{\boldsymbol{M}}
\newcommand{\be}{{\mathbf{e}}}
\newcommand{\dotms}{\dotsm} 
\newcommand{\Leb}{\operatorname{Leb}}
\newcommand{\Var}{\operatorname{Var}}
\newcommand{\bZ}{{\mathbf{Z}}}
\newcommand{\bY}{{\mathbf{Y}}}
\newcommand{\UU}{{\mathbb{U}}}
\newcommand{\bbT}{{\mathbb{T}}}
\renewcommand{\PR}{\mathbb{P}}
\newcommand{\weakarrow}{\overset{d}{\Longrightarrow}}
\newcommand{\fl}[1]{\left\lfloor #1 \right\rfloor}
\theoremstyle{definition}
\newcommand{\FORGET}{{\tt \tiny FORGET}}
\newcommand{\dghp}{\textup{d}_{\textup{GHP}}}
\newcommand{\dghI}{\dgh^{(\infty)}}
\newcommand{\dghpI}{\dghp^{(\infty)}}
\newcommand{\dgh}{\textup{d}_{\textup{GH}}}
\newcommand{\di}{{\textup{d}}}
\newcommand{\sF}{\mathscr{F}}
\definecolor{BLUE}{RGB}{41,86,143}
\definecolor{RED}{RGB}{178,31,53}
\newcommand{\bb}[1]{\mathbb{#1}}
\newcommand{\bo}[1]{\ensuremath{{\bf #1 } }}
\newcommand{\diag}{{\operatorname{diag}}}
\newcommand{\bp}{{\bf p}}
\newcommand{\bq}{{\bf q}}
\newcommand{\fK}{{\mathfrak{M}_{\tiny \mbox{mp}}}}
\newcommand{\fMd}{{\mathfrak{M}^{\ast d}}}
\begin{document}

\begin{frontmatter}

\title{Multitype L\'evy trees as scaling limits of multitype Bienaym\'e-Galton-Watson trees}
\runtitle{Multitype L\'evy trees as scaling limits of MBGW trees}

\begin{aug}
 \orcid{0009-0008-9599-623X}               

\author[A]{\fnms{Osvaldo}~\snm{Angtuncio Hern\'andez}\ead[label=e1]{osvaldo.angtuncio@cimat.mx}}
\and
\author[B]{\fnms{David}~\snm{Clancy Jr.}\ead[label=e2]{dclancy@math.wisc.edu}}

\address[A]{Departamento de Probabilidad y Estad\'istica, Centro de Investigaci\'on en Matem\'aticas\printead[presep={,\ }]{e1}}

\address[B]{Department of Mathematics, University of Wisconsin\printead[presep={,\ }]{e2}}
\end{aug}

\begin{abstract}
We establish sufficient mild conditions for a sequence of multitype Bienaym\'e-Galton-Watson trees, conditioned in some sense to be large, to converge to a limiting compact metric space which we call a \emph{multitype L\'{e}vy tree}. More precisely, we condition on the size of the maximal subtree of vertices of the same type joined by the root to be large.
While we employ a different conditioning, our result can be seen as a generalization to the multitype setting of the continuum random trees defined by Aldous, Duquesne and Le Gall in 
[Ald91a,Ald91b,Ald93,DLG02].
Our main result is an invariance principle for the convergence of such trees, by gluing single-type L\'{e}vy trees together in a method determined by the limiting spectrally positive additive L\'{e}vy field, as constructed by Chaumont and Marolleau 
[CM21].

Our approach is an improvement of a result about the convergence in the Gromov-Hausdorff-Prohorov topology, of compact marked metric spaces equipped with vector-valued measures, which are then glued via an iterative operation. 
To analyze the gluing operation, we extend the techniques developed by S\'enizergues 
[Sen19,Sen22]
to the multitype setting.  
While the single-type case exhibits a more homogeneous structure with simpler dependency patterns, the multitype case introduces interactions between different types, leading to a more intricate dependency structure where functionals must account for type-specific behaviors and inter-type relationships.
\end{abstract}

\begin{keyword}[class=MSC]
\kwd[Primary ]{60J80}
\kwd{05C05}
\kwd[; secondary ]{60G51,
		60F17,60G17
}
\end{keyword}

\begin{keyword}
\kwd{Multitype Bienaym\'e-Galton-Watson Trees}
\kwd{Metric Measure Spaces}
\kwd{L\'evy Processes}
\kwd{Additive L\'{e}vy Fields}
\kwd{Real Trees}
\kwd{Scaling Limits}
\end{keyword}

\end{frontmatter}

\tableofcontents


\section{Introduction}

In this article, we extend to the multitype setting the classical result that Bienaym\'e–Galton–Watson (BGW) trees conditioned to have size $n$ converge to random real trees. More precisely, we show that a multitype Bienaym\'e–Galton–Watson (MBGW) tree conditioned in some sense to be large, converges to a compact metric measure space that we call the \emph{multitype L\'evy tree}. To the best of our knowledge, this is the first result in which the limiting random real tree is described while simultaneously keeping track of the finite number of types in the limit.

The convergence theory for BGW trees conditioned on having a fixed size was pioneered by Aldous \cite{Aldous.91, Aldous.91a, Aldous.93} in the context of the Brownian continuum random tree, and subsequently generalized to the L\'evy setting by Duquesne, Le Gall, and Le Jan \cite{LL.98a, LL.98b, MR1954248}.
Such works have started an important field of research in probability.
For multitype trees, less is known. Miermont \cite{Miermont.08}, Berzunza \cite{MR3748121}, and de Raph\'{e}lis \cite{deRaphelis.17} show that various MBGW trees converge under appropriate re-scaling and after forgetting the type information to \textit{single-type} $\alpha\in(1,2]$-stable continuum random trees. 
More explicitly, 
in \cite{Miermont.08}, Miermont shows that irreducible MBGW trees, under certain criticality assumptions on the mean matrix of the offspring distribution and a finite variance assumption, converge to (scaled) Brownian continuum random trees. More precisely, let $\vec \varphi = (\vec \varphi^1,\dotsm,\vec \varphi^d)$ be a fixed collection of offspring distributions with at least one $i\in[d]:=\{1,2,\dotsm,d\}$ satisfying $\vec \varphi^i (\{(k_1,\dotsm,k_d): \sum_{j} k_j = 1\})<1$. Denote the mean matrix by $M = (m_{ij})_{i,j\in[d]}$, with $m_{ij} = \E[\xi_{j}^i]$ where $\vec{\xi}^i \sim \vec \varphi^i$ and let $Q^{i} = (q^i_{jk})_{j,k\in[d]}$ be the matrix with $q^i_{jk}: = \E[\xi^i_j\xi^i_k]$. Miermont shows the following.\footnote{To be more precise, Miermont avoids the continuum random tree formalism and instead states his results in terms of scaling limits of height processes. Theorem 1 in \cite{Miermont.08}, together with standard arguments relating height processes to real trees \cite{MR2203728}, yields the stated result.}
\begin{thm}[Miermont {\cite[Theorem 1]{Miermont.08}}]\label{thm:Miermont}
Suppose that $M$ is irreducible, $q_{jk}^i<\infty$ for all $i,j,k\in[d]$, and that the Perron-Frobenius eigenvalue $\varrho$ of $M$ satisfies $\varrho = 1$. Let $\vec{a} = (a_1,\dotsm,a_d)$ be the left eigenvector of $M$ with eigenvalue $1$ normalized so that $\sum_{i} a_i = 1$. Let $T_{n,r}$ be a $\vec \varphi$-MBGW tree, started from an individual $\rho$ having type $i_0$, conditioned on having size at least $nr$ for some $r>0$. 
Let $\vec{\mu}_n$ be the counting measure on the tree and $d_n$ the graph distance on $T_{n,r}$. 
Then, there exists some explicit constant $\sigma\in (0,\infty)$ such that in the Gromov-Hausdorff-Prohorov topology 
\begin{equation*}
    \left(T_{n,r},\rho, \frac{\sigma}{\sqrt{n}}  d_n, \frac{1}{n}\vec{\mu}_n \right) \weakarrow \left( \sT_r, \rho, d_{\sT_r}, \vec{a}\mu\right)
\end{equation*}
where $(\sT_r,\rho, d_{\sT_r},\mu)$ is a Brownian continuum random tree conditioned on having size at least $r$.
\end{thm}

Some of the results of Miermont were extended to infinitely many types by de Raph\'elis in \cite{deRaphelis.17} and a stable analog of Theorem \ref{thm:Miermont} was obtained by Berzunza \cite{MR3748121}. There are some additional assumptions in \cite{MR3748121} which are rather technical because each $\vec \varphi^i$ need not be in the domain of attraction of the same stable law nor even have the same stable index, and so we refer the reader to \cite{MR3748121} for details. 
We remark that in the aforementioned papers, the limit does not keep track of the types of the individuals; that is, the limit is a 'single-type tree'.

As the precise statement of our main result requires some terminology to be introduced, we informally state our main theorem here (for a formal description see Theorem \ref{thm:MAIN} below).
A simulation of the MBGW tree is presented in Figure \ref{multitipeTreeGDS32000V3}.

\begin{figure}
\centering
     \includegraphics[width=.6\textwidth]{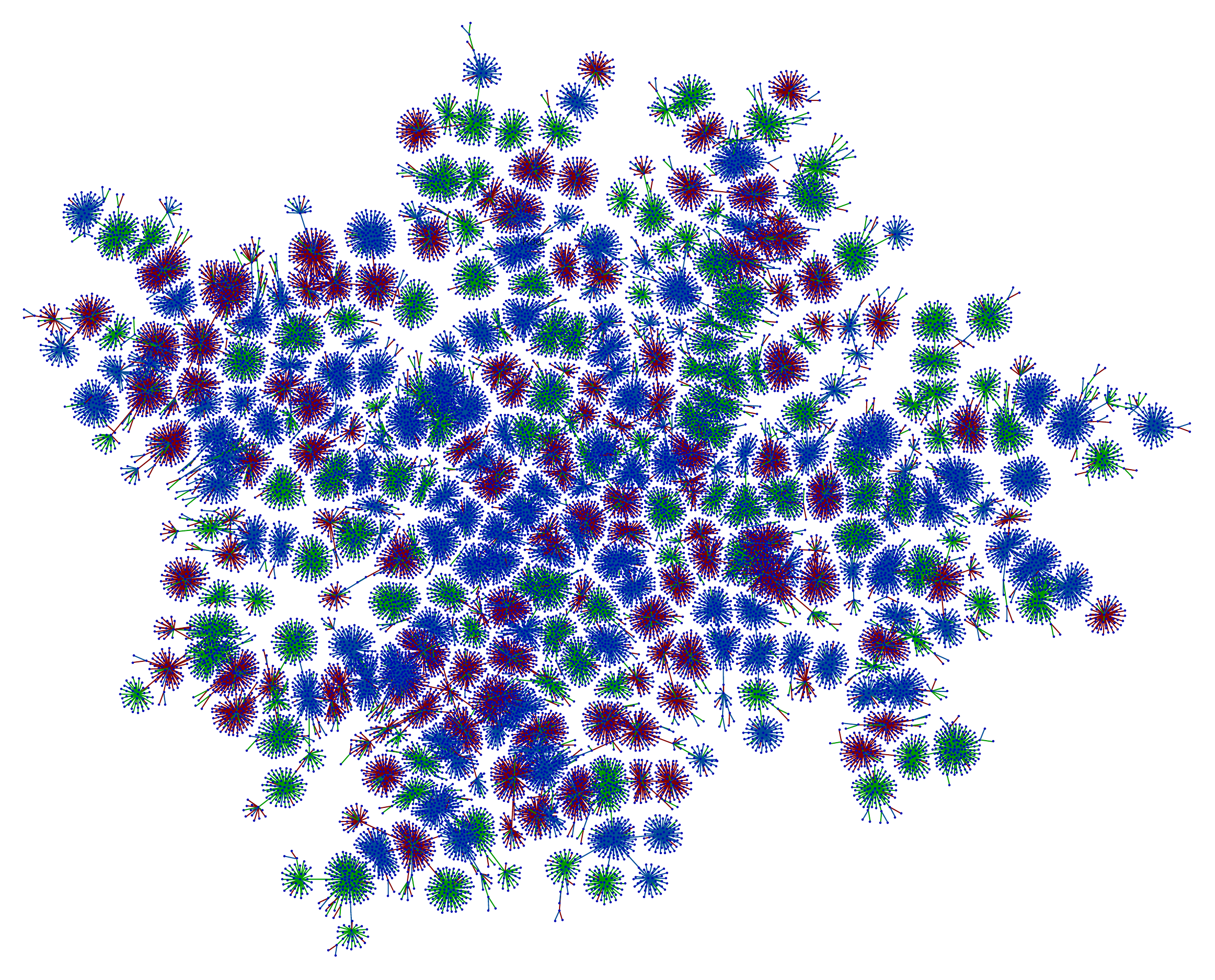}
\caption{We show a simulation of a multitype tree with $d = 3$ and 32000 vertices.}
\label{multitipeTreeGDS32000V3}
\end{figure}

\begin{thm}[Informal statement of Theorem \ref{thm:MAIN}]
Suppose that $T_n$ is a sequence of MBGW trees with $d$ many types conditioned to be large in some sense, and with offspring distribution $\vec \varphi_n$, for $n\in \mathbb{N}$. 
Let $\vec{\mu}_n$ denote the $\R^d_+$-valued measure whose $i^{\text{th}}$ coordinate, $\langle \vec{\mu}_n,\vec{\epsilon}_i\rangle$, represents the counting measure of type $i$ vertices for each $i\in [d]$. 
Let $d_n$ be the graph distance on $T_n$. 
Then, under appropriate assumptions on $\vec{\varphi}_n$, there are real valued sequences $a_n\to\infty$, $b_n^j\to\infty$ for each $j\in [d]$ and a limiting compact metric space $\sT = (\sT,\rho, d_\sT,\vec{\mu})$ with a root $\rho$, a distance $d_\sT$, and an $\R_+^d$-valued measure $\vec{\mu}$ such that, for the diagonal matrix $B_n := \textup{diag}(b_n^1,\dotms,b_n^d)\in \R^{d\times d}$, we have
\begin{equation*}
    \left( T_n, \rho_n, a_n^{-1} d_n, B_n^{-1} \vec{\mu}_n\right) \weakarrow \sT
\end{equation*} in the Gromov-Hausdorff-Prohorov topology.
\end{thm}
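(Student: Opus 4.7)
The plan is to reduce the multitype convergence to a single-type scaling limit via extraction of a same-type skeleton, and then reassemble the full tree using the abstract vector-measure gluing stability result announced in the abstract. Without loss of generality take the distinguished type to be $1$ and let $T_n^{(1)}\subset T_n$ denote the type-$1$ skeleton: the connected subtree rooted at $\rho_n$ spanned by those type-$1$ vertices whose ancestral path to $\rho_n$ meets only type-$1$ vertices. It is the cardinality of this object that is assumed to be large. A classical extraction argument, used in Miermont's Theorem \ref{thm:Miermont} and refined in \cite{deRaphelis.17, MR3748121}, shows that $T_n^{(1)}$ is itself a single-type Bienaym\'e-Galton-Watson tree with offspring distribution $\varphi_n^{(1)}$ determined by $\vec\varphi_n$ through the law of the total number of same-type grandchildren in the multitype branching process; under the hypotheses imposed on $\vec\varphi_n$, this reduced distribution will lie in the domain of attraction of an $\alpha$-stable law for some $\alpha\in(1,2]$.

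Given this reduction, the Aldous--Duquesne--Le Gall single-type scaling theorems \cite{MR1954248, LL.98a, LL.98b, Aldous.93} directly supply sequences $a_n,b_n^1\to\infty$ and a limiting $\alpha$-stable L\'evy tree $\sT^{(1)}$ such that
\begin{equation*}
\bigl(T_n^{(1)}, \rho_n, a_n^{-1} d_n, (b_n^1)^{-1}\mu_n^{(1)}\bigr) \weakarrow \sT^{(1)}
\end{equation*}
in GHP, which fixes the distance scaling $a_n$. For $i\ne 1$, every type-$i$ vertex lies in a pendant forest hanging off some skeleton vertex, and conditionally on $T_n^{(1)}$ these pendant forests are independent across skeleton vertices with a distribution inherited from $\vec\varphi_n$. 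A functional law of large numbers along the skeleton contour (replaced by a martingale argument when the pendant type-$i$ counts are heavy-tailed) identifies sequences $b_n^i\to\infty$ and gives joint convergence of $(b_n^i)^{-1}\mu_n^{(i)}$ to a limiting measure on $\sT^{(1)}$ expressible as the integral of an additive L\'evy field against the local-time measure of $\sT^{(1)}$; this is the source of the additive L\'evy fields listed in the keywords.

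The final step is to glue the pendant forests back onto the skeleton and verify that the full tree still converges in GHP. The abstract gluing-stability theorem from the first part of the paper is tailored exactly for this: given the joint convergence above together with a uniform-in-$n$ bound showing that the pendant forests have maximum height of order $o(a_n)$, the glued object converges to the metric measure space obtained by attaching the $d-1$ measures produced above to $\sT^{(1)}$, yielding the multitype L\'evy tree $\sT$. The principal obstacle is this uniform pendant-height control: a single pendant of height growing at rate $a_n$ would create an $\Omega(1)$ obstruction in the rescaled Gromov-Hausdorff distance and destroy the convergence. Establishing the bound requires a sharp tail estimate on the height of a pendant forest conditionally on its total progeny, followed by a union bound over the $\Theta(b_n^1)$ skeleton vertices, and it must be carried out in a manner compatible with the joint measure convergence of all $d$ coordinates of $\vec\mu_n$. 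Coordinating this tightness with the joint measure convergence, and verifying the precise hypotheses of the abstract gluing theorem for vector-valued measures, is the heart of the argument.
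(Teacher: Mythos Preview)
Your reduction collapses the problem to a single-type skeleton with negligible pendants, but this is precisely the regime the paper is \emph{not} in. The assumptions \ref{ass:a1}--\ref{ass:A6} force each type $j$ to have its own spatial scale $b_n^j$ and, crucially, the \emph{same} height scale $a_n$: the type-$j$ subtrees that hang off the root's type-$i_0$ subtree have diameters of order $a_n$, not $o(a_n)$, and they in turn carry type-$k$ subtrees of order $a_n$, and so on recursively. Your ``uniform pendant-height control'' step therefore fails outright; there is no tail bound that makes these pendants vanish in the limit, because they do not vanish. Correspondingly the limit $\sT$ is not a single $\alpha$-stable tree decorated with extra measures: Construction~\ref{cons:continuum2} builds it as an infinite recursive gluing of single-type L\'evy trees of all $d$ types along the Ulam--Harris tree, and the limiting metric genuinely sees contributions from every level of this recursion. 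The single-$\alpha$ picture you invoke is Miermont's regime (Theorem~\ref{thm:Miermont}), which the paper explicitly distinguishes from its own in Section~3.1.

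The paper's actual decomposition is not ``skeleton plus pendants'' but the reduced-tree decomposition of Section~\ref{subsectionMultitypeEnconding}: every maximal monochromatic subtree $\tr'_\bp$ becomes a subdecoration $\cD(\bp)$ indexed by $\bp\in\bbU$, and the multitype tree is recovered as the glued decoration $\sG(\cD)$ (Lemma~\ref{lem:decoration}). Convergence is then obtained by verifying the abstract hypotheses \ref{ass:1thm1}--\ref{ass:3thm1} of Theorem~\ref{thm:graphConv1}. Hypothesis \ref{ass:1thm1} is the convergence of each individual single-type subtree via Duquesne--Le~Gall (Proposition~\ref{prop:dlHeight}); \ref{ass:2thm1} and \ref{ass:3thm1} are controlled not by a pendant-height union bound but through the first-passage times of the spectrally positive additive L\'evy field $(\widetilde{\bX}^{i,j})$ of Chaumont--Marolleau (Theorem~\ref{thm:CM}), which bound the total mass at and beyond each reduced height $h$ via the iterates $\widetilde U^{(h),j}$ of \eqref{eqn:Rrecurse}. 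The extinction condition \ref{ass:A6} on the multitype branching process is what replaces your union bound: it guarantees that the portion of the tree beyond reduced height $h$ is metrically close to the portion at height $\le h$, uniformly in $n$, without any of it being $o(a_n)$.
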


To obtain our main theorem, we adapt and improve upon several recent results, notably the convergence of metric measure spaces via decorated spaces, as developed by S\'enizergues \cite{Senizergues.19, Senizergues.20}, and the theory of spectrally positive additive L\'evy fields (spaLfs),  introduced by Chaumont and Marolleau \cite{MR4193902,MR4575008}.

On the one hand, decorated spaces provide a convenient framework of constructing larger metric spaces from smaller spaces. On the other hand, the constructions of Chaumont and Marolleau \cite{MR4193902, MR4575008} allow us to extract single-type information
from a multitype field encoding of a continuous state branching process. To the best of our knowledge, this is the first work that combines these two techniques to establish convergence of metric spaces with finitely many types. 
We believe this methodology can be further developed in other models, by demonstrating the convergence of multitype objects through the gluing of single-type components. 
Within the branching processes framework, we provide the first general result of this type, as previous works addressed the convergence of the tree without keeping track of the types. 

\subsection{Outline of proof technique}
Our first step is to show that a MBGW tree can be naturally interpreted as a discrete decorated space. 
Loosely speaking, one starts with a root, say type $i_0$, and attach a maximal connected subtree of its same type, constructed according to the MBGW offspring distribution. 
Each vertex in this subtree may in turn have children of the other types. 
For each such child, say of type $j\neq i_0$, we attach a new maximal connected subtree of type $j$, and then proceed inductively. 
Thus, to view the MBGW tree as a decorated space, one needs each maximal connected subtree of every type and a procedure to glue them.

The theory of decorated spaces can be viewed as a generalization of the stick-breaking construction of the CRT (see \cite{MR1085326,MR3317158}), but, instead of cutting and pasting segments from $\mathbb{R}_+$, one works with  marked metric measure spaces. That is, one starts with a base metric space containing infinitely many marks. Onto each of these marks, one ``glues'' another metric space which itself contains infinitely many marks. This process continues ad infinitum and the ultimate metric space is the Cauchy completion of the ``limit'' of the gluing procedure. Roughly speaking, in order to show convergence in the Gromov-Hausdorff-Prohorov (GHP) topology of decorated spaces, one needs to show (1) the constituent marked metric spaces converge in the GHP topology along with the marks, and (2) establish uniform control of the size of all the metric spaces added after finitely many gluing operations. This theory was largely developed in \cite{Senizergues.19,Senizergues.20}, however, the formulation found therein is not applicable to our setting. Here we rely on more direct assumptions that we can verify in our setting. For example, in \cite{Senizergues.20} it is assumed that the measures on the decorations are probability measures that ``push the mass to the leaves'' in some sense, whereas our assumptions deal directly with the mass of the decorations. 
Our approach circumvents the need to construct and define a 'multidimensional height process', which appears intractable. The trade-off is that the multitype tree must instead be analyzed through the maximal connected subtrees of each type—that is, the exploration proceeds via these subtrees rather than the usual generational structure.

In the continuum setting, we show that the previous construction can be extended to yield a multitype L\'evy tree, thus formalizing its definition.
This extension relies on two key ingredients: (1) a precise description of each maximal connected subtree (a L\'evy tree) of a given type together with the exploration of such subtrees, and (2) a specification of the marks at which subtrees of the other types are attached.
For the first, we build on the classical constructions of the single-type case \cite{MR1954248, LL.98a, LL.98b}, together with the more recent analysis of spaLf's and multidimensional first hitting times developed in  \cite{MR4193902,MR4575008}. 
Here, we develop some fluctuation-theoretic properties of spaLf's  that are crucial in the multitype framework, showing that maximal connected components can be described through recursive equations naturally arising from the analysis of multidimensional hitting times. 
This yields a simple way to explore the subtrees and their masses.
For the second ingredient, we adapt convergence techniques from random graph theory to encode and control the way maximal connected subtrees are attached. Specifically, in the tree exploration process, we glue subtrees according to their ordered size, using a Poisson point process to govern the attachment mechanism.

\section{Main Results}

Since the formal definition of the multitype L\'evy tree—and consequently our main theorem—relies on several technical concepts and on our new construction, the following subsections are devoted to introducing these ingredients step by step. We begin by defining glued decorations and establishing a general convergence result for them, which is Theorem \ref{thm:graphConv1}. We then present in Subsections \ref{subsectionEncodingMTsAsWalks} and \ref{subsectionIntroMBGWTreesAsGluedDecorations} the coding of multitype trees via random walks and excursions, which is used to construct MBGW trees as glued decorations. We extend this construction to the continuum setting in Subsection \ref{subsectionMLevyTrees}. After briefly recalling the notions of L\'evy processes, their height processes, excursion measures, and single-type marked L\'evy trees, we show in Subsection \ref{subsectionContinuumDecoration} how to construct a continuum decoration.
After describing the discrete and continuous construction of the glued decorations, we then formally state our main result in Subsection \ref{refMainResult}.

\subsection{Convergence of Glued Decorations} 

We assume the reader is familiar with the Gromov-Hausdorff-Prohorov (GHP) topology (refer to Section \ref{ssec:GHP} for the standard definitions). 

S\'{e}nizergues \cite{Senizergues.20} has developed a program to show the convergence in the GHP topology of a sequence of metric measure spaces obtained by gluing smaller metric measure spaces in a very general fashion. We start with the Ulam tree $\bbU$ defined by 
\begin{equation}\label{eqn:bbU}
\bbU := \bigcup_{n\ge 0} \N^n,\qquad \text{where}\qquad \N = \{1,2,\dotsm\}\text{  and 
} \N^0 = \{\emptyset\}.
\end{equation}
There is a natural genealogical order $\preceq$ on $\bbU$ defined by $\bp\preceq \bq$ if and only if $\bp$ is a prefix of $\bq$, i.e. $\bp:=(p_1,p_2,\dotsm, p_n)$ and $\bq := (p_1,p_2,\dotsm, p_n, q_{n+1},q_{n+2},\dotsm,q_{n+m})$ for some $m\ge 0$. We will write $|\bp| = n$ if $\bp\in \N^n$. Given any $\bp = (p_1,\dotsm,p_n)$ and $\bq = (q_1,\dotsm, q_k)$, we will denote by $\bp\bq$, the concatenation of $\bp$ and $\bq$, as the element $(p_1,\dotsm, p_n,q_1,\dotsm,q_k)$. If $\bp\neq \emptyset,$ we will denote by $\PAR(\bp) = (p_1,\dotsm, p_{n-1})$ the parent of $\bp$.

Let $\fMd$ denote the collection of compact rooted metric measure spaces equipped with finite $\R^d_+$-valued measures, and let $\mathfrak{M}^{\ast d}_{\tiny \mbox{mp}}$ denote the collection of compact rooted metric measure spaces with \emph{infinitely many marked points} and equipped with a finite $\R_+^d$-valued measure (see Subsection \ref{subsubsectionIMPMM} for a precise definition of this space).

A \textit{decoration} $\cD:\bbU\to \mathfrak{M}_{\tiny \mbox{mp}}^{*d}$ is a map that associates to each $\bp\in \bbU$ a \emph{subdecoration} $\cD(\bp): = \left(D_\bp, \di_\bp , \rho_\bp,\vec{\mu}_\bp, (x_{\bp;\ell})_{\ell\ge 1}\right)$, for an arbitrary collection of spaces $(\cD(\bp);\bp \in \bbU)$ in $\mathfrak{M}_{\tiny \mbox{mp}}^{*d}$. Note that in \cite{Senizergues.20}, the convergence result only handles marked metric measure spaces equipped with probability measures.

Following \cite{Senizergues.20}, we construct a rooted metric measure space $\sG^*(\cD)$ from the decoration. In words, it is defined as the quotient space  $\left(\bigsqcup_{\bp\in\bbU} D_\bp\right)/\sim$
 where $\sim$ is the smallest equivalence relation such that for all $\bp\in\bbU,\ell\in \N$ we have $  D_{\bp} \ni x_{\bp;\ell} \sim \rho_{\bp\ell}\in D_{\bp\ell}$ (recall that $\bp \ell$ the concatenation of two elements in $\bbU$). There is a natural metric we can place on $\sG^*(\cD)$ which we delay until Section \ref{sssec:metricDecorations}. If we denote by $\vec{\epsilon}_j$ the $j$-th standard basis vector in $\R^d$ and $\langle  \vec{\mu}_\bp, \vec{\epsilon}_j\rangle (\cdot)$ is the $j$-th coordinate measure of $\vec{\mu}_\bp$, we can define a measure $\vec{\mu}$ on $\sG^*(\cD)$ by
\begin{equation}\label{eqn:measuredef}
\langle\vec{\mu},\vec{\epsilon}_j\rangle(A) := \sum_{\bp\in \bbU} \langle  \vec{\mu}_\bp, \vec{\epsilon}_j\rangle (A)\in[0,\infty],\qquad \forall\ j\in [d] ,\ \forall \textup{ Borel set }A \subseteq \sG^*(\cD)
\end{equation}which, by Carath\'eodory's extension theorem, exists and is unique. Here we are a bit sloppy and associate with each measure $\vec{\mu}_\bp$ on $D_\bp$ its pushforward under the canonical maps $D_\bp \hookrightarrow \bigsqcup_{\bq} D_\bq \twoheadrightarrow \sG^*(\cD)$ where the former is the inclusion map and the latter is the quotient map.

\begin{definition}[Decoration, subglued decoration and glued decoration]\label{definitionDecorationGluedMetricspaceGluedDecoration}
A \emph{subglued decoration} $\sG^*(\cD)$ of a decoration $\cD$, is defined by 
\begin{equation*}
\sG^*(\cD): = \left(\big(\bigsqcup_{\bp\in \bbU} D_\bp \big)/\sim,d_{\sG^*(\cD)},\rho_\emptyset,\vec \mu\right),
\end{equation*}where $\sim$ is the smallest equivalence relationship which makes 
\begin{equation}\label{defIdentifyingInfinitelyMarkedPoinsWithRootsOfTheSubsequenceDecorations}
x_{\bp;\ell}\sim \rho_{\bp \ell}\qquad \mbox{ for every } \bp\in \bbU \mbox{ and } \ell\in \N,
\end{equation}where the root $\rho_\emptyset$ is the same as the root of the subdecoration $\cD(\emptyset)$, and $d_{\sG^*(\cD)}$ is a  distance in $\sG^*(\cD)$. 
Finally, from a subglued decoration we construct a \emph{glued decoration} $\sG(\cD)$, as  the Cauchy completion of the former.
We denote by $d_{\sG(\cD)}$ and $\vec \mu$ its distance and measure. 
\end{definition}
A priori, we should not expect that the glued decoration $\sG(\cD)$ is compact nor that its measure is finite almost surely (or even locally). In order to guarantee both these properties we impose the following assumption. We recall that the \emph{diameter} of a metric space $(X,d)$ is defined as  $\textup{diam}(X):=\sup_{x,y\in X}d (x,y)$. 

\begin{assumption}\label{ass:uniformBound}  The decoration $\cD:\bbU\to \mathfrak{M}^{\ast d}_{\tiny \mbox{mp}}$ satisfies
\begin{enumerate}
\item\label{ass:uniformBound1} For each $j\in [d]$, the summation $\displaystyle\sum_{\bp\in \bbU} \langle \vec{\mu}_\bp,\vec{\epsilon}_j \rangle(D_\bp) <\infty.$
\item\label{ass:uniformBound2} For each $h\ge 1$ and every $\eps>0$ there exists a finite set $I(h,\eps)$ such that
\begin{equation*}
\sup_{\substack{\bp\notin I(h,\eps),\\ |\bp| = h}} \operatorname{diam}(D_\bp)\le \eps. 
\end{equation*}

\item\label{ass:uniformBound3} For all $\eps>0$, there exists an $h = h(\eps)$ sufficiently large, such that for each $\bp\in \bbU$ with $|\bp|> h$ and each $y\in D_\bp\hookrightarrow\sG(\cD)$ there exists $\bq\in \bbU$ with $|\bq|\le h$ and some $z\in D_\bq$ such that $d_{\sG(\cD)}(y,z)\le \eps$.
\end{enumerate}
\end{assumption}

Let us explain informally what these conditions are doing. Condition (1) guarantees that the $[0,\infty]^d$-valued measure constructed in \eqref{eqn:measuredef} is actually $\R_+^d$-valued. Conditions (2)+(3) together imply compactness of $\sG(\cD)$. Condition (3) helps control the Cauchy completion operation $\sG^*(\cD)\mapsto \sG(\cD)$ while Condition (2) helps control the growth of metric balls around the root.
The following proposition is easy to prove. 
\begin{prop}\label{prop:copmact1}
If $\cD$ satisfies Assumption \ref{ass:uniformBound}, then $\sG(\cD)\in \fMd$.
\end{prop}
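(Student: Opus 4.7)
The proposition asserts two things about $\sG(\cD)$: that its measure is a finite $\R_+^d$-valued measure, and that the space is compact. The measure claim is immediate from Assumption \ref{ass:uniformBound}(1): for each coordinate $j \in [d]$, $\sigma$-additivity applied to \eqref{eqn:measuredef} bounds the total mass by $\sum_{\bp \in \bbU} \langle \vec\mu_\bp, \vec\epsilon_j\rangle(D_\bp) < \infty$, so $\vec\mu$ takes values in $\R_+^d$. (The Cauchy completion does not add mass, since each $\vec\mu_\bp$ is supported on $D_\bp \subseteq \sG^*(\cD)$.) The substance of the proposition therefore lies in compactness.

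Since $\sG(\cD)$ is complete by construction, the plan for compactness is to prove it is totally bounded, and for that it suffices to verify that the dense subset $\sG^*(\cD)$ is totally bounded. Fix $\eps > 0$. The first reduction is Assumption \ref{ass:uniformBound}(3) with parameter $\eps/2$, which yields an $h = h(\eps/2)$ such that every point in $\bigcup_{|\bp|>h} D_\bp$ lies within $\eps/2$ of the truncated set $K_h := \bigcup_{|\bp|\le h} D_\bp$. This reduces the task to producing a finite $(\eps/2)$-net for $K_h$.

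I would construct this net by induction on level, carrying a growing radius. Set $\delta := \eps/(2(h+1))$ and claim that for each $h' \in \{0, 1, \ldots, h\}$ there exists a finite set $F_{h'} \subseteq K_{h'}$ that is a $((h'+1)\delta)$-net for $K_{h'}$. The base case $h' = 0$ uses compactness of $D_\emptyset$. For the inductive step, invoke Assumption \ref{ass:uniformBound}(2) at level $h'$ with parameter $\delta$ to obtain a finite exceptional set $I(h', \delta)$. For each $\bp \in I(h', \delta)$ with $|\bp| = h'$, the compact $D_\bp$ admits a finite $\delta$-net; take the union of these. For each $\bp \notin I(h', \delta)$ with $|\bp| = h'$, every $y \in D_\bp$ satisfies $d(y, \rho_\bp) \le \operatorname{diam}(D_\bp) \le \delta$, while the root $\rho_\bp$ is identified via $\sim$ with a marked point $x_{\PAR(\bp);\ell} \in D_{\PAR(\bp)} \subseteq K_{h'-1}$, which by the inductive hypothesis lies within $h'\delta$ of $F_{h'-1}$; the triangle inequality then places $y$ within $(h'+1)\delta$ of $F_{h'-1}$. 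Taking $F_{h'}$ to be $F_{h'-1}$ together with the finite $\delta$-nets for $\bp \in I(h', \delta) \cap \N^{h'}$ closes the induction. At $h' = h$ this produces a finite $(\eps/2)$-net for $K_h$, which combined with the reduction from Assumption \ref{ass:uniformBound}(3) is an $\eps$-net for $\sG^*(\cD)$, and hence for $\sG(\cD)$ by density of $\sG^*(\cD)$.

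The only real subtlety is that the covering radius grows additively by $\delta$ at each of the $h$ levels, so one must commit to $\delta$ inversely proportional to $h+1$ at the outset; apart from that bookkeeping, the proof is a direct triangulation of the three clauses of Assumption \ref{ass:uniformBound}, with clause (1) supplying finite mass, clause (2) handling each fixed level (exceptional pieces are compact; typical pieces have small diameter and attach to the previous level), and clause (3) cutting off the tail of deep levels.
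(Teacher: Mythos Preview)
Your proposal is correct. The paper does not actually prove this proposition; it simply declares that it is ``easy to prove'' and moves on. Your argument is a clean and complete realization of the natural elementary proof: clause~(1) gives finiteness of the measure, and compactness follows from total boundedness, with clause~(3) reducing to the first $h$ levels and clause~(2) feeding an inductive covering argument across those levels (exceptional pieces are individually compact, typical pieces collapse to their attachment point in the previous level). The only care required is the bookkeeping you already flagged, namely choosing $\delta = \eps/(2(h+1))$ in advance so that the additive growth of the covering radius over $h$ levels stays within budget. Since the paper supplies no proof to compare against, your write-up stands on its own.
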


To state the next theorem, we define the \emph{total mass} of a subdecoration $D_\bp$ as  $\|\vec{\mu}_\bp\|(D_{\bp}):=\sum_{j=1}^d \langle \vec{\mu}_\bp,\vec{\epsilon}_j \rangle(D_\bp)$.
A similar definition holds also for general measure spaces.  
The following limit theorem shows that a \textit{uniform} analog of Assumption \ref{ass:uniformBound} is sufficient for convergence of a sequence of glued decorations. 
As is now standard, we prove the convergence in the GHP topology. 

\begin{thm}\label{thm:graphConv1} 
Suppose that $(\cD^n;n\ge 1)$ and $\cD$ are a collection of decorations on $\bbU$, where $\cD^n(\bp): = \left(D^n_\bp, \di^n_\bp , \rho^n_\bp,\vec{\mu}^n_\bp, (x^n_{\bp;\ell})_{\ell\ge 1}\right)$ for every $n\in \bb{N}$. Assume that:
\begin{enumerate}[label=\textbf{H.\arabic*}]
\item \label{ass:1thm1} For each $\bp\in \bbU$, $\cD^n(\bp)\to \cD(\bp)$ in the infinitely marked GHP topology. 
\item\label{ass:2thm1}For every $h\ge 1$, and for   each $\eps>0$, there exists some finite collection $I(h,\eps)\subset \N^h= \{\bp\in \bbU:|\bp| =h\}$  and some $n_{h,\eps}$ sufficiently large such that {for all }$n\ge n_{h,\eps}$, we have
\begin{equation*}
    \sum_{\substack{ \bp\notin I(h,\eps)\\ |\bp| = h}} \|\vec{\mu}_\bp^n\|(D^n_{\bp}) \le\eps \qquad\text{and}\qquad \sup_{\substack{\bp\notin I(h,\eps)\\ |\bp|=h }} \textup{diam}(D^n_{\bp}) \le \eps.
\end{equation*}

\item\label{ass:3thm1} For every $\eps>0$, there exists some $h = h(\eps)$ sufficiently large and $n_{h,\eps}$ sufficiently large such that for all $n\ge n_{h,\eps}$
\begin{equation}\label{eqn:massH3}
    \sum_{\bp: |\bp|>h} \|\vec{\mu}_\bp^n\|(D^n_{\bp}) \le \eps.
\end{equation}Furthermore, {for all $n\ge n_{h,\eps}$}, for all $\bp\in \bbU$, and for all $y\in D^n_{\bp}$, there is some $\bq$ with $|\bq| \le h$ and some $z\in D^n_{\bq}$ such that $d_{\sG(\cD^n)}(y,z)\le \eps$.
\end{enumerate}
Then $\cD$ satisfies Assumption \ref{ass:uniformBound} and we have the convergence of glued decorations $\sG(\cD^{n}) \to \sG(\cD)$ in the GHP topology.
\end{thm}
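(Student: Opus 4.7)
The plan is to handle the result in two stages: first verify that the limit decoration $\cD$ itself satisfies Assumption \ref{ass:uniformBound} (so that $\sG(\cD) \in \fMd$ makes sense), and then prove GHP convergence via a truncation-plus-continuity argument that reduces the problem to the (essentially known) case of gluing finitely many metric measure spaces. Because both the measure construction \eqref{eqn:measuredef} and the GHP topology are linear in the measure coordinate, the $\R_+^d$-valued setting adds at most a factor of $d$ in constants and the extension from probability measures handled in \cite{Senizergues.20} to finite $\R_+^d$-valued measures will be routine.

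For the verification of Assumption \ref{ass:uniformBound}, I would first use \ref{ass:1thm1} on a single $\bp$ to get $\|\vec{\mu}_\bp^n\|(D_\bp^n)\to \|\vec{\mu}_\bp\|(D_\bp)$ and $\operatorname{diam}(D_\bp^n)\to \operatorname{diam}(D_\bp)$. Combined with \ref{ass:2thm1} and \ref{ass:3thm1}, taking $n\to\infty$ on both the finite sum of terms in $I(h,\eps)$ and the uniform tail bound, one obtains (1), (2), (3) of Assumption \ref{ass:uniformBound} for $\cD$ with the same sets $I(h,\eps)$ and heights $h(\eps)$ (up to possibly replacing $\eps$ by $2\eps$). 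Proposition \ref{prop:copmact1} then gives $\sG(\cD)\in\fMd$.

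For the convergence itself, fix $\eps>0$. Choose $h = h(\eps)$ from \ref{ass:3thm1} and set
\begin{equation*}
    I_\eps := \bigsqcup_{k=0}^{h} I(k, \eps/(h+1)),
\end{equation*}
which is a finite subset of $\bbU$. Define the \emph{truncated decoration} $\cD^n_\eps$ by $\cD^n_\eps(\bp) := \cD^n(\bp)$ if $\bp \in I_\eps$ and otherwise $\cD^n_\eps(\bp) := (\{\rho\},0,\rho,\vec 0,(\rho)_{\ell\ge 1})$ (a one-point subdecoration with zero measure). Define $\cD_\eps$ analogously. Since $I_\eps$ is finite and \ref{ass:1thm1} gives convergence on each $\bp\in I_\eps$, the finite-gluing continuity established in \cite{Senizergues.20}---upgraded coordinate-wise to vector-valued measures---yields $\sG(\cD^n_\eps)\to \sG(\cD_\eps)$ in the GHP topology as $n\to\infty$.

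It remains to bound the truncation error uniformly. By the identification \eqref{defIdentifyingInfinitelyMarkedPoinsWithRootsOfTheSubsequenceDecorations}, there is a canonical isometric embedding $\sG(\cD^n_\eps)\hookrightarrow \sG(\cD^n)$, and I would use this embedding to build a correspondence whose distortion and measure-discrepancy are both $O(\eps)$. The distortion bound comes from \ref{ass:2thm1}+\ref{ass:3thm1}: every point of $\sG(\cD^n)$ either lies in a subdecoration indexed by $\bp\in I_\eps$ (and so is already represented), or lies within distance $\eps$ of such a point by \ref{ass:3thm1} combined with the diameter bound in \ref{ass:2thm1}. The measure-discrepancy is $\le d\eps$ by \ref{ass:3thm1} and the tail bound in \ref{ass:2thm1}, summed over the $d$ coordinates. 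The same estimate holds for $\sG(\cD)$ versus $\sG(\cD_\eps)$ by the Assumption \ref{ass:uniformBound} bounds established in the first paragraph. A standard three-term triangle inequality
\begin{equation*}
    \dghp\bigl(\sG(\cD^n),\sG(\cD)\bigr) \le \dghp\bigl(\sG(\cD^n),\sG(\cD^n_\eps)\bigr) + \dghp\bigl(\sG(\cD^n_\eps),\sG(\cD_\eps)\bigr) + \dghp\bigl(\sG(\cD_\eps),\sG(\cD)\bigr)
\end{equation*}
then gives $\limsup_n \dghp(\sG(\cD^n),\sG(\cD)) \le C d \eps$, and sending $\eps\to 0$ closes the argument.

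The main technical obstacle I anticipate is the careful bookkeeping at the truncation boundary: the equivalence relation in \eqref{defIdentifyingInfinitelyMarkedPoinsWithRootsOfTheSubsequenceDecorations} glues infinitely many pieces at each marked point $x_{\bp;\ell}$, and when we truncate we must verify that shortest-path distances in $\sG(\cD^n)$ do not decrease by more than $\eps$ after forgetting the untruncated pieces (geodesics could in principle ``shortcut'' through a discarded subdecoration). Hypothesis \ref{ass:3thm1} is exactly what rules this out: any shortcut would witness a point more than $\eps$ from every $D^n_\bq$ with $|\bq|\le h$. Establishing this precisely---and checking that the natural Markov-kernel extension of Sénizergues' finite-gluing continuity from probability measures to $\R_+^d$-valued finite measures goes through---will be the bulk of the proof.
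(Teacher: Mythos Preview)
Your approach is essentially the same as the paper's: truncate to a finite index set, invoke S\'enizergues' finite-gluing continuity (the paper's Lemma \ref{lem:finTreeLem}) on the truncated pieces, and control truncation error via \ref{ass:2thm1}--\ref{ass:3thm1}. The paper routes this through a four-term triangle inequality passing through the intermediate spaces $\sG_{\le h}(\cD^n)$ and $\sG(\tr,\cD^n)$ rather than directly through your $\sG(\cD^n_\eps)$, but this is a cosmetic difference.

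Two points deserve attention. First, your set $I_\eps$ is not a priori a subtree of $\bbU$; if some $\bp\in I_\eps$ has an ancestor $\bq\notin I_\eps$, then your truncated $\cD^n_\eps$ collapses $D^n_\bq$ to a point and the ``canonical isometric embedding'' $\sG(\cD^n_\eps)\hookrightarrow\sG(\cD^n)$ fails. The paper handles this by passing to the smallest finite tree $\tr\supseteq I_\eps$; you should do the same. Relatedly, your shortcut concern is a red herring: the metric on $\sG(\cD)$ is defined explicitly in Section \ref{sssec:metricDecorations} as a path-sum through the common ancestor in $\bbU$, so geodesics respect the tree structure and the embedding (once $I_\eps$ is a tree) is automatically isometric.

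Second, and more substantively, your verification of Assumption \ref{ass:uniformBound}(3) is too casual. You cannot simply ``take $n\to\infty$'' in \ref{ass:3thm1}, because the conclusion concerns distances in $d_{\sG(\cD)}$---the very metric whose existence and properties are in question---and the witnessing $\bq$ in \ref{ass:3thm1} may vary with $n$ without stabilizing. The paper proves this by contradiction: assuming (3) fails produces a point $y\in D_\bp$ far from every $D_\bq$ with $|\bq|\le h$, and then the finite-tree convergence $\sG(\tr_1,\cD^n)\to\sG(\tr_1,\cD)$ for a tree $\tr_1\ni\bp$ transports this to a contradiction with \ref{ass:3thm1} for large $n$. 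This argument in fact uses the truncation estimates (the analogs of your $\dghp(\sG(\cD^n),\sG(\cD^n_\eps))\le O(\eps)$) as input, so the paper verifies Assumption \ref{ass:uniformBound} \emph{after} establishing those estimates, not before as you propose. Your ordering would need to be reversed, or the verification of (3) done differently.
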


The above assumptions can be interpreted as follows. Assumption \ref{ass:1thm1} is about the convergence of each subdecoration. 
Assumption \ref{ass:2thm1} says that for any height $h$ in $\bbU$, almost all subdecorations growing at height $h$, except for a finite collection, have a total contribution to the \emph{mass} and \emph{diameter} that becomes negligible as $n\to \infty$. 
Assumption \ref{ass:3thm1} says that the total mass of all subdecorations growing after height $h$ becomes negligible, for some $h$ large enough; and that subdecorations growing after height $h$, are \emph{close enough} to all decorations growing before height $h$.

We remind the reader that we delay the discussion of the  metric on $\sG(\cD)$ until Section \ref{ssec:GHP}.
Theorem \ref{thm:graphConv1} is an extension of an analogous result in \cite{Senizergues.20} wherein the author deals with probability measures and includes {a} technical assumption that the measures on the decorations are ``pushed to the leaves'' in some sense. S\'{e}nizergues mentions the technical assumption placed on the measures can be dropped under an appropriate change in assumptions, which we provide in Theorem \ref{thm:graphConv1}.

\subsection{Encoding Multitype Trees with $\Z^d$-valued Walks}
\label{subsectionEncodingMTsAsWalks}

Let us now explain the depth-first encoding of multitype rooted planar trees in terms of a collection of $d$ many $\Z^d$-valued walks. This is analogous to constructions found in \cite{MR3449255, Hernandez:2020, CKL.22}. Before describing the encoding of a multitype tree, we recall the definition of the Ulam tree, and the \L ukasiewicz path (also called depth-first walk) for a single-type planar tree. We use here the usual terminology for trees (see \cite{MR850756}).

\subsubsection{Coding a single-type tree}\label{sec:SingleEncode} A plane tree $\tr$ is a non-empty finite subset of $\bbU$ such that $\emptyset\in \tr$; if $v\in \tr\setminus\{\emptyset\}$, then $\PAR(v)\in \tr$; and if $v\in \tr$, there is a finite $\chi(v)\in\{0,1,\dotsm\}$ such that $vj\in \tr$ if and only if $1\le j \le \chi(v)$. 
Here  $\chi(v):=\chi_\tr(v)$ denotes the number of children that the vertex has.
We let $\bbT$ be the collection of all finite plane trees.

Consider a plane tree $\tr$ of size $n=\#\tr$ rooted at the vertex $v_1 = \emptyset$. Label the vertices in a \emph{depth-first order}: starting at the root $v_1$ and proceeding clockwise through the vertices and labeling them $v_1,\dotsm, v_n$ using the lexicographical order. 

We can encode the entire tree using the \emph{\L ukasiewicz path} $X_\tr = (X_\tr(m) ; m = 0,1,\dotsm, n)$, where
\begin{equation*}
X_\tr(m): = \sum_{k=1}^m (\chi_\tr(v_k)-1),\qquad m = 0,1,\dotms, n.
\end{equation*} Note that $X_{\tr}(0) = 0$. 
It is easy to see \cite{MR2203728} that $X_\tr(m)\ge 0$ for $m = 0,1,\dotms, n-1$, $X_\tr(n) = -1$, and $X_\tr(m+1)-X_\tr(m)\ge -1$ for all $m \ge 0$. Conversely, given any walk $X$ with the previous properties, one can construct a plane tree $\tr$ such that $X = X_\tr$. 

In {the} sequel we will frequently concatenate the \L ukasiewicz paths (or more accurately the graphs of the paths) of an ordered collection of finite trees $(\tr_\ell;\ell\ge 1)$ in order to get a walk $(X(m); m \geq 0)$ that encodes the entire forest. That is, let $v_1,v_2,\dotsm$ be the enumeration of the vertices in $(\tr_\ell;\ell\ge 1)$ where $v_1,\dotms, v_{\#\tr_1}$ is the depth-first ordering of the vertices in $\tr_1$, similarly,  $v_{\#\tr_1+1},\dotsm, v_{\#\tr_1+\#\tr_2}$ is the depth-first ordering of the vertices in $\tr_2$, etc. Set
\begin{equation*}
X(m): = \sum_{k=1}^m (\chi(v_k)-1) ,\qquad m \geq 0.
\end{equation*}Consequently, each tree is encoded by an \textit{excursion} of the $\Z$-valued walk $X$ above its running minimum. That is, if $\tau(\ell) = \min\{m: X(m) = -\ell\}$ for $\ell\geq 1$, then the tree $\tr_\ell$ is encoded by the walk 
\begin{equation*}X_{\tr_\ell}(m) = X(m + \tau(\ell-1)) + (\ell-1)\qquad\textup{ for }\qquad m=0,\dotsm, \tau(\ell)-\tau(\ell-1).
\end{equation*}

Denote by $\hgt(v)$ the \emph{height} of a vertex $v$, i.e. its distance to the root. Here we use the usual graph distance on the tree $\tr$. We define the corresponding height function of the tree $\tr$, denoted by $H_\tr = (H_{\tr}(m);m = 0,1,\dotms,\#\tr-1)$, as $H_{\tr}(m): = \hgt(v_{m+1}).$ Note that we can recover $H_\tr$ as a function of $X_\tr$ by
\begin{equation}\label{eqn:HeightDiscFunctional}
H_\tr(m) = \#\{k\in \{0,\dotms, m-1\}:X_{\tr}(k) = \min_{r\in \{k,\dotms ,m\}} X_\tr(r)\},
\end{equation}for $m\in \{1,2,\dotms,\#\tr-1 \}$
See, for example, \cite[Proposition 1.2]{MR2203728}. {The height function can be easily extended to the concatenated} $X$ encoding a forest.

\begin{remark} \label{rmk:introRemark1}
In the sequel we will frequently say that the walk $X$ is made by concatenating the walks $(X_{\tr_\ell};\ell\geq 1)$ as opposed to $X$ is made using the \L ukasiwicz encoding from the concatenation of the depth-first orderings of the vertices in $(\tr_\ell;\ell\geq 1)$. Similarly, we say that a height process $H$ is made by concatenating the excursions $(H_{\tr_\ell};\ell\geq 1)$. 
\end{remark}

\subsubsection{Multitype encoding}\label{subsectionMultitypeEnconding}

We define a \textit{multitype tree} $\tr$ with $d$ types as a rooted planar tree equipped with a type function, written as $\TYPE$ which assigns to each vertex $v\in \tr$ a specific type in the set $[d]$. 
If $\TYPE(v) = i$ we will use interchangeably the phrases ``$v$ is type $i$'', ``$v$ is of type $i$'' and ``$v$ has type $i$''. We say the tree $\tr$ is rooted at a type $i$ vertex if its root $\rho_\tr$ has type $i$.

Following \cite[p. 2728]{MR3449255}, we say that $\tr'\subseteq\tr$ is a type $i$ subtree if it is a maximally connected subgraph of $\tr$ such that all its vertices have type $i$. We root type $i$ subtrees at the unique element of minimal height. We also define the reduced tree $\tred$ of $\tr$ as the plane tree obtained by collapsing all vertices in each type $i$ subtree into a single type $i$ vertex but retaining all the edges between the typed subtrees (of different types). That is if $\tr_i$ and $\tr_j$ are two type $i$ and $j$ subtrees of $\tr$ and there is some $u\in \tr_i$ and $v\in \tr_j$ such that the parent of $v$, $\PAR(v)$, is $u$ then in the reduced subtree $\tred$ the vertex corresponding to $\tr_i$ has a child which corresponds to the subtree $\tr_j$. 
The tree $\tred$ is also a multitype tree where each vertex $v_\red\in \tred$ is given the type of each vertex in the corresponding collapsed tree. 

Analogously to the single-type case above, we let $\vec \chi(v) := (\chi^1(v),\dotsm, \chi^d(v))$ denote the vector where $\chi^i(v)$ is the number of type $i$ children of vertex $v$.

Consider the reduced tree
$\tred$ {of} a fixed multitype tree $\tr$. The reduced tree inherits its planar structure from $\tr$. Label all the type $i\in[d]$ vertices in $\tred$ by $(w^i_\ell;\ell = 1,2,\dotsm)$ in \textit{breadth-first order}. Recall that the breadth-first order of a plane (single-type) tree, is a labeling of its vertices in the following way: label the root $w_1$, label its $\chi(w_1)$ children $w_2,\dotsm, w_{\chi(w_1)+1}$, then label the $\chi(w_2)$ children by $w_{\chi(w_1)+2},\dotms, w_{\chi(w_1)+\chi(w_2)+1}$ etc. 
That is, going from generation zero to the last generation, label the individuals of each generation from left to right. 
The vertices $(w_\ell^i;\ell\ge 1)$ are the subsequence of the type $i$ vertices in the breadth-first list constructed from $\tred$.

By construction each vertex $w\in \tred$ corresponds to a subtree $\tr'_w$ (of some type in $[d]$), seen as a subtree of $\tr$. 
Fix an $i\in [d]$ and $\ell\geq 1$. 
For any vertex $w^i_\ell$, denote by $\tr_\ell^i$ its corresponding type $i$ subtree. To code the tree $\tr^i_\ell$, label its vertices in depth-first order as $v^i_1,v^i_2,\dotsm, v^i_q$ for $q = \#\tr^i_\ell$. 
In Figure \ref{figTreeWithBFOAndDFO}, we show a multitype tree together with its breadth-first order and depth-first order.
For each $j\in [d]$ and $m=0,\dotsm , q$, we write
\begin{equation*}
X^j_{\tr^i_\ell} (m): = \sum_{k=1}^m (\chi^{j}(v^i_k) - 1_{[i=j]}).
\end{equation*}
Observe that for each $j\neq i$, the term $X^{j}_{\tr^i_\ell}$ is non-decreasing and $X^i_{\tr^i_\ell}$ is the \L ukasiewicz path of the rooted planar (single-type) tree $\tr^i_\ell$. 
See Figure \ref{figTreeTEmptysetAndCoding} for an example of the previous coding.

\begin{figure}
\centering
     \includegraphics[scale=0.8]{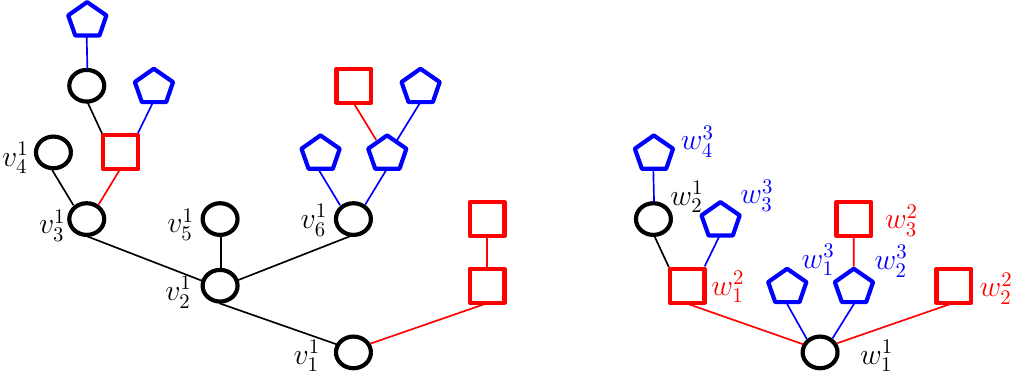}
\caption{We show a multitype tree with $d = 3$, where type one is represented as black circles, type two as red squares, and type three as blue pentagons. On the left, the tree together with the depth-first order of the subtree $\tr^1_1$ (the first subtree of type one) is depicted. On the right, we show the corresponding reduced tree together with its breadth-first order.}
\label{figTreeWithBFOAndDFO}
\end{figure}

\begin{figure}
\centering
     \includegraphics[scale=0.6]{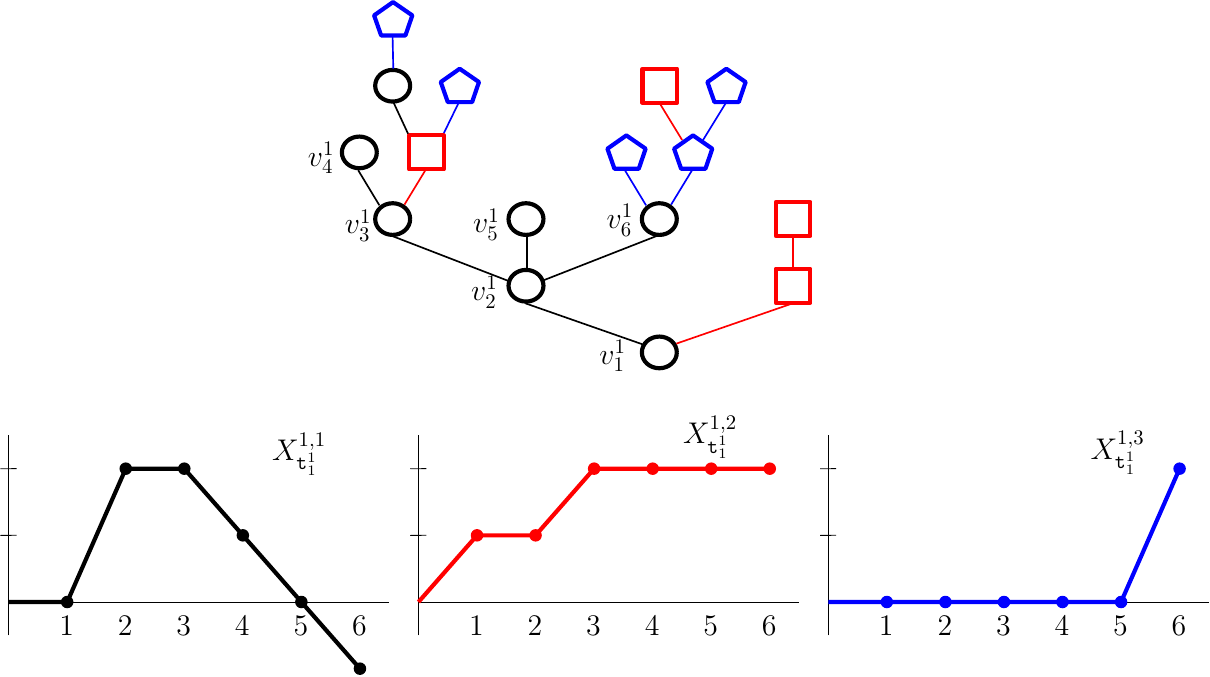}
\caption{We depict the paths $\big(X^{1}_{\tr^1_1},X^{2}_{\tr^1_1},X^{3}_{\tr^1_1}\big)$ of the subtree $\tr^1_1$.}
\label{figTreeTEmptysetAndCoding}
\end{figure}

\begin{remark}
It is well-known that using height processes $H_\tr$, which rely on the depth-first order, provides a robust way of establishing limit theorems for the metric space structure of a (single-type) BGW tree. See Chapter 2 of \cite{MR1954248} for some general results, as well as \cite{MR2203728} for a gentle introduction. This may lead the reader to wonder why we even involve any breadth-first order in our construction. 
The reason why we do this is because the construction of the (discrete) metric space $\sG^*(\cD)$ involves identifying points in the metric space $D_\bp$ of the decoration with the root in the metric space $D_{\bp \ell}$. That is, the glued metric space is constructed by gluing metric spaces $D_\bp$ indexed, layer-by-layer, by the Ulam-Harris tree $\bbU$. This layer-by-layer indexing is essentially what the breadth-first ordering is doing on the reduced tree. 
\end{remark}

Let $\vec X^i$ be the concatenation of the $\Z^d$-valued walks $(X^1_{\tr^i_\ell},\dotsm,X^d_{\tr^i_\ell})$ (recall Remark \ref{rmk:introRemark1}). It is shown in \cite{MR3449255} that the map that sends a finite rooted planar multitype tree $\tr$ to the collection of walks $(\vec X^i;i\in [d])$ is bijective. This bijection is essentially re-established in Section \ref{sec:discreteDecorations}.
This bijective correspondence should not be surprising because one can construct for fixed $\ell$, the $\Z^d$-valued walk $ (X^1_{\tr^i_\ell},\dotsm,X^d_{\tr^i_\ell})$ encoding $\tr^i_\ell$ by considering the excursion intervals of $X^{i,i}$. Indeed, let $\tau^i(k) = \min\{m: X^{i,i}(m) = -k\}$ for any $k\in \N_0$, then for $m=0,\dotsm, \tau^i(\ell)-\tau^i(\ell-1)$ we have
\begin{equation*}
X^{j}_{\tr^{i}_\ell} (m) = X^{i,j}(m+ \tau^i(\ell-1)) - X^{i,j}(\tau^i(\ell-1)).
\end{equation*}Such formulations will appear regularly in the sequel.

\subsubsection{Multitype  Bienaym\'e-Galton-Watson trees}\label{subsectionMBGWIntro}

\begin{definition}[Bienaym\'e-Galton-Watson Trees]\label{definitionBGWtree}
Let $\varphi$ and $\vec \varphi^1,\dotms,\vec \varphi^d$ be probability measures on $\{0,1,\dotms\}$ and $\{0,1,\dotms\}^d$, respectively. Define $\vec \varphi: = (\vec \varphi^1,\dotms,\vec \varphi^d)$. 
We say that a rooted planar tree $T$ is a BGW tree with offspring distribution $\varphi$ if each vertex $v$ gives birth to a random number of children with common law $\varphi$, independently of the vertices up to its current generation, i.e. $\chi(v) \overset{d}{\sim}\varphi$, and if $v$ has height ${\rm hgt}(v)$, then $\chi(v)$ is independent of $(\chi(w);{\rm hgt}(w)\leq {\rm hgt}(v), w\neq v)$. We say that a random multitype tree $T$ is a MBGW tree with offspring distribution $\vec \varphi$ (or simply a $\vec \varphi$ MBGW tree), started from a type $i_0$ individual, if the root is type $i_0$ and each vertex $v$ type $i$, has child vector $\vec \chi(v)\overset{d}{\sim} \vec \varphi^i$ independently of all other vertices up to its current generation. 
\end{definition}

In order for the trees above to be finite a.s. we need to impose a (sub)criticality assumption which we now explain. Let $\vec{\xi}^i:=(\xi^{i,1},\dotsm,\xi^{i,d})\overset{d}{\sim} \vec \varphi^i$ and write $M\in \R^{d\times d}$ the square matrix $M: = (\E[{\xi^{i,j}}]; i,j\in[d])$, hence, we assume each mean exists. We say that $M$ is \emph{irreducible} if for every $i,j\in [d]$, there is some $n$ such that $M^n_{i,j}>0$. The following is a standard result in branching processes.
\begin{prop}[{\cite[Chapter V]{AN.72}}] Suppose that $M$ is irreducible and $\PR\big(\sum_{j=1}^d \xi^{i,j} = 1\big) < 1$ for some $i\in[d]$. Then the corresponding MBGW tree is a.s. finite if and only if the Perron-Frobenius eigenvalue $\varrho$ of $M$ satisfies $\varrho\le 1$.     
\end{prop}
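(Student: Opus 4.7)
The plan is the classical generating-function argument for multitype branching processes. For $\vec s\in[0,1]^d$ and $i\in[d]$ write $f^i(\vec s)=\E\bigl[\prod_{j=1}^d s_j^{\xi^{i,j}}\bigr]$, set $\vec f=(f^1,\dots,f^d)$, and let $q_i=\PR_i(\#T<\infty)$. Conditioning on the offspring of the root gives the fixed-point relation $\vec q=\vec f(\vec q)$. An easy induction combined with componentwise monotonicity of $\vec f$ shows that $\vec q^{(n)}:=\vec f^{\circ n}(\vec 0)$ is the vector of extinction-by-generation-$n$ probabilities, that $(\vec q^{(n)})$ is non-decreasing, and that its limit equals the smallest fixed point of $\vec f$ in $[0,1]^d$; this limit coincides with $\vec q$. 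The problem therefore reduces to showing that $\vec f$ admits no fixed point strictly below $\vec 1$ when $\varrho\le 1$ but does admit one when $\varrho>1$.

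For the supercritical case $\varrho>1$, let $\vec v>\vec 0$ be the Perron right eigenvector, $M\vec v=\varrho\vec v$. Taylor expansion at $\vec 1$ yields $\vec f(\vec 1-\eps \vec v)=\vec 1-\eps\varrho \vec v+O(\eps^2)$, which for $\eps>0$ small is componentwise $\le \vec 1-\eps \vec v$. Iterating, $\vec f^{\circ n}(\vec 1-\eps \vec v)$ is non-increasing and converges to a fixed point below $\vec 1$, forcing $\vec q<\vec 1$. For $\varrho<1$, let $Z_n^j$ count the type $j$ individuals at generation $n$; iteration of the mean operator gives $\E_i[Z_n^j]=(M^n)_{ij}$, and $\varrho<1$ forces $M^n\to 0$ exponentially, so Markov's inequality and Borel-Cantelli yield $\vec Z_n\to \vec 0$ a.s., hence $\vec q=\vec 1$.

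The critical case $\varrho=1$ is the delicate one, and I expect it to be the main obstacle: the Taylor argument above degenerates to $\vec f(\vec 1-\eps \vec v)=\vec 1-\eps \vec v+O(\eps^2)$, so convexity alone cannot rule out a sub-$\vec 1$ fixed point. The plan is to consider $W_n:=\vec u^{\,\top}\vec Z_n$, where $\vec u>\vec 0$ is the left Perron eigenvector with $\vec u^{\,\top}M=\vec u^{\,\top}$; then $(W_n)$ is a non-negative martingale that converges a.s.\ to some $W_\infty\ge 0$. The hypothesis $\PR(\sum_j \xi^{i,j}=1)<1$ for some $i$, combined with irreducibility of $M$ (which propagates non-degeneracy to all types, since any type eventually produces a descendant of the distinguished non-degenerate type), ensures that the one-step conditional variance of $W_n$ is bounded below on the survival event by a positive constant times $W_n$; since a convergent non-negative martingale necessarily has vanishing increments, this forces $W_\infty=0$ a.s. Strict positivity of $\vec u$ then gives $\vec Z_n\to \vec 0$, i.e.\ $\vec q=\vec 1$. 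All of this is contained in \cite[Chapter V]{AN.72}, which is the reference cited in the statement.
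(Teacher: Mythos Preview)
The paper does not give its own proof of this proposition; it is stated as a classical fact and attributed to \cite[Chapter V]{AN.72} without further argument. Your sketch is a faithful outline of the standard generating-function and martingale approach found in that reference, so there is nothing to compare against and no gap to report.
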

We will call the corresponding offspring distribution $\vec \varphi$ subcritical if $\varrho<1$, critical if $\varrho = 1$ and supercritical if $\varrho>1$. A fundamental assumption that we will make is: 
\begin{assumption} \label{ass:subcrit}
$\vec \varphi $ is (sub)critical.
\end{assumption}

A fundamental property of both single-type and multitype BGW trees is that all subtrees are encoded by individual excursions of the processes $X$ and $\vec{X}^i$ for $i \in [d]$, respectively; furthermore, these excursions are i.i.d.
It is also not hard to see that the resulting walks $ \vec X^i = \big((X^{i,1}(k),\dotsm,X^{i,d}(k)); k\ge 0\big)$, can be expressed as the $\Z^d$-valued random walks
\begin{equation}\label{eqnRandomWalks}
\vec X^{i}(m): = \sum_{k=1}^m \left(\vec{\xi}^{i}_k-\vec{\epsilon}_i\right),\qquad (\vec{\xi}^i_k;k\ge 1)\overset{i.i.d.}\sim\vec \varphi^i,
\end{equation} for $m\geq 0$.
As above, let $\tau^{i}(m) = \min\{k: X^{i,i}(k) = -m\}$. We define the $i^\text{th}$ \textit{excursion measure} of $\vec X^{i}$, denoted by $\vec N^{i,\vec \varphi}: = \vec N^{\vec \varphi^i}$, as
\begin{equation}\label{eqnCodingSubtreesOfMBGWWitRandomWalk}
\vec N^{i,\vec \varphi} := \textup{Law} \left(\big(\vec X^i(m); m = 0,1,\dotsm,\tau^i(1)\big) \right).
\end{equation}

\subsection{Representing Multitype Trees as Glued Decorations}\label{subsectionIntroMBGWTreesAsGluedDecorations}

Hopefully the picture is becoming clear on how we establish convergence of the MBGW branching tree. 
Recall the definition of a glued decoration, given in \eqref{definitionDecorationGluedMetricspaceGluedDecoration}. 
Building upon the representation of multitype trees outlined in the previous section, we now develop an algorithmic construction of these structures as glued decorations.
Consequently, by applying the convergence criteria for glued decorations established in Theorem \ref{thm:graphConv1}, we derive the scaling limit for MBGW trees conditioned on their size.

In Section \ref{sec:discreteDecorations} we construct a decoration $\cD^{i_0,\vec \varphi}$ which recreates a MBGW tree $T$ starting from a single type $i_0$ individual using the excursion measures $(\vec N^{i,\vec \varphi})_{i\in [d]}$ and the connection between reduced subtrees and the walks. We briefly describe its construction here. We do this in two steps.

We first construct the (law) of a single value of the subdecoration $\cD^{i_0,\vec \varphi}(\bp)$, for an arbitrary $\bp\in \bbU$ and an initial type $i_0\in [d]$ of the root. Recall that this is an element of $\mathfrak{M}^{\ast d}_{\tiny \mbox{mp}}$, meaning that we will construct an element $\cX = (\cX,\rho,d_g, \vec{\mu}, (x_\ell)_{\ell\ge 1})$ where $(\cX,d_g)$ is a compact metric space with the graph distance $d_g$, $\rho\in \cX$ is a root, $\vec{\mu}$ is a vector-valued measure on $\cX$ with values in $\R_+^d$ for every Borel subset of $\cX$ (in particular for the $j^\text{th}$ coordinate measure $\langle \vec{\mu},\vec{\epsilon}_j\rangle$, it holds that $\langle \vec{\mu},\vec{\epsilon}_j\rangle(\cX)\in[0,\infty)$) and $(x_\ell)_{\ell\ge 1}$ is a collection of distinguished points in $\cX$. To specify the points, we will need a random permutation so we write $\mathfrak{S}_n$ as the symmetric group on $n$ letters.
\begin{construction}[Single subdecoration of type $i$ under $\vec N^{i,\vec \varphi}$] \label{construction:Discretesingletype}Generate the $\Z^{d}$-valued random walk $\vec X^{i}$ according to $\vec N^{i,\vec \varphi}$. Let $\tr$ be the (random and a.s. finite single-type) rooted planar tree with \L ukasiewicz path $X^{i,i}$, and denote by $v_1$ its root. Let $\widetilde{\tr}$ denote the {rooted} planar tree constructed by adding a single additional vertex $v_0$ to the tree $\tr$ and joining $v_0$ to $v_1$, defining $v_0$ as the new root. Let $v_1,v_2,\dotsm, v_{\tau^{i}(1)}$ denote the depth-first labeling of the vertices of $\tr$.

\setlength{\leftmargini}{.15in}

\begin{itemize}
   \item Let $\cX = \widetilde{\tr}$, $d_g$ be the graph distance on $\widetilde\tr$ and root the space $\cX$ at $\widetilde\rho := v_0$.
   \item Define the measure $\vec{\mu}^{(i)}$ on subsets of $\cX$ by
   \begin{equation*}
       \langle \vec{\mu}^{(i)},\vec{\epsilon}_j \rangle (A) := \#\{v_m\in A: m\ge 1\}1_{[j=i]}
   \end{equation*}{for any $j\in [d]$,} so that the $i^\text{th}$ coordinate of the measure is the counting measure on the type $i$ vertices on $\tr\subset \cX$ and the remaining coordinates are null.
   \item Construct the special points $(x_\ell)_{\ell\ge 1}$ as follows. Fix $j\neq i$, and let $\vec L_j$ be the tuple which contains the vertex $v_m$, for $m\geq1$, exactly $X^{i,j}(m)-X^{i,j}(m-1)$ many times. That is $v_m$ is contained with multiplicity given by the number of type $j$ children that the vertex would have in a MBGW tree:
   \begin{equation*}
       \vec L_j: = (a_1^j,\dotsm, a_{\# \vec L_j}^j): = \big( \underset{\substack{ X^{i,j}(1)-X^{i,j}(0)\\\textup{many times}}}{\underbrace{v_1,v_1,\dotsm, v_1}},\dotsm,\underset{\substack{ X^{i,j}(\tau^{i}(1))-X^{i,j}(\tau^{i}(1)-1)\\\textup{many times}}}{\underbrace{v_{\tau^{i}(1)},v_{\tau^{i}(1)},\dotsm, v_{\tau^{i}(1)}}} \big).
   \end{equation*}Given $\#\vec L_j = X^{i,j}(\tau^{i}(1)) = n$, let $\pi^j\in \mathfrak{S}_{n}$ be a uniform permutation of $\#\vec L_j$ many letters, conditionally independent of $\vec X^i$.
   Now set for any $\ell\in \mathbb{N}$
   \begin{equation}\label{eqnDefinitionOfInfinnitelyMarkedPointInSubdecoration}
       x_{\ell}: = \begin{cases}
           a^j_{\pi^j(m+1)} & \quad\ell = j + md,\  \mbox{for some }\ j \neq i, \ m=0,\dotsm, \#\vec L_j-1\\
           \widetilde\rho&\quad \textup{else.}
       \end{cases}
   \end{equation}
\end{itemize} 
The resulting infinitely marked metric measure space $(\cX,\widetilde\rho,d_g,\vec{\mu}^{(i)}, (x_\ell)_\ell)$ will be called a \textit{subdecoration of type $i$}, and we write $\cX\sim \vec N^{i,\vec \varphi}$ to denote its distribution (with a slight abuse of notation, we use the same notation as for the excursion measure).
\end{construction}
Let us make some remarks about the construction above. 
\begin{remark}
{If we translate the subdecoration to a subtree in a MBGW tree, the points $x_1,x_{1+d},x_{1+2d},x_{1+3d},\dotsm$ will be a uniform permutation of all the vertices having children of type $1$, counted with multiplicity. They will keep track of where to paste all type 1 subtrees. The points $x_2,x_{2+d},x_{2+2d},x_{2+3d},\dotsm$ will be a uniform permutation of all the vertices having children of type $2$, counted with multiplicity, keeping track of where to paste all type 2 subtrees; and so on.  }
\end{remark}
\begin{remark}
In a MBGW tree, the type $j$ children of the vertices $v$ in a type $i$ subtree constructed from $\vec X^{i}$, will correspond to points $x_\ell$ in {the} subdecoration of type $i$, where $\ell \equiv j\mod d$. This allows us to keep track of what type a particular infinitely marked metric measure space is.
\end{remark}
\begin{remark}
We view the root $\widetilde \rho$ of $\cX$ as a cemetery point for the infinite marks. Hence, whenever $x_\ell=\widetilde \rho$, 
we will only glue the trivial single-point space $\bzer := (\{\widetilde \rho\}, \widetilde \rho, d_{\widetilde \rho}, \vec{0}, (\widetilde \rho)_{\ell\ge 1})$, where $\vec 0$ is the null measure.
\end{remark}
\begin{remark}
The presence of the uniform permutation is so that we can order the type $i$ subtrees that we glue to a particular subtree by their size as opposed to any ordering relating to planarity. This is because when taking limits, we need to be able to describe the limit of each subdecoration in the sequence of glued decorations (Assumption \ref{ass:1thm1} in Theorem \ref{thm:graphConv1}). However, the ``left-most'' tree is equal in law to the first excursion of a random walk, and if the random walk converges to a L\'evy process with infinite variation, then the first excursion converges to the zero function. The natural way to fix this issue is to order the corresponding subtrees by their size.
Similar analysis also appears in the work on Markovian growth-fragmentations \cite{Bertoin.17} (the top of page 1085 therein) and for studying \textit{heavy subtrees} of BGW trees conditioned to have size $n$ is studied in \cite{MR3916322}.
\end{remark}

With these remarks in mind, we can construct the full decoration, using an excursion measure which we denote by $\vec M^{i_0,\vec \varphi}$. It is convenient to introduce a quantity beforehand. 
Suppose that $D_\bp \subset \bbU\cup\{\widetilde\rho\}$ is a subdecoration of type $i$, with $\widetilde\rho$ some distinguished point not contained in $\bbU$. Denote by $\vec{\#}^{(i)}D_\bp \in\{0,1,\dotsm, \infty\}^d$ the vector with $j^{\text{th}}$ entry, for $j\neq i$, given by
\begin{equation}\label{defVecCardinalityJOfASubdecoration}
\begin{split}
\vec{\#}^{(i)}_jD_\bp  &:= \#\{\ell: \ell\equiv j \mod d, \mbox{ where } x_\ell\neq \widetilde\rho\}
= \#\{m: x_{j+(m-1)d}\neq\widetilde \rho\},
\end{split}
\end{equation}and 
\begin{equation}
\begin{split}
	\vec{\#}^{(i)}_iD_\bp &:= \#D_\bp.
\end{split}
\end{equation}Observe that for each $i\in [d]$, the subdecoration $\cX\sim \vec N^{i,\vec \varphi}$ of type $i$ almost surely satisfies $\vec{\#}^{(i)}\cX\in \{0,1,\dotsm\}^d$ where $\widetilde \rho$ is the point $v_0$ in Construction \ref{construction:Discretesingletype} above.
Also, given $j\neq i$, observe that $\vec{\#}^{(i)}_j\cX$ has the same meaning as $X^{i,j}(\tau^{i}(1))$ but without the need to describe the random walk $\vec X^{i}$.
\begin{construction}[Decoration $\cD^{i_0,\vec \varphi}$] \label{construction:DiscreteDecoration}
Construct $\cD = \cD^{i_0,\vec \varphi}:\bbU\to \mathfrak{M}^{\ast d}_{\tiny \mbox{mp}}$ inductively as follows:
\setlength{\leftmargini}{.15in}
\begin{itemize}
\item For $\bp = \emptyset \in \bbU$, generate
$\cD(\emptyset) \sim \vec N^{i_0,\vec \varphi}.$ 
\item For $h\geq 0$, conditionally given $(\cD(\bp); |\bp|\le h)$, independently for each $\bp\in \bbU$ such that $\bp = (p_1p_2p_3\dotsm p_h)$, i.e. $|\bp| = h$, assuming $\cD(\bp)$ is a type $i$ subdecoration, proceed as follows:
\begin{itemize}
    \item For each $j\in [d]$ with $j\neq i$, let $(\cX^{ j}_{\bp,m}; m =1,2,\dotsm, \vec\#^{(i)}_j D_{\bp} )$ denote i.i.d. samples of subdecorations of type $j$ with law $\vec N^{j,\vec \varphi}$ and let $(\cX^{j}_{\bp,(m)}; m =1,2,\dotsm, \vec\#^{(i)}_j D_{\bp} )$ denote their rearrangement in decreasing order of cardinality (i.e. $\#\cX^{j}_{\bp,(1)}\geq \#\cX^{j}_{\bp,(2)}\geq \dotsm $) with ties broken arbitrarily (and measurably). Extend this list by setting $\cX^{j}_{\bp,(m)} = \bzer$ for all $m> \vec{\#}^{(i)}_j D_
    {\bp}$. 
    \item For any $\ell\in \mathbb{N}$, choose $j\in [d]$ with $j\neq i$, and  $m\in \mathbb{N}$ such that $\ell = j + (m-1)d$, and set
    \begin{equation}\label{defSubdecorationsChildrenOfASubdecorationLabeledInDecOrder}
        \cD(\bp \ell): =   \cX^{ j}_{\bp,(m)}.
    \end{equation}For $\ell \equiv i \pmod d$, we set $\cD(\bp \ell) := \bzer$.
\end{itemize}
\end{itemize}
\end{construction}

Having constructed the decoration, now we explicitly construct its glued decoration (recall Definition \ref{definitionDecorationGluedMetricspaceGluedDecoration}). 
We use Construction \ref{construction:DiscreteDecoration}, together with the construction of the points $(x_\ell)_\ell$ in \eqref{eqnDefinitionOfInfinnitelyMarkedPointInSubdecoration} and the identification of the root and marked point $x_{\bp;\ell}\sim \widetilde \rho_{\bp\ell}$ as in \eqref{defIdentifyingInfinitelyMarkedPoinsWithRootsOfTheSubsequenceDecorations}. 
Given $\bp\in \bbU$, a subdecoration $\cD(\bp)$ type $i$, and given $\vec \#^{(i)}_j \cD(\bp)$ for $j\neq i$, construct independent subdecorations $(\cX^{j}_{\bp,(m)})_{m\in [\vec \#^{(i)}_j \cD(\bp)]} $ type $j$ ordered in decreasing order of cardinality.
Finally, for any $j\neq i$, glue all subdecorations $(\cX^{j}_{\bp,(m)})_{m\in [\vec \#^{(i)}_j \cD(\bp)]}$ to $\cD(\bp)$ by identifying each marked point $x_{j+md}\in \cD(\bp)$ with the root of the $(m+1)^{\text{th}}$ subdecoration $\cX^{j}_{\bp,(m)}$ type $j$.
Following the notation in Definition \ref{definitionDecorationGluedMetricspaceGluedDecoration}, we denote by $\sG(\cD^{i_0,\vec \varphi})$ the glued decoration. 
Now we define its excursion measure. 

\begin{definition}[Excursion measure of the glued decoration $\sG(\cD^{i_0,\vec \varphi})$]\label{remarkConstructionOfGluedDecoration}
Let $\vec{\varphi}$ be the offspring distribution of some MBGW tree and let $\cD^{i_0,\vec{\varphi}}$ be the decoration constructed above. We denote the \textit{excursion measure of the glued decoration} $\sG(\cD^{i_0,\vec{\varphi}})$ as the measure $M^{i_0,\vec{\varphi}}$.
\end{definition}

In Section \ref{sec:discreteDecorations} we prove the following:
\begin{lem}[Multitype Bienaym\'e-Galton-Watson trees as Glued Decorations] \label{lem:decoration} Suppose Assumption \ref{ass:subcrit} holds.
Let $T$ be a MBGW tree started from a single type $i_0$ individual, having offspring distribution $\vec \varphi$. Join a single vertex $\widetilde\rho$ to $T$ which is connected to just the root vertex $\emptyset\in T$, call the resulting space $\widetilde{T}$. View $\widetilde{T}$ as a metric measure space with the graph distance, rooted at $\rho$, and equipped with the vector-valued measure $ \langle\vec{\mu},\vec{\epsilon}_j\rangle (A) = \#\{v\in A: \TYPE(v) = j; v\neq \widetilde \rho\}.$ 
Then as rooted metric measure spaces 
\begin{equation}\label{eqnGluedDecorationSameLawAsMBGWTree}
    \sG(\cD^{i_0,\vec \varphi})\overset{d}{=} \widetilde{T}.
\end{equation}
\end{lem}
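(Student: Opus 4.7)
The plan is to compare the glued decoration $\sG(\cD^{i_0,\vec\varphi})$ with the MBGW tree $\widetilde{T}$ level by level in the Ulam tree, using the branching property of MBGW trees and an exchangeability argument. First, I would verify that the root subdecoration $\cD(\emptyset)$ generated under $\vec N^{i_0,\vec\varphi}$ has the correct law: by the coding in Section \ref{subsectionMultitypeEnconding}, the walk $\vec X^{i_0}$ stopped at $\tau^{i_0}(1)$ encodes the root type-$i_0$ subtree of $T$ (through its \L ukasiewicz excursion $X^{i_0,i_0}$) together with the counts $X^{i_0,j}(\cdot)$ of type-$j$ children, $j\neq i_0$, of each vertex. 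Under Assumption \ref{ass:subcrit} this subtree is a.s.\ finite, and Construction \ref{construction:Discretesingletype} produces a copy of the type-$i_0$ subtree of $T$ with an attached extra vertex $\widetilde\rho$, equipped with the graph distance, rooted at $\widetilde\rho$, and carrying the correct $\R^d_+$-valued measure (the counting measure of type-$i_0$ vertices in coordinate $i_0$, and zero in the other coordinates).

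Next, I would use the branching property to handle the gluing at the marks. Conditionally on the type-$i_0$ subtree containing the root of $T$, the subtrees of $T$ rooted at the type-$j$ children (for $j\neq i_0$) of its vertices are independent MBGW trees started at a type-$j$ individual, attached to the type-$i_0$ subtree along a specific planar order. The glued decoration instead independently resamples i.i.d.\ type-$j$ subdecorations $(\cX^{j}_{\emptyset,m})$, sorts them in decreasing order of cardinality to obtain $(\cX^{j}_{\emptyset,(m)})$, and attaches them at the points $x_{j+(m-1)d}=a^{j}_{\pi^j(m)}$, where $\pi^j$ is an independent uniform permutation. Because the i.i.d.\ samples $(\cX^{j}_{\emptyset,m})$ are exchangeable, there is a random permutation $\sigma$ (the sorting permutation) with $(\cX^{j}_{\emptyset,\sigma(m)})\overset{d}{=}(\cX^{j}_{\emptyset,m})$; combined with the independent uniform $\pi^j$, the random pairing $(a^{j}_{\pi^j(m)},\cX^{j}_{\emptyset,(m)})_m$ has the same joint law as $(a^{j}_m,\widetilde{\cX}^{j}_m)_m$ with $\widetilde{\cX}^{j}_m$ i.i.d.\ type-$j$ subdecorations in the natural attachment order. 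Since $\widetilde{T}$ is compared only as a non-planar rooted metric measure space, this reproduces exactly the law of the first generation of the reduced tree of $T$ together with its children MBGW subtrees.

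I would then iterate this argument inductively over the levels of $\bbU$, using the conditional independence structure built into Construction \ref{construction:DiscreteDecoration} and the fact that, under Assumption \ref{ass:subcrit}, the reduced tree of $T$ and hence the support of $\cD^{i_0,\vec\varphi}$ in $\bbU$ are a.s.\ finite. Consequently no Cauchy completion is required and $\sG^*(\cD^{i_0,\vec\varphi})=\sG(\cD^{i_0,\vec\varphi})$ almost surely. The restricted metric on $\sG(\cD^{i_0,\vec\varphi})$ inherited from each subdecoration is the graph distance, and the identifications $x_{\bp;\ell}\sim\rho_{\bp\ell}$ from \eqref{defIdentifyingInfinitelyMarkedPoinsWithRootsOfTheSubsequenceDecorations} correspond exactly to the edges of $T$ joining a vertex of one type to a child of another type; similarly, $\vec\mu=\sum_\bp\vec\mu_\bp$ from \eqref{eqn:measuredef} recovers the vector counting measure of types on $\widetilde{T}$, establishing \eqref{eqnGluedDecorationSameLawAsMBGWTree}.

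The main obstacle I anticipate is the exchangeability bookkeeping in the second step: one must check carefully that sorting i.i.d.\ subdecorations by decreasing cardinality (with some deterministic tie-breaking) and then gluing them at uniformly permuted attachment points is equivalent, as a joint random object, to the natural planar pairing in the MBGW tree, and that this equivalence is preserved when one further samples the descendants recursively. A secondary, largely notational, point is keeping track of the labeling $\ell\mapsto(j,m)$ via $\ell=j+(m-1)d$ together with the cemetery convention $x_\ell=\widetilde\rho$ outside the active range, which ensures that trivial subdecorations $\bzer$ are glued at the extra marks without altering the metric measure space.
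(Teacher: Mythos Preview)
Your proposal is correct and follows essentially the same route as the paper: a level-by-level comparison using the branching property, matching the law of the root subdecoration with the type-$i_0$ subtree of $T$, and then invoking exchangeability to identify the pairing (uniformly permuted marks, size-sorted i.i.d.\ children) with the natural planar attachment in the MBGW tree, iterated inductively over $\bbU$. The paper's proof is slightly terser---it phrases the match by rewriting the MBGW side in the decoration's size-sorted form (via \eqref{eqnInfinitelyMarkedPointsInSubtreeAreUniform}) rather than unwinding the decoration via exchangeability as you do---but the two arguments are equivalent, and your explicit remarks on finiteness of the reduced tree and the cemetery convention are useful additions.
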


The above lemma does not immediately allow us to apply Theorem \ref{thm:graphConv1} to the conditioned MBGW trees because, as already discussed, the first excursion coding $\cD(\emptyset)$ will converge to the $0$ function. To overcome this, we condition so as to guarantee there are enough individuals at the ``base'' of the tree to obtain a proper convergence. More explicitly, in order to prove \ref{ass:1thm1} holds for $\bp=\emptyset$, we assume that $\#\cD(\emptyset)\geq r_n$ and let $r_n\to \infty$.  After proving the lemma above, we show in Subsection \ref{subsectionGluedDecorationConditionedWithALargeFirstSubtree} that the same result holds but conditioning with the size of the first subdecoration $\cD(\emptyset)$ on the left-hand side of \eqref{eqnGluedDecorationSameLawAsMBGWTree}, and the subtree of type $i_0$ connected with the root on the right-hand side of \eqref{eqnGluedDecorationSameLawAsMBGWTree}.

\subsection{Multitype L\'{e}vy trees and the GHP Convergence}\label{subsectionMLevyTrees}
In order to describe the scaling limits, we need to first introduce the limiting stochastic processes that we consider.

\subsubsection{L\'{e}vy processes}
For each $i\in[d]$, consider a L\'{e}vy process $\vec \bX^i = (\vec \bX^i(t);t\ge 0)$ in $\R^d$ with coordinate functions $\vec \bX^i=(X^{i,1},\dotsm, X^{i,d})$ such that $X^{i,i}$ is spectrally positive (i.e. possesses no negative jumps) and for each $j\neq i$, the process $X^{i,j}$ is a subordinator (i.e. has non-decreasing paths). We denote by $\Psi_i$ its Laplace exponent defined by (see \cite{Bertoin.96})
\begin{equation*}
\E\left[\exp(-\langle \vec{\lambda} , \vec \bX^i(t) \rangle \right] = \exp\left( t\Psi_i( \vec{\lambda})\right), \qquad t\ge0,\quad \vec{\lambda} = (\lambda_1,\dotms, \lambda_d)\in \R^d_+.
\end{equation*}
In our setting, the exponents $\Psi_i$ take the form \cite{MR3689968,MR4193902}
\begin{equation*}
\Psi_i(\vec \lambda) := -\sum_{j=1}^d a_{i,j} \lambda_j + q_i \lambda_i^2 - \int\limits_{\R_+^d} \left(1-e^{-\langle \vec{\lambda}, \vec{x}\rangle} - \lambda_ix_i	 1_{[\|x_i\|\le 1]} \right)\pi_i(d\vec{x}),
\end{equation*}where for each $i\neq j$ it holds that $a_{i,j}\ge 0$, and for each $i\in [d]$ it holds that $a_{i,i}\le 0$, $q_{i}\ge 0$, and $\pi_i$ is a Radon measure on $\R_+^d$ such that $\pi_i(\{0\}) = 0$ and
\begin{equation}\label{eqn:ass1Laplace}
\int\limits_{\R_+^d} \Big[(1\wedge \|\vec{x}\|^2) + \sum_{j\neq i} (1\wedge x_j) \Big] \pi_i(d\vec{x})<\infty.
\end{equation}
It is also convenient to consider the Laplace transform of the coordinates $X^{i,i}$, which we define by
\begin{equation*}
\E[e^{-\lambda X^{i,i}(t)}] = \exp(t\psi_i(\lambda))
\qquad\textup{with}\qquad 
\psi_i(\lambda) = -a_{i,i}\lambda +q_i \lambda^2 + \int_{0}^\infty (e^{-\lambda r}-1+1_{r<1}\lambda r) \widetilde{\pi}_i(dr),
\end{equation*}
where we impose the condition that for all $i\in[d]$
\begin{equation}   \label{eqn:ass2Laplace}
\int_0^\infty (r\wedge r^2)\widetilde{\pi}_i(dr)<\infty \textup{ and }  \textup{ either } q_i>0\quad  \text{or}\quad \int_0^1 r\widetilde{\pi}_i(dr) = +\infty.
\end{equation}
The last condition implies that $X^{i,i}$ has infinite variation.

\subsubsection{Height Process}

It is well known that under the assumption 
\begin{equation}\label{eqn:Greys1type}
\int_1^\infty \frac{1}{\psi_i(\lambda)}\,d\lambda<\infty
\end{equation}
there exists a continuous process $H^i = (H^i(t);t\ge 0)$ which is a modification (in the sense of \cite[Lemma 1.2.2]{MR1954248}) of the limit in probability
\begin{equation*}
H^{i,\circ}(t): = \liminf_{k\to\infty} \int_0^t 1\{X^{i,i}(s) < \delta_k+I^i(s,t)\}\,ds,\qquad I^i(s,t): = \inf_{s\le r\le t}\{ X^{i,i}(r)\}
\end{equation*}
for $\delta_k\downarrow 0$.
When $q_i>0$, the process $H^i$ can be constructed as $H^i(t) := q_i^{-1} \Leb\left\{ I^i(s,t): s\le t\right\}.$ See also Theorem 1.4.3 and equations (14) and (15) in \cite{MR1954248}. The height process $H^i$ codes the genealogy of a continuous state branching process with branching mechanism $\psi_i$ \cite{MR1954248}.

Let us enumerate the excursion intervals of $H^i$ (and $X^{i,i}-I^i$, for $I^i(t) = I^i(0,t)$) as
\begin{equation*}
\{t: H^i(t)>0\} = \{t: X^{i,i}(t)>I^i(t)\} = \bigcup_{\ell\in \mathbb{Q}_+} (g^i_\ell,d^i_\ell),
\end{equation*}were $(g^i_\ell,d^i_\ell)$ is the interval containing $\ell$. 
We define $\zeta_\ell^i: = d_\ell^i-g_\ell^i$ as the length of the excursion interval containing $\ell$. 
An important property is that the process $H^i$ on the interval $(g_\ell^i,d_\ell^i)$ depends only on the process $X^{i,i}-I^i$ on $(g_\ell^i,d_\ell^i)$  \cite[Chapter 1]{MR1954248}. Let us write \begin{align*}
H_{\ell}^i(t) &= H^i(g_\ell^i+t\wedge \zeta_\ell^i), &\vec \bX^i_\ell(t) &= \vec \bX^i(g_\ell^i+t\wedge \zeta_\ell^i) - \vec \bX^i(g_\ell^i)
\end{align*}We define the point process on $\R_+\times \D(\R_+,\R^d_+)\times C(\R_+,\R_+)$ by
\begin{equation*}\label{eqnPointProcessOfExcursionsHavingExcursionMEasure}
\vec{ \mathcal{N}}^i(dt,d\vec \be,dh) = \sum_{\ell*} \delta_{(-I^i(g_\ell), \vec \bX^i_\ell, H^i_\ell)}(dt,d\vec \be,dh),
\end{equation*}where the sum ranges over all $\ell\in \mathbb{Q}_+$ that count each excursion once.
The point process $\vec {\mathcal{N}^i}$ is a Poisson random measure with intensity $\Leb\otimes \vec \bN^i$, and we call $\vec \bN^i$ the \textit{excursion measure} of $(\vec \bX^i,H^i)$. 
We will frequently write $(\vec \be,h)$ as a generic excursion, and we will write $\zeta = \zeta(h) = \sup\{t: h(t)>0\}$ as the duration of this excursion.

\subsubsection{Single-type (marked) L\'{e}vy trees}

We recall the definition of a real tree coded by a continuous excursion. Let $f:[0,\zeta]\to \R_+$ denote an arbitrary continuous function with $f(0) = f(\zeta) = 0$, and consider the (pseudo-)distance $d_f$ by
\begin{equation*}
d_f(s,t) = f(s)+f(t) - 2 \inf_{ s\wedge t\le r\le s\vee t} f(r).
\end{equation*} We note that for most of the limiting functions $f$ that we consider, $f(t)>0$ for all $t\in(0,\zeta)$; however, the discrete contour processes can take the value $0$ for some $t\in(0,\zeta)$ with strictly positive probability, but for the sake of readability, we do not explicitly distinguish that case here.
The function $d_f$ satisfies the triangle inequality, is symmetric, but fails to distinguish points. We can still view $f$ as the contour process of the real tree $\T_f$ defined by $\T_f = [0,\zeta]/\sim,$
where $\sim$ is the smallest equivalence relation such that $s\sim t$ if $d_f(s,t) = 0$. One can see \cite{MR2203728}, that $(\T_f,d_f)$ is a metric space. We root the tree $\T_f$ at the element $\rho = [0]_\sim$, i.e. at the equivalence class of $0$.

For an excursion $h$ of $H^i$, we can construct a continuum random tree (a L\'{e}vy tree) $\T_{h} = (\T_{h}, \rho, d_h,\vec{\mu})$. Here $d_h$ is defined analogously as the (pseudo-)distance $d_f$ above, but using the function $h$. The vector-valued measure $\vec{\mu}$ has coordinate measures defined as follows. Let $p_{h}:[0,\zeta]\to \T_{h}$ be the canonical quotient map and define
\begin{equation}\label{eqnMeasureMuOnLevyTree}
\langle \vec{\mu},\vec{\epsilon}_j\rangle = \begin{cases}
    (p_h)_{\#}\Leb|_{[0,\zeta]} &\quad j=i\\
    0 & \quad \textup{else}
\end{cases}, 
\end{equation} where recall that $\vec{\epsilon}_j$ is the canonical vector with $j^{\text{th}}$ entry one and the others zero; here we write $g_\#\nu = \nu\circ g^{-1}$ for the \emph{pushforward} of a measure $\nu$ under a (measurable) map $g$; and $\nu|_{[0,\zeta]}$ is the restriction to $[0,\zeta]$.

We will also need to construct the infinitely marked single-type L\'{e}vy tree analogous to Construction \ref{construction:Discretesingletype}. Recall that if $f:[0,\zeta]\mapsto \R_+$ is a non-decreasing c\`adl\`ag function with $f(0) = 0$ and $f(\zeta)>0$, then the Stieltjes measure $\frac{1}{f(\zeta)}f(dx)$ defines a probability measure on $[0,\zeta]$ such that a random variable $R\sim \frac{1}{f(\zeta)}f(dx)$ satisfies
\begin{equation*}
\PR(R\le s) = \frac{f(s\wedge \zeta)}{f(\zeta)} \qquad\forall s\ge 0.
\end{equation*}
\begin{construction}[Infinitely marked single-type L\'{e}vy tree under $\vec \bN^i$] \label{construction:continuum1} Generate an excursion $(\vec \be,h)$ sampled from the excursion measure $\vec \bN^i$ and let $\zeta$ denote its duration. Conditionally given $(\vec \be,h)$, generate independent random times $R_{\ell}\in[0,\zeta]$ {for $\ell=j+md\geq 1$} with laws
\begin{equation}\label{eqnDefinitionR_jLevyTree}
\PR(R_{j+md}\le s|\vec \be) = \begin{cases}
\displaystyle \frac{e^j(s\wedge \zeta)}{e^j(\zeta)} &\quad  e^j(\zeta)>0\\
1&\quad  \textup{else}
\end{cases}\qquad\textup{ for all }s\in[0,\zeta],
\end{equation} where $\vec \be = (e^1,\dotms, e^d)$. 
In particular, $R_{i+md}  = 0$ for all $m$. 
We let $\rho = p_h(0)$, for $p_h$ the canonical quotient map as above. Define the special marked points $x_{\ell} = p_h(R_{\ell})$ for all $\ell\ge 1$. We let $\vec{\mu}$ denote the measure constructed in \eqref{eqnMeasureMuOnLevyTree}. Finally, we set
\begin{equation}\label{eqn:infinitelyPointedLevyTree}
\T_{h} = \left(\T_{h}, \rho, d_h, \vec{\mu},(x_{\ell})_{\ell\ge 1}\right).
\end{equation} We call this the marked L\'{e}vy tree, and we will denote it by $\T^*_{h}$ if there is possibility of confusion with the unmarked tree. 
\end{construction}

\begin{remark}
In \eqref{eqnDefinitionR_jLevyTree} above, we only need to condition on the excursion $\vec \be$ instead of the pair $(\vec \be,h)$ since $h$ can be constructed given $\vec \be$. Also, the tree $\T_h\in \mathfrak{M}^{\ast d}_{\tiny \mbox{mp}}$ depends on $e^j$ for $j\neq i$, which can not be recovered directly from $h$ nor $e^i$. 
\end{remark}
\begin{remark}
At the end of the proof of Lemma \ref{lem:inductionProof}, we show how the marks constructed from a MBGW tree converge to the ones presented in \eqref{eqnDefinitionR_jLevyTree}, thus, justifying the choice of such a law. 
\end{remark}

\subsection{The continuum decorations}\label{subsectionContinuumDecoration}

We now describe how to construct a multitype L\'{e}vy tree, $\sT$, rooted at a type $i_0$ individual under the excursion measures $(\vec \bN^{1},\dotsm, \vec \bN^{d})$. 
For that, we define $\vec \Psi:=( \Psi_1,\dotsm, \Psi_d)$.

\begin{construction}[Decoration  $\cD^{i_0,\vec \Psi}_r$]\label{cons:continuum2} Fix $i_0\in [d]$ and $r>0$. Let $\cD(\emptyset) = \T_{h_\emptyset}^*$ as in Construction \ref{construction:continuum1} where $(\vec \be_\emptyset,h_\emptyset)\sim \vec \bN^{i_0}( -| \zeta\ge r)$.
Inductively {over $h\geq 1$}, suppose that we have constructed $\cD(\bp)$ for all $|\bp|\le h$. Conditionally given $(\cD(\bp);|\bp|\leq h)$ and independently for any $\bp\in \mathbb{U}$ such that $|\bp| = h$, assume $\cD(\bp)$ is the tree $\T^*_{h_\bp}$ where $(\vec \be_\bp,h_\bp)$ is an excursion sampled according to the excursion measure $\vec \bN^i$, for some $i\in[d]$. 
Denote by $\zeta_\bp$ the length of $h_\bp$. 
We now construct $(\cD(\bp \ell);\ell\ge 1)$. 
Given $\vec\be_{\bp}$, for $j\neq i$ let $\vec {\mathcal{N}}^j_{\bp}$ denote an independent Poisson random measure on $[0,\zeta_\bp]\times \D(\R_+,\R_+^d)\times C(\R_+,\R_+)$ with intensity
\begin{equation*}
\E[\vec {\mathcal{N}}_\bp^j(dt,-)|\vec \be_\bp] ={e^j_\bp(dt)} \times \vec \bN^j_\bp(-) \qquad j\neq i,\quad e^j_\bp(\zeta_\bp)>0,
\end{equation*}
The Poisson random measure $\vec{\mathcal{N}}_\bp^j$ encodes all the ``type $j$'' subdecorations $D_{\bp \ell}$ where $\ell\equiv j\mod d$ and that are glued to $D_\bp$. Therefore, if $e^j_\bp(\zeta_\bp)>0$, we will enumerate the (countably infinitely many) atoms of $\vec {\mathcal{N}}^j_\bp$ as $(\vec \be_{\bp;j+md},h_{\bp;j+md})$ for $m\ge 0$ in the a.s. unique way (see Lemma \ref{lem:distinctExcursionLengths}) such that $\zeta_{\bp;j}> \zeta_{\bp;j+d}>\dotsm>0.$ If $e^j_\bp(\zeta_\bp) = 0$ we just set the atoms as $(0,\boldsymbol{0}, \boldsymbol{0})$ where $\boldsymbol{0}$ are the zero functions in the respective space (in particular this is the case when $j = i$). Doing this for all $j\in[d]$, gives us a collection of functions
\begin{align*}
(\vec{\be}_{\bp \ell}, h_{\bp \ell})_{\ell\ge 1}:=	(\vec{\be}_{\bp;\ell},h_{\bp;\ell})_{\ell\ge 1}
\end{align*}
some of which are the zero function. Recalling  Construction \ref{construction:continuum1}, we define
\begin{align*}
\cD(\bp \ell) = \T_{h_{\bp \ell}} = \left(\T_{h_{\bp \ell}}, \rho, d_{h_{\bp \ell}}, \vec{\mu}, (x_{\bp \ell; r})_{r\ge 1}\right)
\end{align*}
where the right-hand side is defined as in \eqref{eqn:infinitelyPointedLevyTree}. 

After recursively building  $\cD(\bp)$ for all $\bp \in \bbU$, we define the decoration $\cD^{i_0,\vec \Psi}_r:=(\cD(\bp);\bp \in \bbU)$.

\end{construction}
We are now ready to define the multitype L\'evy tree $\sT$.  
\begin{definition}[Continuum glued decoration $\sG(\cD^{i_0,\vec \Psi}_r)$, multitype L\'evy tree $\sT$]\label{constructionMultitypeLevyTreeGluedDecoration}
For each $r>0$, we let $\bM^{i_0,\vec \Psi}_r$ denote the law of $\sG(\cD_r^{i_0,\vec{\Psi}})$.

We say that $\bM^{i_0}_{r}:=\bM^{i_0,\vec \Psi}_r$ is the law of the \textit{multitype L\'{e}vy tree} $\sT = \sG(\cD^{i_0,\vec{\Psi}}_r)$, with {branching mechanisms} $(\Psi^j;j\in[d])$ and rooted at a type $i_0$ individual contained in a subtree of size at least $r$. 
Note that the above defines a glued decoration, formed by the Cauchy completion of the subglued decoration constructed from $\cD^{i_0,\vec \Psi}_{r}$ for all $r\in \mathbb{R}_+$, just as in Definition \ref{definitionDecorationGluedMetricspaceGluedDecoration}. 
\end{definition}

\subsection{Convergence of conditioned  Multitype Bienaym\'e-Galton-Watson trees}

We now state our main convergence results. 
Recall the definition of the law $M^{i_0,\vec \varphi_n}$ of the glued decoration in \ref{remarkConstructionOfGluedDecoration}.
Throughout this section we will consider a sequence of decorations $(\cD^{n}_{r_n};n\ge 1)$, conditioned on the first subdecoration of type $i_0\in [d]$ connected to the root, being large, that is
\begin{equation}\label{eqnDecorationsWithBigSubtreeBig}
\sG(\cD^{n}_{r_n}):=\sG(\cD^{n,i_0,\vec \varphi_n}_{r_n})\sim 	M^{i_0,\vec \varphi_n}(-| \#\cD^n(\emptyset) \ge r_n)
\end{equation}
for some (sub)critical offspring distribution $\vec \varphi_n$ and some $r_n\to\infty$ at an appropriate speed (to be specified later). Here we use a sequence of $d$-dimensional random walks $(\vec X^i_n;i\in [d])$ with step distribution $\vec \varphi_n=(\vec \varphi^1_n,\dotsm, \vec \varphi^d_n)$, as constructed in \eqref{eqnRandomWalks}. The following four assumptions will be needed.

\begin{enumerate}[label=\textbf{A.\arabic*}]
\item \label{ass:a1} There exist sequences $a_n\to\infty$, $b^j_n{\to \infty}$ with $b^j_n/a_n\to\infty$ for each $j\in[d]$, and such that for all $i\in[d]$
\begin{equation*}
    \left(\frac{a_n}{b^j_n} X^{i,j}_n (\fl{b^i_nt}) ;t\ge 0, j\in[d]\right) \weakarrow \left(\vec \bX^i(t);t\ge 0 \right),
\end{equation*}
for a L\'{e}vy process $\vec \bX^i${.}
\item \label{ass:a2} For any $i\in [d]$, the L\'{e}vy process $\vec \bX^i$ has Laplace exponent $\Psi_i$ satisfying \eqref{eqn:ass1Laplace}, and the L\'{e}vy process $X^{i,i}$ has Laplace exponent $\psi_i(\lambda) = \Psi_i(\lambda \vec{\epsilon}_i)$ satisfying \eqref{eqn:ass2Laplace}.
\item \label{ass:a3} The Laplace exponent $\psi_i$ of $X^{i,i}$ satisfies \eqref{eqn:Greys1type}.
\item \label{ass:a4} For each $i\neq j$, $X^{i,j}$ is strictly increasing and the Perron-Frobenius eigenvalue $\lambda$, of $(\E[{X^{i,j}}(1)];i,j\in[d])$, is non-positive: $\lambda\le 0$. 
\end{enumerate}

Assumption \ref{ass:a1} can be equivalently formulated in terms of the offspring distributions through the convergence of multidimensional triangular arrays (see, e.g., \cite{MR1943877}, Chapter VII.3). 

\begin{remark}
{
Requiring the functional convergence of rescaled depth-first walks (also called \L ukasiewicz paths) to L\'evy processes, as formulated in \ref{ass:a1}--\ref{ass:a4}, has become a standard framework for establishing scaling limits of branching processes. This approach contrasts with the results in \cite{MR2469338, MR3606739, MR3748121}, where the convergence of depth-first walks was not assumed. In the single-type setting, assumptions analogous to ours are employed in \cite{MR3098685, MR0362529, MR2225068, mijatović2025criticalbranchingprocessesimmigration}, while for the multitype case, the convergence of multidimensional branching processes is established within this framework in \cite{MR827331, MR3689968, clancy2021encodingmultitypegaltonwatsonforests, MR5009763}. This methodological distinction parallels the developments in stochastic blockmodels: for instance, the scaling limits in \cite{BBSW.14} are characterized by a single Brownian motion with parabolic drift, whereas those in \cite{KL.21, CKL.22} necessitate multiple independent Brownian motions or L\'evy-type processes.
}
\end{remark}

\subsubsection{Known Convergence Results}\label{sec:associatedBranchingProcesses}
It will be convenient to describe certain future assumptions in terms of branching processes, so we establish the corresponding notation here. We let $\vec Z_n = \left((Z^1_n(m),\dotms, Z^d_n(m));m = 0,1,\dotsm\right)$ denote a MBGW branching process starting from $\lfloor b^i_n/a_n\rfloor $ many individuals 
type $i$, for all $i\in[d]$.  Each vertex of type $i\in[d]$ gives birth to $\xi^{i,j}_n$ many vertices of type $j$ where $\vec{\xi}^i_n=(\xi^{i,1}_n,\dotsm, \xi^{i,d}_n)\sim \vec \varphi^i_n$. We also let $(Y^i_n(m);m=0,1,\dotms)$, $i\in[d]$, {be} a single-type BGW branching process with offspring	 distribution $\widetilde{\varphi}^{(i)}_n(k):=\vec \varphi^i_n (\Z^{i-1}_+\times\{k\}\times \Z^{d-i}_+)$ and starting from $\lfloor b_n^i/a_n\rfloor $ many individuals. Note that $\widetilde{\varphi}_n^{(i)}$ is {the} $i^\text{th}$ marginal law of $\vec \varphi^i_n$. Obviously we can couple $\vec Y_n$ and $\vec Z_n$ such that for all $m\ge 0$ and $i\in[d]$ it holds $Y^i_n(m)\le Z^i_n(m)$. 

Using the discrete Lamperti transform, which describes the generation sizes of the population (see \cite{MR3098685,MR3689968, MR3449255}), we can write $\vec Z_n$ and $\vec Y_n$ as solutions to
\begin{align*}
Z_n^j(m+1)  &= \left\lfloor \frac{b_n^j}{a_n} \right\rfloor  + \sum_{i=1}^d X^{i,j}_n \left( \sum_{\ell=0}^m Z_n^i(\ell) \right),&
Y_n^j(m+1)  &= \left\lfloor \frac{b_n^j}{a_n} \right\rfloor  + X^{j,j}_n \left( \sum_{\ell=0}^m Y_n^j(\ell) \right),
\end{align*} 
although this does not extend to the coupling discussed above.

The weak convergence of $H_n^i$ to $H^i$ has been established in \cite[Theorem 2.3.1, Corollary 2.5.1]{MR1954248}, which we now recall in the next proposition.
\begin{prop}\label{prop:dlHeight}
Suppose \ref{ass:a1}, \ref{ass:a2}, \ref{ass:a3} and
\begin{enumerate}[label=\textbf{A.5}]
    \item \label{ass:a5} For each $i\in[d]$ and $\delta>0$ it holds that
    $
     \liminf_{n} \PR(Y^i_n(\fl{a_n\delta}) = 0) > 0
    $, where $Y^i_n$ is the type $i$ branching process defined above, starting from $\fl{b_n^i/a_n}$ many individuals. 
\end{enumerate}
Then jointly for all $i\in[d]$ and jointly  with the convergence in \ref{ass:a1}, we have
\begin{equation*}
    \left(a_n^{-1} H_n^i\left(\fl{b^i_nt} \right);t\ge 0 \right) \weakarrow H^i,
\end{equation*} where $H^i_n$ is the height process constructed from $X^{i,i}_n$ in \ref{ass:a1}, and associated to $Y^i_n$.
\end{prop}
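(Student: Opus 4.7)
The plan is to reduce the claim, for each fixed $i \in [d]$, to the single-type convergence theorem of Duquesne--Le Gall applied to the \L ukasiewicz walk $X_n^{i,i}$, and then to promote the $d$ resulting marginal convergences to the asserted joint convergence by exploiting the independence of the encoding walks $\vec X_n^1, \dotsm, \vec X_n^d$.

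Fix $i \in [d]$. By \eqref{eqn:HeightDiscFunctional}, $H_n^i$ is a deterministic functional of the single-type walk $X_n^{i,i}$, which is a random walk with step distribution $\widetilde{\varphi}_n^{(i)} - 1$ encoding a single-type Bienaym\'e--Galton--Watson forest with $\fl{b_n^i/a_n}$ roots---precisely the forest whose population process is $Y_n^i$. Assumption \ref{ass:a1} gives the convergence (after rescaling time by $b_n^i$ and space by $a_n/b_n^i$) of $X_n^{i,i}$ to the spectrally positive L\'evy process $X^{i,i}$ with Laplace exponent $\psi_i$. The remaining hypotheses of \cite[Theorem 2.3.1]{MR1954248} are verified by the assumptions: \ref{ass:a2} ensures $\psi_i$ is of infinite variation type; \ref{ass:a3} is Grey's integrability condition \eqref{eqn:Greys1type} guaranteeing the existence of a continuous modification $H^i$; and \ref{ass:a5} is exactly the non-extinction condition on the associated branching process $Y_n^i$ which rules out the collapse of the height process under rescaling. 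Applying \cite[Theorem 2.3.1, Corollary 2.5.1]{MR1954248} then yields, jointly with the walk convergence of \ref{ass:a1} for that coordinate, the marginal convergence $(\vec X_n^i, a_n^{-1} H_n^i(\fl{b_n^i\cdot})) \weakarrow (\vec \bX^i, H^i)$ for each $i$.

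To obtain joint convergence across $i \in [d]$, I would invoke the fact that by \eqref{eqnRandomWalks} the walks $\vec X_n^1, \dotsm, \vec X_n^d$ are constructed from $d$ independent i.i.d.\ sequences (one per starting type $i$, with law $\vec \varphi_n^i$). Consequently the $d$ pairs $(\vec X_n^i, H_n^i)_{i \in [d]}$ are jointly independent, and marginal weak convergence of each pair automatically upgrades to joint weak convergence of the full tuple on the product Skorokhod space---this is precisely the statement in the proposition.

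The only real conceptual obstacle, handled entirely by the cited theorem, is that $H_n^i$ is \emph{not} a continuous functional of $X_n^{i,i}$ in the Skorokhod topology, and $H^i$ is only a limit in probability of functionals of $X^{i,i}$. Hence joint convergence of the walks does not transfer to joint convergence of the height processes via continuous mapping alone; this is exactly the difficulty that Grey's condition \ref{ass:a3} and the non-degeneracy hypothesis \ref{ass:a5} are designed to overcome. Once Duquesne--Le Gall's theorem is applied coordinate-wise, the independence of the $\vec X_n^i$ completes the argument with no further delicate work required.
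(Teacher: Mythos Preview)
Your proposal is correct and matches the paper's approach: the paper does not give an independent proof but simply cites \cite[Theorem 2.3.1, Corollary 2.5.1]{MR1954248} for the marginal convergence of each $(X_n^{i,i}, H_n^i)$, exactly as you do. Your additional remark that joint convergence across $i\in[d]$ follows from the mutual independence of the walks $\vec X_n^1,\dotsm,\vec X_n^d$ (built from independent i.i.d.\ sequences $(\vec\xi_k^i)_{k\ge1}$ per type) is the natural justification for the ``jointly'' in the statement, which the paper leaves implicit.
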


\begin{remark}
Note that the rescaling for the multitype BGW process is
\[
\Big(\frac{a_n}{b^{i}_n}Z^i_n(\fl{a_n t});t\geq 0\Big). 
\]That is, in our setting, the height of a \emph{typical} point in the tree is of the order $a_n$, whereas the number of individuals type $i$ at a typical height, is of the order $\fl{b_n^i/a_n}$. 
\end{remark}

\subsection{Main Result}\label{refMainResult}

Recall from Definition \ref{constructionMultitypeLevyTreeGluedDecoration}, the law $\bM^{i_0}_r$. 
Our main theorem states the convergence to such multitype L\'evy tree.

\begin{thm}[Convergence of conditioned MBGW trees to the conditioned multitype L\'evy tree]\label{thm:MAIN}
Suppose Assumptions \ref{ass:a1}--\ref{ass:a4} hold and also that 
\begin{enumerate}[label=\textbf{A.6}] 
\item \label{ass:A6} For every $\delta>0$, $\liminf_{n\to\infty} \PR\left(\vec Z_{n}(\fl{a_n\delta}) = \vec{0} \right) > 0$
{where $\vec{Z}_n$ is a MBGW process starting from $\fl{b^i_n/a_n}$  individuals type $i$ for each $i\in [d]$, with offspring distribution $\vec{\varphi}_n$.}
\end{enumerate}
Suppose $r>0$ is such that $\vec \bN^{i_0}(\zeta = r) = 0$, and let $r_n = \fl{b_n^{i_0} r}$.
Let $T_{n,r_n}$ be a $\vec \varphi_n$ MBGW tree started from a root $\rho_n $ with type $i_0\in [d]$, conditioned on the event that the type $i_0$ subtree containing the root has size at least $r_n$. Let $\langle \vec{\mu}_n,\vec{\epsilon}_j\rangle$ be the counting measure on the type $j$ vertices for every $j\in [d]$, and $d_n$ the graph distance on $T_{n,r_n}$.
Then, in the Gromov-Hausdorff-Prohorov topology
\begin{equation*}
\left(T_{n,r_n}, \rho_n, a_n^{-1} d_n, B_n^{-1} \vec{\mu}_n \right) \weakarrow \sT_r,
\end{equation*}
where $B_n = \diag(b_n^1,\dotsm,b_n^d)$ is a diagonal matrix and $\sT_r\sim \bM^{i_0}_r.$
\end{thm}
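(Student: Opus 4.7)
The plan is to reduce the theorem to an application of Theorem~\ref{thm:graphConv1} to the rescaled decorations produced by Construction~\ref{construction:DiscreteDecoration}, conditioned as in \eqref{eqnDecorationsWithBigSubtreeBig}. Concretely, by Lemma~\ref{lem:decoration} together with its conditioned analog in Subsection~\ref{subsectionGluedDecorationConditionedWithALargeFirstSubtree}, the space $(T_{n,r_n},\rho_n,a_n^{-1}d_n,B_n^{-1}\vec\mu_n)$ (with the cemetery vertex appended) has the same law as $\sG(\cD^n_{r_n})$ endowed with the same rescalings encoded into each subdecoration, and by Definition~\ref{constructionMultitypeLevyTreeGluedDecoration} the limit $\sT_r$ is $\sG(\cD^{i_0,\vec\Psi}_r)$. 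Hence it suffices to verify the hypotheses \ref{ass:1thm1}, \ref{ass:2thm1}, \ref{ass:3thm1}.

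\textbf{Verifying \ref{ass:1thm1}.} I will argue by induction on $|\bp|$. At $\bp=\emptyset$, the subdecoration is the type $i_0$ walk $\vec X^{i_0}_n$ under its excursion measure, conditioned on duration $\geq r_n=\fl{b_n^{i_0}r}$, together with the marked points produced by the uniform permutations in \eqref{eqnDefinitionOfInfinnitelyMarkedPointInSubdecoration}. Assumption \ref{ass:a1} yields joint convergence of the rescaled walks to $\vec\bX^{i_0}$; Proposition~\ref{prop:dlHeight}, which applies thanks to \ref{ass:a2}, \ref{ass:a3}, and \ref{ass:A6}, adds convergence of the height processes; and the hypothesis $\vec\bN^{i_0}(\zeta=r)=0$ makes the conditioning a continuity point, so the excursion $(\vec\be_\emptyset,h_\emptyset)$ under $\vec\bN^{i_0}(-|\zeta\geq r)$ is obtained in the limit. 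Conditionally on $\vec\be_\emptyset$, the uniform random placement of the type $j$ external marks along the discrete walk passes in the limit to an i.i.d.\ family with Stieltjes law proportional to $e^j_\emptyset(ds)$, which is exactly the mark distribution \eqref{eqnDefinitionR_jLevyTree} of Construction~\ref{construction:continuum1}. For $|\bp|\geq 1$, the subdecoration is an unconditioned excursion under $\vec N^{j,\vec\varphi_n}$ (with $j=\TYPE(\bp)$), selected by its rank among the siblings of a common parent. Conditioning on the parent subdecoration and using \ref{ass:a4} together with the continuous mapping theorem applied to the ranked-excursion functional, the joint law of the top $k$ ranked siblings converges to the top $k$ ranked atoms of the Poisson random measure with intensity $e^j_{\PAR(\bp)}(ds)\times \vec\bN^j$ appearing in Construction~\ref{cons:continuum2}, completing the induction.

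\textbf{Verifying \ref{ass:2thm1} and \ref{ass:3thm1}.} These are the uniform tightness/compactness estimates. For \ref{ass:2thm1}, at height $h$ I take $I(h,\eps)$ to consist of those $\bp$ at height $h$ whose every ancestor is among the top $K=K(h,\eps)$ ranked siblings; the residual mass is then bounded by the tail of an ordered Poisson random measure on the excursion space, which can be made arbitrarily small by choosing $K$ large, uniformly in $n$ by Skorohod coupling against the limit in \ref{ass:1thm1}. The diameter control uses that small-size excursions (in the scaling of $b_n^j$) have small height (in the scaling of $a_n$), itself a consequence of \ref{ass:a3} and \ref{ass:a2}. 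For \ref{ass:3thm1}, the total mass of subdecorations below height $h$ is controlled through the convergence of the normalized reduced MBGW process derived from $\vec Z_n(\fl{a_n\cdot})/b_n$ to a (sub)critical continuous-state multitype branching process (using \ref{ass:a4} and the Lamperti representation recalled in Subsection~\ref{sec:associatedBranchingProcesses}), whose total mass past generation $h$ tends to zero by \ref{ass:A6}. The Hausdorff-closeness half of \ref{ass:3thm1} is the most delicate piece: it requires a uniform-in-$n$ bound on the maximum of the height process of $T_{n,r_n}$, obtained by concatenating a uniform bound for each of the finitely many type-$i$ height processes via Proposition~\ref{prop:dlHeight} and \ref{ass:A6}, and then summing across types.

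\textbf{Main obstacle.} The chief difficulty is the uniform Hausdorff-closeness in \ref{ass:3thm1}: one needs to bound the \emph{extreme} rescaled depth of the tree, not merely the typical depth, and to do so uniformly in $n$ while tracking how deep excursions of each type can stack on top of one another through the decoration structure. This forces the induction on $h$ used for \ref{ass:1thm1} and \ref{ass:2thm1} to be coupled with a two-sided estimate on the height process, and it is precisely here that the strict positivity assumptions on the off-diagonal subordinators together with the eigenvalue bound in \ref{ass:a4} are essential to prevent the multitype branching from reinjecting mass at deep generations.
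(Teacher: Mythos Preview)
Your overall reduction---use Lemma~\ref{lem:decoration} and its conditioned analog to identify $(T_{n,r_n},\rho_n,a_n^{-1}d_n,B_n^{-1}\vec\mu_n)$ with the rescaled glued decoration $\sG(\overline\cD^n_{r_n})$, then apply Theorem~\ref{thm:graphConv1}---is exactly the paper's route, and your treatment of \ref{ass:1thm1} (induction on $|\bp|$, continuity of the conditioning event, uniform permutations converging to the Stieltjes sampling, ranked siblings converging to ranked Poisson atoms) matches Lemma~\ref{lem:inductionProof} closely.

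The difficulties are in \ref{ass:2thm1} and especially \ref{ass:3thm1}, where your sketch diverges from the paper and, in the Hausdorff half of \ref{ass:3thm1}, does not prove what is needed. You propose to control that half by a ``uniform-in-$n$ bound on the maximum of the height process of $T_{n,r_n}$''. But the rescaled height of the whole tree is of order~$1$, not $o(1)$; a bound on $\sup_t a_n^{-1}H_n(t)$ only gives compactness, not the statement that every $y\in D^n_\bp$ with $|\bp|>h$ lies within $\eps$ of $\sG_{\le h}(\cD^n)$. What must be shown is that the subforest grafted \emph{at reduced height $>h$} has small rescaled diameter once $h$ is large. The paper achieves this (Lemma~\ref{lem:verifyH3}) in two steps: first, using the spaLf first-hitting-time machinery of Chaumont--Marolleau (Theorem~\ref{thm:CM}) and the recursion \eqref{eqn:Rrecurse}, it shows that for large $h$ the number of type~$j$ roots at reduced height $h{+}1$ is at most $3\eps\, b_n^j/a_n$; second, it couples the subforest hanging from those roots with the MBGW process $\vec Z_{n,3\eps}$ and invokes \ref{ass:A6} via \eqref{eqnZ_n_eps} to force extinction within $a_n\delta$ generations with high probability. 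The role of \ref{ass:A6} is thus not ``total mass past generation $h$'' but extinction time of a process started from $O(\eps\, b_n/a_n)$ individuals; your appeal to convergence toward a multitype CSBP and a Lamperti argument is not what is used and would require substantial extra work.

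Your proposed $I(h,\eps)$ for \ref{ass:2thm1} (lineages whose every ancestor is top-$K$) is also different from the paper's and is not obviously adequate: the residual at height $h$ then consists of all $\bp$ with some small ancestor, and a small type-$i$ subtree can still spawn many large type-$j$ descendants, so ``tail of an ordered Poisson'' does not directly bound $\sum_{\bp\notin I(h,\eps),|\bp|=h}\|\vec\mu^n_\bp\|(D^n_\bp)$. The paper (Lemma~\ref{lem:verifyH2}) avoids this by first bounding the \emph{total} rescaled mass at reduced height $\ge 1$ via the discrete hitting time $\widetilde T_n(\vec 0)\le b_n\,\widetilde\bT(\eta\vec{\boldsymbol 1})$ (again through the Chaumont--Marolleau results and \ref{ass:a4}), and then, at each height $h$, taking $I(h,\eps)$ to be the $K$ longest excursions of the relevant coordinate process on the relevant window, which covers all but $\eps$ of the mass by goodness (Lemma~\ref{lem:good}) and Proposition~\ref{prop:good}. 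The diameter half is handled analogously through the height-process convergence. I would recommend reworking \ref{ass:2thm1} and \ref{ass:3thm1} along these lines; the spaLf hitting-time recursion \eqref{eqn:Rrecurse2}--\eqref{eqn:Rrecurse} and Corollary~\ref{cor:DecorationSpalf} are the tools you are missing.
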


Taking $d=1$ in the theorem, yields the convergence of a single-type tree conditioned to have mass at least $r$, which can be deduced by standard arguments (see also Lemma \ref{lem:longExc}).

{\begin{remark}\label{remarkA6ImpliesA5}
We emphasize that although we utilize the convergence of the single-type height functions in Proposition \ref{prop:dlHeight}, our main theorem does not establish the convergence of the overall tree's height function. Doing so would yield a limit independent of the types, as done in  \cite{MR2469338,MR3748121}.
Also, note that Assumption \ref{ass:A6} is a natural generalization of Assumption \ref{ass:a5} made in the single-type case to prove convergence of the tree \cite[Theorem 2.3.1]{MR1954248}. In fact, by a coupling argument between a multitype BGW tree and the single BGW trees corresponding to the reduced subtrees, it is easy to see that \ref{ass:a5} is implied by \ref{ass:A6}.
\end{remark}
}

\begin{remark}
Our primary contribution is a novel construction that extends the scope of previous models by yielding a limit where each point retains a well-defined type. 
While this framework conceptually generalizes earlier approaches as in \cite{Miermont.08,MR3748121}, by keeping the multitype structure, it is not a direct generalization in the sense of weakening their hypotheses. Our methodology requires entirely different techniques, and our assumptions do not imply one another, but rather cover distinct regimes of offspring distributions.
\end{remark}

As the collection of compact $\bb{R}$-trees in the Gromov-Hausdorff topology form a closed set (\cite[Theorem 1]{MR2221786}), we obtain the following corollary.
\begin{cor} The limit $\sT_r$ in Theorem \ref{thm:MAIN} is a compact real tree.     
\end{cor}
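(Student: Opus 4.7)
The proof is essentially a soft consequence of Theorem \ref{thm:MAIN} together with a standard closure property of the class of compact real trees in the Gromov-Hausdorff topology.

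The plan is as follows. First, I would note that each discrete multitype Bienaym\'e-Galton-Watson tree $T_{n,r_n}$, viewed with its graph metric (or equivalently its canonical realization in which each edge is an isometric copy of the unit interval) is a finite, and thus compact, real tree: the four-point condition is satisfied because any finite graph-theoretic tree admits a geodesic realization, and edges can be filled in as intervals to produce a genuine $\mathbb{R}$-tree. Rescaling the distance by $a_n^{-1}$ clearly preserves both compactness and the $\mathbb{R}$-tree property, so each $(T_{n,r_n}, \rho_n, a_n^{-1} d_n)$ is a pointed compact real tree, and the Gromov-Hausdorff distance between the pure vertex-set realization and its edge-filled interval realization is at most $a_n^{-1}/2 \to 0$, so the two choices give the same limit.

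Second, I would invoke Theorem \ref{thm:MAIN} to conclude that, forgetting the $\mathbb{R}^d_+$-valued measure, the sequence of pointed compact metric spaces $(T_{n,r_n}, \rho_n, a_n^{-1} d_n)$ converges to $(\sT_r,\varrho,d_{\sT_r})$ in the pointed Gromov-Hausdorff sense, since the Gromov-Hausdorff-Prohorov topology is finer than the Gromov-Hausdorff topology. In particular, the limit $\sT_r$ is a compact metric space (this compactness is already guaranteed by the construction of the glued decoration under Assumption \ref{ass:uniformBound} via Proposition \ref{prop:copmact1}, and is preserved under the Gromov-Hausdorff limit of compact spaces).

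Third, I would apply the closure result cited in the statement, namely \cite[Theorem 1]{MR2221786}, which states that the space of (isometry classes of) compact real trees is closed in the Gromov-Hausdorff topology on compact metric spaces. Since each approximating space is a compact real tree and the Gromov-Hausdorff limit exists, the limit $\sT_r$ must itself be a compact real tree. There is essentially no obstacle here beyond checking that the cited closure theorem is applied in the correct topology; the only minor point of care is the already-mentioned passage from the vertex-set metric to the edge-filled interval realization, which is harmless because the two differ by at most $a_n^{-1}/2$ in Gromov-Hausdorff distance.
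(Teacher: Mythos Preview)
Your proposal is correct and follows exactly the same approach as the paper: the paper simply notes (in the sentence preceding the corollary) that the collection of compact real trees is closed in the Gromov-Hausdorff topology by \cite[Theorem 1]{MR2221786}, and deduces the corollary immediately from Theorem \ref{thm:MAIN}. Your additional care about the edge-filled realization versus the vertex-set metric is a reasonable technical remark that the paper leaves implicit.
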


\subsection{Discussion, Overview and Applications}

\subsubsection{Relation to random graph models}

Understanding the geometry of large random combinatorial structures has motivated a large amount of probabilistic research over the last several decades. A vital tool for these works are the metric space scaling limits of BGW trees 
initiated with Aldous' work in the early 90's \cite{Aldous.93,Aldous.91, Aldous.91a} on the (Brownian) continuum random tree and then the L\'{e}vy extensions of Le Gall, Le Jan, and Duquesne \cite{MR1954248, LL.98a, LL.98b}. 
Using these techniques, in particular the convergence of the height process, many random graphs models are known to have continuum tree-like metric space limits \cite{ABBG.12, ABBGM.17, BBSW.14, BDW.21, CKG.23}. However, apart from \cite{BBSW.14} the random graph works just mentioned 
are restricted to so-called \textit{rank-one} models wherein the probability that two vertices $i,j$ share an edge is $\PR(i\sim j) \approx w(i)w(j)$ for some weight function $w$. One reason why there is a restriction to rank-one models is that at criticality these random graphs are close to BGW trees, perhaps with a few additional edges included. 

The work \cite{BBSW.14} begins to describe the \textit{basin of attraction} of the (rank-one) Erd\H{o}s-R\'{e}nyi random graph and shows that the stochastic blockmodel under certain assumptions lies in this basin. The stochastic blockmodel, introduced by social scientists in \cite{HLL.83}, is a random graph where each vertex $v$ is assigned a type in $[d]$ for some fixed $d\in \mathbb{N}$, and edges are included independently with probabilities depending solely on the types of the incident vertices. Even though this model is not rank-one in general, the authors of \cite{BBSW.14} show that under certain conditions this model behaves like the rank-one Erd\H{o}s-R\'{e}nyi model. 
This is analogous to the works of Miermont \cite{Miermont.09}, Berzunza \cite{MR3748121}, and de Raph\'{e}lis \cite{deRaphelis.17} wherein the authors show that various MBGW trees converge under appropriate re-scaling and after forgetting the type information to \textit{single-type} $\alpha\in(1,2]$-stable continuum random trees. With this said, there are many applications of random graphs that lie squarely outside of the rank-one regime. One notable example related to cluster analysis is community detection, wherein one tries to determine the underlying block structure of a random graph by just examining the unlabeled graph. We refer the reader to Abbe's survey \cite{Abbe:2017} for more details and some results.

Mathematically, some things are quite well understood for general models of random graphs. For example, the emergence of a giant connected component  is established in \cite{BJR.07} under very mild and general conditions. A key part of the analysis in \cite{BJR.07} is a comparison of the inhomogenous random graph with a multitype branching process. We also refer the reader to \cite{vanderHofstad.24}. 

On the other hand, there are relatively few results about the sizes of the connected components at criticality, let alone their geometry. Recently, Konarovskyi and Limic \cite{KL.21} established a new critical regime for the stochastic blockmodel wherein they establish a scaling limit for the size of the connected components which are described via a random functional of several independent standard multiplicative coalescents \cite{Aldous.97}. Therein, the authors do not distinguish by the type of the vertex in the limit; however, the results of Konarovskyi, Limic and DC \cite{CKL.22} establish a scaling limit of the vector of component masses for the degree-corrected stochastic blockmodel introduced in \cite{Karrer:2011}. In \cite{Wang.23}, Wang establishes various results about the critical bipartite Erd\H{o}s-R\'{e}nyi random graph including the limiting size and geometry of the connected components. In \cite{clancy2024componentsizesrank2multiplicative,clancy2024nearcriticalbipartiteconfigurationmodels}, DC identified the limiting sizes of more general bipartite random graph models. Using different techniques, Haig and Wang \cite{haig2025randombipartitegraphsiid} established metric space scaling limits for one of the models studied in \cite{clancy2024componentsizesrank2multiplicative}.

The next step in describing the geometry of large critical stochastic blockmodels would be to identify the scaling limits of critical multitype BGW trees. This is with the eventual goal of constructing \emph{continuum random graphs} via a quotienting of a multitype CRT, perhaps after some Girsanov transformation. This has been the standard approach for single-type graphs since \cite{ABBG.12}.

\subsubsection{Relation to the CRT}

The CRT, besides being the limit of BGW trees conditioned to have a given size, also arises as the limit of several other classes of trees \cite{MR3050512,MR2829313}, and, as noted earlier, in the multitype setting when types are ignored \cite{MR2469338,MR3748121,addarioberry2025scalinglimitsmultitypebienayme}. More surprisingly, it also emerges as the scaling limit of various classes of graphs and maps \cite{MR2438817,MR3335010,MR3573291,MR3382675,MR3342658}. We expect that some of these models can be extended to the multitype setting adapting our framework.
Single-type limits of multitype trees are also obtained by Haas and Stephenson in \cite{HS.21} using a general framework of multitype Markov branching and multitype fragmentation trees.

The CRT also appears in the context of random walks on graphs and the uniform spanning tree on graphs. 
For example, it is the limit of the uniform spanning tree on some graphs \cite{2004math.....10430P,MR2496437}, as well as the stationary measure of some processes taking values on trees on graphs \cite{MR2221786,hernandez2025scalinglimitaldousbroderchain}. 
On this line of research, there is also a deep connection with multitype branching processes as shown in \cite{MR3876899}, where it is proved that the UST on dense graphs converges locally to a multitype branching process. 

Finally, the recent results of the first author, together with Pardo and Harris \cite{hernandez2025coalescentstructuremultitypecontinuoustime}, gives us the law of the subtree generated by a sample of $k$ individuals in a MBGW tree. 
The next step in this direction is to relate such a result, with the subtree generated by a sample of $k$ individuals in the multitype L\'evy tree, which is a standard topic in the convergence of trees \cite{MR3522292}.

\subsubsection{Recursive Constructions of CRTs}

Going back to Aldous' original construction of the Brownian CRT \cite{Aldous.91}, many authors have considered the stick-breaking construction of CRTs. In these works one starts with the positive real line $[0,\infty)$ and applies marks $ 0 = t_0<t_1<t_2<\dotsm$ according to some Poisson process(es). One then starts with the tree $\sT_1 = [0,t_1)$ and then inductively constructs $\sT_{n+1}$ from $\sT_{n}\sqcup [t_n,t_{n+1})$ by identifying $t_{n}$ with some point $s_n\in[0,t_n)\subset \sT_{n}$. Formally, $\sT_{n+1}$ is the quotient (metric) space $(\sT_n \sqcup [t_n,t_{n+1}))/ \sim$ where $t_n\sim s_n$. The precise rule for choosing $s_n$ depends on the particular model at hand which we do not detail. We refer to \cite{Aldous.91, AP.00, GH.15, CH.17} for some examples, as well as \cite{Senizergues.19} for a general framework. The continuum random tree $\sT$ is then the Cauchy completion of $\bigcup \sT_n$.
Another convergence of glued  metric spaces under a recursive construction, is given in \cite{MR4819750}. 
It is worth mentioning the recent work \cite{bertoin2025selfsimilarmarkovtreesscaling}, where the authors rescale a sequence of self-similar Markov trees and keep track of the (non-countably) types in the limit, using also decorations.
One application to the study of decorated spaces can be found in \cite{MR4780507}, where a random walk on a decorated BGW tree can be found.

A different method was introduced by Rembart and Winkel \cite{RW.18} where they consider iterative construction using ``strings of beads." We keep the construction a little vague so as to not introduce unnecessary technicalities and refer to the reader to \cite{RW.18} for precise statements. A string of beads is a triple $([0,L], (X_i;i\ge 1), (P_i; i\ge 1))$ where $L = \sup_{i: P_i>0} X_i >0$, $\{X_i: P_i>0\}$ are distinct points and $P_1\ge P_2\ge\dotsm\ge 0$ with $\sum_i P_i = 1$. They analyze a ``bead crushing'' algorithm wherein one starts with a random string of beads $\xi = ([0,L], (X_i),(P_i))$ and for each $i$ such that $P_i>0$ one takes an independent copy $\xi_i\overset{d}{=} \xi$ and takes (a disjoint copy of) the interval $[0,L_i]$ corresponding to the string $\xi_i$ and attaches the $0\in \xi_i$ to location $X_i\in [0,L]$ and scales the metric by $P_i^\beta$ for some $\beta$. (When $\beta = 0$ this scaling does nothing). One continues this process ad infinitum. The main theorems in \cite{RW.18} give sufficient conditions for this operation to be stable in the Gromov-Hausdorff topology.

Our Construction \ref{cons:continuum2} is similar to this construction; however there are several notable differences. First, we replace the interval $[0,L]$ in the string of beads with a continuum random tree distributed ``under'' an excursion measure. Second, the points $X_i$ are not distinct but instead atoms of a Poisson random measure whose intensity measure can be atomic. Finally, we do not glue i.i.d. copies of the original random tree onto our tree, but instead glue trees with a different distribution. 

\subsection{Applications}

Let us conclude this section by offering several concrete examples which our general
results cover and, in particular, some of them not currently carried by the literature.

\subsubsection{Convergence to a multitype tree with linear drift in the off-diagonal and Brownian motions in the diagonal}
We provide an example that satisfies \ref{ass:a1}--\ref{ass:a4} and \ref{ass:A6} (recall from Remark \ref{remarkA6ImpliesA5} that this is enough to show Assumption \ref{ass:a5}). Fix any matrix $A = (\alpha_{i,j})_{i,j\in[d]}$ with $\alpha_{i,i}<0$ for all $i\in[d]$ but $\alpha_{i,j}>0$ for $i\neq j$. Suppose also that $\sum_{j} \alpha_{i,j} < 0$. Let $\Lambda^{(n)} = (\lambda^{(n)}_{i,j})_{i,j\in [d]}$ be the matrix $ \Lambda^{(n)} = I_{d\times d} + n^{-1/2} A^{(n)}$ where $A^{(n)}=(\alpha^{(n)}_{i,j})_{i,j\in [d]}\in \R^{d\times d}$ is any sequence such that $A^{(n)}\to A$ as $n\to\infty$. 
Suppose also that $\sum_{j} \alpha^{(n)}_{i,j} < 0$ for every $n\in \N$ and $i\in [d]$. 
We claim that if
\begin{align*}
    \xi^{i,j,(n)}_{\ell} \sim \operatorname{Poi}(\lambda_{i,j}^{(n)})
\end{align*}are independent Poisson random variables for every $i,j\in [d]$ and $\ell,n\in \mathbb{N}$, then \ref{ass:a1}--\ref{ass:a4} and \ref{ass:A6} are satisfied with $a_n = \sqrt{n}$ and $b_n^i = n$ for every $i\in [d]$. We start with \ref{ass:a1}--\ref{ass:a4} and use a classical result of Skorohod \cite{MR94842}. Namely, it suffices to show that if $X^{i,j}_n$ is a random walk with increments $(\xi^{i,j,(n)}_{\ell}-\bo 1_{[i=j]};\ell\in\mathbb{N})$, recall \eqref{eqnRandomWalks}, then
\begin{align}
    n^{-1/2} X^{i,j}_n(n) \weakarrow \begin{cases}
        \alpha_{i,j} & i\neq j\\
        \mathcal{N}(\alpha_{i,i},1) & i=j,
    \end{cases}
\end{align}where $\mathcal{N}(\alpha_{i,i},1)$ are independent normal random variables of mean $\alpha_{i,i}$ and variance one, for every $i\in [d]$.  
Note that for $i\neq j$ we have $X_n^{i,j}(n)\overset{d}{=}\operatorname{Poi}(\lambda_{i,j}^{(n)}n)$, while for $i=j$ we have $X_n^{i,i}(n)+n \overset{d}{=}\operatorname{Poi}(\lambda_{i,i}^{(n)}n)$. Thus, for all $i,j\in [d]$ we obtain
\begin{align*}
\E[n^{-1/2}X_n^{i,j}(n)] = \alpha_{i,j}^{(n)}  \sim \alpha_{i,j}\qquad\textup{and}\qquad \Var(n^{-1/2} X_n^{i,j}(n)) = \lambda_{i,j}^{(n)} = \bo 1_{[i=j]} +o(1).
\end{align*}
By Theorem 2.7 in \cite{MR94842} and the Lindeberg-Feller central limit theorem we have, jointly over $i,j\in [d]$ that
\begin{align*}
    \left(n^{-1/2} X_n^{i,j}(n t) ; t\ge 0\right)\weakarrow X^{i,j}\textup{ where } X^{i,j}(t)= 
        \alpha_{i,j}t + \mathbf{1}_{[i=j]}B_i(t)
\end{align*} for independent standard Brownian motions $(B_i;i\in [d])$. Conditions \ref{ass:a1}--\ref{ass:a4} follow easily. 
For \ref{ass:a4}, recall that for the non-negative matrix $\mathbb{E}(X^{i,j}_n(1))_{i,j\in [d]}$, its Perron–Frobenius eigenvalue $\lambda_n$ satisfies the inequality
\[
\lambda_n\leq \max_{i\in [d]} \sum_{j\in [d]} \mathbb{E}(X^{i,j}_n(1))\leq n^{-1/2} \max_{i\in [d]} \sum_{j\in [d]} \alpha^{(n)}_{i,j}<0,
\]by the Gershgorin circle theorem. 

We now turn to \ref{ass:A6}.
For this, we begin by noting that
\begin{equation}\label{eqnExampleConvergenceToBrownianInDiagonal}
    \sum_{j=1}^d \xi^{i,j,(n)}_1 \le_{\textup{s.t.}} \operatorname{Poi}(1)\qquad\textup{for all }i\in [d],\ n\in\mathbb{N},\ 
\end{equation}
where $X\le_{\textup{s.t.}}Y$ is shorthand for $X$ is stochastically dominated by $Y$, i.e. $\PR(X>t)\le \PR(Y>t)$ for all $t\in \R$. From here, it is not too hard to couple the $d$-type BGW branching process 
\begin{align*}
    {Z}_n^j(k) = \lfloor \frac{b^{j}_n}{a_n} \rfloor + \sum_{i=1}^d X^{i,j}_n\circ C_n^i(k-1) \qquad\textup{and}\quad C_n^j(k) = \sum_{h=0}^k Z_n^j(h)
\end{align*}
with a single-type BGW branching process $Z_n^*$ starting from $\lfloor d\sqrt{n}\rfloor$ many individuals and having $\operatorname{Poi}(1)$ offspring distribution such that
\begin{align*}
    \sum_{j=1}^d Z_n^j(k) \le Z_n^*(k)\qquad\textup{ for all }k\ge 0.
\end{align*} As \cite[Theorem 2.3.2]{MR1954248} implies
\begin{align*}
    \liminf_{n} \PR(Z_n^*(\lfloor \delta \sqrt{n}\rfloor) = 0)>0
\end{align*} it easily follows that 
\begin{align*}
    \liminf_{n} \PR(\vec{Z}_n (\lfloor \delta \sqrt{n}\rfloor ) = \vec{0})>0.
\end{align*}

%
\subsubsection{Convergence to a multitype tree with linear drift in the off-diagonal and stable processes in the diagonal}\label{exampleLinearDriftPlusStables}

Fix any matrix $A = (\alpha_{i,j})_{i,j\in[d]}$ with $\alpha_{i,i}<0$ for all $i\in[d]$ but $\alpha_{i,j}>0$ for $i\neq j$. Suppose also that $\sum_{j} \alpha_{i,j} < 0$. 
Let $A^{(n)}=(\alpha^{(n)}_{i,j})_{i,j\in [d]}\in \R^{d\times d}$ be any deterministic sequence such that $A^{(n)}\to A$ as $n\to\infty$.
Suppose also that $\sum_{j} \alpha^{(n)}_{i,j} < 0$ for every $n\in \N$ and $i\in [d]$. 
We will use the parameters $a_n = n^{1-1/\alpha}$ and $b_n^i = n$ for every $i\in [d]$.

Consider a set of i.i.d. $\alpha$-stable processes $X^{i,i}=(X^{i,i}(t);t\geq 0)$ for all $i\in [d]$, and $\alpha\in (1,2)$, such that $\E(X^{i,i}(1))=0$. 
Let $W_\alpha$ be a random variable in the domain of attraction of an $\alpha$-stable law, taking values on $\{-1,0,1,2,\ldots\}$ and with mean zero. 

To achieve the stochastic domination as in \eqref{eqnExampleConvergenceToBrownianInDiagonal}, we couple the diagonal and off-diagonal terms for each row. For any $n\in\mathbb{N}$, let $P_{n} = \sum_{j \ne i} X_{n}^{i,j}(1)$, where we choose $(X_{n}^{i,j}(1))_{j \neq i}$ to be mutually independent Poisson random variables with $X_{n}^{i,j}(1) \sim \textup{Poi}(n^{1/\alpha-1}\alpha_{i,j}^{(n)})$. Thus, $P_{n} \sim \textup{Poi}(\lambda_n)$ with $\lambda_n = n^{1/\alpha - 1} \sum_{j \ne i} \alpha_{i,j}^{(n)}$. 
Because $\lambda_n \to 0$ and $\mathbb{P}(W_\alpha \ge -1) = 1$, we have $\mathbb{P}(P_{n} \ge k) \le \mathbb{P}(W_\alpha + 1 \ge k)$ for all $k \ge 0$ whenever $n$ is sufficiently large. Thus, we can couple $P_{n}$ and $W_\alpha$ such that $P_{n} \le W_\alpha + 1$ almost surely.

Under this joint coupling, we explicitly define the diagonal term as
\begin{equation}
X_{n}^{i,i}(1) = W_\alpha - P_{n} - B_n \mathbf{1}_{[W_\alpha \ge P_{n}]},
\end{equation}
where, conditionally on $W_\alpha$ and $P_n$, $B_n \sim \textup{Ber}(p_n)$ is an independent Bernoulli random variable. We choose $p_n$ to absorb the remaining required drift:
\[
\mathbb{E}\big[B_n \mathbf{1}_{[W_\alpha \ge P_{n}]}\big] = p_n \mathbb{P}(W_\alpha \ge P_{n}) := - n^{1/\alpha - 1} \sum_{j \in [d]} \alpha_{i,j}^{(n)} > 0.
\]
Because $\mathbb{P}(W_\alpha \ge P_{n}) \to \mathbb{P}(W_\alpha \ge 0) > 0$, we have $p_n \to 0$ as $n \to \infty$, making $B_n$ a well-defined Bernoulli variable with $p_n \in (0,1)$ for all large $n$.

By construction, $X_{n}^{i,i}(1)$ takes values in $\{-1, 0, 1, 2, \dots\}$. Indeed, if $W_\alpha < P_{n}$, the coupling $P_{n} \le W_\alpha + 1$ forces $P_{n} = W_\alpha + 1$, giving $X_{n}^{i,i}(1) = W_\alpha - P_{n} - 0 = -1$. If $W_\alpha \ge P_{n}$, then $X_{n}^{i,i}(1) \ge W_\alpha - P_{n} - 1 \ge -1$. 

Furthermore, the sum over all types yields:
\[
\sum_{j=1}^d X_{n}^{i,j}(1) = X_{n}^{i,i}(1) + P_{n} = W_\alpha - B_n \mathbf{1}_{[W_\alpha \ge P_{n}]} \le W_\alpha \quad \textup{a.s.}
\]
This exact pathwise inequality implies the required stochastic domination $\sum_{j=1}^d X_{n}^{i,j}(1) \le_{\textup{s.t.}} W_\alpha$, completely satisfying Assumption \ref{ass:A6}.

It remains to verify conditions \ref{ass:a1}--\ref{ass:a4}. For $j \ne i$, the variables $X_{n}^{i,j}(1)$ are independent Poisson r.v.'s, so $\mathbb{E}[n^{-1/\alpha}X_{n}^{i,j}(n)] = n^{1-1/\alpha} \cdot n^{1/\alpha-1} \alpha_{i,j}^{(n)} = \alpha_{i,j}^{(n)} \sim \alpha_{i,j}$. For their variances, after scaling the sum of $n$ steps by $n^{-1/\alpha}$, the variance vanishes as $O(n^{-1/\alpha}) \to 0$. Thus, $(n^{-1/\alpha}X_{n}^{i,j}(\lfloor nt\rfloor))_{t \ge 0} \Rightarrow (\alpha_{i,j} t)_{t \ge 0}$.

For the diagonal term, observe that $X_{n}^{i,i}(1) = W_\alpha - R_{n}$, where $R_{n} = P_{n} + B_n \mathbf{1}_{[W_\alpha \ge P_{n}]}$. Notice that $R_n \ge 0$ is bounded by $P_n + 1$. We have $\mathbb{E}[R_n] = \lambda_n + (-\lambda_n - n^{1/\alpha - 1} \alpha_{i,i}^{(n)}) = -n^{1/\alpha - 1} \alpha_{i,i}^{(n)}$.
Furthermore, $\mathbb{E}[R_n^2] \le \mathbb{E}[2P_n^2 + 2B_n^2] \le 2(\lambda_n + \lambda_n^2) + 2p_n = O(n^{1/\alpha - 1})$.
Thus, the sum $\sum_{l=1}^n R_{n,l}$ of i.i.d. variables distributed as $R_n$, has expectation $n \mathbb{E}[R_n] \sim -n^{1/\alpha} \alpha_{i,i}$ and variance $n \textup{Var}(R_n) = O(n^{1/\alpha})$. 
Thus, the variance of $n^{-1/\alpha}\sum_{l=1}^n R_{n,l}$ vanishes as $O(n^{-1/\alpha}) \to 0$. Hence, it converges in probability to the deterministic drift $-\alpha_{i,i} t$.
Therefore, taking $(W_{\alpha, \ell};\ell\in \mathbb{N})$ i.i.d. distributed as $W_\alpha$, and using Slutsky's theorem, 
\[
(n^{-1/\alpha} X_{n}^{i,i}(\lfloor nt \rfloor))_{t \ge 0} = \Big(n^{-1/\alpha} \sum_{\ell=1}^{\lfloor nt \rfloor} W_{\alpha, \ell} - n^{-1/\alpha} \sum_{\ell=1}^{\lfloor nt \rfloor} R_{n, \ell} \Big)_{t \ge 0} \Rightarrow (X^{i,i}(t) + \alpha_{i,i} t)_{t \ge 0}.
\]

Because the limits of the off-diagonal terms are purely deterministic, joint convergence over all $j \in [d]$ holds automatically by Slutsky's theorem, despite the finite-$n$ coupling:
\[
\Big(n^{-1/\alpha}X_{n}^{i,j}(\lfloor nt\rfloor); t\ge 0 \Big)_{j \in [d]} \Rightarrow (Y^{i,1}, \dots, Y^{i,d}), 
\]
where $Y^{i,j}(t)=X^{i,i}(t)\mathbf{1}_{[i=j]}+\alpha_{i,j}t$. This establishes \ref{ass:a1}--\ref{ass:a4}.

\subsubsection{Convergence to a multitype tree with linear drift in the off-diagonal and L\'evy processes in the diagonal}

The preceding example can be generalized as follows. Suppose that $(\widetilde Y_n^i(k);k=0,1,\dotsm)$ are defined by 
\begin{align*}
    \widetilde Y_n^i(k): =Y_n^i(k)-k= \sum_{\ell=1}^k (\chi_\ell^{i,(n)}-1)
\end{align*} for a collection of i.i.d. random variables $(\chi_\ell^{i,(n)};\ell\ge 1)$ with values in $\{0,1,\dotsm\}$ and also independent over $i\in[d]$. Moreover, suppose that for some constants $a_n,b_n\to\infty$ with $b_n/a_n\to\infty$ that 
\begin{align*}
    \left(\frac{a_n}{b_n}\widetilde Y_n^i(b_nt) ;t\ge 0\right) \weakarrow \left(Y^i(t);t\ge 0\right)
\end{align*}
for a L\'{e}vy process $Y^i$ that satisfies
\begin{align*}
    \E[\exp(-\lambda Y^i(t))] = \exp(t\psi_i(\lambda))
\end{align*}
where $\psi_i$ satisfies \eqref{eqn:ass2Laplace}--\eqref{eqn:Greys1type} and $\E[Y^i(1)] \le 0$. 

Suppose additionally that for all $i\in [d]$ and all $n\in \mathbb{N}$ we have
\begin{align*}
    \chi_1^{i,(n)}\le_{\textup{st}}\xi_1^{\ast,(n)}
\end{align*} for some dominating random variables $\xi_1^{\ast,(n)}$ such that the BGW branching process $Z_n^*$ starting from $\lfloor b_n/a_n \rfloor $ many individuals with offspring distribution $\xi_1^{\ast,(n)}$ satisfies \ref{ass:a5}. 
Suppose that $(\alpha_{i,j})_{i,j\in[d]}$ is a $d\times d$ matrix with $\alpha_{i,j}\ge 0$ for $i\neq j$ and $\sum_{j} \alpha_{i,j}=0$ for all $i$. We claim that if $(\vec{p}_n^{i})_{i\in[d]}$ is a family of probability vectors such that $a_n(\vec{p}_n^i-\vec{\epsilon}_i) \to (\alpha_{i,j};j\in[d])$ then the process $\vec  X_n^i:=(X^{i,j}_n;j\in [d])$, defined by  
\begin{align*}
	\vec  X_n^i(k) = \sum_{\ell=1}^k (\vec{\xi}_\ell^{\,i,(n)} - \vec{\epsilon}_i), \quad\textup{where}\quad \vec{\xi}_\ell^{\,i,(n)}|(\widetilde{Y}^j_n;j\in[d]) \sim \operatorname{Multi}\left(\chi_\ell^{i,(n)},\vec{p}^{i}_n\right), 
\end{align*}
satisfies \ref{ass:a1}--\ref{ass:a4}, \ref{ass:A6}. Here $\operatorname{Multi}(n,\vec{p})$ is a multinomial random variable and $\operatorname{Multi}(0,\vec{p})$ is the point mass at $\vec{0}$. In this case the corresponding Laplace transform is given by 
\begin{align*}
    \Psi_i(\vec{\lambda}) = \psi_i(\lambda_i) - \sum_{j=1}^d \alpha_{i,j}\lambda_j.
\end{align*}
Clearly, \ref{ass:a2}--\ref{ass:a4} will hold. Also, \ref{ass:A6} follows as in the previous example, since $\|\vec{\xi}_\ell^{\,i,(n)}\|_1=\chi_\ell^{\,i,(n)}\le_{\textup{st}}\xi_1^{\ast,(n)}$.
To show \ref{ass:a1} holds, from the form of $\Psi_i$ above, we see that the off-diagonal terms $X^{i,j}(t) = \alpha_{i,j}t$ are deterministic. By Slutsky’s theorem, \ref{ass:a1} follows from showing the marginal convergence of $X_{n}^{i,j}$, which is further implied by showing 
\begin{equation}\label{eqnExample2ConvergenceOffDiagonal}
    \frac{a_n}{b_n} X_n^{i,j}(b_n) \weakarrow \alpha_{i,j}+Y^i(1)\bo 1_{[i=j]}.
\end{equation}
This easily follows from the next lemma.

\begin{lem}
Let $Y_n$ be any sequence of non-negative integer-valued random variables such that
\begin{align*}
    \frac{a_n}{b_n}\left(Y_n-b_n\right)\weakarrow Y
\end{align*}
where $a_n,b_n/a_n\to\infty$. 
Fix $i\in [d]$.
Consider for each $n\ge 1$, $(p^j_n;j\in[d])$ a probability distribution on $[d]$ and suppose that there is some $(\alpha_j;j\in[d])$ such that $a_n(p^j_n-\bo 1_{[i=j]})\to \alpha_j\ge 0$ for all $j\neq i$.
For $j\in [d]$, conditionally given $Y_n$, let 
\begin{align*}
\vec{X}_n = (X_n^1, \dots, X_n^d) \sim \operatorname{Multi}(Y_n, \vec{p}_n), 
\end{align*}
a multinomial random vector, where $\operatorname{Multi}(0,p)$ is identically $\bo 0$. 
Then, for all $j\in [d]$
    \begin{align*}
	    \frac{a_n}{b_n} (X^j_n-b_n\bo{1}_{[i=j]})\weakarrow \alpha_j+Y\bo 1_{[i=j]}.
    \end{align*}
\end{lem}
\begin{proof}
Recall that as $x\downarrow0$, we have $\log(1-x) = -(1+o(1))x$ and $1-e^{-x} = (1+o(1))x$. Hence, we have
\begin{align*}
    \E&\left[\exp\Big(-\lambda \frac{a_n}{b_n} \big(X^j_n-b_n\bo 1_{[i=j]}\big)\Big)\right] = \E\left[\left(1-p^j_n+e^{-\lambda a_n/b_n}p^j_n\right)^{Y_n}\right]e^{\lambda a_n\bo 1_{[i=j]}}\\
    &\hspace{3cm}=\E\left[\exp\left\{Y_n\log\left(1 - (1-e^{-\lambda a_n/b_n})p^j_n\right) \right\}\right]e^{\lambda a_n\bo 1_{[i=j]}}\\
    &\hspace{3cm}= \E\left[\exp\left\{ - \left(1+o(1)\right)Y_n\left(1-e^{-\lambda a_n/b_n}\right) p^j_n \right\} \right]e^{\lambda a_n\bo 1_{[i=j]}}\\
    &\hspace{3cm}=\E\left[\exp\left\{-(1+o(1)) \lambda \frac{a_n}{b_n} \big(Y_np^j_n-b_n\bo 1_{[i=j]}\big)\right\}\right].
\end{align*}On the one hand,  if $i\neq j$ then 
\begin{align*}
\frac{a_n}{b_n} Y_np^j_n= 
\frac{a_n}{b_n} (Y_n-b_n)p^j_n + a_n p^j_n \weakarrow \alpha_j,
\end{align*}since
\begin{align*}
    \frac{a_n}{b_n}(Y_n-b_n)p^j_n \weakarrow 0.
\end{align*} 
On the other hand,  if $i=j$ then $X_n^i = Y_n - \sum_{j\neq i}X_n^i$ and so by Slutsky's theorem
\begin{align*}
	\frac{a_n}{b_n}\left(	X_n^i-b_n\right) &= \frac{a_n}{b_n}\left(Y_n-b_n\right) - \frac{a_n}{b_n} \sum_{j\neq i}X_n^j\\
						 &\weakarrow Y - \sum_{j\neq i} \alpha_j = Y+\alpha_i
\end{align*}
where in the last equality we used the observation that $\sum_j \alpha_j = \lim_n a_n \sum_j(p_n^j-1_{[i=j]}) = 0$.
\end{proof}

We apply the previous lemma with $Y_n=\widetilde Y^i_n(b_n)+b_n$, $p^j_n=p^{
i,j}_n$, $\alpha_j=\alpha_{i,j}$ and $X^{j}_n=X^{i,j}_n(b_n)$. 
First note that for any $i,j\in [d]$, conditionally on $\big(\chi_\ell^{i,(n)};\ell\in [b_n]\big)$, we have
\[
X_n^{i,j}(b_n)\stackrel{d}{=}\sum_{\ell=1}^{b_n}{\rm Bin}(\chi_\ell^{i,(n)}, p_n^{i,j})-b_n\bo 1_{[i=j]}
\stackrel{d}{=}{\rm Bin}(\widetilde Y^i_n(b_n)+b_n, p_n^{i,j})-b_n\bo 1_{[i=j]},
\]where $\vec{p}_n^{i}=(p^{i,1}_n,\ldots, p^{i,d}_n)$. Thus \eqref{eqnExample2ConvergenceOffDiagonal} follows. 

\subsubsection{Convergence to a multitype tree with stable processes in the diagonal and Poisson subordinators in the off-diagonal}

In the preceding examples, the off-diagonal limits were restricted to deterministic linear drifts. We now construct an offspring distribution where the limiting branching process exhibits  continuous drift and  compound Poisson jump processes outside the diagonal. 

Fix an index $\alpha \in (1,2)$. Let $A = (\alpha_{i,j})_{i,j \in [d]}$ be a matrix of continuous drifts with $\alpha_{i,i} < 0$ and $\alpha_{i,j} > 0$ for $i \neq j$. For each $i \neq j$, let $c_{i,j} > 0$ dictate the macroscopic jump sizes, and let $\lambda_{i,j} \ge 0$ dictate their rates. 
Let $\tilde{\alpha}_{i,j} = \alpha_{i,j} + c_{i,j} \lambda_{i,j}$ for $i \neq j$, and $\tilde{\alpha}_{i,i} = \alpha_{i,i}$. We assume that the overall mean matrix $\widetilde{A} = (\tilde{\alpha}_{i,j})_{i,j \in [d]}$ is irreducible and that its row sums satisfy $\sum_{j=1}^d \tilde{\alpha}_{i,j} < 0$ for all $i \in [d]$. This guarantees that the Perron-Frobenius eigenvalue of $\widetilde{A}$ is strictly negative. Let $A^{(n)} = (\alpha_{i,j}^{(n)})$, $C^{(n)} = (c_{i,j}^{(n)})$, and $\Lambda^{(n)} = (\lambda_{i,j}^{(n)})$ be deterministic sequences converging to $A$, $C$, and $\Lambda$ respectively; the diagonal terms in $C^{(n)}$ and $\Lambda^{(n)}$ being irrelevant.

For each $i \in [d]$, let $W_\alpha^{(i)}$ be a random variable in the domain of attraction of an $\alpha$-stable law, taking values in $\{-1, 0, 1, 2, \dots\}$, such that $\mathbb{E}[W_\alpha^{(i)}] = 0$ and its upper tail satisfies $\mathbb{P}(W_\alpha^{(i)} \ge m) \le K_i m^{-\alpha}$ for all $m \ge 1$ and some constant $K_i > 0$. We assume $\mathbb{P}(W_\alpha^{(i)} \ge 1) > 0$. Let $(W_{\alpha, \ell}^{(i)}; \ell \ge 1)$ be an i.i.d.\ sequence distributed as $W_\alpha^{(i)}$.

We define the discrete random walk $(X_n^{i,j}(k);k\in\mathbb{N})$  using mutually independent families of random variables. For the off-diagonal terms ($j \neq i$), we combine a microscopic Poisson drift with a macroscopic Bernoulli jump:
\begin{equation}
X_{n}^{i,j}(\ell)-X_{n}^{i,j}(\ell-1) := D_{n,\ell}^{i,j} + J_{n}^{i,j} B_{n,\ell}^{i,j}\qquad \ell\geq 1,
\end{equation}
where $D_{n,\ell}^{i,j} \sim \operatorname{Poi}(n^{1/\alpha - 1} \alpha_{i,j}^{(n)})$ and $B_{n,\ell}^{i,j} \sim \operatorname{Ber}(\lambda_{i,j}^{(n)}/n)$ are mutually independent, and $X_{n}^{i,j}(0)=0$. The deterministic jump size is $J_n^{i,j} = \lfloor c_{i,j}^{(n)} n^{1/\alpha} \rfloor$. Since $\alpha \in (1,2)$, the exponent $1/\alpha - 1 < 0$, ensuring the mean of the Poisson drift vanishes as $n \to \infty$.

For the diagonal term ($j=i$), we define
\begin{equation}
X_{n}^{i,i}(\ell)-X_{n}^{i,i}(\ell-1) := W_{\alpha,\ell}^{(i)} - B_{n,\ell}^{i} \mathbf{1}_{\{W_{\alpha,\ell}^{(i)} \ge 1\}}\qquad \ell\geq 1,
\end{equation}
where $B_{n,\ell}^{i} \sim \operatorname{Ber}(p_n^i)$ is an independent microscopic Bernoulli drift, such that \linebreak $p_n^i: = \min\{1, -n^{1/\alpha-1} \alpha_{i,i}^{(n)} / \mathbb{P}(W_{\alpha,1}^{(i)} \ge 1)\}$, and $X_{n}^{i,i}(0)=0$. Since $\alpha_{i,i} < 0$, we have $p_n^i \in (0, 1)$ for large $n$. The mean of the diagonal increment is exactly $\alpha_{i,i}^{(n)} n^{1/\alpha-1}$.

We set the scaling sequences $a_n = n^{1 - 1/\alpha}$ and $b_n^j = n$ for all $j \in [d]$, making the random walk spatial scaling factor $a_n/b_n^j = n^{-1/\alpha}$.

Let us verify Assumptions \ref{ass:a1}--\ref{ass:a4}. We evaluate the Laplace transform of the scaled off-diagonal terms $j \neq i$ at $\theta \ge 0$, obtaining
\begin{align*}
\mathbb{E}\Big[\exp\Big(-\theta n^{-1/\alpha}  &X_{n}^{i,j}\big(\lfloor nt \rfloor\big)\Big)\Big] 
= \Big( \mathbb{E}\big[ e^{-\theta n^{-1/\alpha} D_{n,1}^{i,j}} \big] \mathbb{E}\big[ e^{-\theta n^{-1/\alpha} J_n^{i,j} B_{n,1}^{i,j}} \big] \Big)^{\lfloor nt \rfloor} \\
&= \exp\Big( \lfloor nt \rfloor n^{1/\alpha - 1} \alpha_{i,j}^{(n)} (e^{-\theta n^{-1/\alpha}} - 1) \Big) \Big( 1 - \frac{\lambda_{i,j}^{(n)}}{n} + \frac{\lambda_{i,j}^{(n)}}{n} e^{-\theta n^{-1/\alpha} \lfloor c_{i,j}^{(n)} n^{1/\alpha} \rfloor} \Big)^{\lfloor nt \rfloor}\\
& \to \exp\Big(-\theta \alpha_{i,j}t+  \lambda_{i,j}t (e^{-\theta c_{i,j}} - 1) \Big).
\end{align*}The latter characterizes convergence to a subordinator $Y^{i,j}(t) =  c_{i,j} N_{i,j}(t)+\alpha_{i,j} t$, where $N_{i,j}$ is a standard Poisson process of rate $\lambda_{i,j}$. Since $\alpha_{i,j} > 0$, the limit $Y^{i,j}$ is strictly increasing, satisfying \ref{ass:a4}. 

For the diagonal term, the scaled compensator $n^{-1/\alpha} \sum_{\ell=1}^{\lfloor nt \rfloor} B_{n,\ell}^{i} \mathbf{1}_{\{W_{\alpha,\ell}^{(i)} \ge 1\}}$ has mean $n^{-1/\alpha} \lfloor nt \rfloor p_n^i \mathbb{P}(W_{\alpha,1}^{(i)} \ge 1) \to -t \alpha_{i,i}$ and variance bounded by $\mathcal{O}(n^{-1/\alpha}) \to 0$. Thus, it converges in probability to $-t \alpha_{i,i}$. By Slutsky's theorem:
\[
\Big(n^{-1/\alpha}  X_{n}^{i,i}\big(\lfloor nt \rfloor\big)\Big)_{t \ge 0} \Rightarrow (X^{i,i}(t) + \alpha_{i,i} t)_{t \ge 0},
\]
where $X^{i,i}(t)$ is an $\alpha$-stable spectrally positive L\'evy process, satisfying \ref{ass:a2} and \ref{ass:a3}. Since all components are constructed using mutually independent families of variables, joint functional convergence to independent limits automatically fulfills \ref{ass:a1}.

Finally, we prove Assumption \ref{ass:A6}. 
We demonstrate that the total offspring generated by a type $i$ individual, $\Sigma_n^{(i)} :=\sum_{j \in [d]} X_{n,1}^{i,j}$, is uniformly stochastically dominated by a fixed heavy-tailed random variable with strictly negative mean. Dropping the negative diagonal compensator, we have the exact pointwise bound $\Sigma_n^{(i)} \le W_{\alpha,1}^{(i)} + \Sigma_{D,n} + \Sigma_{V,n}$, where $\Sigma_{D,n} := \sum_{j \neq i} D_{n,1}^{i,j}$ and $\Sigma_{V,n} := \sum_{j \neq i} J_n^{i,j} B_{n,1}^{i,j}$.

For any integer $k \ge 1$, using the union bound:
\begin{equation}
\mathbb{P}(\Sigma_n^{(i)} \ge k) \le \mathbb{P}\Big(W_{\alpha,1}^{(i)} \ge \lceil k/3 \rceil\Big) + \mathbb{P}\Big(\Sigma_{D,n} \ge \lceil k/3 \rceil\Big) + \mathbb{P}\Big(\Sigma_{V,n} \ge \lceil k/3 \rceil\Big).
\end{equation}
We analyze these three tail probabilities uniformly in $n$:
\begin{enumerate}
    \item By definnition of $W_{\alpha}^{(i)}$, we have $\mathbb{P}(W_{\alpha,1}^{(i)} \ge \lceil k/3 \rceil) \le C_1 k^{-\alpha}$ for some positive $C_1$.
    \item $\Sigma_{D,n}$ is Poisson distributed with a mean uniformly bounded by $\mu^* = \sup_n n^{1/\alpha-1} \sum_{j \neq i} \alpha_{i,j}^{(n)} < \infty$. Because the tail of a Poisson distribution decays super-exponentially, we can find a constant $C_2 > 0$ such that $\mathbb{P}(\Sigma_{D,n} \ge \lceil k/3 \rceil) \le C_2  k^{-\alpha}$.
    \item  For any $m \ge 1$, since $\Sigma_{V,n}$ is a sum of non-negative integers, by the union bound:
    \begin{equation}
    \mathbb{P}(\Sigma_{V,n} \ge m) \le \mathbb{P}(\Sigma_{V,n} \ge 1) \le \sum_{j \neq i} \mathbb{P}(B_{n,1}^{i,j} = 1) \le \frac{\lambda^*}{n},
    \end{equation}
    where $\lambda^* = \sup_n \sum_{j \neq i} \lambda_{i,j}^{(n)}$. Next, notice that $\Sigma_{V,n}$ is deterministically bounded by its maximum possible value $c^* n^{1/\alpha}$, where $c^* = \sup_n \sum_{j \neq i} c_{i,j}^{(n)}$. Therefore, if $m > c^* n^{1/\alpha}$, the probability $\mathbb{P}(\Sigma_{V,n} \ge m)$ is exactly zero. The event can only occur if $n \ge (m / c^*)^\alpha$. Thus, 
    \begin{equation}
    \mathbb{P}(\Sigma_{V,n} \ge m) \le \frac{\lambda^*}{n} \mathbf{1}_{\{ n \ge (m / c^*)^\alpha \}} \le \frac{\lambda^*}{(m / c^*)^\alpha} = \lambda^* (c^*)^\alpha m^{-\alpha}.
    \end{equation}
    Applying this to $m = \lceil k/3 \rceil $ yields $\mathbb{P}(\Sigma_{V,n} \ge \lceil k/3 \rceil)  \le C_3 k^{-\alpha}$ for some positive $C_3$. 
\end{enumerate}

Combining these, there exists a constant $M = C_1 + C_2 + C_3$ such that $\sup_n \mathbb{P}(\Sigma_n^{(i)} \ge k) \le M k^{-\alpha}$ for all $k \ge 1$. By scaling the continuous drifts $\alpha_{i,j}$, the jump rates $\lambda_{i,j}$, and the stable tail constant $K_i$ to be sufficiently small, we can make $M$ arbitrarily small.

To complete the stochastic domination, let $\delta = \inf_n \mathbb{P}(\Sigma_n^{(i)} = -1)$. Note that $\Sigma_n^{(i)} = -1$ if $W_{\alpha,1}^{(i)} = -1$, $B_{n,1}^i = 0$, $\Sigma_{D,n} = 0$, and $\Sigma_{V,n} = 0$. By independence, as $n \to \infty$, the probabilities of the last three events tend to $1$. Since $\mathbb{P}(W_{\alpha}^{(i)} = -1) > 0$, we have $\delta > 0$. Thus, $\mathbb{P}(\Sigma_n^{(i)} \ge 0) \le 1 - \delta$.

We define a fixed dominating random variable $\widetilde{W}^* \in \{-1, 0, 1, \dots\}$ by setting its tail probabilities: $\mathbb{P}(\widetilde{W}^* \ge 0) = 1 - \delta$, and $\mathbb{P}(\widetilde{W}^* \ge k) = \min(1-\delta, M k^{-\alpha})$ for all $k \ge 1$. Because $M k^{-\alpha}$ decays monotonically, this defines a valid, non-increasing tail function. Furthermore, the mean is $\mathbb{E}[\widetilde{W}^*] = \sum_{k=1}^\infty \min(1-\delta, M k^{-\alpha}) - \delta$. By making $M$ small enough, this sum is strictly less than $\delta$, making the mean strictly negative. Since $\Sigma_n^{(i)} \ge -1$ almost surely, we have $\mathbb{P}(\Sigma_n^{(i)} \ge k) \le \mathbb{P}(\widetilde{W}^* \ge k)$ for all $k \ge -1$, meaning $\Sigma_n^{(i)} \le_{\textup{s.t.}} \widetilde{W}^*$ uniformly in $n$. 
Because $\widetilde{W}^*$ possesses an $\alpha$-stable tail and a strictly negative mean, a single-type BGW process with this offspring distribution is subcritical and dies out with positive probability. 
Thus, as in the previous examples, the multitype branching process $\vec{Z}_n$ also goes extinct with positive probability, rigorously establishing Assumption \ref{ass:A6}.

\subsection{Overview of the Article}

We first use Lemma \ref{lem:decoration} to change from the conditioned MBGW tree to the conditioned glued (discrete) decoration.
After that, we use Theorem \ref{thm:graphConv1} to prove the convergence of the conditioned glued decorations to the conditioned glued decoration $\bM^{i_0}_r$, which by construction \ref{constructionMultitypeLevyTreeGluedDecoration} is the conditioned multitype L\'evy tree.

In Section \ref{sec:discreteDecorations}, we reinterpret the random-walk encoding of MBGW trees introduced by Chaumont and Liu \cite{MR3449255} in terms of decorations, thereby establishing Lemma \ref{lem:decoration}. The conditioned version of this result is presented in Corollary \ref{cor:DecorationSpalf}, where the MBGW tree is conditioned on the first subdecoration attached to the root being large, as in \eqref{eqnDecorationsWithBigSubtreeBig}. This conditioned tree is precisely the one that, after rescaling, converges to the multitype L\'evy tree.

In Section \ref{sec:DecorationOfUlamTree}, we introduce the necessary background on the Gromov–Hausdorff–Prohorov topology and provide the formal definitions of decorations. We then establish Theorem \ref{thm:graphConv1}, which concerns the convergence of glued decorations, following the approach pioneered by S\'enizergues \cite{Senizergues.20} for proving convergence of glued metric measure spaces.

Section \ref{sec:levy} recalls key properties of single-type L\'evy trees and L\'evy processes, together with the recent results of Chaumont and Marolleau \cite{MR4193902,MR4575008} on first hitting times of spaLf's.

Finally, in Section \ref{sec:convergence}, we show that the decorations $\cD^n$ encoding the metric measure spaces $T_n$ in Theorem \ref{thm:MAIN} satisfy, after suitable rescaling, Assumptions \ref{ass:1thm1}–\ref{ass:3thm1}, which are precisely those required for Theorem \ref{thm:graphConv1}. This theorem then yields the limiting decoration, explicitly described in Construction \ref{cons:continuum2}.

\section{Decorations in the Discrete}\label{sec:discreteDecorations}
In this section we describe MBGW trees as glued decorations of the Ulam tree, which will prove Lemma \ref{lem:decoration}. We refer the reader back to Section \ref{sec:SingleEncode} for an overview of how to encode a single-type plane tree using a single $\Z$-valued walk. 
First we show that a subdecoration, as described in Construction \ref{construction:Discretesingletype}, has the same law as a subtree in a MBGW tree together with added infinitely marked points. 
After that, we relate the decoration as in Construction \ref{construction:DiscreteDecoration} and the glued decoration as in Definition \ref{definitionDecorationGluedMetricspaceGluedDecoration}, with the whole MBGW tree. 

\subsection{Subdecorations and Infinitely Marked Points from Multitype Trees}

We recall that a multitype plane tree $\tr$ is a plane tree with a ``type'' function which assigns to
each vertex $v\in \tr$ its type $\TYPE(v)\in [d]$ for some $d<\infty$. All trees will be rooted at a vertex $\rho\in \tr$. Given a multitype tree $\tr$, we will say a tree $\tr'\subseteq \tr$ is a subtree of type $i$ if it is a maximally connected subgraph of $\tr$ whose vertices are all of type $i$. That means if $w\notin \tr'$ but $v\in \tr'$ then $w\sim v$ implies that $\TYPE(w)\neq i$. We root these subtrees at the unique element which is closest to the root in $\tr$. 

Recall from Subsection \ref{subsectionMultitypeEnconding} the definition of the reduced tree $\tred$ of a multitype tree $\tr$. 
We will say that a type $i$ subtree $\tr'\subseteq\tr$ is at \textit{reduced height} $k$ if $d(w_\red,\rho_\red) = k$ where $w_\red\in \tred$ is the vertex obtained by collapsing the tree $\tr'$, and $\rho_\red$ is the root of $\tr_\red$, and the distance is in the tree $\tred$. Recall Figure \ref{figTreeWithBFOAndDFO} for a visual representation.

\subsubsection{Induced labeling on a reduced subtree}\label{subsectionInducedLabelingOnAReducedSubtree} Recall from Subsection \ref{subsectionMultitypeEnconding} that the reduced subtree $\tred$ has associated a breadth-first order of its vertices $w_1,w_2,\dotsm, w_{\# \tred}$. Let us now describe how we further index the reduced subtree using the Ulam-Harris tree $\bbU$. First the root $\rho_\red\in \tred$ is labeled by $\emptyset\in \bbU$. Now for a vertex $w\in \tred$ which has been assigned label $\bp\in \bbU$, we give labels to its children as follows: let $w_{j,1},\dotsm , w_{j,K_j}$ be the $K_j$ many type $j$ children in $\tred$ of $w$ (where $j$ is different from the type of $w$). Order those labels as $w_{j,(1)},\dotsm , w_{j,(K_j)}$ corresponding to subtrees $\tr'_{j,(1)},\dotms, \tr'_{j,(K_j)}$ listed in order of \textit{decreasing cardinality} of the subtrees (with ties broken arbitrarily). 
Then these vertices are assigned labels in $\bbU$ as follows:
\begin{equation}\label{definitionLabelingOfVerticesInReducedTree}
w_{j,(m)}\textup{ is labeled by }\bp ( j+ (m-1)d),\qquad m\in [K_j].
\end{equation}Recall that $\bp ( j+ (m-1)d)$ refers to the concatenation of two elements of $\bbU$. 
In Figure \ref{figTreeAndReducedTreeUsingUlam} we show an example of such a labeling. 

\begin{figure}
\centering
\includegraphics[scale=0.7]{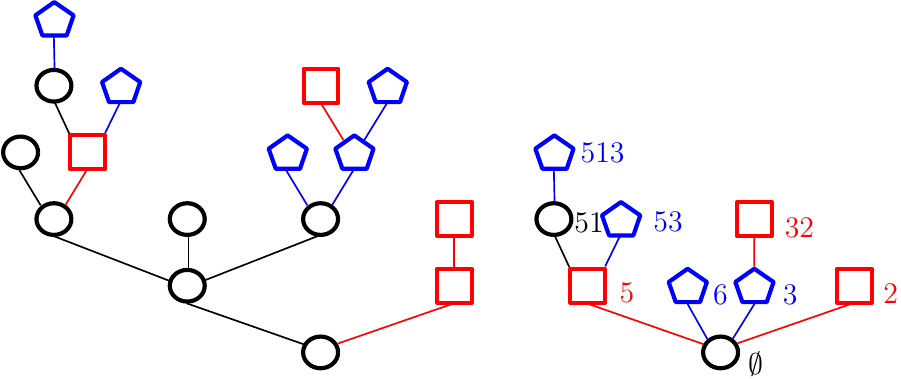}
\caption{The reduced subtree together with the labeling via the Ulam-Harris tree. Recall that type $1$ individuals are black, type $2$ are red, and type $3$ are blue.}
\label{figTreeAndReducedTreeUsingUlam}
\end{figure}

In the sequel we will make no difference between the vertex $w\in \tred$ and the corresponding label $\bp=\bp(w,\tred)$ (this also depends on the procedure for breaking ties) in the Ulam-Harris tree. This allows us to associate $\tred\subset\bbU$ which necessarily is a finite subtree, whenever $\tr$ if finite. Similarly, we will associate the vertex $w\in \tred$ with the corresponding typed subtree $ \tr'_\bp$. 
See Figure \ref{figTreeAndSubtreesUsingUlam} for an example. 

\begin{figure}
\centering
\includegraphics[scale=0.8]{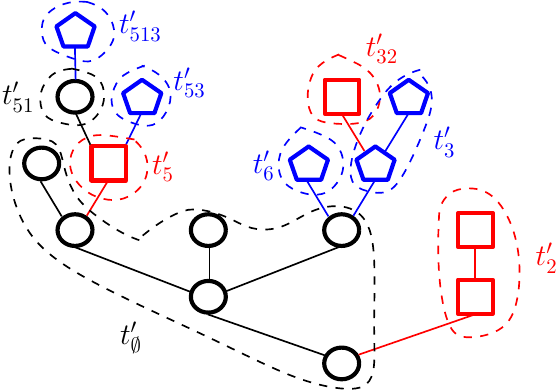}
\caption{The type $j$ subtrees at reduced height 0, 1, 2 and 3. Since the red square vertices are type $2$, the corresponding subtrees are assigned indexes $2,5,8,11,\dotsm\in \bbU$. Those type 2 subtrees are also labeled  $\tr'_{2,(1)}, \tr'_{2,(2)}, \tr'_{2,(3)}$. The blue vertices are type $3$ and so the corresponding subtrees are indexed by $3,6,9,\dotsm$. The type $2$ subtree in the right (in red) is labeled $\tr_2'$ because it is the largest type $2$ subtree at reduced height $1$.}
\label{figTreeAndSubtreesUsingUlam}
\end{figure}




\subsubsection{Subtrees as Metric Measure Space of Subdecorations}\label{subsubsectionSubtreesAsMetricMeasureSpaceOfSubdecorations}

The objective now is to define $\cD(\bp)$ for any $\bp\in \bbU$, using the previous labeling. 
A natural choice of the metric space $D_\bp$ of the subdecoration $\cD(\bp)$ is using the corresponding type $j$ subtrees $\tr_\bp'$; however, this will not work. Indeed, consider the following example. Let $\tr$ be a tree with two nodes $\{x,y\}$ where $x$ is of type 1 and $y$ is of type 2, and the tree is rooted at $x$. If $D_\emptyset =\{x\}$, $D_2 = \{y\}$ and $D_\bp = \bzer$ for all other $\bp$, then as a metric space $\sG(\cD) = \{pt\}$ just consists of a single point. This is because all the infinitely marked PMMs $\cD(\bp)$ have metric spaces $D_\bp$ that consist of a single point and, therefore, all the marks and roots must be the unique point on those metric spaces. Consequently, the defining equivalence relation $\sim$ in the definition of $\sG^*(\cD)$ identifies all the elements in $\sG^*(\cD)$ with each other. In short, if we use $\tr_\bp'$ then we lose all the edges of the form $\{\rho_{\bp,\ell}, x_{\bp;\ell}\}$.

The way we choose to overcome this slight problem is to \textit{plant} every subtree of each type as follows. 
Fix $\bp \in \bbU$.
On the one hand, assume the latter has associated a vertex $w\in \tred$ with type $i$ subtree $\tr'_\bp$. 
Denote by $\rho$ the root of $\tr'_\bp$. 
We write $\widetilde{\tr}'_\bp$ as the subtree $\tr'_\bp$ with a single additional vertex denoted by $\widetilde\rho$ which is connected only to the root $\rho$ of $\tr'_\bp$. The new tree $\widetilde{\tr}'_\bp$ is rooted at the new vertex $\widetilde{\rho}$. By convention we will say that the new vertex $\widetilde{\rho}$ does not have a type. On the other hand, if $\bp\in \bbU\setminus \tred$, that is, $\bp$ is not a label of a vertex in $\tred$, then $\tr'_\bp$ is empty, and we define $\widetilde \rho_\bp$ also as a single additional vertex with no type.
In this case the metric measure space will be $\widetilde D_\bp: = (\{\widetilde{\rho}_\bp\}, \widetilde{\rho}_\bp, d_g, \vec{\boldsymbol{0}})$, where $d_g$ is the graph distance. 
The corresponding subdecoration will be defined as $(\{\widetilde{\rho}_\bp\}, \widetilde{\rho}_\bp, d_g, \vec{\boldsymbol{0}},(\widetilde{\rho}_\bp)_{\ell\ge 1})$. Note that the infinitely marked points are defined to be all equal to the additional root in this case. We call such metric measure space and subdecoration trivial.

In the case $\bp\in \tred$ with corresponding subtree $\tr'_\bp$ type $i$, the measure $\vec{\mu}^{(i)}$ that we place on $\widetilde{\tr}'_\bp$ is
\begin{equation*}
\langle \vec{\mu}^{(i)},\vec{\epsilon}_j\rangle (A) = \#(A\cap \tr'_\bp)1_{[i=j]},\qquad j\in[d].
\end{equation*}
In particular, $\vec{\mu}^{(i)}(\{\widetilde\rho\}) = \vec{0}$.
Sometimes we also use the notation $\vec \mu_\bp$ instead of $\vec{\mu}^{(i)}$, which also specifies the dependency on $i$, when $\bp=\bq \ell$ and $\ell\equiv i \mod d$.
The metric measure space $\widetilde D_\bp$ of the subdecoration $\cD(\bp)$ is defined as $(\widetilde{\tr}'_\bp ,\widetilde{\rho},d_g, \vec{\mu}^{(i)})$ where $\widetilde \tr'_\bp$ was defined above. 
It only remains to describe the infinitely many marked points for non-trivial subtrees $\widetilde\tr'_\bp$.

\subsubsection{Infinitely Marked Points in Subtrees}\label{subsubsectionInfinitelyMarkedPointsInSubtrees}

Let $\bp\in \bbU$ and suppose that $\widetilde{\tr}'_\bp$ is a non-trivial type $i$ (rooted) subtree of $\tr$. The element $\bp\in \bbU$ necessarily corresponds to some vertex $\bp\in\tred$ and this vertex might have children in $\tred$. Necessarily, those offspring are type $j\neq i$. 

By construction (see \eqref{definitionLabelingOfVerticesInReducedTree}), the type $j$ children of $\bp$ in $\tred$ are of the form $\bp \ell$ where $\ell = j, j+d, j+2d,\dotsm, j+(K_j-1)d$ for some $K_j<\infty$. Moreover, each of the type $j$ children corresponds to the type $j$ subtree $\tr'_{\bp \ell}\subset \tr$ and this is rooted at $\rho_{\bp \ell}\in \tr_{\bp \ell}'$. 
Each vertex $\rho_{\bp \ell}$ has a parent in $\tr$ which also belongs to the type $i$ subtree $\tr_\bp'\subset\widetilde{\tr}_{\bp}'$. 
We call this parent $x_{\bp;\ell}$ and it is the $\ell^\text{th}$ marked point in the decoration $\cD(\bp)$. When $\ell = j+md$ for $m\ge K_j$, we set $x_{\bp;\ell} = \widetilde\rho_{\bp}$.
See Figure \ref{figTemptyset} for an example. 

\begin{figure}
\centering
\includegraphics[scale=0.85]{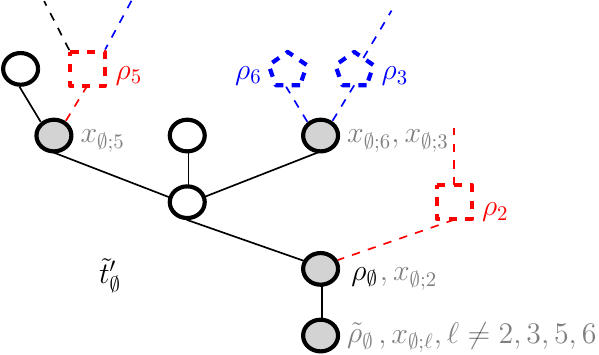}
\caption{The subtree $\widetilde{\tr}'_\emptyset$ together with the roots $(\rho_{\bp };\bp\in \UU)$ and the marks $(x_{\bp;\ell };\bp\in \UU)$. 
The marks in $\widetilde{\tr}_\emptyset' $ are depicted with filled gray to emphasize that they are the roots of the subtrees $\widetilde{\tr}_{\bp}'$.
}
\label{figTemptyset}
\end{figure}

\subsubsection{Subtrees in Multitype Bienaym\'e-Galton-Watson trees as Subdecorations}\label{subsubsectionSubtreesInMBGWTreesAsSubdecorations}

Recall Definition \ref{definitionBGWtree}. We will say that a tree $\widetilde T$ is a \textit{planted} MBGW tree rooted at a type $i_0$ vertex if $\widetilde T = \{\widetilde{\rho}\}\cup T$ for a MBGW tree $T$ and a single additional vertex $\widetilde{\rho}$ which shares an edge with $\rho\in T$. To be consistent we say $\TYPE(\rho) = i_0$.

Recall from \eqref{eqnCodingSubtreesOfMBGWWitRandomWalk}, that for every $i\in [d]$, the  type $i$ subtrees in $T$ can be coded by the excursion measures $\vec N^{i,\vec \varphi}$. 
Also, observe that since every vertex has an independent number of children, {for every $j$, }the type $j$ subtrees in $T$  growing from a type $i$ subtree (with $i\neq j$), are independent and identically distributed. 
Note that there are $K_j:=X^{i,j}(\tau^i(1))$ many of them (here we omit the dependency of the random walk $\vec X^i$ and the subtree). 
That means if $\widetilde{\tr}'_\bp$ has been constructed and is a type $i$ subtree such that there are $K_j\ge 0$ many points $x_{\bp; j+md}\in \widetilde{\tr}'_\bp \setminus\{\widetilde{\rho}_\bp\}$ (counted with multiplicity), each with corresponding subtree $\widetilde \tr'_{\bp(j+md)}$, then the subtrees
\begin{equation}\label{eqnSubtreesTypejOfaSubtreeTypei}
(\widetilde{\tr}'_{\bp (j+md)}; m = 0,1,\dotms, K_j-1)
\end{equation} are i.i.d. samples of a tree $\widetilde T^{(j)}$ re-arranged in decreasing order of size with ties broken arbitrarily (cf. Jagers' theorem on stopping lines \cite{MR1014449}). 
Moreover, conditionally on just the value $K_j$ they are \textit{independent} of $\widetilde{\tr}'_{\bp}$. In particular, conditionally given
\begin{equation*}
\left(\#\{m : x_{\bp;j+md} = v\}; v\in \tr'_{\bp}\right) = (\chi^j(v); v\in \tr'_\bp),
\end{equation*} where $\chi^j(v)$ is the number of type $j$ children of $v$ in $\tr$, 
the sequence 
\begin{equation}\label{eqnInfinitelyMarkedPointsInSubtreeAreUniform}
(x_{\bp; j},x_{\bp; j+d},\dotsm, x_{\bp; j+(K_j-1)d})
\end{equation}
is uniformly distributed among all sequences $(y^j_1,\dotms, y^j_{K_j})$ of vertices in $\tr'_\bp$ such that $\#\{m:y^j_m = v\} = \chi^j(v)$ for all $v\in \tr'_\bp$.
In other words, by independence of the subtrees in \eqref{eqnSubtreesTypejOfaSubtreeTypei} between them and between $\tr'_\bp$, ordering the subtrees of type $j$ by decreasing size, is independent of the label $v\in \tr'_\bp$.
Note that the above holds vacuously for $j = i$ as $K_i = 0$ in this situation. 

As in the construction of the glued decoration before Definition \ref{remarkConstructionOfGluedDecoration}, we have now all ingredients to describe a multitype BGW tree as a glued decoration.
Indeed, in Subsections \ref{subsectionInducedLabelingOnAReducedSubtree} and  \ref{subsubsectionSubtreesAsMetricMeasureSpaceOfSubdecorations} we have described how to label and define the subdecorations. 
In Subsections \ref{subsubsectionInfinitelyMarkedPointsInSubtrees} and \ref{subsubsectionSubtreesInMBGWTreesAsSubdecorations} we described how the infinitely marked points are constructed from a multitype tree, and that they are uniformly distributed.
To prove Lemma \ref{lem:decoration}, it remains to describe how we create the glued decoration of a MBGW tree, which is the main content on the proof of Lemma \ref{lem:decoration}. 
For a visual representation of the proof see Figure \ref{figDifferentCodingsOfTheTree}.

\begin{figure}
\centering
     \includegraphics[scale=.8]{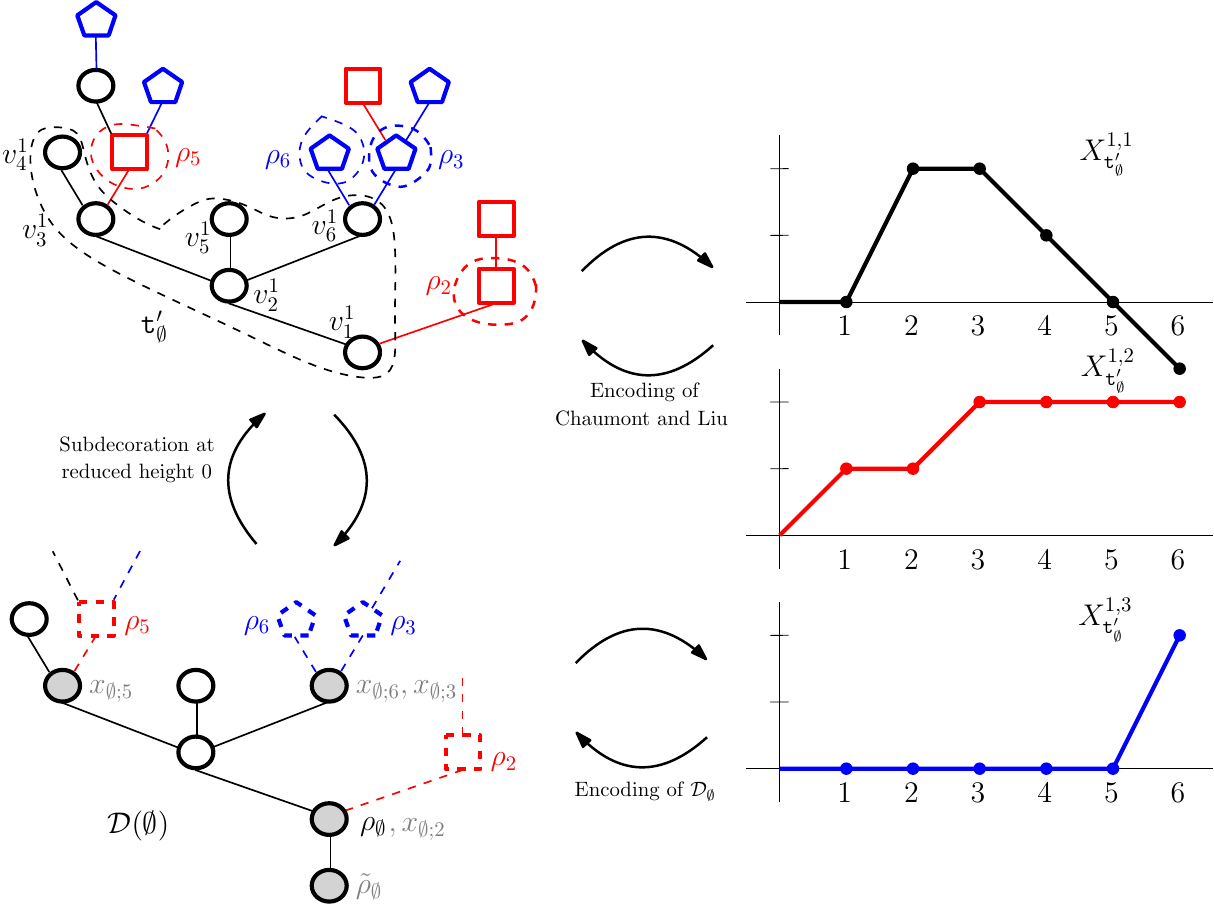}
\caption{In the top-left figure, we show the subtree $\tr'_\emptyset$, together with the positions in which the subtrees $(\tr'_{\ell};\ell\in \mathbb{N})$ growing from it, are pasted. Note that such subtrees have roots $(\rho_\ell;\ell\in\mathbb{N})$ which are encoded by vertices at reduced height one, as in Figure \ref{figTreeAndReducedTreeUsingUlam}. On the middle-right figure, we show the encoding of $\tr'_\emptyset$. Finally, on the bottom-left figure, we show the subdecoration $\cD(\emptyset)$, which contains the marks $(x_{\emptyset;\ell};\ell\in\mathbb{N})$, where the subtrees $(\tr'_{\ell};\ell\in \mathbb{N})$ are going to be pasted. Note that here, the subtrees shown in dotted lines, are not part of $\cD(\emptyset)$. }
\label{figDifferentCodingsOfTheTree}
\end{figure}

\subsection{Proof that Multitype Bienaym\'e-Galton-Watson trees are Glued Decorations} 

In this subsection, we prove  Lemma \ref{lem:decoration}.
In order to do so, we will compare the first step in the construction of the two spaces: the glued decoration  $\sG(\cD^{i_0,\vec \varphi})$ restricted to subdecorations $\cD(\bp)$ with $|\bp|\leq 1$, and the subtrees at reduced height at most one, in a MBGW tree.

For the former, recall Construction \ref{construction:Discretesingletype} of subdecorations, Construction \ref{construction:DiscreteDecoration} of a decoration and Definition \ref{remarkConstructionOfGluedDecoration} for the law of the glued decoration. 
First we start with a subdecoration $\cD(\emptyset)\sim \vec N^{i_0,\vec \varphi}$ together with its infinitely marked points $(x_\ell)_\ell$. 
For $j\neq i_0$, recall by \eqref{eqnDefinitionOfInfinnitelyMarkedPointInSubdecoration} that $(x_{j+(m-1)d}; m\in [\vec \#^{(i_0)}_j \cD(\emptyset)])$ are uniformly distributed conditionally given $\vec \#^{(i_0)}_j \cD(\emptyset)$. 
Recall also from \eqref{defVecCardinalityJOfASubdecoration} that $\vec \#^{(i_0)}_j \cD(\emptyset)=X^{i_0,j}(\tau^{i_0}(1))$. 
Then, we sample i.i.d. copies $\cX^{j}_{\emptyset,m}\sim \vec N^{j,\vec \varphi}$ for $m\in [\vec \#^{(i_0)}_j \cD(\emptyset)]$, conditionally independent of $\cD(\emptyset)$ given $\vec \#_j \cD(\emptyset)$. 
Finally, we order them in decreasing order of cardinality $(\cX^{j}_{\emptyset,(1)},\cX^{j}_{\emptyset,(2)},\dotsm )$ and glue the $m^{\text{th}}$ by its root, with the marked point $x_{j+(m-1)d}$. 
Thus, the constructed subdecorations and infinitely marked points are
\begin{equation}\label{eqnCompareSubdecorationsAndSubtreesFirstStep1}
\big(\cD(\emptyset),(x_{j+(m-1)d})_m,(\cX^{j}_{\emptyset,(m)})_m,m\in [\vec \#^{(i_0)}_j \cD(\emptyset)],j\neq i_0\big). 
\end{equation}

On the other hand, 
we construct the subtree $\tr'_\emptyset$ in the MBGW tree, corresponding to the root, and having type $i_0$. 
Note by \eqref{eqnCodingSubtreesOfMBGWWitRandomWalk} that such a subtree is also constructed from a 
\L ukasiewicz path $\vec X^{i_0}$ (equivalently, an excursion measure $\vec N^{i_0,\vec \varphi}$). 
Define $\widetilde \tr'_\emptyset$ as $\tr'_\emptyset$ planted in the additional vertex $\widetilde \rho_\emptyset$ connected to the root. 
Now, consider any vertex $w_k\in \tr'_\emptyset$ for some $k\in [\tau^{i_0}(1)]$. 
Fix $j\neq i_0$ and note that $w_k$ has  $X^{i_0,j}(k)-X^{i_0,j}(k-1)$ children of type $j$. 
Thus we sample i.i.d. subtrees $\tr'_{j,k,m}\sim \vec N^{j,\vec \varphi}$ for $m\in [ X^{i_0,j}(k)-X^{i_0,j}(k-1)]$, conditionally independent of $\tr'_\emptyset$ given $X^{i_0,j}(k)-X^{i_0,j}(k-1)$.
Connect their root to $w_{k}$ by an edge (equivalently, construct the planted subtree $\widetilde \tr'_{j,k,m}$ and glue its root to $w_k$).
Then, order all the subtrees $(\tr'_{j,k,m};m\in [ X^{i_0,j}(k)-X^{i_0,j}(k-1)], k\in [\tau^{i_0}(1)])$ by decreasing order of cardinality, and label them $(\tr'_{j,(m)};m\in [ X^{i_0,j}(\tau^{i_0}(1))])$.
Finally, from all the $X^{i_0,j}(\tau^{i_0}(1))$ subtrees  type $j$, assign label $j+(m-1)d\in \bbU$ to the root of $\tr'_{j,(m)}$, and assign label $x_{j+(m-1)d}$ to the vertex in $\tr'_\emptyset$ connected to this subtree by an edge. 
Recall from \eqref{eqnInfinitelyMarkedPointsInSubtreeAreUniform} that $(x_{j+(m-1)d};m\in [X^{i_0,j}(\tau^{i_0}(1))])$ are uniformly distributed. 
We collect the random planted subtrees and infinitely marked points in 
\begin{equation}\label{eqnCompareSubdecorationsAndSubtreesFirstStep2}
\big( \widetilde \tr'_\emptyset,(x_{j+(m-1)d}),(\widetilde \tr'_{j,(m)}),m\in [ X^{i_0,j}(\tau^{i_0}(1))],j\neq i_0\big). 
\end{equation}

Note that both \eqref{eqnCompareSubdecorationsAndSubtreesFirstStep1} and \eqref{eqnCompareSubdecorationsAndSubtreesFirstStep2} have spaces each equipped with the graph distance, and the measure is the counting measure on the type of the subspace and the remaining coordinates are null (that is, counting measure $\vec \mu^{(i_0)}$ for $\cD(\emptyset)$ and $\tr'_\emptyset$, and counting measure $\vec \mu^{(j)}$ for the subdecorations of type $j$ and subtrees of type $j$ growing from them, respectively). 
This implies that \eqref{eqnCompareSubdecorationsAndSubtreesFirstStep1} and \eqref{eqnCompareSubdecorationsAndSubtreesFirstStep2} have the same distribution. 

Continuing in this way, by independence, we can construct the glued decoration restricted to subdecorations $\cD(\bp)$ with $|\bp|\leq h$ and multitype BGW tree with subtrees at reduced height at most $h$, for any $h\in \N$ and prove that they have the same distribution. 
This proves the result. 
\qed

\subsection{Glued Decoration Conditioned with a Large First Subtree}\label{subsectionGluedDecorationConditionedWithALargeFirstSubtree}

After proving in Lemma \ref{lem:decoration} that the glued decoration $\sG(\cD^{i_0,\vec \varphi})$, with law $M^{i_0,\vec \varphi}$ as defined in \ref{remarkConstructionOfGluedDecoration}, is a MBGW tree, in this section we describe the glued decoration \emph{conditioned on the size of the tree} $\cD(\emptyset)$ being at least a given value. 
Namely, for any $r\in \N$ we work with a glued decoration $\sG(\cD_{r})$ with law $M^{i_0,\vec \varphi}(-| \#\cD(\emptyset) \ge r) = \PR\left(\sG(\cD^{i_0,\vec \varphi})\in - | \#\cD(\emptyset)\ge r\right)$, as defined in \eqref{eqnDecorationsWithBigSubtreeBig}. 
Recall also from \eqref{eqnCodingSubtreesOfMBGWWitRandomWalk} and the construction above such equation, that each type $j$ subtree $T^{(j)}$ is constructed according to an \textit{excursion measure} of $\vec X^{j}$, namely  $\vec N^{j,\vec \varphi}$.

Certainly we can condition on several different events, for example $\cD(\emptyset)$ having height at least $k$, its number of leaves, conditioning the total size of the tree to be $k$, or the number of individuals of type $j$ to be $k_j$, or even starting with a multitype BGW forest with sufficiently enough roots, but we leave establishing those limit theorems to the reader.  

\subsubsection{Connections between the Reduced Tree and Hitting times}

In order to apply our Theorem \ref{thm:graphConv1}, we need to establish some properties of MBGW forests which allow us to verify Hypotheses \ref{ass:2thm1}--\ref{ass:3thm1} therein, as Hypotheses \ref{ass:1thm1} is not hard to establish using Proposition \ref{prop:dlHeight}.

We use the decomposition of Chaumont and Liu \cite{MR3449255}. Let $\vec X^j$ denote the $\Z^d$-valued random walks from the previous section (or as in Subsection \ref{subsectionMBGWIntro}). Following the notation therein, for $i,j\in [d]$ with $i\neq j$ and $m\in \Z_+$ we define the processes
\begin{align*}
\overline{X}^{i,j}(m) = X^{i,j }\circ\tau^{i}(m),\qquad \tau^{i}(m)  = \min\{k: X^{i,i}(k) = -m\},
\end{align*} as before. 
Let us fix some $\vec{r}\in\Z^d_+\setminus\{\vec 0\}$ arbitrary. 
The \emph{multivariate first hitting/passage times} studied in \cite{MR3449255}, are given by 
\begin{equation}\label{eqn:minSolDiscrete}
\vec T(\vec r): = \min\left\{\vec{m}\in \Z^d_+: \sum_{i=1}^d X^{i,j}(m_i) = -r_j,\quad \forall j\textup{ s.t. }m_j<\infty\right\}.
\end{equation}
Set $ \vec T(\vec r)=\vec n=(n_1,\dotsm, n_d)$, and for any $j$ such that $n_j<\infty$, define  
\begin{equation*}
k_j: = -\min_{0\le m\le n_j} X^{j,j}(m) = -X^{j,j}(n_j).
\end{equation*}
Then Lemma 2.6 in \cite{MR3449255} implies that $\vec{k}=(k_1,\dotsm, k_d)$ is the minimal solution to \eqref{eqn:minSolDiscrete} for the process $\overline{X}^{i,j}$.
By the (sub)criticality assumption on $\vec \varphi$, we note that both $\vec{n}$ and $\vec{k}$ are in $\Z_+^d$ a.s. 

Recalling the construction of the minimal solution \eqref{eqn:minSolDiscrete} in \cite{MR3449255}, for any $j\in  [d]$, $\vec{r}\in\Z^d_+\setminus\{\vec 0\}$ and $h \geq 0$ we define
\begin{align}
R^{(0),j} &= r_j, & U^{(0),j} &= \min\{m: X^{j,j}(m)=-r_j\}\nonumber\\
R^{(h+1),j} &= r_j + \sum_{\ell\neq j} \overline{X}^{\ell,j}(R^{(h),\ell}) , &U^{(h),j} &= \tau^{j}(R^{(h),j}) =\min\left\{m: X^{j,j}(m) = - R^{(h),j}\right\}.\label{eqn:Rrecurse2}
\end{align}
Note that $R^{(h+1),j} = r_j+ \sum_{\ell\neq j} X^{\ell,j}(U^{(h),\ell})$.

\begin{lem}\label{lem:spalfAndDiscTree}
Suppose that $F$ is a MBGW forest with offspring distribution $\vec \varphi$ with $r_j$ many type $j$ roots for each $j\in [d]$, for $\vec{r}\in\Z^d_+\setminus\{\vec 0\}$. Let $F^{\red}$ denote the corresponding reduced forest.
Then jointly over $j\in[d]$ and $h\ge 0$
\begin{align*}
    \#&\{v\in F^{\red}: \TYPE(v) = j, \hgt(v)\le h \textup{ in }F^{\red}\} \overset{d}{=} R^{(h),j}\\
    \#&\{v\in F: \TYPE(v) = j, v\textup{ is at reduced height }\le h\} \overset{d}{=} U^{(h),j}.
\end{align*}
\end{lem}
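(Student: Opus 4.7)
The plan is to proceed by induction on $h\ge 0$, reading both statements off directly from the Chaumont-Liu encoding of $F$ into the walks $(\vec X^{j};j\in[d])$. Since the recursions \eqref{eqn:Rrecurse2} are defined in terms of these same walks, the equalities in the lemma will in fact be pathwise under this coupling, which gives the joint distributional statement automatically.

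\textbf{Base case $h=0$.} By definition, the type $j$ vertices of $F^{\red}$ at reduced height $0$ are precisely the $r_j$ type $j$ roots, giving a count of $r_j=R^{(0),j}$. At the level of $F$, the type $j$ vertices at reduced height $0$ are exactly those lying in the $r_j$ type $j$ subtrees hanging from the type $j$ roots. In the Chaumont-Liu encoding these $r_j$ subtrees are coded, one after another, by the first $r_j$ excursions of $X^{j,j}$ above its running minimum, which together exhaust the walk up to time $\tau^j(r_j)=U^{(0),j}$. Hence the number of type $j$ vertices in $F$ at reduced height $0$ equals $U^{(0),j}$.

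\textbf{Inductive step.} Assume both identities hold at level $h$. A type $j$ vertex $v\in F^{\red}$ with $\hgt_{F^{\red}}(v)\le h+1$ is either (i) one of the $r_j$ type $j$ roots, or (ii) a type $j$ child in $F^{\red}$ of some type $\ell\neq j$ vertex of reduced height $\le h$. Children of type $j$ in $F^{\red}$ of such a type $\ell$ vertex $w$ correspond exactly to those vertices of the type $\ell$ subtree $\tr'_w\subset F$ whose children in $F$ are of type $j$ (counted with multiplicity). By the inductive hypothesis there are $U^{(h),\ell}$ type $\ell$ vertices of $F$ at reduced height $\le h$, and because the walk $\vec X^{\ell}$ is built by concatenating the type $\ell$ subtrees in the BFS order of the type $\ell$ vertices of $\tred$, those $U^{(h),\ell}$ vertices are precisely the first $U^{(h),\ell}$ vertices processed by $\vec X^{\ell}$ (BFS processes all reduced-height-$\le h$ vertices before any reduced-height-$(h+1)$ vertex). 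Therefore the total number of type $j$ children in $F$ of these vertices equals $X^{\ell,j}(U^{(h),\ell})=\overline{X}^{\ell,j}(R^{(h),\ell})$. Summing over $\ell\neq j$ and adding the $r_j$ roots gives
\[
\#\{v\in F^{\red}:\TYPE(v)=j,\hgt_{F^{\red}}(v)\le h+1\}=r_j+\sum_{\ell\neq j}\overline{X}^{\ell,j}(R^{(h),\ell})=R^{(h+1),j}.
\]

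For the second identity, the type $j$ vertices in $F$ at reduced height $\le h+1$ are exactly the vertices in the first $R^{(h+1),j}$ type $j$ subtrees in the BFS order, which are coded by $X^{j,j}$ up to the first passage time to $-R^{(h+1),j}$, namely $\tau^{j}(R^{(h+1),j})=U^{(h+1),j}$. This closes the induction and gives the joint identity in $(h,j)$. The main (and essentially only) obstacle is keeping the bookkeeping straight between the BFS ordering on $\tred$ and the concatenation order used to build $\vec X^{j}$; once one checks that these coincide layer by layer, the statement reduces to the definition of the first passage times $U^{(h),j}$ and the recursion \eqref{eqn:Rrecurse2}.
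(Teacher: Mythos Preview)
Your proof is correct and follows essentially the same inductive approach as the paper. The only difference is packaging: the paper invokes the discrete Lamperti representation of MBGW forests from \cite[Lemma~3.5]{MR3444314} to identify $R^{(h),j}$ with the count of type~$j$ vertices in $F^{\red}$ at height $\le h$, whereas you carry out that induction by hand, checking directly that BFS on $\tred$ exhausts all reduced-height-$\le h$ type~$\ell$ vertices before any at height $h+1$, so that the first $R^{(h),\ell}$ excursions of $X^{\ell,\ell}$ account for exactly the right subtrees. Your version is more self-contained; the paper's is shorter but leans on the external reference.
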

\begin{proof}
To prove the first equality, observe that 
\begin{align*}
&\left(\overline{X}^{i,j}(m)-\overline{X}^{i,j}(m-1) ;j\neq i\right)_{m\ge 1}\\
&\quad\overset{d}{=} \left(\#\left\{\textup{type }j \textup{ children of the }m^\text{th}\text{ vertex type $i$ in }F^\red\right\}; j\neq i\right)_{m\ge 1}
\end{align*}
where recall from \ref{subsectionMultitypeEnconding} that the type $i$ vertices are labeled in a breadth-first manner in $F^\red$.
Since in the reduced forest none of the individuals type $j\in [d]$ have children of type $j$, we can write $\overline X^{j,j}(m):=-m$. 
Then, we have
\[
R^{(h),j}-R^{(h-1),j}=r_j+\sum_{\ell\neq j} \overline{X}^{\ell,j}(R^{(h-1),\ell})-R^{(h-1),j}=r_j+\sum_{\ell\in [d]} \overline{X}^{\ell,j}(R^{(h-1),\ell}).
\]Note that the latter is the number of individuals type $j$ with height $h$ in $F^{\red}$, by the Lamperti representation of MBGW forests in Lemma 3.5 of \cite{MR3444314}.  
Since $R^{(0),j}=r_j$, we have the desired equality.    

The second equality in distribution, follows from the fact that for a tree $\tr$ with \L ukasiewicz path $X_\tr$ we have (see (6.15) in \cite{MR2245368}) that $
\#\tr = \min\{m: X_\tr(m) = -1\}$.
Indeed, recall from \ref{subsectionMultitypeEnconding} that the type $j$ vertices are labeled in a depth-first order in $F$ (and the subtrees of type $j$ are enumerated in a breadth-first order). 
When $h=0$, the value $U^{(0),j}$ counts the number of individuals type $j$ in subtrees of type $j$ connected to the $r_j$ roots, which is precisely the number of individuals type $j$ at reduced height $0$. 
Note that such individuals are coded by excursions of $X^{j,j}$ which occur between $0$ and $U^{(0),j}$. 
Recursively, $U^{(h),j}$ counts the number of individuals type $j$ in subtrees of type $j$ connected to the subtrees of type $\ell\neq j$, which are at reduced height at most $h$. 
They are coded by excursions of $X^{j,j}$ which occur between $U^{(h-1),j}$ and $U^{(h),j}$. 
\end{proof}

\subsubsection{Relation between glued decoration conditioned with large first subtree and conditioned multitype BGW}

Recall from Lemma \ref{lem:decoration}, that we can view a glued decoration as a MBGW tree. 
recall also from Subsection \ref{subsectionMultitypeEnconding} that such a multitype tree, can be coded by a set of \emph{discrete random fields}. 
In the following result, we show how a conditioned glued decoration can be coded by conditioned discrete random fields.

For the next result, we use $\vec T(\vec \epsilon_{i_0})=(T^1(\vec \epsilon_{i_0}),\dotsm, T^d(\vec \epsilon_{i_0}))$, where $\vec T(-)$ is defined in \eqref{eqn:minSolDiscrete} and recall that $\vec \epsilon_{i_0}$ is the unit vector with $i_0^{\text{th}}$ entry equal to one.
The following corollary of Lemma \ref{lem:spalfAndDiscTree} will be useful when taking limits.

\begin{cor}[Coupling the conditioned glued decoration with conditioned discrete random fields]\label{cor:DecorationSpalf}
Fix an $i_0\in[d]$, an offspring distribution $\vec \varphi$, and the values $\vec{r}\in \Z^d_+$ with $r_{i_0}\geq 1$, $r\in \mathbb{N}$.  Suppose that $\sG(\cD_{r})\sim M^{i_0,\vec \varphi}(-|\#\cD(\emptyset)\ge r).$ Then we can couple the glued decoration $\sG(\cD_r)$ with the walks $(\vec X^{j}(m);m\leq T^j(\vec \epsilon_{i_0})),j\in[d])$, conditioned on $\tau^{i_0}(1) \geq r$, such that the subdecorations $(\cD(\bp \ell); |\bp \ell| = h, \text{ with } \ell\equiv j\mod d)$ are coded by excursions of $\vec X^{j}$ which occur between $U^{(h-1),j}$ and $U^{(h),j}$ for any $h\geq 1$, where $(U^{(h),j};j\in[d],h\ge 0)$ solves \eqref{eqn:Rrecurse2} with initial condition $\vec{r} = \vec{\epsilon}_{i_0}$. 
\end{cor}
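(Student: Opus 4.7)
The plan is to combine Lemma \ref{lem:decoration} and Lemma \ref{lem:spalfAndDiscTree} with the walk-based encoding of multitype BGW trees recalled in Section \ref{subsectionMultitypeEnconding}, using that the conditioning $\#\cD(\emptyset)\geq r$ corresponds to conditioning on the length of the first excursion of $X^{i_0,i_0}$ above its minimum.

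First, by Lemma \ref{lem:decoration}, one may identify $\sG(\cD^{i_0,\vec\varphi})$ with a planted MBGW tree $\widetilde T$ rooted at a type $i_0$ vertex, and by construction $\cD(\emptyset)$ (after removing the added planting vertex) corresponds to the type $i_0$ subtree $\tr'_\emptyset\subseteq T$ attached to the root. In the random-walk encoding recalled in Subsection \ref{subsectionMBGWIntro} and \eqref{eqnCodingSubtreesOfMBGWWitRandomWalk}, this subtree is precisely coded by the first excursion of $X^{i_0,i_0}$ above its running minimum, which has length $\tau^{i_0}(1)$. Hence the events $\{\#\cD(\emptyset)\geq r\}$ and $\{\tau^{i_0}(1)\geq r\}$ coincide under this identification, and the conditional laws can be coupled on the same probability space.

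Next, to extend this identification to every $\bp\in\bbU$, I would proceed by induction on the reduced height $h=|\bp|$. The base case $h=0$ is handled above: the excursion $(\vec X^{i_0}(m); m\le U^{(0),i_0}) = (\vec X^{i_0}(m); m\le \tau^{i_0}(r_{i_0}))$ with $r_{i_0}=1$ codes $\cD(\emptyset)$, while for $j\neq i_0$ no excursion of $\vec X^{j}$ has yet been consumed, consistent with $U^{(-1),j}:=0=U^{(0),j}$ when $r_j=0$. For the inductive step, using the labeling convention \eqref{definitionLabelingOfVerticesInReducedTree} (so that children of type $j$ of a vertex labeled $\bp\in\tred$ receive labels of the form $\bp(j+(m-1)d)$ ordered by decreasing subtree size), the subdecorations $\{\cD(\bp\ell):|\bp\ell|=h,\ \ell\equiv j\bmod d\}$ are in bijection with the collection of type $j$ subtrees sitting at reduced height exactly $h$ in $\widetilde T$.

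Now comes the key step: by the second identity of Lemma \ref{lem:spalfAndDiscTree} applied to $F=T$ (the forest with a single root of type $i_0$, i.e. $\vec r=\vec\epsilon_{i_0}$), the total number of type $j$ vertices lying in subtrees at reduced height $\le h$ equals $U^{(h),j}$. Since each type $j$ subtree is coded by a single excursion of $X^{j,j}$ above its running minimum (and the $\vec X^{j}$-increments on that excursion record the offspring of other types), the excursions of $\vec X^{j}$ occurring on the time interval $(U^{(h-1),j},U^{(h),j}]$ are exactly those encoding the type $j$ subtrees at reduced height $h$. Re-ordering them by decreasing excursion length matches the convention \eqref{definitionLabelingOfVerticesInReducedTree} and identifies each excursion with the corresponding $\cD(\bp\ell)$. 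The infinitely-marked points inside each subdecoration are then inherited from the walk increments $X^{j,\ell}(m)-X^{j,\ell}(m-1)$ together with the uniform permutation in Construction \ref{construction:Discretesingletype}, matching the uniform distribution \eqref{eqnInfinitelyMarkedPointsInSubtreeAreUniform} on the MBGW side.

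The main obstacle is purely bookkeeping: one must check that the recursion \eqref{eqn:Rrecurse2} is compatible with the planar/size ordering of subtrees imposed in \eqref{definitionLabelingOfVerticesInReducedTree} and with the breadth-first ordering underlying the definition of $\vec X^j$. This is not a source of randomness but simply requires verifying that within each layer the same set of excursions is being enumerated on both sides, which follows from the independence of the subtrees at fixed reduced height conditionally on their count (as already used in Subsection \ref{subsubsectionSubtreesInMBGWTreesAsSubdecorations}) together with Jagers' stopping line theorem.
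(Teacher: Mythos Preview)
Your proposal is correct and follows essentially the same route the paper intends: the corollary is presented there as an immediate consequence of Lemma \ref{lem:spalfAndDiscTree} (whose proof already records that type $j$ subtrees at reduced height $h$ are coded by the excursions of $X^{j,j}$ between $U^{(h-1),j}$ and $U^{(h),j}$) together with Lemma \ref{lem:decoration} identifying the glued decoration with the planted MBGW tree. Your additional care with the conditioning $\{\#\cD(\emptyset)\ge r\}=\{\tau^{i_0}(1)\ge r\}$ and with the ordering bookkeeping spells out exactly what the paper leaves implicit.
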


\begin{remark}\label{rmrkCorollaryCouplingGluedAndRWs}
From the construction above, it is clear that we can also couple the conditioned decoration $\sG(\cD_{r})$, with two pairs of random walks as follows. 
The first random walk  $\big(\vec X^{i_0}_0(m); m = 0,1,\dotsm,\tau^{i_0}(1)\big)|\{\tau^{i_0}(1)\geq r\}$ will code the subtree of type $i_0$ connected to the root, together with the $X^{i_0,j}_0(\tau^{i_0}(1))$ subtrees of type $j\neq i_0$ growing from it. 
Now, consider an independent set of random walks $(\vec X^{j};j\in[d])$ starting at zero. 
Conditionally on $\big(\vec X^{i_0}_0(m); m = 0,1,\dotsm,\tau^{i_0}(1)\big)|\tau^{i_0}(1)\geq r$, for every $j\in [d]$ define 
\[
\widetilde{X}^j_n(-) = (X^{i_0,j}_0(\tau^{i_0}(1)) \vee 0) \vec{\epsilon}_j + \vec X^j(-).
\]Those random walks will code the forest growing from $\cD(\emptyset)$ if stopped at $\vec T(\vec {\bo 0})$ (the multidimensional first hitting time defined in \eqref{eqn:minSolDiscrete}). 
That is, all subdecorations $(\cD(\bp ); |\bp | \geq 1)$ are coded by $(\vec X^{j}(m);m\leq T^j(\vec{\bo 0}),j\in[d])$. 
This coupling will be used in Section \ref{sec:convergence}.
\end{remark}

\section{Multitype Decorations of the Ulam Tree}\label{sec:DecorationOfUlamTree}

\subsection{Preliminaries on Metric Measure Spaces}\label{ssec:GHP}

All metric measure spaces we consider in the sequel will be rooted (or pointed) at a point $\varrho\in \XX$, will be compact and equipped with a finite measure (or vector-valued measure). 
We will call pointed metric measure spaces (PMM) spaces to 4-tuples $\XX = (\XX,\varrho,\di,\mu)$. 
Thus, PMM spaces that we consider are assumed to be compact. We say that two PMM spaces $\XX = (\XX,\varrho,\di,\mu)$ and $\XX' = (\XX',\varrho',\di',\mu')$ are isomorphic if there exists an isometry $f:\XX\to \XX'$ such that $f(\varrho) = \varrho'$ and is measure preserving $f_\#\mu = \mu'$. Let $\M$ denote the collection of equivalence classes of PMM spaces.

It is now well-known that one can equip $\M$ with the Gromov-Hausdorff-Prohorov (GHP) topology. This topology is metrizable with the metric $\dghp$ and this turns $\M$ into a Polish space. See \cite{Khezeli.20} for more information. This topology extends the Gromov-Hausdorff distance, but we will define this distance in reference to the GHP metric. Define $\M_0$ as the space of PMM spaces equipped with the null measure. The space $\M_{0}\subset\M$ is equipped with the subspace topology which is simply Gromov-Hausdorff topology. Moreover it is a \textit{closed} subset. 
Note that there is a natural surjective contraction $\FORGET:\M\to \M_0$ sending a PMM $\XX = (\XX,\varrho,d,\mu)\mapsto \FORGET(\XX) = (\XX,\varrho,d, 0).$ We denote this metric on $\M_0$ by $\dgh$. We recall each of these below, but refer \cite{Khezeli.20} for more information.

\subsubsection{Hausdorff and Prohorov distances}

Suppose that $(E,d)$ is an arbitrary metric space. One can topologize the collection $F:=\{A:A\subseteq E\text{ is compact}\}$ with the Hausdorff distance:
\begin{equation*}
d^{E}_{\textup{H}}(A,B) = \inf\{\eps>0: A\subset B^\eps \text{ and }B\subset A^\eps\},\qquad A^\eps:=\bigcup_{x\in A} \{y\in A : d(x,y)<\eps\},
\end{equation*}for any $A,B\subseteq F$. 
The Hausdorff distance turns $F$ into a metric space. 

If $(E,d)$ is a Polish space and $\mu,\theta$ are two finite Borel measures on $E$, we can define the Prohorov distance between $\mu$ and $\theta$ as
\begin{equation*}
d_{\textup{P}}^E(\mu,\theta) = \inf\{\eps>0: \mu(A)\le \theta(A^\eps)+\eps\text{ and }\theta(A)\le \mu(A^\eps)+\eps \quad \forall A\text{ Borel}\}.
\end{equation*} The Prohorov distance turns the collection of finite (Borel) measures on $E$ into a metric space.

\subsubsection{Gromov-Hausdorff-Prohorov Topology} \label{sssec:GHP}

Given two elements $\XX,\YY\in \M$, if both live in some ambient Polish space $\Zz$, then one can can talk about the Hausdorff distance between $\XX$ and $\YY$ as subsets of $\Zz$ and about the Prohorov distance between their two measures. The (compact) GHP topology allows us to do this even when $\XX$ and $\YY$ are not in the same ambient space: the GHP metric $\dghp$ is defined as
\begin{equation*}
\dghp(\XX,\YY) = \inf\left\{d_{\textup{H}}^{\Zz}(f(\XX),g(\YY)) + d_{P}^{\Zz}(f_\#\mu_\XX, g_\#\mu_{\YY}) + d_\Zz(f(\varrho_\XX),g(\varrho_\YY))\right\}
\end{equation*}
where the infimum is taken over all compact metric spaces $(\Zz,\di_\Zz)$ and isometric embeddings $f:\XX\to \Zz$ and $g:\YY\to \Zz$. The $\dghp$ is a metric on $\M$.

If the PMMs $\XX$ and $\YY$ have null measures, then we recover simply the Gromov-Hausdorff distance.
The following lemma can be seen in \cite{Khezeli.20}.

\begin{lem}
$(\M,\dghp)$ and $(\M_{0},\dgh)$ are Polish.
\end{lem}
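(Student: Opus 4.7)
The plan is to establish separability and completeness of $(\M,\dghp)$; the result for $(\M_{0},\dgh)$ is then inherited, since $\M_{0}\subset \M$ is closed (the zero-measure constraint is preserved under $\dghp$-limits) and $\dghp$ restricted to $\M_{0}$ equals $\dgh$, and Polishness passes to closed subspaces.

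For separability, I would construct a countable dense set of ``rational finite PMM spaces''. Given $(\XX,\varrho,\di,\mu)\in \M$ and $\eps>0$, compactness yields a finite $\eps$-net $\{x_{0}=\varrho, x_{1},\ldots, x_{N}\}$. I pick a Borel partition $\XX = \bigsqcup_{i=0}^{N} A_{i}$ with $x_{i}\in A_{i}\subset B(x_{i},\eps)$ and replace $\mu$ by the atomic measure $\mu^{\eps}:=\sum_{i} \mu(A_{i})\delta_{x_{i}}$ supported on $F_{\eps}:=\{x_{0},\dotsm,x_{N}\}$. Using the inclusion $F_{\eps}\hookrightarrow \XX$ as the correspondence, one checks that $\dghp((\XX,\varrho,\di,\mu),(F_{\eps},\varrho,\di|_{F_{\eps}},\mu^{\eps}))\le C\eps$ for a universal $C$. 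The finitely many values $\di(x_{i},x_{j})$ and $\mu^{\eps}(\{x_i\})$ can then be approximated by rationals, introducing another $O(\eps)$ error. Ranging over finite rooted graphs with rational edge weights and rational atomic masses produces a countable dense subset.

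For completeness, I take a Cauchy sequence $(\XX_{n})$ and extract a subsequence with $\dghp(\XX_{n},\XX_{n+1})\le 2^{-n}$. For each $n$ choose an ambient compact space $(Z_{n},\di_{Z_{n}})$ and isometric embeddings $\phi_{n}:\XX_{n}\hookrightarrow Z_{n}$, $\psi_{n}:\XX_{n+1}\hookrightarrow Z_{n}$ almost-realizing the infimum in the definition of $\dghp$. Using the standard iterated amalgamation of metric spaces (identifying $\psi_{n}(\XX_{n+1})\subset Z_{n}$ with $\phi_{n+1}(\XX_{n+1})\subset Z_{n+1}$), I glue the $Z_{n}$'s into a single separable metric space $(Z,\di_{Z})$ that contains isometric copies of every $\XX_{n}$, with pairwise Hausdorff distances controlled by the summable series $\sum 2^{-n}$. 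The images then form a Cauchy sequence of compact subsets of the completion $\bar Z$ in the Hausdorff distance, their pushforward measures form a Cauchy sequence in the Prohorov distance on $\bar Z$, and the roots form a Cauchy sequence in $\bar Z$. Passing to limits yields a compact rooted PMM $\XX_{\infty}\in \M$ with $\dghp(\XX_{n},\XX_{\infty})\to 0$.

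The main obstacle is ensuring that the Hausdorff limit in $\bar Z$ is indeed compact, which amounts to uniform total-boundedness of the family $(\phi_{n}(\XX_{n}))_n$ inside $Z$. This is where the Cauchy property has to be exploited carefully: the $2^{-n}$-correspondences transport finite $\eps$-nets between consecutive terms, and a diagonal construction then produces, for every $\eps>0$, a single finite $2\eps$-net for all sufficiently large $n$ simultaneously. Once this is in hand, compactness of the Hausdorff limit is automatic, and Prohorov-completeness of the finite Borel measures on the Polish space $\bar Z$ supplies the limit measure. Full details of this amalgamation and diagonalization argument can be found in \cite{Khezeli.20}.
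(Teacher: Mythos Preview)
Your proof sketch is correct and follows the standard route for establishing Polishness of the Gromov--Hausdorff--Prohorov space. The paper, however, does not prove this lemma at all: it simply states the result and refers the reader to \cite{Khezeli.20}. So your proposal goes well beyond what the paper provides, supplying the outline of the argument (separability via rational finite PMM spaces, completeness via iterated amalgamation into a single ambient space) that the cited reference develops in full. Since you also point to \cite{Khezeli.20} for the details, your treatment is strictly more informative than the paper's while remaining consistent with it.
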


\subsubsection{Infinitely Marked PMM Spaces}\label{subsubsectionIMPMM}

We are concerned about glued metric spaces, and so we need to consider how to include infinitely many marked points on a metric space, just as in Construction \ref{cons:continuum2}. This is done by considering \textit{infinitely marked pointed metric measure spaces}. Here the ``pointed'' refers to the root while the marks correspond to additional points which are vital for the gluing procedure. 

Consider a 5-tuple $(\XX,\varrho,\di,\mu,(x_\ell)_{\ell\ge 1})$ where $(\XX,\varrho,\di,\mu)\in \M$ and $(x_\ell)_{\ell\ge 1}$ is an $\XX$-valued sequence. We say that two such $5$-tuples $\XX$ and $\XX'$ are equivalent if there exists an isometry $f:\XX\to \XX'$ such that $f(\varrho) = \varrho'$ and $f(x_\ell) = x_\ell'$ for all $\ell\ge 1$ and is measure preserving $f_\#\mu = \mu'$. 
For any $k\in \mathbb{N}$ we denote by $\mathfrak{M}^{(k)}_{\tiny \mbox{mp}}$ the subcollection of those $\XX$ such that $x_\ell = \varrho$ for all $\ell>k$. 
We identify $\mathfrak{M}^{(0)}_{\tiny \mbox{mp}} = \mathfrak{M}$ and $\fK:=\mathfrak{M}^{(\infty)}_{\tiny \mbox{mp}}$. 
Let $\mathfrak{M}_{\tiny \mbox{mp},0}$ (resp. $\mathfrak{M}^{(k)}_{\tiny \mbox{mp},0}$) denote the collection of elements in $\fK$ (resp. $\mathfrak{M}^{(k)}_{\tiny \mbox{mp}}$) with null measure. We call elements in $\mathfrak{M}_{\tiny \mbox{mp}}^{(k)}$ a PMM with $k$ marked points.

Let us write $\dghp^{(k)}$ and $\dgh^{(k)}$ as
\begin{equation}\label{eqnDefinitionGHPKMArkedSpace}
\begin{split}
\qquad \dgh^{(k)}(\XX,\YY) &= \inf\left\{d_{\textup{H}}^{\Zz}(f(\XX),g(\YY))  + d_\Zz(f(\varrho_\XX),g(\varrho_\YY))\vee \max_{\ell\le k} d_\Zz(f(x_\ell),g(y_\ell)) \right\}\\
\dghp^{(k)}(\XX,\YY) &= \inf\left\{d_{\textup{H}}^{\Zz}(f(\XX),g(\YY)) + d_{P}^{\Zz}(f_\#\mu_\XX, g_\#\mu_{\YY}) + d_\Zz(f(\varrho_\XX),g(\varrho_\YY))\vee \max_{\ell\le k} d_\Zz(f(x_\ell),g(y_\ell)) \right\},
\end{split}
\end{equation}where again the infimum is over all isometric embeddings $f:\XX\to \mathcal Z$ and $g:\YY\to \mathcal Z$ into a common metric space $(\Zz,\di_\Zz)$, and where the notation for the parameters of $\YY$ should be obvious. 
The GH metric on $\mathfrak{M}_{\tiny \mbox{mp},0}^{(k)}$, i.e. $\dgh^{(k)}$ above, was used in \cite{Miermont.09}. It is elementary to see that $\mathfrak{M}_{\tiny \mbox{mp},0}^{(k)}$ is Polish with respect to this metric $\dgh^{(k)}$. It is also not hard to see that using the GHP topology on PMM spaces with $k$ marked points 
makes $\mathfrak{M}^{(k)}_{\tiny \mbox{mp}}$ a Polish space with metric $\dghp^{(k)}$. 
Also observe that both functions above are well-defined for any two elements in $\fK$, but it no longer separates points and, hence, is not a metric.

For two elements $\XX,\YY\in \fK$, we define $\dghpI$ as
\begin{equation*}
\dghpI(\XX,\YY) = \sum_{k=1}^\infty 2^{-k}\left(\dghp^{(k)}(\XX,\YY) \wedge 1\right).
\end{equation*}Define similarly $\dghI$. 
The following lemma is not difficult to establish, and its proof is omitted.
\begin{lem}
$(\fK,\dghpI)$ and $(\mathfrak{M}_{\tiny \mbox{mp},0},\dghI)$ are Polish spaces.
\end{lem}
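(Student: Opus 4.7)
The plan is to verify the three defining properties of a Polish metric in turn for $(\fK, \dghpI)$; the argument for $(\mathfrak{M}_{\tiny\mbox{mp},0}, \dghI)$ is entirely analogous (and in fact one can realise the latter as the closed subset of $\fK$ consisting of elements equipped with the null measure). For the metric property, symmetry and the triangle inequality are immediate because each $\dghp^{(k)}\wedge 1$ is a bounded pseudometric on $\fK$, the pseudometric property of $\dghp^{(k)}$ coming from the same three-space gluing trick that underlies $\dghp$. The substantive point is separation: if $\dghpI(\XX,\YY)=0$ then $\dghp^{(k)}(\XX,\YY)=0$ for every $k$. Passing from an $\eps$-correspondence to a genuine isometry on the compact PMM spaces $\XX, \YY$ (by the standard Arzel\`a--Ascoli extraction already behind the separation property of $\dghp$) yields for each $k$ a measure-preserving isometry $f_k\colon\XX\to\YY$ fixing the root and sending $x_{\XX;\ell}\mapsto x_{\YY;\ell}$ for $\ell\le k$. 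Each $f_k$ is $1$-Lipschitz on the compact space $\XX$, so a further Arzel\`a--Ascoli extraction produces a subsequential limit $f\colon\XX\to\YY$ which is still a measure-preserving isometry and now matches \emph{all} marks simultaneously. A symmetric argument supplies the inverse, hence $\XX\sim\YY$ in $\fK$.

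For separability, observe that $\dghp^{(k)}$ is monotone non-decreasing in $k$. For each $K\ge 1$, fix a countable $\mathcal{D}_K\subset \mathfrak{M}^{(K)}_{\tiny\mbox{mp}}$ dense for $\dghp^{(K)}$ (e.g.\ finite rational metric spaces with $K$ distinguished points and atomic rational measures), and view its elements as elements of $\fK$ by padding with $x_\ell=\varrho$ for $\ell>K$. Given $\XX\in\fK$ and $\eps>0$, choose $K$ with $2^{-K}<\eps/2$ and $\YY\in\mathcal{D}_K$ with $\dghp^{(K)}(\XX,\YY)<\eps/2$; then monotonicity of $\dghp^{(k)}$ in $k$ gives
\begin{equation*}
\dghpI(\XX,\YY) \;\le\; \dghp^{(K)}(\XX,\YY)\sum_{k\le K} 2^{-k} + \sum_{k>K}2^{-k} \;<\; \eps,
\end{equation*}
so $\bigcup_K \mathcal{D}_K$ is countable and dense in $\fK$.

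For completeness, let $(\XX_n)$ be Cauchy in $\dghpI$; then it is Cauchy in each pseudometric $\dghp^{(k)}$. Extract a subsequence $(\XX_{n_j})$ with $\dghpI(\XX_{n_j},\XX_{n_{j+1}})<4^{-j}$ and apply the standard GHP embedding argument that underlies the completeness of $(\M,\dghp)$: all $\XX_{n_j}$ can be isometrically embedded into a common compact metric space $Z$ so that the images converge in Hausdorff distance, the measures in Prohorov distance, the roots converge, and, for every fixed $\ell\ge 1$, the sequence of $\ell$-th marked points converges in $Z$ to some $x_\ell$. The Hausdorff limit equipped with the marks $(x_\ell)_{\ell\ge 1}$ is a compact PMM in $\fK$, and $\dghpI(\XX_{n_j},\XX)\to 0$ by dominated convergence on the weighted sum, from which the full Cauchy sequence converges. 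The main obstacle in this programme is the separation step: converting infinitely many isometries that individually preserve only finitely many marks into a single one that preserves them all. This is resolved by compactness of the underlying PMM spaces, which renders the family $(f_k)$ of $1$-Lipschitz maps pre-compact and allows a diagonal Arzel\`a--Ascoli extraction; the other two steps are routine extensions of the corresponding facts for $\dghp^{(k)}$ on $\mathfrak{M}^{(k)}_{\tiny\mbox{mp}}$.
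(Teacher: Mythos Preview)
The paper omits the proof entirely, stating only that ``the following lemma is not difficult to establish, and its proof is omitted.'' Your proposal supplies a correct and essentially standard argument: the Arzel\`a--Ascoli diagonal extraction for the separation step is the right idea, and the separability and completeness arguments are the natural adaptations of the corresponding proofs for $(\M,\dghp)$ and $(\mathfrak{M}^{(k)}_{\tiny\mbox{mp}},\dghp^{(k)})$.

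One small caveat on separability: the paper's definition of $\mathfrak{M}^{(K)}_{\tiny\mbox{mp}}$ requires $x_\ell\neq\varrho$ for all $\ell\le K$, so a countable set dense in $\mathfrak{M}^{(K)}_{\tiny\mbox{mp}}$ for $\dghp^{(K)}$ need not approximate an arbitrary $\XX\in\fK$ whose first $K$ marks may include the root. This is easily repaired by taking $\mathcal{D}_K$ to be a countable $\dghp^{(K)}$-dense subset of the larger class of compact PMM spaces with $K$ distinguished points (allowed to coincide with the root), e.g.\ finite rational metric spaces with rational atomic measures and any choice of $K$ marked vertices. With that adjustment your bound $\dghpI(\XX,\YY)\le \dghp^{(K)}(\XX,\YY)\sum_{k\le K}2^{-k}+\sum_{k>K}2^{-k}<\eps$ goes through unchanged.
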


\subsubsection{Extension to vector-valued measures}

We wish to extend the GHP topology on $\fK$ to metric spaces equipped with $\R_+^d$-valued measures for $d\geq 2$, which are (essentially) used in \cite{ABBGM.17} in the case where $d = 2$. We do this as follows. For any $k = 0,1,\dotsm,\infty$ , we let $\mathfrak{M}_{\tiny \mbox{mp}}^{(k),d}$ denote the $d$-fold product space $\prod_{j=1}^d \mathfrak{M}^{(k)}_{\tiny \mbox{mp}}$. Elements of $\mathfrak{M}^{(k),d}_{\tiny \mbox{mp}}$ are of the form $(\XX_1,\dotms, \XX_d)$ where $\XX_i\in \mathfrak{M}_{\tiny \mbox{mp}}^{(k)}$ for every $i\in [d]$. We can define the sum metric:
\begin{equation*}\label{eqnDistanceGHPKMarksMeasured}
\dghp^{(k),d}((\XX_1,\dotsm,\XX_d), (\YY_1,\dotsm, \YY_d)) = \sum_{j=1}^d \dghp^{(k)}(\XX_j,\YY_j) \qquad k = 0, 1,2,\dotsm, \infty.
\end{equation*} The metric $\dghp^{(k),d}$ turns $\mathfrak{M}_{\tiny \mbox{mp}}^{(k),d}$ into a Polish space. We let $\mathfrak{M}_{\tiny \mbox{mp}}^{(k),\ast d}\subset\mathfrak{M}_{\tiny \mbox{mp}}^{(k),d}$ denote the subset of all $d$-tuples $(\XX_1,\dotsm, \XX_d)$ such that $\dgh^{(k)}(\XX_i,\XX_j) = 0$ for all $i,j$. That is, as marked PMM spaces $\XX_1,\dotsm, \XX_d$ are all equivalent so the only possible difference lies on their measures. It is not hard to see that $\mathfrak{M}_{\tiny \mbox{mp}}^{(k),\ast d}$ is a closed subset of $\mathfrak{M}_{\tiny \mbox{mp}}^{(k),d}$ for all $k$. 
\begin{lem}
For any $k \in \{0,1,\dotsm,\infty\}$, the spaces $(\mathfrak{M}_{\tiny \mbox{mp}}^{(k),d},\dghp^{(k),d})$ are Polish. 
\end{lem}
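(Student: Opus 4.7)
The plan is to reduce the statement to two elementary topological facts: (i) a finite product of Polish spaces, equipped with an $\ell^1$-type sum metric, is Polish; and (ii) $(\mathfrak{M}_{\tiny \mbox{mp}}^{(k)},\dghp^{(k)})$ is Polish for every $k\in\{0,1,\dotsm,\infty\}$, which is either stated explicitly as a lemma just above (the case $k=\infty$) or is a standard extension of the classical Polishness of the GHP space (the cases $0\le k<\infty$). Observe that by the very definition of $\dghp^{(k),d}$ in \eqref{eqnDistanceGHPKMarksMeasured} as $\sum_{j=1}^d \dghp^{(k)}(\XX_j,\YY_j)$, the identity map
\begin{equation*}
\big(\mathfrak{M}_{\tiny \mbox{mp}}^{(k),d},\dghp^{(k),d}\big)\;\longrightarrow\;\prod_{j=1}^d\big(\mathfrak{M}_{\tiny \mbox{mp}}^{(k)},\dghp^{(k)}\big),
\end{equation*}
equipped on the right with the $\ell^1$-product metric, is an isometry.

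I would first handle the finite-$k$ case. For $k<\infty$, the metric $\dghp^{(k)}$ is the obvious $k$-marked extension of the usual compact GHP distance on $\mathfrak{M}$, and separability and completeness follow along the same lines as in the classical proofs (e.g., Abraham-Delmas-Hoscheit or Miermont \cite{Miermont.09}): approximation by finite $\eps$-nets gives a countable dense subset, while a Cauchy sequence can be realised in a common ambient compact space via Gromov's embedding argument, and the corresponding Hausdorff, Prohorov and pointwise limits together with the $k$ mark limits yield the limiting element of $\mathfrak{M}_{\tiny \mbox{mp}}^{(k)}$. Granting this, the $d$-fold product $\mathfrak{M}_{\tiny \mbox{mp}}^{(k),d}$ is Polish because finite products of Polish spaces with the $\ell^1$ sum metric are Polish.

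For $k=\infty$, the lemma just preceding already asserts that $(\mathfrak{M}_{\tiny \mbox{mp}},\dghpI)$ is Polish, using the geometric-series sum $\sum_k 2^{-k}(\dghp^{(k)}\wedge 1)$ of the truncated distances. The same finite-product argument then applies verbatim: the $d$-fold product $\prod_{j=1}^d(\mathfrak{M}_{\tiny \mbox{mp}},\dghpI)$ inherits both separability (take the countable product of countable dense subsets) and completeness (a Cauchy sequence in the sum metric is componentwise Cauchy, each component converges by Polishness of the factor, and the componentwise limits assemble into the desired limit).

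Nothing here is delicate; the only point requiring any real input is Polishness of each factor $(\mathfrak{M}_{\tiny \mbox{mp}}^{(k)},\dghp^{(k)})$, which has been quoted above, so the main obstacle is essentially just unpacking definitions. I would therefore present the proof as a two-line observation: write $\dghp^{(k),d}$ as an $\ell^1$ sum, invoke Polishness of each factor, and conclude by the stability of the Polish property under finite products.
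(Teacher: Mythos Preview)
Your proposal is correct and matches the paper's approach exactly: the paper simply observes, in the sentence immediately preceding the lemma, that ``products of Polish spaces are Polish,'' relying on the previously stated Polishness of each factor $(\mathfrak{M}_{\tiny \mbox{mp}}^{(k)},\dghp^{(k)})$. Your write-up spells out a few more details than the paper does, but the argument is the same one-line reduction.
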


When $k = \infty$, we will write $\mathfrak{M}^{\ast d}_{\tiny \mbox{mp}}$ instead of $\mathfrak{M}_{\tiny \mbox{mp}}^{(\infty),\ast d}$ and when $k = 0$ we will write $\fMd$ instead of $\mathfrak{M}_{\tiny \mbox{mp}}^{(0),\ast d}$. We view the element $\XX = (\XX_1,\dotsm, \XX_d)\in \mathfrak{M}^{\ast d}_{\tiny \mbox{mp}}$ as the 5-tuple $\XX=(\XX, \varrho,\di, \vec{\mu},(x_\ell)_{\ell\ge 1})$ 
where $\vec{\mu} = \sum_{j=1}^d \mu_j \vec{\epsilon}_j$ is a vector-valued measure on $\XX$ {and each $\mu_j$ is the measure on $\XX_j$}. Note that for all $i\in [d]$ the first $k$ entries of $(x_\ell^i;\ell\ge 1)$ for the marks on $\XX_i$ all agree so this gives a well-defined element in $\mathfrak{M}_{\tiny \mbox{mp}}^{(k),\ast d}$ no matter how we select the entries $(x_\ell)_{\ell>k}$.

Also, by abuse of notation, we will consider finitely pointed spaces $(\mathfrak{M}_{\tiny \mbox{mp}}^{(k),d},\dghp^{(k),d})$ as elements of $(\mathfrak{M}_{\tiny \mbox{mp}}^{*d},\dghp^{(\infty),d})$ by arbitrarily extending the sequence $(x_\ell)_{\ell\leq k}$ to $x_\ell=\varrho$ for $\ell>k$.

\subsection{Gluing Metric Spaces Along the Ulam Tree} \label{sssec:metricDecorations}


 Following \cite{Senizergues.20}, any function $f:\bbU \to E$ is called an $E$-valued decoration of $\bbU$.
Consider a decoration $\cD:\bbU\to \mathfrak{M}^{\ast d}_{\tiny \mbox{mp}}$, where for each $\bp\in \bbU$ 
\begin{equation*}
\cD(\bp): = \left(D_\bp, \di_\bp , \rho_\bp,\vec{\mu}_\bp, (x_{\bp;\ell})_{\ell\ge 1}\right).
\end{equation*}Recall the gluing operation $\sG$ and the subglued decoration $\sG^*(\cD)$ from Definition \ref{definitionDecorationGluedMetricspaceGluedDecoration}. 
One can easily define a distance $\di$ on $\sG^*(\cD)$ as follows. Suppose that $z\in D_\bp$ and $y\in D_\bq$ {for some $\bp,\bq\in \bbU$ }. Then 
\setlength{\leftmargini}{.15in}
\begin{itemize}
\item if $\bp = \bq$, then $\di(z,y) = \di_\bp(z,y)$;
\item if $\bp\prec \bq$ with $\bp = (p_1,\dotsm, p_n)$ and $\bq = (p_1,\dotsm, p_n,q_{n+1},\dotsm, q_{n+k})$, {for $\ell=1,\dotsm,k-1$ }set $\bq^{(\ell)} = \bp(q_{n+1},\dotsm, q_{n+\ell})$
\begin{equation*}
    \di(z,y)=\di(y,z) := \di_{\bp}(z,x_{\bp;q_{n+1}}) + \sum_{\ell=1}^{k-1} \di_{\bq^{(\ell)}} (\rho_{\bq^{(\ell)}},x_{\bq^{(\ell)}; q_{n+\ell+1}}) + \di_{\bq}(\rho_\bq, y);
\end{equation*}
\item if $\mathbf{w} = \bp\wedge \bq$ is the least common ancestor of $\bp$ and $\bq$, with $\bp = \mathbf{w}(\ell_1,\ell_2,\dotms, \ell_k)$ and $\bq = \mathbf{w} (r_1,\dotsm, r_{m})$ for some $k, m\ge 1$ then
\begin{equation*}
    \di(z,y) = \di(y,z) := \di_{\mathbf{w}}(x_{\mathbf{w}; \ell _1},x_{\mathbf{w};r_1}) + \di(x_{\mathbf{w};\ell_1},z)+\di(x_{\mathbf{w};r_1},y).
\end{equation*}
\end{itemize}
We then define $\sG(\cD)$ as the Cauchy completion of $\sG^*(\cD)$, i.e. $\sG(\cD) = \overline{\sG^*(\cD)}.$ 
A priori, this space is just an arbitrary complete metric space and so it need not be compact. Similarly, it need not come equipped with a finite vector-valued measures $\vec{\mu}$. If it is compact and its measure is finite, then $\sG(\cD)\in \mathfrak{M}^{\ast d}$.

\subsection{Construction in an Embedded Space}
In order to prove our general topological results, we introduce now some formal topological preliminaries following \cite{Senizergues.20}. 
The objective is to isometrically embed all the (equivalence classses of the) subdecorations $\cD(\bp)$ into a common space, and construct there the glued decoration.

Let us consider the Urysohn space $(\U,\delta)$ with a fixed point $\ast\in \U$. This is a metric space (with metric $\delta$) which is the unique Polish space (up-to isomorphism) which has the following extension property: given any finite metric space $(X,\di)$ and any $x\in X$, any isometry $\phi:X\setminus\{x\}\to \U$ can be extended to an isometry from the entire space $X$ to $\U$. This therefore extends to all separable metric spaces and hence any compact metric space. Moreover, if $(K,\di,\rho)$ is a compact metric space, the isometry $\phi:K\to \U$ can be chosen such that $\phi(\rho) = \ast$ (or any element of $\U$). In particular, for every decoration $\cD$ and every $\bp\in \bbU$, the infinitely marked metric measure space $\cD(\bp) = (D_\bp,\di_\bp, \rho_\bp, \vec{\mu}_\bp, (x_{\bp;\ell})_{\ell\ge 1})$ can be embedded isometrically into the space $\U$, i.e. there is some isometric embedding $\phi_\bp:D_\bp\to \U$ with $\phi_\bp(\rho_\bp) = \ast$. If $\vec{\mu}_\bp$ is a $d$-dimensional measure, then clearly  there exists an infinitely marked metric measure space $\XX_\bp\subset \U$ which is equivalent to $\cD(\bp)$ (i.e.  $\dghp^{(\infty),d}(\cD(\bp),\XX_\bp) = 0$), defined by
\begin{align*}
\XX_\bp = (\phi_\bp(D_\bp), \delta, \ast, (\phi_\bp)_{\#}\vec{\mu}_\bp, (\phi_\bp(x_{\bp;\ell}))_{\ell\ge 1}).
\end{align*}

We refer the reader to Section $3.11\frac{2}{3}{}_+$ in \cite{Gromov.99} as well as to \cite{Huvsek.08,Melleray.07} and references therein for more information about the properties of the Urysohn space. In particular, we would like to recall the following claim (c.f. \cite[Corollary 1.2]{Melleray.07}, Exercise (b') in \cite[pg. 82]{Gromov.99}.
\begin{claim}\label{claim:isometry1}
If $K,L$ are compact subsets of $\U$ and $\phi:K\to L$ is an isometry, then there exists an extension $\widetilde{\phi}:\U\to \U$ such that $\widetilde{\phi}|_K = \phi$
\end{claim}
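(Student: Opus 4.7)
The claim is the standard fact that the Urysohn space $\U$ is \emph{$\omega$-homogeneous over compact subsets}, and my plan is to prove it by the usual back-and-forth method combined with the one-point extension property that characterizes $\U$.

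First I would isolate the key technical lemma: for any compact $A\subset \U$, any isometry $\psi:A\to \U$, and any point $a\in \U\setminus A$, there exists $b\in \U$ such that $\delta(b,\psi(x))=\delta(a,x)$ for every $x\in A$. For a finite $A$ this is exactly the defining extension property of $\U$ recalled in the excerpt (applied to the one-point extension $A\cup\{a\}$). To pass from finite to compact, I would fix an $\eps$-net $A_\eps\subset A$, obtain a point $b_\eps\in \U$ extending $\psi|_{A_\eps}\cup\{a\mapsto b_\eps\}$ isometrically, and verify via the triangle inequality that $|\delta(b_\eps,\psi(x))-\delta(a,x)|\leq 2\eps$ for every $x\in A$. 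A diagonal/limit argument along $\eps_n\downarrow 0$, using that $\U$ is complete and that the candidate points $b_{\eps_n}$ form a Cauchy sequence (by the same $2\eps$-estimate applied twice), produces the desired $b$.

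Next I would run the back-and-forth construction. Fix a countable dense set $\{u_n\}_{n\geq1}\subset\U$ and set $\phi_0=\phi:K\to L$, so the initial partial isometry has compact domain and compact range. Inductively, at each odd step $2n+1$ I pick the first $u_m$ not already in $\operatorname{Dom}(\phi_{2n})$ and apply the lemma above (to $\phi_{2n}$ viewed as an isometry from its compact domain) to produce an extension $\phi_{2n+1}$ whose domain contains $u_m$. At each even step $2n+2$ I apply the same lemma to the inverse $\phi_{2n+1}^{-1}$ (whose domain is the compact range of $\phi_{2n+1}$) to extend the range to contain the first $u_m$ not yet hit. Crucially, each new domain is obtained from a compact set by adjoining a single point, so compactness is preserved and the lemma applies at every step.

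Finally I would assemble the global extension. The union $\Phi=\bigcup_n\phi_n$ is a partial isometry whose domain contains the dense set $\{u_n\}$ and whose range likewise contains $\{u_n\}$. Because $\Phi$ is an isometry on a dense subset of the complete metric space $\U$, it has a unique continuous extension $\widetilde{\phi}:\U\to\U$, which is automatically an isometry. The image $\widetilde{\phi}(\U)$ is complete, hence closed, and contains the dense set $\{u_n\}$, so $\widetilde{\phi}$ is surjective. By construction $\widetilde{\phi}|_K=\phi$, completing the proof. The main obstacle is the one-point extension lemma for compact (rather than finite) sets; once that approximation argument is in place, the back-and-forth bookkeeping is routine and the final extension by continuity is standard.
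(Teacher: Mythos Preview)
The paper does not prove this claim; it simply cites it as a known fact about the Urysohn space (referring to \cite[Corollary~1.2]{Melleray.07} and Gromov's book). Your back-and-forth argument is precisely the standard proof of compact homogeneity of $\U$ (originally due to Huhunai\v{s}vili), so in spirit you are reproducing what the cited references contain.

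There is one step that, as written, does not quite go through. In your one-point extension lemma for compact $A$, you assert that the approximating points $b_{\eps_n}$ form a Cauchy sequence ``by the same $2\eps$-estimate applied twice.'' But knowing that both $b_{\eps_n}$ and $b_{\eps_m}$ satisfy $|\delta(b_{\eps},\psi(x))-\delta(a,x)|\le 2\eps$ for all $x\in A$ gives no control on $\delta(b_{\eps_n},b_{\eps_m})$: there may be many points at the prescribed distances from $\psi(A)$, and they need not be close to one another. The fix is standard: take nested nets $A_{\eps_1}\subset A_{\eps_2}\subset\cdots$ with $\sum\eps_n<\infty$, and at step $n+1$ apply the finite extension property not to $\psi(A_{\eps_{n+1}})$ alone but to $\psi(A_{\eps_{n+1}})\cup\{b_{\eps_n}\}$, prescribing $\delta(b_{\eps_{n+1}},b_{\eps_n})=2\eps_n$. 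A short triangle-inequality check (using that $b_{\eps_n}$ is already within $2\eps_n$ of the correct distances on $A_{\eps_{n+1}}$) shows these constraints are consistent, so such $b_{\eps_{n+1}}$ exists. Then $\sum\delta(b_{\eps_{n+1}},b_{\eps_n})<\infty$ genuinely forces the sequence to be Cauchy. With this amendment, the rest of your argument (back-and-forth, extension by density and completeness, surjectivity) is correct.
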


As pointed out in \cite[Section 2.3.2]{Senizergues.20}, there are some measure-theoretic subtleties about equivalence classes of metric spaces involved with the procedure above. To avoid these, we define
$\mathfrak{M}^{\ast d}_{\tiny \mbox{mp}}(\U)$ as the subset of elements $(K,\delta_{|K},\ast,\mu,(\rho_\ell)_{\ell\geq 1})$, where $K$ is a compact subset of $\U$, we have $*,\rho_\ell\in K\subset \U$ for all $\ell\geq 1$, also $\delta_{|K}$ is the distance on $\U$ restricted to $K$, and the measure $\mu$ is a $d$-dimensional valued measure. 
The latter is the analogue of $\mathfrak{M}^{\ast d}_{\tiny \mbox{mp}}$, but in the Urysohn space.
We thus define the projection map $\pi:\mathfrak{M}^{\ast d}_{\tiny \mbox{mp}}(\U)\to \mathfrak{M}^{\ast d}_{\tiny \mbox{mp}}$, which assigns to each element $(K,\delta_{|K},\ast,\mu,(\rho_\ell)_{\ell\geq 1})$ in $\mathfrak{M}^{\ast d}_{\tiny \mbox{mp}}(\U)$, the corresponding equivalence class, say $[(K,\delta_{|K},\ast,\mu,(\rho_\ell)_{\ell\geq 1})]$ in $\mathfrak{M}^{\ast d}_{\tiny \mbox{mp}}$. 
Since we are only dealing with compact spaces, then this map is surjective and hence, we can lift any deterministic element of $\mathfrak{M}^{\ast d}_{\tiny \mbox{mp}}$ to an element of $\mathfrak{M}^{\ast d}_{\tiny \mbox{mp}}(\U)$. 
Now, since we are dealing with random elements (equivalence classes of subdecorations) taking values on $\mathfrak{M}^{\ast d}_{\tiny \mbox{mp}}$, we must ensure that we can consider versions of those random subdecorations that take values on $\mathfrak{M}^{\ast d}_{\tiny \mbox{mp}}(\U)$.
This is done by using a result of \cite{MR330393}, which implies that for our Polish spaces, every probability distribution $\tau$ on $\mathfrak{M}^{\ast d}_{\tiny \mbox{mp}}$ can be lifted to a probability distribution $\sigma$ on $\mathfrak{M}^{\ast d}_{\tiny \mbox{mp}}(\U)$, such that $\pi_*\sigma$ is equal to $\tau$. 
The previous ensures that we can always work with  versions of our subdecorations, embedded in $\U$, whose root coincides with $\ast$, and whose distribution coincides with the law of the (original) subdecoration in $\mathfrak{M}^{\ast d}_{\tiny \mbox{mp}}$. 
In the following, we will be loose on the fact that we are considering equivalence classes, as well as elements on $\U$.

Consider the ``sequence space'' $\U^{\bbU}$ consisting of all sequences $(y_\bp)_{\bp\in \bbU}$ of $\bbU$-indexed and $\U$-valued sequences. While this space $\U^{\bbU}$ is too large, we can define the space
\begin{equation*}
\ell^1(\bbU\to \U) := \left\{(y_\bp)_{\bp\in \bbU}\in \U^{\bbU}: \sum_{\bp\in \bbU} \delta(\ast,y_\bp) <\infty\right\}
\end{equation*}

\begin{figure}[t]
\centering
\includegraphics[width=0.7\linewidth]{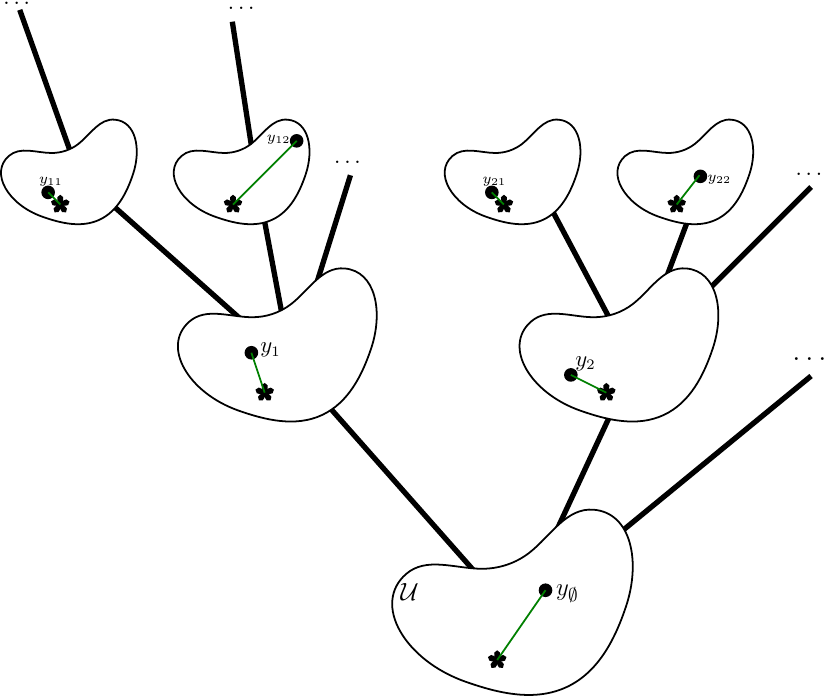}
\caption{Representation of $\ell^1(\bbU\to \U)$. The green lines represent the distance between the point $y_\bp$ and the root $\ast$. The thick black lines represent the edges in the Ulam-Harris tree $\bbU$. 
}
\label{fig:croppedl1}
\end{figure}

See Figure \ref{fig:croppedl1} for a representation of $\ell^1(\bbU\to\U)$. 
In words, $\ell^1(\bbU\to \U)$ is the collection of all $\bbU$-indexed collections of elements in $\U$ whose distance to some fixed point $\ast$ is summable. It is easy to see that $\ell^1(\bbU\to \U)$ is a Polish space with metric given by \begin{equation}\label{eqnDistanceOnTheSpaceEllbbUToCalU}
d_{\ell^1}((y_\bp), (z_\bp)): = \sum_{\bp\in \bbU} \delta(y_\bp,z_\bp)<\infty.
\end{equation} The following observation is helpful to know, its proof is trivial and omitted. 
\begin{claim}\label{claim:isometry2}
If for each $\bp\in \bbU$ we have some isometry $ \phi_\bp:\U\to \U$ which fixes $\ast$, then the product map,
\begin{equation*}
\phi = \prod_{\bp\in \bbU} \phi_\bp : \prod_{\bp\in \bbU} \U\to \prod_{\bp\in \bbU}\U\qquad \text{by}\qquad\phi\left((y_\bp)_{\bp\in \bbU}\right) = ( \phi_\bp(y_\bp))_{\bp\in \bbU}
\end{equation*}
is an isometry on $\ell^1(\bbU\to \U)$.
\end{claim}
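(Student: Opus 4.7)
The plan is essentially to check two things directly from the definitions: that $\phi$ sends $\ell^1(\bbU \to \U)$ into itself, and that it preserves the metric $d_{\ell^1}$ given in \eqref{eqnDistanceOnTheSpaceEllbbUToCalU}. Both reduce to the pointwise identity $\delta(\phi_\bp(u),\phi_\bp(v)) = \delta(u,v)$, combined with the assumption $\phi_\bp(\ast) = \ast$.

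First I would verify well-definedness. Take $(y_\bp)_{\bp \in \bbU} \in \ell^1(\bbU\to\U)$. Since $\phi_\bp$ is an isometry of $\U$ with $\phi_\bp(\ast) = \ast$, we have
\begin{equation*}
\delta(\ast,\phi_\bp(y_\bp)) \;=\; \delta(\phi_\bp(\ast),\phi_\bp(y_\bp)) \;=\; \delta(\ast, y_\bp),
\end{equation*}
so $\sum_{\bp\in\bbU} \delta(\ast,\phi_\bp(y_\bp)) = \sum_{\bp\in\bbU}\delta(\ast,y_\bp) < \infty$, meaning $\phi((y_\bp)) \in \ell^1(\bbU\to\U)$.

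Next, for the isometry property, pick $(y_\bp),(z_\bp) \in \ell^1(\bbU\to\U)$. Using the pointwise isometry property of each $\phi_\bp$, compute
\begin{equation*}
d_{\ell^1}\bigl(\phi((y_\bp)),\phi((z_\bp))\bigr) \;=\; \sum_{\bp \in \bbU} \delta(\phi_\bp(y_\bp),\phi_\bp(z_\bp)) \;=\; \sum_{\bp \in \bbU} \delta(y_\bp,z_\bp) \;=\; d_{\ell^1}\bigl((y_\bp),(z_\bp)\bigr).
\end{equation*}
Finally, to get surjectivity (and hence that $\phi$ is an isometric bijection, matching the paper's usage of ``isometry''), observe that each $\phi_\bp$ is a bijection of $\U$ with inverse $\phi_\bp^{-1}$, which is again an isometry fixing $\ast$. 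Thus $\prod_{\bp} \phi_\bp^{-1}$ is well-defined on $\ell^1(\bbU\to\U)$ by the same argument as above and is easily seen to be a two-sided inverse of $\phi$.

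There is no real obstacle here; the only mild subtlety worth flagging is to make sure one invokes $\phi_\bp(\ast)=\ast$ when justifying that $\phi$ preserves the summability condition defining $\ell^1(\bbU\to\U)$ — without the basepoint fix, membership in $\ell^1$ need not be preserved.
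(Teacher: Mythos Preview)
Your proof is correct and is exactly the direct verification the paper has in mind; the paper itself omits the proof as trivial, and your argument is the natural way to fill it in.
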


As described above, for any decoration $\cD$ and any ${\bp}\in \U$ , we can (and do) take 
\begin{equation}\label{eqnSubdecorationDefinedOntheUryshonSpace}
\cD(\bp) = (D_\bp, \di_\bp, \rho_\bp, \vec{\mu}_\bp, (x_{\bp;\ell})_{\ell\ge 1}) = \left(D_\bp , \delta|_{D_\bp\times D_\bp}, \ast, \vec{\mu}_\bp|_{D_\bp} , (x_{\bp;\ell})_{\ell\ge 1} \right)
\end{equation}
where $D_\bp\subset \U$ and $\vec{\mu}_\bp$ is a measure on $\U$ supported on $D_\bp$. This allows the gluing operation to be defined on the product space $\prod_{u\in \bbU}\U$ instead of abstractly. Following \cite{Senizergues.20}, we define for each $\bp = (p_1,p_2,\dotsm, p_n)$ and with $\bp(h) := (p_1,\dotsm, p_h)$ for $h=1,\dotsm, n$, the space
\begin{equation*}
\widetilde{D}_\bp = \left\{(y_\bq)_{\bq\in \bbU}\big| y_{\bp(h-1)} = x_{\bp(h-1);p_h} \text{ for }h\in [n],  y_\bp\in D_\bp, \text{ and }y_\bq = \ast \text{ if }\bq\not\preceq \bp   \right\}.
\end{equation*} 
See Figure \ref{fig:embeddinginl1cropped} for a representation. It is easy to see that $\widetilde{D}_\bp\subset \ell^{1}(\bbU\to \U)$ and is isometric to $D_\bp$. Indeed, for any two points $(y_\bq)_{\bq\in \bbU}$ and $(y'_\bq)_{\bq\in \bbU}$ in $\widetilde{D}_\bp$ we have (by definition)
\begin{align*}
y_\bq = y_\bq'\qquad\textup{ for all }\bq\neq \bp\qquad\textup{ and }\qquad y_\bp, y'_\bp\in D_\bp\subset\U.
\end{align*} Hence $d_{\ell^1}((y_\bq),(y_\bq')) = \delta(y_\bp,y_\bp')$ is precisely the (subspace) metric on $D_{\bp}$. Thus, the subglued decoration and glued decoration
\begin{equation*}
\sG^*(\cD) := \bigcup_{\bp\in \bbU} \widetilde{D}_\bp\subset \ell^{1}(\bbU\to \U),\qquad \sG(\cD) := \overline{ \sG^*(\cD)}\subset\ell^{1}(\bbU\to \U),
\end{equation*}as in Definition \ref{definitionDecorationGluedMetricspaceGluedDecoration} (recall that they come with a distance, a root and a measure).

Observe that the gluing operations as a subset of $\ell^1(\bbU\to \U)$ depend on the isometric embedding of $D_\bp\hookrightarrow \U$. However, the metric spaces $\sG^*(\cD)$ and $\sG(\cD)$ are unique up-to an isometry appearing the maps in Claim \ref{claim:isometry2} where each $\phi_\bp$ is as in Claim \ref{claim:isometry1}.

\begin{figure}
\centering
\includegraphics[width=0.7\linewidth]{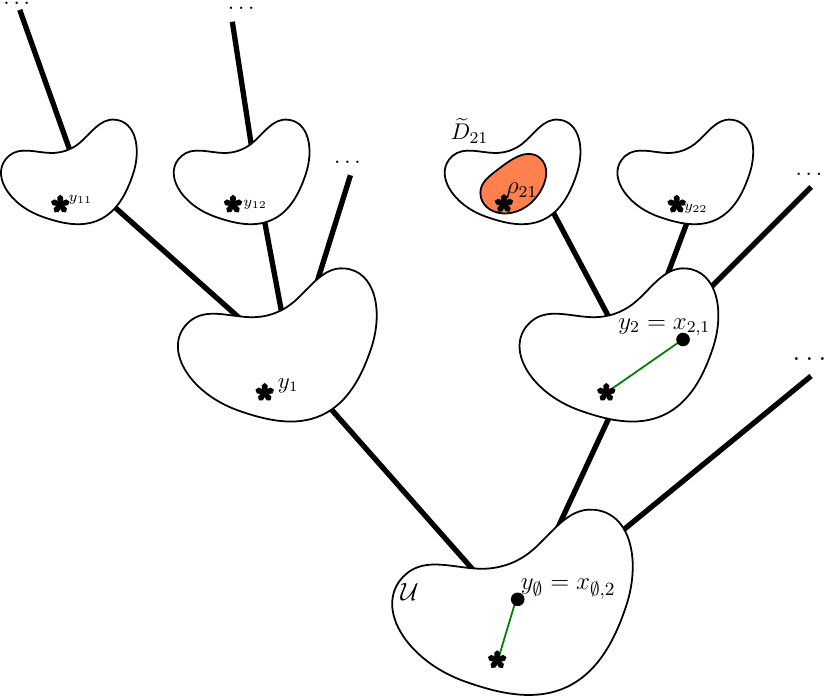}
\caption{The embedded subdecorated metric space $\widetilde{D}_{21}$. The orange blob in the copy of $\U$ is the embedded version of $D_{21}\subset U\hookrightarrow \ell^1(\bbU\to \U)$. }
\label{fig:embeddinginl1cropped}
\end{figure}

We view both spaces $\sG^*(\cD)$ and $\sG(\cD)$ as metric measure spaces.

\subsection{Bounds on Glued Decorations}

Some particular subsets of $\sG(\cD)$ will be helpful in the proof of Theorem \ref{thm:graphConv1}. For each $\tr\in \bbT$ and any $h\ge 0$, define
\begin{equation*}
	\sG(\tr, \cD) = \bigcup_{\bp\in \tr} \widetilde{D}_\bp\qquad 
	\sG_{\le h}^*(\cD) = \bigcup_{\substack{\bp\in \bbU\\ |\bp|\le h}} \widetilde{D}_\bp\qquad \sG_{\le h} = \overline{\sG_{\le h}^*( \cD)}, 
\end{equation*} for any decoration $\cD$.
These will be
viewed as glued decorations (metric measure spaces) as in Definition \ref{definitionDecorationGluedMetricspaceGluedDecoration}, and contained in $\ell^1(\bbU\to \U)$. Note that since all plane trees are assumed to be finite, then $\sG(\tr,\cD)$ is compact. 
The following lemma is a trivial extension of Lemma 2.4 in \cite{Senizergues.20} to the case of vector-valued measures, its proof is omitted.
\begin{lem}[S\'enizergues {\cite[Lemma 2.4]{Senizergues.20}}]\label{lem:finTreeLem}
	For any $\tr\in \bbT$, and any two $\mathfrak{M}^{\ast d}_{\tiny \mbox{mp}}$-valued decorations $\cD$ and $\cD'$
	\begin{equation*}
		\dghp(\sG(\tr,\cD), \sG(\tr,\cD')) \le 2\sum_{\bp\in \tr} 
\dghp^{(L_\tr(\bp)),d} (\cD(\bp),\cD'(\bp))
   \end{equation*}
	where $L_\tr(\bp) := \max\{\ell \in \N : \bp\ell \in \tr\}$ (setting it to 0 if $\bp$ is a leaf).
\end{lem}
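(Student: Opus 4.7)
The plan is to follow the embedding-based approach used by Sénizergues in the single-type, probability-measure case, adapting it only at the point where the Prohorov distance for a single measure is replaced by a sum of Prohorov distances for the $d$ coordinate measures. Since the metric $\dghp^{(k),d}$ on $\mathfrak{M}^{(k),\ast d}_{\tiny \mbox{mp}}$ is defined as an infimum over common isometric embeddings of a sum of the Hausdorff term, the root/marked-point terms, and $\sum_{j=1}^d d_P^{Z}((\iota_\bp)_\#\langle\vec{\mu}_\bp,\vec{\epsilon}_j\rangle,(\iota'_\bp)_\#\langle\vec{\mu}'_\bp,\vec{\epsilon}_j\rangle)$, the only structural change from Sénizergues' argument is that this extra summation rides along passively throughout.

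Fix $\delta>0$ and write $\varepsilon_\bp:=\dghp^{(\chi_\tr(\bp)),d}(\cD(\bp),\cD'(\bp))$. For each $\bp\in\tr$, choose a compact ambient space $(Z_\bp,d_{Z_\bp})$ together with isometric embeddings $\iota_\bp:D_\bp\to Z_\bp$ and $\iota'_\bp:D'_\bp\to Z_\bp$ that realize $\varepsilon_\bp$ up to $\delta/\#\tr$ in every component (Hausdorff of images, Prohorov of each coordinate measure, root, and the first $\chi_\tr(\bp)$ marked points; the marked points $x_{\bp;\ell}$ with $\ell>\chi_\tr(\bp)$ do not enter into $\sG(\tr,\cD)$, so we do not need to control them). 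Next, build inductively a common embedding into the Urysohn space $\U$: embed $Z_\emptyset$ arbitrarily; given that $Z_\bp\hookrightarrow \U$ has been fixed and $\bp\ell\in\tr$, use Claim \ref{claim:isometry1} (the Urysohn extension property) to extend the pair of isometries $\{\iota_\bp(x_{\bp;\ell})\mapsto \iota_{\bp\ell}(\rho_{\bp\ell}),\ \iota'_\bp(x'_{\bp;\ell})\mapsto \iota'_{\bp\ell}(\rho'_{\bp\ell})\}$ into a global isometry of $Z_{\bp\ell}$ into $\U$. This enforces simultaneously the gluing relation $x_{\bp;\ell}\sim\rho_{\bp\ell}$ for $\cD$ and $x'_{\bp;\ell}\sim\rho'_{\bp\ell}$ for $\cD'$, so the resulting images in $\U$ are isometric copies of $\sG(\tr,\cD)$ and $\sG(\tr,\cD')$ equipped with their correct (pushforward) vector-valued measures.

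It remains to bound the three ingredients of $\dghp$ in $\U$. The Prohorov term is straightforward: for each coordinate $j$, $\langle\vec{\mu}_\bp,\vec{\epsilon}_j\rangle$ inside $\sG(\tr,\cD)$ pushes forward to a measure on $\U$ supported on the image of $Z_\bp$, and the sum over $\bp\in\tr$ of local Prohorov distances dominates the global Prohorov distance of $\langle\vec{\mu},\vec{\epsilon}_j\rangle$; summing over $j$ and using the definition of $\dghp^{(\chi_\tr(\bp)),d}$ gives a bound by $\sum_\bp\varepsilon_\bp$. For the root, it sits inside $Z_\emptyset$, giving a single $\varepsilon_\emptyset$ contribution. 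The main step — which is where the factor of $2$ appears — is the Hausdorff bound. Any point $y\in\sG(\tr,\cD)\cap Z_\bp$ is $\varepsilon_\bp$-close to some $y'\in \iota'_\bp(D'_\bp)\subseteq\sG(\tr,\cD')$; this alone would give a bound of $\sum_\bp\varepsilon_\bp$. However, when one wants to compare the point $y=\iota_\bp(x_{\bp;\ell})=\iota_{\bp\ell}(\rho_{\bp\ell})\in\sG(\tr,\cD)$ with its counterpart in $\sG(\tr,\cD')$, the gluing identifications force that in $\U$ the point $\iota'_\bp(x'_{\bp;\ell})$ (in $Z_\bp$) and the point $\iota'_{\bp\ell}(\rho'_{\bp\ell})$ (in $Z_{\bp\ell}$) are distinct representatives of the same element of $\sG(\tr,\cD')$, and a nearest witness may lie in either of the two. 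Tracking this carefully, one sees that each internal gluing point contributes at most $\varepsilon_\bp+\varepsilon_{\bp\ell}$, and summed along any path one gets a Hausdorff bound of $2\sum_\bp\varepsilon_\bp$.

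Putting the three bounds together yields $\dghp(\sG(\tr,\cD),\sG(\tr,\cD'))\le 2\sum_{\bp\in\tr}\varepsilon_\bp+C\delta$, and letting $\delta\downarrow 0$ gives the claim. The main obstacle — as in the single-type case — is the simultaneous gluing step: one must verify that the Urysohn extension property can be applied compatibly for both $\cD$ and $\cD'$ at every internal node without losing the coordinate-wise measure estimates already obtained in $Z_\bp$. This is standard once one works in the $\ell^1(\bbU\to\U)$ framework described just before the statement of the lemma, since pushforwards of measures supported on $D_\bp$ are unaffected by extending isometries beyond $Z_\bp$ (Claim \ref{claim:isometry2}).
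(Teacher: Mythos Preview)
Your approach is essentially the one the paper has in mind: the paper omits the proof entirely, stating that it is ``a trivial extension of Lemma 2.4 in \cite{Senizergues.20} to the case of vector valued measures,'' and your sketch reproduces S\'enizergues' embedding argument with the Prohorov term replaced by the sum over coordinates. One small inaccuracy: the paper defines $\dghp^{(k),d}$ as a \emph{sum of infima} $\sum_{j=1}^d \dghp^{(k)}(\XX_j,\YY_j)$ rather than a single infimum of a sum over a common embedding; since the underlying marked metric spaces are identical across coordinates this does not affect the argument, but it does mean the cleanest route is simply to apply S\'enizergues' single-measure lemma coordinate by coordinate and sum, which makes the ``trivial extension'' claim immediate.
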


This section is devoted to the proof of Theorem \ref{thm:graphConv1}, about the convergence of the glued decorations. The idea is the following: if $\cD$ satisfies Assumption \ref{ass:uniformBound} $\sG(\cD)$ then in the GHP topology we can approximate it by $\sG_{\le h}(\cD)$ and this, in turn, can be approximated by $\sG(\tr,\cD)$. This last is depicted in Figure \ref{fig:gtd-approx}.
\begin{figure}
	\centering
	\includegraphics[width=0.7\linewidth]{"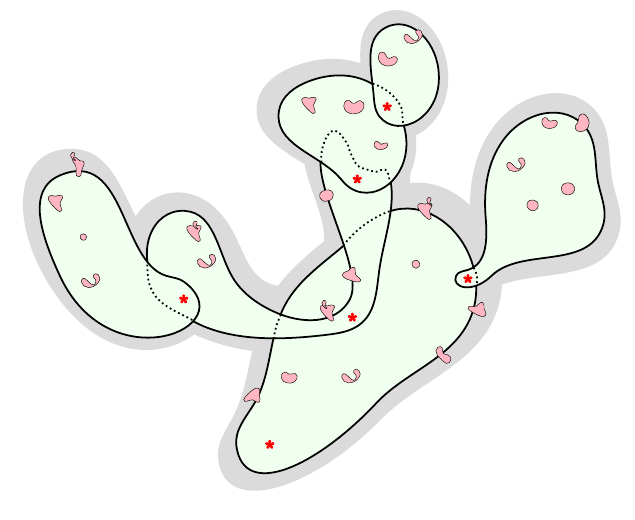"}
	\caption{Approximation of $\sG(\cD)$ by $\sG(\tr,\cD)$. The blobs in green are the only subdecorations large enough to significantly contribute to the mass or geometry in the limit, and the pink blobs are some of the subdecorations in $\bbU\setminus \tr$. The red dots are the roots of the green subdecorations. The gray region is an $\eps$-neighborhood. }
	\label{fig:gtd-approx}
\end{figure}
 Similarly, under \ref{ass:2thm1}--\ref{ass:3thm1} we can do the same approximations for $\cD^n$ uniformly for all $n$ large. Finally, the pointwise convergence in \ref{ass:1thm1} shows that $\sG(\tr,\cD^n)$ is arbitrarily close to $\sG(\tr,\cD)$ for all trees $\tr$ by the previous lemma.


We recall that for any metric space $(X,d)$, any point $x\in X$ and subset $A\subseteq X$, we have $d(x,A):=\inf_{y\in A} d(x,y)$. 

\begin{lem}\label{lem:limitfordecor}
Suppose $(\cD^n;n\in \mathbb{N}), \cD$ satisfy the assumptions of Theorem \ref{thm:graphConv1}. Then
\begin{enumerate}
\item\label{eqnThmGraphConv11} 
$
\displaystyle    \lim_{h\to\infty} \limsup_{n\to\infty} \dghp^d(\sG(\cD^n), \sG_{\le h}(\cD^n)) = 0.
$
\item\label{eqnThmGraphConv12} For all $h\ge 1$ fixed $\displaystyle     \inf_{\tr\in \bbT} \limsup_{n\to\infty} \dghp^d (\sG_{\le h} (\cD^n) , \sG(\tr, \cD^n)) = 0.$
Moreover, the infimum over $\tr$ can be taken over $\tr$ with height at most $h$.
\item\label{eqnThmGraphConv13} For all $\tr\in \bbT$, $  \displaystyle  \limsup_{n\to\infty} \dghp^d (\sG(\tr,\cD^n), \sG(\tr,\cD)) = 0.$
\end{enumerate}
\end{lem}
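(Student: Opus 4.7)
The plan is to establish the three statements by invoking H.1, H.3 and H.2 respectively, together with Lemma \ref{lem:finTreeLem}.

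For \eqref{eqnThmGraphConv13}, I will apply Lemma \ref{lem:finTreeLem}: since $\tr$ is a finite plane tree with finite child counts $\chi_\tr(\bp)$,
\[
\dghp^d(\sG(\tr,\cD^n),\sG(\tr,\cD))\le 2\sum_{\bp\in\tr}\dghp^{(\chi_\tr(\bp)),d}(\cD^n(\bp),\cD(\bp))
\]
is a finite sum and each term vanishes by H.1, using that convergence in $\dghpId$ is equivalent to convergence of every $\dghp^{(k),d}$, $k<\infty$, since $\dghpId$ is a $2^{-k}$-weighted sum of these.

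For \eqref{eqnThmGraphConv11}, I fix $\eps>0$ and invoke H.3 to obtain $h(\eps)$ and $n_{h,\eps}$. The inclusion $\sG_{\le h}(\cD^n)\subseteq \sG(\cD^n)\subset \ell^1(\bbU\to\U)$ realizes both spaces in a common ambient metric space sharing the root. For $h\ge h(\eps)$ and $n\ge n_{h,\eps}$, the second clause of H.3 bounds the Hausdorff distance by $\eps$, while the first clause controls each of the $d$ coordinate Prohorov distances by $\eps$, as the excess measure is supported on $\sG(\cD^n)\setminus\sG_{\le h}(\cD^n)$. Hence $\dghp^d(\sG(\cD^n),\sG_{\le h}(\cD^n))=O(\eps)$ uniformly in large $n$, and taking $\limsup_n$ then $\lim_h$ and sending $\eps\downarrow 0$ yields the claim.

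For \eqref{eqnThmGraphConv12}, the core step is building the approximating finite tree $\tr$ level by level. Given $h\ge 1$ and $\eps>0$, I apply H.2 at each $k=1,\dots,h$ with tolerance $\eps/h$ to obtain finite $I(k,\eps/h)\subseteq\{|\bp|=k\}$ and thresholds $n_{k,\eps/h}$; then I let $\tr$ be the smallest plane tree of height at most $h$ containing $\bigcup_{k=1}^h I(k,\eps/h)$, obtained by closing under ancestors and sibling completion, which remains finite. For $n\ge\max_{k\le h}n_{k,\eps/h}$ and $\bp\notin\tr$ with $|\bp|=k\le h$, necessarily $\bp\notin I(k,\eps/h)$, so $\textup{diam}(D^n_\bp)\le \eps/h$, and summing the level-wise mass bounds from H.2 gives $\sum_{\bp\notin\tr,\,|\bp|\le h}\|\vec\mu^n_\bp\|(D^n_\bp)\le\eps$. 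For any $y\in \widetilde D^n_\bp$ with $\bp\notin\tr$, I traverse the at most $h$ ancestors of $\bp$ until reaching the first one in $\tr$: each intermediate subdecoration contributes at most $\eps/h$ to $d_{\sG(\cD^n)}$, so $y$ lies within $\eps$ of $\sG(\tr,\cD^n)$. This simultaneously bounds the Hausdorff and Prohorov contributions, giving $\dghp^d(\sG_{\le h}(\cD^n),\sG(\tr,\cD^n))=O(\eps)$ for large $n$; as $\eps$ is arbitrary, the infimum vanishes.

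The main obstacle is \eqref{eqnThmGraphConv12}: propagating the diameter bound along possibly long ancestral chains of subdecorations in $\bbU$ is what forces the $\eps/h$ budgeting across levels and the careful plane-tree closure in the construction of $\tr$, whereas \eqref{eqnThmGraphConv11} and \eqref{eqnThmGraphConv13} follow fairly mechanically from H.3 and Lemma \ref{lem:finTreeLem} respectively.
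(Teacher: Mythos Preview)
Your proposal is correct and follows essentially the same approach as the paper. The only cosmetic difference is that for part \eqref{eqnThmGraphConv12} you budget $\eps/h$ per level to obtain an $O(\eps)$ bound, whereas the paper uses $\eps$ at each level and obtains an $O(h\eps)$ bound; since $h$ is fixed, both yield the same conclusion. One minor point you glossed over in part \eqref{eqnThmGraphConv11}: the second clause of \ref{ass:3thm1} is stated only for $y\in D^n_\bp\subset\sG^*(\cD^n)$, so to bound the Hausdorff distance on the Cauchy completion $\sG(\cD^n)$ you should first approximate an arbitrary $y\in\sG(\cD^n)$ by some $y'\in\sG^*(\cD^n)$ within $\eps$, as the paper does explicitly; this is routine by density but worth a sentence.
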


\begin{proof}
We begin by proving the first limit. Fix an $\eps>0$ and let $h \ge h(\eps)$ and $n_{h,\eps}$ from Assumption \ref{ass:3thm1} be fixed. We assume that all the glued metric spaces we analyze are embedded in $\ell^1(\bbU\to \U)$ as discussed above.

Consider any $n\ge n_{h,\eps}$.
Note that 
as $\sG_{\le h}(\cD^n)\subseteq \sG(\cD^n),$
\begin{equation*}
\di_{H} (\sG(\cD^n), \sG_{\le h}(\cD^n)) = \sup_{y\in \sG(\cD^n)}\di_{\ell^1}(y,\sG_{\le h}(\cD^n)),
\end{equation*}where we recall that the distance on $\ell^1(\bbU\to \U)$ is given in \eqref{eqnDistanceOnTheSpaceEllbbUToCalU}.
Therefore, we can find $y\in \sG(\cD^n)$ such that {for any $n$ big enough}
\begin{equation}\label{eqn:hausdorff1,lemma55}
 \di_{H} (\sG(\cD^n), \sG_{\le h}(\cD^n)) \le \di_{\ell^1}(y,\sG_{\le h}(\cD^n))+\eps.
\end{equation}
By the definition of Cauchy completions $\inf_{y'\in \sG^*(\cD^n)}\di_{\ell^1}(y,y') = 0$
and so we can find $\bp\in \bbU$ and $y'\in \cD^n(\bp)\hookrightarrow \sG^*(\cD^n)$ such that $\di_{\ell^1}(y,y') \le \eps.$ 
Hence, by \eqref{eqn:hausdorff1,lemma55}
\begin{equation}\label{eqn:hausdorff2,lemma55}
\di_{H} (\sG(\cD^n), \sG_{\le h}(\cD^n)) \le \di_{\ell^1}(y',\sG_{\le h}(\cD^n))+2\eps.
\end{equation}
Also, by definition, for all $z\in \sG_{\le h}(\cD^n)$ we have $\di_{\ell^1}(y',\sG_{\le h}(\cD^n)) \le \di_{\ell^1}(y',z)$. 
By Assumption \ref{ass:3thm1}, we can find $\bq\in \bbU$ with $|\bq|\le h$ and $z\in D^n(\bq)\subseteq \sG_{\le h}(\cD^n)$ such that $\di_{\ell^1}(y',z)\le \eps$, which holds trivially when $|\bp |\leq h$ as we can take $\bq=\bp$ and $z=y'$. Thus, by \eqref{eqn:hausdorff2,lemma55}
\begin{equation}\label{eqn:hausdorff3,lemma56}
\di_{H} (\sG(\cD^n), \sG_{\le h}(\cD^n)) \le3\eps.
\end{equation}
For the Prohorov distance, note that as $\sG_{\le h}(\cD^n) \subseteq \sG(\cD^n)$ we have
\begin{equation*}
\di_P(\sG(\cD^n),\sG_{\le h}(\cD^n)) \le \sum_{\bp: |\bp|> h} \|\vec{\mu}_{\bp}^n\|(D^n_\bp) \le \eps
\end{equation*}
where the last inequality is simply \eqref{eqn:massH3}. 
Hence by \eqref{eqn:hausdorff3,lemma56}, for all $n$ large we have
\begin{equation*}
\dghp (\sG(\cD^n), \sG_{\le h}(\cD^n)) \le 4\eps.
\end{equation*}
Taking $\limsup_{n\to\infty}$, $\limsup_{h\to\infty}$ and then $\lim_{\eps\downarrow 0}$ implies the convergence in \eqref{eqnThmGraphConv11}. 

The proof of \eqref{eqnThmGraphConv12}  is similar except we use Assumption \ref{ass:2thm1} instead. 
Indeed, let $h\geq 1$ and $\eps>0$ be fixed but arbitrary. For each $j\le h$, let $I(j,\eps)$ and $n_{j,\eps}$ be as in \ref{ass:2thm1}. Let $n\ge \max_{j\le h}n_{j,\eps}$ be fixed and let 
\begin{equation}\label{eqn:Ihdef}
I^h := \bigcup_{j\le h} I(j,\eps).
\end{equation}As $I^h$ is finite, we can find a tree $\tr\in \bbT$ such that $I^h\subseteq \tr$. Without loss of generality we assume that $\tr$ is the smallest such tree and therefore if $\bp\in \tr$ then $|\bp|\le h$. 
Noting that $\sG(\tr,\cD^n)\subseteq \sG_{\le h}(\cD^n)$, and any path from a vertex $\bq\in \bbU$ with height at most $h$ is connected to $\tr$ by elements $\bp\in \bbU\setminus \tr $ with $|\bp|\leq h$, we have for any $y\in \cD^n_\bq \subset  \sG_{\le h}(\cD^n)$ that the distance $\di_{\ell^1}(y,\sG(\tr,\cD^n))$ is at most
\begin{align*}
\dghp &(\sG_{\le h} (\cD^n), \sG(\tr,\cD^n)) \le \sum_{\bp\notin \tr: |\bp|\le h} \|\vec{\mu}^n_{\bp}\|(D^n_\bp)+\sum_{j\leq h}\sup_{\bp\notin \tr: |\bp|=j}  \operatorname{diam}(D^n_\bp)\\
&\le \sum_{\bp\notin I^h: |\bp|\le h} \|\vec{\mu}^n_{\bp}\|(D^n_\bp)+\sum_{j\leq h}\sup_{\bp\notin  I^h: |\bp|=j}  \operatorname{diam}(D^n_\bp)\\
&\le \sum_{j\le h}\sum_{\bp\notin I(j,\eps):|\bp|=j} \|\vec{\mu}^n_{\bp}\|(D^n_\bp)+\sum_{j\le h} \sup_{\bp\notin I(j,\eps):|\bp|=j} \operatorname{diam}(D^n_\bp)\\
& \le 2h \eps.
\end{align*}
Here we used in the last inequality Assumption \ref{ass:2thm1}. Therefore, for all $\eps>0$, there exists a finite tree $\tr\in \bbT$ with height at most $h$ such that 
\begin{equation*}
\limsup_{n\to\infty} \dghp (\sG_{\le h} (\cD^n), \sG(\tr,\cD^n)) \le 2h \eps
\end{equation*} where we recall $h$ is fixed. Taking the infimum over $\tr$, and then $\eps\downarrow 0$ gives the desired conclusion. We note that there is no loss of generality in taking the infimum over $\tr$ that are of height at most $h$ instead of all $\bbT$. 

Part 3 is a direct consequence of Lemma \ref{lem:finTreeLem} and Assumption \ref{ass:1thm1}.
\end{proof}

{ It is not hard to check that \ref{ass:1thm1}--\ref{ass:3thm1} hold for $\cD^n\equiv \cD$ whenever $\cD$ satisfies Assumption \ref{ass:uniformBound}. Consequently, we get the following corollary.}

\begin{cor}\label{cor:limittreefornicedecoration}
Suppose $\cD$ satisfies Assumption \ref{ass:uniformBound}, then $\inf_{\tr\in \bbT} \dghp (\sG(\tr,\cD), \sG(\cD)) = 0.$
\end{cor}

\subsection{Convergence of glued decorations: Proof of Theorem \ref{thm:graphConv1}}

\begin{proof}[Proof of Theorem \ref{thm:graphConv1}] 

Suppose, for the moment, that $\cD$ satisfies Assumption \ref{ass:uniformBound}. Then, using the triangle inequality for any tree $\tr\in \bbT$ and any $n,h\geq 1$, we have
\begin{align}\label{eqn:triangle1}
\dghp^d (\sG(\cD^n), \sG(\cD))\leq \dghp^d &(\sG(\cD^n), \sG_{\leq h}(\cD^n))+\dghp^d (\sG_{\leq h}(\cD^n), \sG(\tr,\cD^n))\\
\nonumber&\hspace{.5cm}+\dghp^d (\sG(\tr,\cD^n), \sG(\tr,\cD))+\dghp^d (\sG(\tr,\cD), \sG(\cD)). 
\end{align}

Fix $\eps>0$. By Lemma \ref{lem:limitfordecor}\eqref{eqnThmGraphConv11}, there exists $h_1 = h_1(\eps)$ such that for all $n$ large $\dghp^d (\sG(\cD^n),\sG_{\le h_1}(\cD^n))\le \eps$. Since for each fixed $n$, the mapping  $h\mapsto \dghp^d(\sG(\cD^n), \sG_{\le h}(\cD^n))$ is decreasing we have
\begin{align*}
\limsup_{n\to \infty} \dghp^d (\sG(\cD^n),\sG_{\le h}(\cD^n))\le \eps\qquad\textup{ for all }h\ge h_1.
\end{align*}
From Corollary \ref{cor:limittreefornicedecoration}, we can find a tree $\tr_1 = \tr_1(\eps)$ such that $\limsup_{n\to\infty} \dghp^d(\sG(\tr_1,\cD),\sG(\cD)) \le \eps$. Again, by monotonicity properties we conclude
\begin{align*}
\limsup_{n\to\infty} \dghp^d (\sG(\cD^n),\sG_{\le h}(\cD^n)) + \dghp^d(\sG(\tr,\cD),\sG(\cD)) \le 2\eps \qquad\textup{ for all }h\ge h_1, \tr\supset \tr_1.
\end{align*}
Let us set $h_2 = h_2(\eps) =h_1(\eps)\vee  \max\{|\bp| : \bp\in \tr_1(\eps)\}$. 

Recall from Lemma \ref{lem:limitfordecor}\eqref{eqnThmGraphConv12}, for $h_2$ above we can find a tree $\tr_2=\tr_2(\eps)\supset \tr_1(\eps)$ with height at most $h_2$ such that for all $n$ large $\dghp^d(\sG_{\le h_2}(\cD^n), \sG(\tr_2,\cD^n))\le \eps$. In particular, 
\begin{align*}
\limsup_{n\to\infty} \dghp^d (\sG(\cD^n),\sG_{\le h_2}(\cD^n)) + \dghp^d(\sG(\tr_2,\cD),\sG(\cD))+ \dghp^d\left(\sG_{\le h_2}(\cD^n), \sG(\tr_2,\cD^n)\right) \le 3\eps.
\end{align*}

By using Lemma \ref{lem:limitfordecor}\eqref{eqnThmGraphConv13}, 
\begin{align*}
\lim_{n\to\infty}\dghp^d (\sG(\tr_2,\cD^n), \sG(\tr_2,\cD)) =0.
\end{align*}
Consequently, by \eqref{eqn:triangle1} with $h = h_2, \tr = \tr_2$,
\begin{align*}
\limsup_{n\to\infty}\dghp^d (\sG(\cD^n), \sG(\cD)) \le 3\eps\qquad\textup{ for all }\eps>0.
\end{align*}

Therefore, the proof of Theorem \ref{thm:graphConv1} follows once we show that $\cD$ satisfies Assumption \ref{ass:uniformBound}.
To establish Assumption \ref{ass:uniformBound}(1) we fix an $\eps>0$ and let $h=h(\eps)$ be as in Assumption \ref{ass:3thm1}. Let $I^h$ be as in \eqref{eqn:Ihdef} for this fixed $h$. 
Since the size of all subdecorations above $h$ is bounded by \eqref{eqn:massH3} in \ref{ass:3thm1}, and the size of the subdecorations in each of the generations below generation $h$ but outside $I^h$ is bounded by \ref{ass:2thm1}, then for all $n$ large
\begin{equation*}
\sum_{\bp\in \bbU}\|\vec{\mu}^n_{\bp}\|(D^n_\bp) \le \sum_{\bp\in I^h} \|\vec{\mu}^n_{\bp}\|(D^n_\bp) + (h+1)\eps.
\end{equation*}
As $I^h$ is a finite set, by Assumption \ref{ass:1thm1} we can conclude that
\begin{equation*}
\lim_{n\to\infty} \sum_{\bp\in I^h} \|\vec{\mu}^n_{\bp}\|(D^n_\bp) = \sum_{\bp\in I^h} \|\vec{\mu}_{\bp}\|(D_\bp)<\infty.
\end{equation*}
Therefore $\displaystyle \limsup_{n\to\infty} \sum_{\bp\in \bbU}\|\vec{\mu}^n_{\bp}\|(D^n_\bp) <\infty.$ Hence,  Assumption \ref{ass:uniformBound}(1) for $\cD$ follows from by Assumption \ref{ass:1thm1} and Fatou's lemma.
Now we prove Assumption \ref{ass:uniformBound}(2).
Fix any $h\geq 1$ and $\eps>0$.
For $I^h$ (recall that the latter depends on $\eps$) defined in \eqref{eqn:Ihdef}, we have that for all $n$ large
\begin{equation*}
\{\bp\in \bbU : |\bp|\in\{1,\dotsm, h\}, \textup{ and }\operatorname{diam}(D^n_\bp)> 2\eps\}\subseteq I^h
\end{equation*}
by Assumption \ref{ass:2thm1}. But if $\bp\in \bbU$ with $|\bp|\in\{1,\dotsm, h\}$ is such that $\operatorname{diam}(D_\bp)>2\eps$ then by Assumption \ref{ass:1thm1} necessarily 
\begin{equation*}
\limsup_{n\to\infty} \operatorname{diam}(D^n_\bp) > 2\eps. 
\end{equation*}Hence such point $\bp$ is in $I^h$. 
It follows that
\begin{equation*}
\{\bp\in \bbU: |\bp|\in\{1,\dotsm, h\}, \textup{ and } \operatorname{diam}(D_\bp)>2\eps\} \subset I^h,
\end{equation*} which is a finite set (depending on $\eps$). This verifies Assumption \ref{ass:uniformBound}(2).

Finally, to reach a contradiction, suppose that Assumption \ref{ass:uniformBound}(3) fails to hold. Then for some $\delta>0$, and every $h=1,2,\dotms$ there exists some $\bp =\bp(\delta,h) \in \bbU$ with $|\bp|> h$ and some $y\in D_\bp\hookrightarrow\sG(\cD)\subset \ell^1(\bbU\to \U)$ such that
\begin{equation}\label{eqn:infqzandy}
\forall\ |\bq|\le h, \forall\ z\in D_\bq\textup{ it holds that }\di \left(y,z\right) \ge 3\delta.
\end{equation}
By parts \eqref{eqnThmGraphConv11} and \eqref{eqnThmGraphConv12} of Lemma \ref{lem:limitfordecor}, for all $\eps>0$ there exists an $h(\eps)>0$ such that for all $h\ge h(\eps)$ there is a finite tree $\tr= \tr(\eps,h)\in \bbT$ such that for all $n$ sufficiently large
\begin{equation}\label{eqn:infqzandy2}
\sup_{z\in \sG(\cD^n)} \di(z,\sG_{\le h}(\cD^n))+  \sup_{z\in \sG_{\leq h}(\cD^n)} \di\left(z, \sG(\tr;\cD^n) \right) \le 2\eps,
\end{equation}where $\tr(\eps,h)$ has height at most $h$.
Here we used in each summand the equivalent definition of the Hausdorff distance between two metric spaces $(X,d)$ and $(Y,d)$ as $\max\{\sup_{x\in X}d(x,Y), \sup_{y\in Y}d(X,y)\}$. 

Define $\tr_0 = \tr(\delta, h(\delta))$ as above and let $\tr_1 = \tr_1(\delta,h(\delta),\bp(\delta,h(\delta)))$ be the smallest finite tree such that
\begin{equation*}
\tr_0\subset \tr_1\qquad\textup{and}\qquad \bp(\delta,h(\delta)) \in \tr_1.
\end{equation*}Note that $\tr_0\neq \tr_1$ as $|\bp(\delta,h(\delta))| \ge h(\delta)+1$. 
Also we have
\begin{equation}\label{eqn:infqzandy3}
\sup_{y\in \sG(\tr_1;\cD)} \di(y,\sG(\tr_0;\cD)) \ge     \sup_{y\in \sG(\tr_1;\cD)} \inf_{z\in D_\bq, |\bq|\le h}\di(y,z)\geq \sup_{y\in D_\bp} \inf_{z\in D_\bq, |\bq|\le h}\di(y,z)\ge 3\delta
\end{equation} by \eqref{eqn:infqzandy}.
But for all $n$ large, by \eqref{eqn:infqzandy2}, 
\begin{align*}
&  \sup_{y\in \sG(\tr_1;\cD^n)} \di(y,\sG(\tr_0;\cD^n)) \leq \sup_{y\in \sG(\cD^n)} \di(y,\sG_{\le h}(\cD^n))+  \dghp\left(\sG_{\leq h}(\cD^n), \sG(\tr_0;\cD^n) \right)\le 2\delta.
\end{align*}Taking limsup as $n$ goes to infinity, together with \eqref{eqn:infqzandy3} gives us a contradiction, since Lemma \ref{lem:finTreeLem} implies that in the GHP topology $\sG(\tr_1;\cD^n) \to \sG(\tr_1;\cD)$ and $\sG(\tr_0;\cD^n) \to \sG(\tr_0;\cD)$ as $n\to\infty$.
\end{proof}

\section{Path Properties of L\'{e}vy Processes}\label{sec:levy}

The key to establishing the hypotheses of Theorem \ref{thm:graphConv1} for MBGW trees is some fluctuation theory for L\'{e}vy processes and L\'{e}vy fields which we now recall.

\subsection{Multidimensional First Hitting Times}\label{sec:hittingTimes}

We have already noted the discrete first hitting times of \cite{MR3449255} in Section \ref{subsectionGluedDecorationConditionedWithALargeFirstSubtree} (see equation \eqref{eqn:Rrecurse2}), so in this section we recall some of the recent work of Chaumont and Marolleau \cite{MR4193902,Chaumont:2021} in the continuous time setting.

\subsubsection{Deterministic Additive Fields and first hitting times}

Consider a collection of functions $(x^{i,j};i,j\in[d])$ such that 
\begin{enumerate}
\item For all $i,j$, the function $x^{i,j}:\R_+\to \R$ is c\`adl\`ag and  $x^{i,j}(0) = 0$;
\item For all $i$, we have $x^{i,i}(t)\ge x^{i,i}(t-)$ for all $t\ge 0$;
\item For all $i\neq j$, we have that $x^{i,j}$ is non-decreasing.
\end{enumerate}

Recalling the minimal solution to \eqref{eqn:minSolDiscrete}, the analogous minimal solution when the underlying paths $(x^{i,j})$ are indexed by continuous time $t\ge 0$, is given for $\vec{r}\in\R_+^d$, by 
\begin{equation}
\label{eqn:minSol}  \vec{T}(\vec{r}) := \vec{T}(\vec{r},(x^{i,j})_{i,j}) =   \inf\left\{\vec{t}\in[0,\infty]^d:  \sum_{i=1}^d x^{i,j}(t_i-) = -r_j,\qquad \forall j\textup{ s.t. }t_j<\infty\right\}
\end{equation}
for the \textit{unique} element  $\vec{t}\in [0,\infty]^d$ such that, if $\vec{s}$ is any solution to $\sum_{i=1}^d x^{i,j}(s_i-) = -r_j$ for all $j$ such that $s_j<\infty$, then $s_i\ge t_i$ for all $i\in[d]$. The existence and uniqueness of $\vec T(\vec{r})$ is shown in \cite{MR4193902}.
\label{subsecConstructionFirstHittingTime}

We now describe the construction of the first hitting time as described in the appendix of \cite{MR4193902}. Fix $\vec r\in \mathbb{R}^d_+$. 
Then, for any $i\in [d]$ we define $u^{(0)}_i = \inf\{t: x^{i,i}(t-) = -r_i\}$
and for $h\geq 0$
\begin{equation*}
u^{(h+1)}_i = \inf\{t: x^{i,i}(t-) = -(r_i+\sum_{\ell\neq i}x^{\ell,i}(u_\ell^{(h)})) \}.
\end{equation*}
Note $u^{(h+1)}_i\ge u^{(h)}_i$ for all $h = 0,1,\dotsm$ and $i\in[d]$ so there exists a limit
 $\lim_{n\to\infty} u^{(h)}_i= u^{(\infty)}_i\in[0,\infty].$ As shown in \cite{MR4193902}, $\vec{T}(\vec{r}) = (u^{(\infty)}_1,\dotms, u^{(\infty)}_d)$ is the minimal solution to \eqref{eqn:minSol}.
When $d = 2$, it need not be the case that $u_i^{(h+1)}>u_i^{(h)}$ for all $i$ even if $x^{i,j}$ are strictly increasing for all $i\neq j$. However, when $x^{i,j}$ are strictly increasing for all $i\neq j$ it is easy to see that $u_i^{(h+2)}>u_{i}^{(h)}$ for all $i$ and $h$. 

\subsubsection{Discrete time} The same algorithm above works when the functions $x^{i,j}:\Z_+\to \Z$ satisfy
\begin{enumerate}
\item $x^{i,i}(m) \ge x^{i,i}(m-1)-1$ for all $i\in[d]$ and $m\ge 1$; and
\item $x^{i,j}(m)\ge x^{i,j}(m-1)$ for all $i\neq j$ and $m\ge1.$
\end{enumerate}
The difference is that now one wants to solve \eqref{eqn:minSolDiscrete}, a discrete version of \eqref{eqn:minSol} for the value $\vec{r}\in \Z_+^d\setminus\{0\}$.
We refer the interested reader to \cite{MR3449255} for more details.


\subsection{Properties for Additive  L\'{e}vy Fields}

We now consider the situation where the additive fields are the $\R^d$-valued L\'{e}vy processes $(\vec \bX^{i}(t);t\in\R_+)$ for $i\in [d]$, described in the overview. That is, they have Laplace exponents $\Psi_i$ described in \eqref{eqn:ass1Laplace}. In this random setting, we write $\vec \bT(\vec{r})$ for the first hitting time of level $-\vec{r}$ for the field defined by $\vec \bX^{i}$. First observe that the first hitting time may have some infinite coordinate $\vec \bT(\vec{r})\in[0,\infty]^d\setminus \R_+^d$. Proposition 3.1 and Theorem 3.4 in \cite{MR4193902} give sufficient conditions for $\vec \bT(\vec{r})\in \R_+$ a.s. for each $\vec{r}\in \R^d$. 
\begin{thm}[Proposition 3.1 and Theorem 3.4 \cite{MR4193902}]\label{thm:CM}
Suppose that $\vec \bX^i$ have Laplace transforms defined in \eqref{eqn:ass1Laplace}. Suppose that
\begin{equation}\label{eqn:(H)}
    \textup{the set }\{\lambda\in \R_+^d: \Psi_j(\lambda) > 0\textup{ for all }j\in[d]\} \textup{ is non-empty}
\end{equation}
and that $ J^\Psi:= \left(\E[X^{i,j}(1)]; i,j\in[d] \right)\in \R^{d\times d}$ is irreducible.
Then $\vec \bT(\vec{r})=( \bT^1(\vec{r}),\dotsc,\bT^d(\vec{r}))\in \R_+^d$ a.s. for every $\vec{r}\in \R_+^d$ if and only if the Perron-Frobenius eigenvalue of $J^\Psi$ is non-positive. Whenever this happens, $ X^{i,j}(\bT^i(\vec{r})- ) = X^{i,j}(\bT^i(\vec{r}))$ a.s.
\end{thm}
\begin{remark}\label{rmk:(H)Satisfies}
    Note that \eqref{eqn:(H)} holds if each $\Psi_j$ satisfies \eqref{eqn:ass2Laplace}, and $J^\Psi$ is irreducible if $X^{i,j}$ is strictly increasing for each $i\neq j$. That is, whenever \ref{ass:a2} and \ref{ass:a4} hold.
\end{remark}

\subsection{Convergence of Excursions} 

We now turn to general results about convergence in the Skorohod space. Suppose that $\vec f:\R_+\to\R^d$ is a c\`adl\`ag function. In the limit, we will consider functions $\vec f = (f^1,\dotsm, f^d)$ whose coordinates satisfy
\begin{enumerate}[label = \textbf{F.\arabic*}]
\item \label{F1}$\vec f(0) = \vec{0}$.
\item \label{F2}There exists a unique $i\in[d]$ such that for all $j\neq i$, $f^j(t)$ is non-decreasing.
\item \label{F3} $f^{j}(t)\ge f^j(t-)$ for all $t\ge 0$ for every $j$.
\end{enumerate}
In the discrete we will replace \eqref{F3} with 
\begin{enumerate}
\item[\textbf{F.3'}]\label{F3'} $f^i(t)-f^i(t-)\in \{-\delta, 0,\delta, 2\delta,\dotsm\}$ for some $\delta>0$.
\end{enumerate}

We say that $(l,r)$ is an \textit{(continuous) excursion interval} of $\vec f$ if
\begin{equation*}
f^i(l-)\wedge f^i(l) = \inf_{s\le l} f^i(s) = \inf_{s\le r} f^i(s) = f^i(r-)\wedge f^{i}(r),
\end{equation*}and
\[
f^i(t-)>\inf_{s\le r} f^i(s)\textup{ for all }t\in(l,r).
\]Note that we only consider the excursion intervals for the $i^{\text{th}}$ entry, in the $i$ coordinate appearing in \eqref{F2}.  
We denote the set of excursion intervals for a function $\vec f$ by $\mathcal I(\vec f)$. For an excursion interval $(l,r)$ we denote by $\vec f_{(r)} = (f_{(r)}^1,\dotms, f_{(r)}^d)$ the \textit{excursion} defined by
\begin{equation*}
\vec f_{(r)}(t) = \vec f((t+l)\wedge r)-\vec f(l-) ,\qquad t\ge 0.
\end{equation*}
We also let $\mathcal R(\vec f) = \{r: (l,r)\in \mathcal I(\vec f)\}$ and $\mathfrak L(\vec f) = \{l: (l,r)\in \mathcal I(\vec f)\}$. Note that for any compact interval $[0,T]$, we can always order the excursions so that their lengths are non-increasing. We define for each $k\ge 1$, the \emph{length} $\mathcal{L}_{(k)}(\vec f)$ of the $k^\text{th}$-longest excursion on $[0,T]$.

In the discrete, the excursion intervals are defined through the first hitting times since $f^i(t)\in \delta \Z$ lies in a lattice. We say that $(l,r)$ is a \textit{(discrete) excursion interval} if there is some $k = 0,1,2,\dotms$ such that $l = \inf\{t: f^i(t)=-k\delta\}$ and $r = \inf\{t: f^i(t)=-(k+1)\delta\}$. The remaining notation is defined analogously.

{In the following, we fix $T\in \R_+$ such that $\vec f$ is continuous at $T$. }
Slightly altering the definition of good functions from that in \cite{DvdHvLS.17}, we make the following definition.

\begin{definition}
We say that a function $\vec f\in \D([0,T],\R^d)$ is \textit{good} if it satisfies \eqref{F1}--\eqref{F3} and
\begin{enumerate}[label=\textbf{G.\arabic*}]
    \item \label{G1} $\mathcal R(\vec f)$ does not have any isolated points;
    \item \label{G2} $[0,\sup {\mathcal R(\vec f)}]\setminus \bigcup_{(l,r)\in \mathcal I(\vec f)} (l,r)$ has Lebesgue measure zero;
    \item \label{G3} $f^i$ does not attain a local minimum at any $r\in \mathcal R(\vec f)$; and
    \item \label{G4} $\vec f$ is continuous at every $r\in \mathcal R(\vec f)$ and $l\in \mathfrak L(\vec f)$. 
\end{enumerate}
\end{definition}

For any $k\ge 1$, let $(l_{(i)},r_{(i)})_{i\ge 1}$ be the ordered (with respect to the length) sequence of excursions of $\vec f$ on the interval $[0,T]$ so that $\mathcal{L}_{(i)}(\vec f) = r_{(i)}-l_{(i)}$. If $\mathcal{L}_{(i)}(\vec f) = \mathcal{L}_{(j)}(\vec f)$ for $i< j$, then we can choose this ordering so that $r_{(i)}<r_{(j)}$. Define the map $\mathcal E_k(\vec f) = (\vec f_{(r_1)},\vec f_{(r_2)},\dotsm, \vec f_{(r_k)})\in \D(\R_+,\R^d)^k$ where $r_1<r_2<\dotms <r_k$ and 
\begin{align*}
\{r_1,\dotsm, r_k\} = \{r_{(1)},\dotms, r_{(k)}\}.
\end{align*} That is $\mathcal E_k(\vec f)$ are the $k$ longest excursions of $\vec f$ (with ties broken by order of appearance) and these excursions are listed in order of appearance. We also let $\vec{\mathcal{L}}_{k}(\vec f) = (\mathcal{L}_{(1)}(\vec f),\dotsm, \mathcal{L}_{(k)}(\vec f))$. 
The following proposition is a straight-forward generalization of Lemma 14 in \cite{DvdHvLS.20}, Proposition 22 in \cite{NP.10}, and Proposition 5.12 in \cite{CKG.23} and the proof is omitted.
\begin{prop}\label{prop:good} Suppose that $\vec f, \vec f_n\in \D([0,T],\R^d)$, with $\vec f$ good on $[0,T]$. 
Assume also that
$\vec f_n\to \vec f${ in  the $J_1$ topology}. Then $\vec{\mathcal{L}}_{k}(\vec f_n)\to \vec{\mathcal{L}}_{k}(\vec f)$ in $\R^k$ for each $k\ge 1$. 

If, in addition, for some $k$ it holds that $\mathcal{L}_{(j)}(\vec f)> \mathcal{L}_{(j+1)}(\vec f)$ for every $1\le j \le k-1$, then $\mathcal E_{k}(\vec f_n)\to \mathcal E_k(\vec f)$ in $\D(\R_+,\R^d)^k$.  
\end{prop}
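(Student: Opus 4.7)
The plan is to lift the scalar-case arguments of \cite{DvdHvLS.20, NP.10, CKG.23} to this vector setting. The crucial observation is that the excursion intervals $\mathcal{I}(\vec f)$ depend only on the coordinate $f^i$ singled out by \eqref{F2}: they are precisely the excursion intervals of $f^i$ above its running infimum $I^i(t) := \inf_{s\leq t} f^i(s)$, since the other coordinates $f^j$ are non-decreasing and hence play no role in the definition. Accordingly, after fixing homeomorphisms $\lambda_n : [0,T] \to [0,T]$ realizing $J_1$-convergence, namely $\|\lambda_n - \mathrm{id}\|_\infty \to 0$ and $\|\vec f_n \circ \lambda_n - \vec f\|_\infty \to 0$ (which is possible since $\vec f$ is continuous at $T$), one automatically obtains $I_n^i \circ \lambda_n \to I^i$ uniformly.

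The first step is to match excursions. For each excursion interval $(l,r) \in \mathcal{I}(\vec f)$ of the top-$k$ lengths, conditions \eqref{G3} and \eqref{G4} ensure that $f^i$ is continuous at $l, r$ and strictly undershoots the level $I^i(r)$ immediately after $r$. By uniform convergence of $\vec f_n \circ \lambda_n$ and $I_n^i \circ \lambda_n$, for any $\eps>0$ and all large $n$ there exist $l^n, r^n$ within $\eps$ of $l, r$ respectively such that $(\lambda_n^{-1}(l^n), \lambda_n^{-1}(r^n)) \in \mathcal{I}(\vec f_n)$. Applying this to each of the $k$ longest excursions of $\vec f$ yields $\liminf_n \mathcal{L}_{(j)}(\vec f_n) \geq \mathcal{L}_{(j)}(\vec f)$ for all $j \leq k$.

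The second step, conversely, rules out spurious long excursions in $\vec f_n$. Suppose for contradiction that along a subsequence $\vec f_{n_m}$ has excursion endpoints whose limits $\ell'_1 < r'_1 \leq \ell'_2 < r'_2 \leq \dotsm$ satisfy $r'_j - \ell'_j > \mathcal{L}_{(j)}(\vec f)$ for some $j \leq k$. On each limiting interval $(\ell'_s, r'_s)$, uniform convergence forces $f^i - I^i \geq 0$ with equality at the endpoints. Either $(\ell'_s, r'_s)$ is a genuine excursion interval of $\vec f$, or the set $\{t : f^i(t) = I^i(t)\} \cap (\ell'_s, r'_s)$ carries positive Lebesgue measure, directly contradicting \eqref{G2}; conditions \eqref{G1} and \eqref{G4} are used to preclude boundary pathologies in passing to the limit. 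This yields $\limsup_n \mathcal{L}_{(j)}(\vec f_n) \leq \mathcal{L}_{(j)}(\vec f)$, completing the proof of $\vec{\mathcal{L}}_k(\vec f_n) \to \vec{\mathcal{L}}_k(\vec f)$. Under the strict inequality $\mathcal{L}_{(j)}(\vec f) > \mathcal{L}_{(j+1)}(\vec f)$ for $1 \leq j \leq k-1$, the matching between the top-$k$ excursions of $\vec f_n$ and $\vec f$ is unambiguous for large $n$; the endpoint convergence $\ell^n_{(j)} \to \ell_{(j)}$, $r^n_{(j)} \to r_{(j)}$ together with the uniform convergence of $\vec f_n \circ \lambda_n$ then yields $\vec f_{n,(r^n_{(j)})} \to \vec f_{(r_{(j)})}$ in $D(\R_+, \R^d)$ (after extension by the terminal value), by composing $\lambda_n$ with affine rescalings of the relevant subintervals.

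The main obstacle is the second step: condition \eqref{G2} is indispensable, since without it the limit $\vec f$ could have arbitrarily long intervals on which $f^i = I^i$ outside any true excursion, and perturbations producing long spurious excursions in $\vec f_n$ could not be excluded. The remaining delicate point is tracking the $J_1$ time-changes precisely at excursion endpoints (not just in the interior), but this is handled by \eqref{G4} combined with the fact that the non-$i$ coordinates are monotone, so they inherit good convergence at continuity points of $\vec f$ automatically. The rest is book-keeping essentially identical to the one-dimensional proof.
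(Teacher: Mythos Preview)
Your proposal is correct and aligns with the paper's approach: the paper itself omits the proof entirely, stating only that the proposition is ``a straight-forward generalization of Lemma 14 in \cite{DvdHvLS.20}, Proposition 22 in \cite{NP.10}, and Proposition 5.12 in \cite{CKG.23}.'' Your sketch carries out precisely that generalization, correctly identifying that the excursion intervals $\mathcal{I}(\vec f)$ depend only on the distinguished coordinate $f^i$, so that the scalar arguments apply verbatim to the interval structure, with the remaining coordinates coming along for free via $J_1$-convergence and monotonicity.
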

The following corollary is immediate.
\begin{cor}\label{cor:longExcursion2}
Suppose that $\vec f_n\to \vec f$ in the Skorohod space $\D([0,T],\R^d)$ and that $\vec f$ is good. Then for any $\delta>0$ and $j\in [d]$
\begin{equation*}
    \limsup_{n\to\infty} \#\left\{(l,r)\in \mathcal{I}(\vec f_n): f_n^j(r)-f_n^j(l)> \delta\right\}<\infty.
\end{equation*}
\end{cor}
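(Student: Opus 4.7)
The plan is to split on whether $j$ equals the distinguished coordinate $i$ from property \eqref{F2}, with the main case being $j\ne i$, which is handled by a simple monotonicity argument, and the case $j=i$ handled via Skorohod jump-count upper semicontinuity.

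For the case $j\ne i$, property \eqref{F2} gives that $f^j$ is non-decreasing, and in the paper's intended application (where the $\vec f_n$ are rescaled $\Z^d$-valued random walks encoding multitype BGW trees) the same is true of $f_n^j$. Since the excursion intervals $(l,r)\in\mathcal I(\vec f_n)$ are pairwise disjoint subintervals of $[0,T]$ and $f_n^j$ is non-decreasing, writing $N_n$ for the number of excursion intervals with $j$-increment exceeding $\delta$, we obtain
\begin{equation*}
N_n\delta \le \sum_{(l,r)\in\mathcal I(\vec f_n)} (f_n^j(r)-f_n^j(l))\le f_n^j(T)-f_n^j(0) = f_n^j(T).
\end{equation*}
Since $T$ is a continuity point of $\vec f$ (as stated just before Proposition \ref{prop:good}) and $\vec f_n\to\vec f$ in the $J_1$ topology, $f_n^j(T)\to f^j(T)<\infty$, giving $\limsup_n N_n\le f^j(T)/\delta<\infty$.

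For the case $j=i$, the argument is slightly more delicate but structurally similar. For each $(l,r)\in\mathcal I(\vec f_n)$, assuming $f_n^i$ has only non-negative jumps (property \eqref{F3} or its discrete analogue \eqref{F3'}), the excursion-defining conditions yield $f_n^i(l-)=f_n^i(r-)=\inf_{s\le r}f_n^i(s)$, whence
\begin{equation*}
f_n^i(r)-f_n^i(l) = \Delta f_n^i(r)-\Delta f_n^i(l).
\end{equation*}
The event $f_n^i(r)-f_n^i(l)>\delta$ therefore forces $\Delta f_n^i(r)>\delta$, and the count in question is dominated by the number of jumps of $f_n^i$ on $[0,T]$ of magnitude greater than $\delta$. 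A standard consequence of $J_1$-convergence is that this number is eventually at most the number of jumps of $f^i$ on $[0,T]$ of size exceeding $\delta/2$, which is finite since $f^i$ is c\`adl\`ag on the compact interval $[0,T]$.

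I expect the main (mild) obstacle to be the case $j=i$, which requires invoking Skorohod control of big jumps rather than pure monotonicity. In the paper's intended application, however, only the case $j\ne i$ is directly needed---namely, to count the number of macroscopically sized type-$j$ subtrees attached to a type-$i$ subtree with $j\ne i$---so the elementary monotonicity argument suffices in practice.
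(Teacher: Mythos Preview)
Your argument is correct. The paper gives no proof, merely labeling the statement ``immediate'' after Proposition~\ref{prop:good}; your treatment is more explicit than anything the paper provides, and is likely close to what the authors have in mind.

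A few remarks. First, you are right that the argument requires $\vec f_n$ to satisfy \ref{F2} and \ref{F3} (or \textbf{F.3'}); although the corollary does not state this, the paper only defines $\mathcal{I}(\vec f_n)$ for such functions, so the hypothesis is implicit. Second, your case $j\ne i$ is entirely self-contained and does not use Proposition~\ref{prop:good} at all---just disjointness of excursion intervals, monotonicity of $f_n^j$, and the fact that $f_n^j(T)\to f^j(T)$ at the continuity point $T$. Third, your case $j=i$ is correct: under \ref{F3} one has $f_n^i(l-)=f_n^i(r-)$ for every continuous excursion interval $(l,r)$, whence $f_n^i(r)-f_n^i(l)=\Delta f_n^i(r)-\Delta f_n^i(l)\le \Delta f_n^i(r)$, and the $J_1$ upper-semicontinuity of big-jump counts finishes. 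You might also note that under \textbf{F.3'} the discrete excursion intervals satisfy $f_n^i(r)-f_n^i(l)=-\delta'<0$ by definition, so the $j=i$ count is identically zero in that setting. Finally, your closing observation is accurate: the corollary is not cited later in the paper, and in the places where a related bound is needed (e.g.\ the proofs of Lemmas~\ref{lem:verifyH2} and~\ref{lem:inductionProof}) only the $j\ne i$ situation arises, for which your monotonicity bound $N_n\le f_n^j(T)/\delta$ is already sufficient.
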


The following lemma will also be useful and is essentially Proposition VI.2.11 in \cite{JS.13}.
\begin{lem}\label{lem:firstPassing}
Suppose that $f_n\to f$ in the Skorohod space of real valued functions, where $f$ is a function with non-negative jumps and satisfies $\liminf_{t\to\infty} f(t) = -\infty$, and $f_n(0),f(0)\ge 0$. For any $x\ge 0$, and any function $g$, define $S_g(x) = \inf\{t: g(t-) \le -x\textup{ or }g(t)\le -x\}$.

If $x_n\to x> 0$, for some $x$ such that $S_f(x+) = S_f(x)$, then $S_{f_n}(x_n)\to S_{f}(x)$.
\end{lem}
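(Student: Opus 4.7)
The lemma is a standard continuity-of-first-passage property under Skorokhod convergence, and the plan is to prove the two inequalities $\limsup_n S_{f_n}(x_n) \le S_f(x)$ and $\liminf_n S_{f_n}(x_n) \ge S_f(x)$ separately. Throughout I shall freely use the basic fact that if $g_n\to g$ in the $J_1$ topology on $\D(\R_+,\R)$ and $s$ is a continuity point of $g$, then $g_n(s)\to g(s)$ and moreover $\inf_{r\le s}(g_n(r-)\wedge g_n(r))\to \inf_{r\le s}(g(r-)\wedge g(r))$ (this is essentially Proposition VI.2.11 in Jacod--Shiryaev).

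For the upper bound I would exploit the hypothesis $S_f(x+)=S_f(x)$: given $\eps>0$, choose $y>x$ with $S_f(y)<S_f(x)+\eps$, so that there exists $t\le S_f(x)+2\eps$ with $f(t-)\wedge f(t)\le -y$. Since $f$ has only non-negative jumps, $f(t-)\le f(t)$ and hence $f(t-)\le -y$. Pick a continuity point $s<t$ of $f$ close enough to $t$ that $f(s)\le -(x+y)/2<-x$; this is possible since $\lim_{u\uparrow t}f(u)=f(t-)\le -y$ and the continuity points of $f$ are dense. Skorokhod convergence at the continuity point $s$ gives $f_n(s)\to f(s)$, so for $n$ large $f_n(s)\le -x_n$, and therefore $S_{f_n}(x_n)\le s\le S_f(x)+2\eps$. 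Letting $\eps\downarrow 0$ yields $\limsup_n S_{f_n}(x_n)\le S_f(x)$.

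For the lower bound I would argue by contradiction. Suppose along a subsequence that $S_{f_{n_k}}(x_{n_k})\to T<S_f(x)$. Choose a continuity point $s$ of $f$ with $T<s<S_f(x)$. By definition of $S_f(x)$,
\begin{equation*}
\inf_{r\le s}\bigl(f(r-)\wedge f(r)\bigr) > -x,
\end{equation*}
say the left-hand side equals $-x+3\delta$ for some $\delta>0$. By the Skorokhod-continuity of the running infimum at a continuity point recalled above,
\begin{equation*}
\inf_{r\le s}\bigl(f_{n_k}(r-)\wedge f_{n_k}(r)\bigr) \longrightarrow -x+3\delta,
\end{equation*}
which together with $x_{n_k}\to x$ gives $\inf_{r\le s}(f_{n_k}(r-)\wedge f_{n_k}(r))>-x_{n_k}$ for all $k$ large. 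This forces $S_{f_{n_k}}(x_{n_k})>s>T$, contradicting $S_{f_{n_k}}(x_{n_k})\to T$.

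The only mildly subtle point is the continuity of the running infimum of left and right values under $J_1$ convergence at a continuity point of the limit, but this is a classical fact; the only adaptations specific to this statement are the use of the non-negative-jumps assumption to reduce $f(t-)\wedge f(t)$ to $f(t-)$ in the upper-bound argument, and the trivial perturbation $x_n\to x$ in place of a fixed $x$, handled by the strict gap $\delta$. Hence I do not expect any genuine obstacle beyond recording these standard facts.
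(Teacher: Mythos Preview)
Your proof is correct and follows essentially the same route as the paper's: both arguments establish the two inequalities $\limsup_n S_{f_n}(x_n)\le S_f(x)$ and $\liminf_n S_{f_n}(x_n)\ge S_f(x)$ separately, and both hinge on the continuity under $J_1$ of the running infimum at a continuity point of the limit. The only cosmetic difference is that the paper first replaces $f_n$ by $f_n+(x_n-x)$ to reduce to $x_n\equiv x$, and then replaces all functions by their running infima (so that $f$ becomes continuous and non-increasing) before arguing; you instead work directly with the original functions and invoke the running-infimum continuity as a black box. Neither presentation gains anything substantial over the other.
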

\begin{proof}
By replacing $f_n(t)$ with $\widetilde{f}_n(t) = f_n(t) + (x_n-x)$ we can suppose that $x_n = x$ for all $n$. Next, there is no loss in generality in supposing that $f$ is continuous and that all functions $f_n$ and $f$ are non-increasing. Indeed, this can be done by replacing $f(t)$ with $\widetilde{f}(t) = \inf_{s\le t} f(s)$ and similarly replacing $f_n$. This operation leaves the first hitting times $S_f(x) = S_{\widetilde{f}}(x)$ unchanged. 

First if $t< S_f(x)$, then $f(t)> - x$ and so for all $n$ large $f_n(t)>-x$ as well. Hence $\liminf S_{f_n}(x)\ge S_f(x).$ Next if $t>S_f(x)$, then $t> S_f(y)$ for some $y>x$, by the hypothesis $S_f(x+)=S_f(x)$. Then $f(t)< x$ and so for all $n$ large $f_n(t)<x$ as well, implying $\limsup_n S_{f_n}(x)\le S_f(x)$. 
\end{proof}

\subsection{Properties of Spectrally Positive L\'{e}vy Processes under Assumption \ref{ass:a2}}

\begin{lem}\label{lem:good}
Under Assumption \ref{ass:a2}, for each $i\in[d]$ and $T >0$, $\vec \bX^i$ is good on $[0,T]$ a.s.
\end{lem}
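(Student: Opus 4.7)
The plan is to invoke classical properties of spectrally positive L\'evy processes with infinite variation. Under Assumption \ref{ass:a2}, the process $X^{i,i}$ is spectrally positive with Laplace exponent $\psi_i$ satisfying \eqref{eqn:ass2Laplace}, and hence has infinite variation, while the remaining coordinates $X^{i,j}$ ($j\neq i$) are subordinators. Three standard excursion-theoretic facts (cf.\ Bertoin \cite{Bertoin.96} and Chapter VII of \cite{MR1954248}) will be used throughout: (i) the running infimum $I^{i,i}(t):=\inf_{s\le t}X^{i,i}(s)$ is continuous a.s.; (ii) the zero set of the reflected process $R^i:=X^{i,i}-I^{i,i}$ has Lebesgue measure zero a.s.; and (iii) the point $0$ is regular for both $(-\infty,0)$ (by infinite variation) and $(0,\infty)$ (by the presence of positive jumps) under $X^{i,i}$, and hence regular for $\{0\}$ under $R^i$.

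With (i)--(iii) in hand, properties \textbf{G.1}--\textbf{G.3} follow essentially immediately. \textbf{G.2} is simply (ii). For \textbf{G.1}, since $0$ is regular for $\{0\}$ under $R^i$, by the strong Markov property at any $r\in\mathcal R(\vec f)$ the zero set of $R^i$ accumulates to the right of $r$, and the regularity of $0$ for $(0,\infty)$ ensures that infinitely many new excursions start immediately after $r$, yielding a sequence $r_n\in\mathcal R(\vec f)$ with $r_n\downarrow r$. For \textbf{G.3}, by the strong Markov property at $r\in\mathcal R(\vec f)$ and the regularity of $0$ for $(-\infty,0)$ under $X^{i,i}$, the process $X^{i,i}$ immediately dips strictly below $X^{i,i}(r)$, so $r$ cannot be a local minimum.

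For \textbf{G.4}, continuity of $X^{i,i}$ at $r\in\mathcal R(\vec f)$ follows from (i) and the absence of negative jumps, since then $X^{i,i}(r-)=X^{i,i}(r)=I^{i,i}(r)$. To handle the remaining coordinates I would decompose $\vec\bX^i=\vec\bX^{i,(a)}+\vec Y^i$, where $\vec\bX^{i,(a)}$ collects the Gaussian and drift parts together with the jumps of $\vec\bX^i$ whose $i$-th coordinate is positive, and $\vec Y^i$ collects the jumps with $i$-th coordinate zero. By independence of Poisson random measures on disjoint Borel sets, $\vec Y^i$ is independent of $\vec\bX^{i,(a)}$, and $X^{i,i}$---and hence $\mathcal R(\vec f)$ and $\mathfrak L(\vec f)$---is measurable with respect to $\vec\bX^{i,(a)}$. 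A conditional Fubini argument, using that $\mathcal R(\vec f)\cup\mathfrak L(\vec f)$ is a countable Lebesgue-null random set, shows that a.s.\ no jump time of $\vec Y^i$ lies in $\mathcal R(\vec f)\cup\mathfrak L(\vec f)$. Since any jump of $\vec\bX^{i,(a)}$ coincides with a jump of $X^{i,i}$, continuity of $\vec\bX^{i,(a)}$ at any $r\in\mathcal R(\vec f)$ reduces to that of $X^{i,i}$ at $r$, already established.

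The main technical obstacle is ruling out ``jumping'' left endpoints, i.e., showing that no $l\in\mathfrak L(\vec f)$ is a jump time of $X^{i,i}$. Such an $l$ would necessarily lie in $A:=\{s:R^i(s-)=0\}$, which is Lebesgue-null by (i) and (ii). I would apply the predictable compensator formula to the Poisson random measure $N(ds,dr)$ of jumps of $X^{i,i}$: since $\mathbf{1}_A$ is predictable (being left-continuous in $s$), for every $\epsilon>0$
\begin{equation*}
\E\big[N(A\times(\epsilon,\infty))\big]=\widetilde{\pi}_i((\epsilon,\infty))\cdot\E\big[\Leb(A)\big]=0,
\end{equation*}
so $N(A\times(\epsilon,\infty))=0$ a.s., and letting $\epsilon\downarrow 0$ yields $N(A\times(0,\infty))=0$. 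Hence $X^{i,i}$ has no jumps in $A$, and in particular is continuous at every $l\in\mathfrak L(\vec f)$. Combined with the preceding paragraph, this establishes \textbf{G.4} and completes the proof.
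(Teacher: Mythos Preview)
Your argument is correct and follows essentially the same architecture as the paper's proof: regularity of $0$ for both half-lines gives \textbf{G.1} and \textbf{G.3}; Lebesgue-nullity of the zero set of $R^i$ gives \textbf{G.2} (the paper derives this from the occupation density formula for $H^i$, you cite it directly); and for \textbf{G.4} both proofs split the off-diagonal coordinates into a part whose jumps coincide with those of $X^{i,i}$ and an independent remainder, handling the latter via the Lebesgue-nullity of $\mathcal R\cup\mathfrak L$.

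The one genuine difference is how you rule out jumps of $X^{i,i}$ at the endpoints. The paper cites a deterministic remark in \cite{DvdHvLS.20} for right endpoints and Millar's theorem \cite{MR0433606} on continuity at local infima for left endpoints. Your compensator argument is a clean, self-contained alternative: since $\mathbf 1_A$ with $A=\{s:R^i(s-)=0\}$ is predictable and $\Leb(A)=0$ a.s., the compensator identity forces $N(A\times(0,\infty))=0$. This is arguably more elementary than invoking Millar, and it has the bonus that it treats $\mathcal R$ and $\mathfrak L$ uniformly, since both are contained in $A$.

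One small point: your stated justification for continuity at $r\in\mathcal R$ (``follows from (i) and the absence of negative jumps, since then $X^{i,i}(r-)=X^{i,i}(r)=I^{i,i}(r)$'') is incomplete as written. From (i) and spectral positivity you get $X^{i,i}(r-)=I^{i,i}(r)$, but this alone does not preclude a \emph{positive} jump at $r$. The fix is immediate: $\mathcal R\subseteq A$ as well (precisely because $R^i(r-)=0$), so your compensator argument already excludes jumps at every $r\in\mathcal R$. You should either reorder the proof so the compensator argument comes first and covers both $\mathcal R$ and $\mathfrak L$, or add a line noting that $\mathcal R\subseteq A$.
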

\begin{proof}
The conditions \ref{F1}--\ref{F3} are obvious. By Theorem 1 and Corollary 5 in \cite[Chapter VII]{Bertoin.96}, the point $0$ is a \emph{regular point} of $(0,\infty)$ and $(-\infty,0)$ for the L\'{e}vy process $X^{i,i}$, that is $\inf\{t:X^{i,i}(t)>0\} = \inf\{t: X^{i,i}(t)<0\} = 0$, a.s. Together with the Markov property, this implies that $X^{i,i}$ a.s. satisfies \ref{G1} and \ref{G3}.
To establish \ref{G2}, note that by the occupation density formula (Proposition 1.3.3 in \cite{MR1954248}) there is a process $L^i = L_{H^i} = (L(a,t);a\in \R_+,t\in \R_+)$ such that 
\begin{align*}
\int_0^T g(H^i(t))\,dt = \int_0^\infty g(a) L^i(a,T)\,da
\end{align*}
for all non-negative measurable functions $g$. In particular,
\begin{align*}
  \int_0^T 1_{[X^{i,i}(s-) = I^i(s)]}\,ds = \int_0^T 1_{[H^i(s) = 0]}\,ds = \int_0^\infty 1_{0}(a) \, L_{H^i}(a,T)\,da = 0.
\end{align*} 

We now establish \ref{G4}. For simplicity, we suppose that $d = 2$ (the general case is completely analogous) and that $i = 1$. 
We prove that $\vec X=(X^{1,1},X^{1,2})$ is continuous on $\mathcal R(\vec X^1)$. 
Note that by the L\'{e}vy-It\^{o} decomposition and Poisson thinning, we can decompose $  X^{1,2}(t) = X^{1,2}_0(t) + X^{1,2}_1(t)$
where the jump times of $X^{1,2}_0$ are contained in the jump times of $X^{1,1}$, and $X^{1,2}_1$ is independent of the L\'{e}vy process $(X^{1,1},X^{1,2}_0)$. Now given any random set $\mathcal{Z}\subseteq [0,T]$ of Lebesgue measure 0 which is independent of $X_1^{1,2}$, then by standard Poisson process calculations
\begin{equation}\label{eqnX12DoesNotJumpAtIndependentTimes}
\begin{split}
  &\PR(X^{1,2}_1(t)>X^{1,2}_1(t-) \textup{ for some }t\in \mathcal{Z}) \le  \E[\widetilde{\mathcal{N}}_1(\mathcal{Z}\times (0,\infty))] = \E[\Leb\otimes \widetilde{\Pi}_1(\mathcal{Z}\times (0,\infty))] = 0,
\end{split}
\end{equation}where $\widetilde{\Pi}_1$ is the L\'{e}vy measure of $X^{1,2}_1$ and $\widetilde{\mathcal{N}}$ is a Poisson random measure with intensity $\Leb\otimes\widetilde{\Pi}_1$. Hence, $X^{1,2}_1$ is continuous on $\mathcal R(\vec X^1)$ and $\mathfrak L(\vec X^1)$ a.s., by \ref{G2}. Thus, the conclusion \ref{G4} then follows if we can establish that $X^{1,1}$ is continuous at each $r\in \mathcal R(\vec X^1)$ and $l\in \mathfrak L(\vec X^1)$. 

On the one hand, by Remark 12 in \cite{DvdHvLS.20}, if $\vec f^i$ satisfies \ref{F1}--\ref{F3} and \ref{G1}--\ref{G3}, then $\vec f^i$ is continuous at each $r\in \mathcal R(\vec f^i)$. 
On the other hand, note that each $l\in \mathfrak L(\vec X^1)$ is a local infimum by its definition. Then the regularity of the process implied by Assumption \ref{ass:a2}, and \cite[Theorem 3.1]{MR0433606} or \cite[Theorem 2]{MR4130409}, imply that $X^{1,1}$ is continuous at $l$. 
This gives the desired continuity of $X^{1,1}$ and establishes our claim.
\end{proof}

Recalling the construction of the minimal solution in Section \ref{sec:hittingTimes}, we define for any $\vec{r}\in \R^d_+$ and $h\geq 0$:
\begin{align}
R^{(0),j} &= r_j, & U^{(0),j} &= \min\{t: X^{j,j}(t)=-r_j\}\nonumber\\
R^{(h+1),j} &= r_j + \sum_{\ell\neq j} X^{\ell,j}(U^{(h),\ell}), &U^{(h+1),j} &= \min\left\{t: X^{j,j}(t) = - \left( r_j + \sum_{\ell\neq j} X^{\ell,j}(U^{(h),\ell})\right)\right\}\label{eqn:Rrecurse}.
\end{align}Note that a similar equation in the discrete has been introduced in \eqref{eqn:Rrecurse2}. 
Write $\tau^{j}(x) = \inf\{t:X^{j,j}(t) = -x\}$ for any $x\geq 0$ and $j\in [d]$. 
Then by induction $U^{(h),j} = \tau^{j}(R^{(h),j})$.

Fix $r\in \mathbb{R}_+$ and $i_0\in [d]$. 
Let us now sample an excursion $\vec \be^{i_0} = (e^{i_0,1},\dotsm, e^{i_0,d})\sim \vec \bN^{i_0}(-|\zeta\ge r)$, which will serve as the `subtree' of type $i_0$ connected to the root, together with the places where to paste the subtrees of type $i\neq i_0$. 
Consider an independent process $\vec \bX^i$ for $i\neq i_0$. Let us define $(R^{(h),j};h\ge 0, j\in[d])$ as in \eqref{eqn:Rrecurse} for the initial conditions $r_j = e^{i_0,j}(\zeta)$ for all $j\in [d]$.
The following lemma implies the independence of all the subdecorations at reduced height $h\geq 1$, growing from $\cD(\emptyset)$, the latter constructed from $\vec \be^{i_0}$.

\begin{lem}\label{lem:firstHitting1}
Suppose that $\vec \bX^j$ satisfies \ref{ass:a2} for all $j\in[d]$. Then, almost surely, for every $j\in[d]$ and $h\ge 0$,
\begin{equation}\label{eqn:firstHittingAtRks}
\tau^{j}(R^{(h),j}+) = S_{X^{j,j}} (R^{(h),j}+) = S_{X^{j,j}}(R^{(h),j}) = \tau^{j}(R^{(h),j}),
\end{equation}
and for every $\ell\neq j$
\begin{equation}\label{eqn:firstHittingAtRks1}
 X^{\ell,j}(U^{(h),\ell}-) = X^{\ell,j}(U^{(h),\ell}).
\end{equation}
Moreover,
\begin{equation}\label{eqn:firstHittingAtRks2}
\left(\vec \bX^{j}(U^{(h),j}+t) - \vec \bX^{j}(U^{(h),j});t\ge 0\right)
\end{equation}
is independent of $\sF(\vec{U}^{(h)})$ where $\sF(\vec{t})$ is the standard augmentation of $\sigma(\cup_{j=1}^d\{\vec \bX^j(s): s\le t_j\})$.
\end{lem}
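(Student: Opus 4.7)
The plan is to proceed by induction on $h \geq 0$, at each step exploiting the fact that $R^{(h),j}$ depends only on the ``external'' fields $\vec\bX^\ell$ for $\ell\neq j$, so that $U^{(h),j}=\tau^j(R^{(h),j})$ is, conditionally on those external fields, a stopping time for $\vec\bX^j$. The base case $h=0$ already contains all the ingredients; the inductive step is essentially an application of the strong Markov property to each field $\vec\bX^j$ at $U^{(h-1),j}$, using the third claim at level $h-1$ to reset the situation.

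For \eqref{eqn:firstHittingAtRks}, since $X^{j,j}$ is spectrally positive we already have $\tau^j(x)=S_{X^{j,j}}(x)$ and $X^{j,j}(\tau^j(x))=X^{j,j}(\tau^j(x)-)=-x$, so the only nontrivial content is $\tau^j(R^{(h),j}+)=\tau^j(R^{(h),j})$. This is the statement that the first-passage subordinator $x\mapsto \tau^j(x)$ is continuous (rather than merely c\`adl\`ag), which follows from the infinite variation of $X^{j,j}$ imposed by \ref{ass:a2}: under this condition $0$ is regular for $(-\infty,0)$ and the inverse of the ladder-height subordinator has no jumps; c.f.\ Bertoin \cite{Bertoin.96}, Chapter VII. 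For \eqref{eqn:firstHittingAtRks1}, I will use the L\'evy--It\^o decomposition to write $X^{\ell,j}=X^{\ell,j}_0+X^{\ell,j}_1$, where $X^{\ell,j}_0$ has jumps only at jump times of $X^{\ell,\ell}$ and $X^{\ell,j}_1$ is an independent subordinator. Applying \eqref{eqn:firstHittingAtRks} to index $\ell$ shows $X^{\ell,\ell}$ does not jump at $U^{(h),\ell}$, whence $X^{\ell,j}_0$ does not either. Since $U^{(h),\ell}$ is measurable with respect to $\sigma(\vec\bX^k:k\neq\ell)\vee\sigma(X^{\ell,\ell})$, it is independent of $X^{\ell,j}_1$; a Fubini computation against the Poisson intensity $\mathrm{Leb}\otimes\widetilde{\pi}_{\ell,j}^1$, exactly as in \eqref{eqnX12DoesNotJumpAtIndependentTimes}, then shows $X^{\ell,j}_1$ has no jump at $U^{(h),\ell}$ almost surely.

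For \eqref{eqn:firstHittingAtRks2}, set $\cG_j:=\sigma(\vec\bX^k:k\neq j)$. Conditional on $\cG_j$, the level $R^{(h),j}$ is deterministic and $U^{(h),j}$ is a genuine stopping time for $\vec\bX^j$, so by the strong Markov property the shifted process is conditionally independent of $\cF^j_{U^{(h),j)}}:=\sigma(\vec\bX^j(s):s\leq U^{(h),j})$. Unconditioning gives independence from $\cG_j\vee\cF^j_{U^{(h),j)}}$, which contains $\sF(\vec U^{(h)})$. The inductive step then reduces to the base case: given \eqref{eqn:firstHittingAtRks2} at level $h-1$, the process $\vec Y^j(t):=\vec\bX^j(U^{(h-1),j}+t)-\vec\bX^j(U^{(h-1),j})$ has the same law as $\vec\bX^j$ and is independent of $\sF(\vec U^{(h-1)})$, while $R^{(h),j}-R^{(h-1),j}$ is $\sF(\vec U^{(h-1)})$-measurable, so the arguments above apply verbatim to $\vec Y^j$ with this new ``initial level''. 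The main delicate point will be the bookkeeping in the inductive step: one must ensure that conditioning on $\sF(\vec U^{(h-1)})$ preserves the mutual independence of the shifted fields $\vec Y^j$ for different $j$, since the third claim at level $h-1$ must be invoked jointly across $j\in[d]$, not merely for one $j$ at a time.
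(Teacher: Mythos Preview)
There is a genuine error in your argument for \eqref{eqn:firstHittingAtRks}: the first-passage process $x\mapsto\tau^j(x)$ is \emph{not} continuous under \ref{ass:a2}. It jumps at the (random, dense) levels at which excursions of $X^{j,j}$ above its running infimum begin. What regularity of $0$ for $(-\infty,0)$ actually yields is that for each \emph{fixed} $x$ the strong Markov property at the stopping time $\tau^j(x)$ forces the process immediately below $-x$, hence $\tau^j(x+)=\tau^j(x)$ a.s.\ --- a statement about one level at a time, not about the whole path of $\tau^j$. The paper proceeds exactly this way: it shows inductively that each $U^{(h),j}$ is a stopping time for the joint filtration and then invokes regularity and the strong Markov property there. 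Your inductive framework (pass to $\vec Y^j$, use the $\sF(\vec U^{(h-1)})$-measurable increment $R^{(h),j}-R^{(h-1),j}$) reaches the same conclusion once you replace the false pathwise-continuity claim by this stopping-time argument.

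For \eqref{eqn:firstHittingAtRks1} your L\'evy--It\^o split is a different route from the paper's: the paper writes $U^{(h+1),\ell}$ as a strictly increasing limit of stopping times and invokes quasi-left-continuity of $\vec\bX^\ell$ (Proposition~I.7 in \cite{Bertoin.96}, with filtration details deferred to \cite[Proposition~3.1]{MR4193902}), handling all coordinates at once. Your approach works once the induction is in place, but note that your direct measurability claim ``$U^{(h),\ell}\in\sigma(\vec\bX^k:k\neq\ell)\vee\sigma(X^{\ell,\ell})$'' already fails for $h\ge2$, since $R^{(h),\ell}$ depends on $X^{\ell,m}(U^{(h-2),\ell})$ for $m\neq\ell$; so the L\'evy--It\^o argument genuinely requires the inductive reset you sketch, and your ``delicate point'' about joint independence of $(\vec Y^k)_{k\in[d]}$ must be built into the induction hypothesis rather than left open.
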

\begin{proof}
From the strong Markov property of $X^{j,j}$, together with the fact that the first hitting time of a closed set is a stopping time, we obtain that $S_{X^{j,j}}(R^{(0),j})$ is a stopping time. 
An induction argument then implies that $S_{X^{j,j}}(R^{(h),j})$  is a stopping time for any $h\geq 1$. 
As noted above, the point $0$ is regular for both $(0,\infty)$ and $(-\infty,0)$ for the L\'{e}vy process $X^{j,j}$ \cite[Theorem VII.1, Corollary VII.5]{Bertoin.96}. 
The previous two assertions imply the middle equality in \eqref{eqn:firstHittingAtRks}.
The outside two are simply a notation change between a deterministic setting and the stochastic setting.

To prove \eqref{eqn:firstHittingAtRks1}, we use Proposition I.7 in \cite{Bertoin.96}. More precisely, by induction suppose that $U^{(h),j}$ is a.s. a continuity point of $\vec\bX^j$. 
If $U^{(h+1),j} = U^{(h),j}$, then $U^{(h+1),j}$ is a.s. a continuity point of $\vec\bX^j$. Otherwise
\begin{align*}
U^{(h),j}< U^{(h+1),j}& = \inf\{t: X^{j,j}(t-) = -{R}^{(h+1),j}\} \\
&= \lim_{k\to\infty} \inf\left\{t: X^{j,j}(t-) = - R^{(h),j} - \frac{k-1}{k}\left(R^{(h+1),j}- R^{(h),j}\right)\right\}
\end{align*} where the right-hand side is the limit of stopping times increasing towards $U^{(h),j}$. The rest of the argument is essentially the same as that of Proposition 3.1 in \cite{MR4193902}, and we omit the details.
\end{proof}

Now we prove that Assumption \ref{ass:a2} implies that the L\'evy process does not have excursion lengths of equal size. 
This is the condition needed in the second part of Proposition \ref{prop:good}.

\begin{lem}\label{lem:distinctExcursionLengths}
	Suppose that $\vec \bX^i$ satisfies Assumption \ref{ass:a2}. Let $(\zeta_k;k\ge 1)$ be some enumeration of all the excursion lengths of $X^{i,i}$ on $[0,T]$. Then $\PR(\zeta_i = \zeta_j\textup{ for some }i\neq j) = 0$. 
\end{lem}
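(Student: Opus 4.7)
The plan is to apply It\^o's excursion theory to the reflected process $X^{i,i} - I^i$ above its infimum and reduce the claim to the atomlessness of the excursion-length distribution under the It\^o excursion measure. Under Assumption \ref{ass:a2}, $X^{i,i}$ is a spectrally positive L\'evy process of infinite variation, so by \cite[Chapter VII]{Bertoin.96} the point $0$ is regular for both $(-\infty,0)$ and $(0,\infty)$ for $X^{i,i}$. Consequently $-I^i$ serves as a continuous local time at the infimum, and the collection of excursions of $X^{i,i}-I^i$ above $0$ forms a Poisson point process $\Pi = \{(\ell_k, e_k)\}$ on $[0,\infty)\times E$ with intensity $d\ell\otimes n_i$, where $n_i$ is the It\^o excursion measure on the space $E$ of positive excursions and $\zeta: E\to (0,\infty)$ is the length functional.

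The key ingredient is that $n_i(\zeta = s) = 0$ for every $s>0$. This is a standard consequence of the infinite-variation hypothesis: the first-passage subordinator $\tau^{i}(x) = \inf\{t : X^{i,i}(t) \le -x\}$ has Laplace exponent $\phi_i = \psi_i^{-1}$, and (up to the normalization of local time) its L\'evy measure coincides with the push-forward $n_i\circ\zeta^{-1}$. Under infinite variation one has $\phi_i(\lambda)/\lambda \to 0$ as $\lambda\to\infty$, the drift of $\tau^i$ vanishes, and standard fluctuation identities (see \cite[Chapter VII]{Bertoin.96}) give an absolutely continuous tail $s\mapsto n_i(\zeta > s)$. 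Hence $n_i(\zeta\in\cdot)$ is atomless.

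Granting this, the proof is completed by a Mecke/Campbell moment computation. The local time $L(T) = -I^i(T)$ is finite a.s., so fix an arbitrary $K>0$ and restrict to $\Pi_K = \{(\ell_k,e_k)\in\Pi : \ell_k\le K\}$; for any $\epsilon>0$ the set $\Pi_K\cap\{\zeta\ge\epsilon\}$ is a.s. finite. Applying the second Campbell formula for Poisson point processes,
\begin{equation*}
\mathbb E\Big[\sum_{(\ell_1,e_1)\ne(\ell_2,e_2)\in\Pi_K} \mathbf 1_{\{\zeta(e_1)=\zeta(e_2),\,\zeta(e_1)\ge\epsilon\}}\Big] = K^2\!\int_{[\epsilon,\infty)^2}\!\mathbf 1_{\{s_1=s_2\}}\,n_i(\zeta\in ds_1)\,n_i(\zeta\in ds_2)=0,
\end{equation*}
where the final equality uses the atomlessness of $n_i\circ\zeta^{-1}$. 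Letting $\epsilon = 2^{-n}\downarrow 0$ and invoking countable subadditivity yields $\mathbb P(\exists\, i\ne j : \zeta_i = \zeta_j)=0$ on $\{L(T)\le K\}$; sending $K\to\infty$ proves the lemma. The main (and only delicate) obstacle is establishing the atomlessness of $n_i(\zeta\in\cdot)$, but this is standard once infinite variation is in hand and can be cited directly from \cite{Bertoin.96} or \cite{MR1954248}.
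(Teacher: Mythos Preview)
Your approach is correct and takes a genuinely different route from the paper. The paper argues locally: for rationals $q_1<q_2$ lying in distinct excursion intervals it conditions on $\mathcal F(q_2)$, writes $d(q_2)-q_2=\inf\{t:\tilde X(t)\le -Y\}$ for a fresh copy $\tilde X$ of $X$ and $\mathcal F(q_2)$-measurable random variables $Y,R$, and observes that $\{\zeta(q_1)=\zeta(q_2)\}=\{\inf\{t:\tilde X(t)\le-Y\}=R\}\subset\{\tilde X(R)=-Y\}$, which has conditional probability zero since the one-dimensional marginals of $X$ are atomless under \ref{ass:a2} by \cite[Theorem~27.4]{MR1739520}. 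You instead exploit the global Poisson structure of the excursion process and reduce to the atomlessness of $n_i\circ\zeta^{-1}$, after which the second Campbell formula finishes at once. Your route is more structural and extends immediately to ruling out $k$-wise coincidences for any $k$; the paper's is more self-contained, requiring only the simple Markov property at deterministic times and no It\^o excursion machinery.

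One step should be tightened: the atomlessness of $n_i(\zeta\in\cdot)$ is not stated explicitly in \cite[Chapter~VII]{Bertoin.96} or \cite{MR1954248}, and the sentence ``standard fluctuation identities give an absolutely continuous tail'' is not a proof. A direct argument is available and, interestingly, rests on exactly the fact the paper's proof invokes. By the Markov property under $n_i$, for any $s>\epsilon>0$ one has $n_i(\zeta=s)=\int_{(0,\infty)}\PR_x(\tau_0=s-\epsilon)\,\rho_\epsilon(dx)$, where $\rho_\epsilon(dx)=n_i(e(\epsilon)\in dx,\,\zeta>\epsilon)$ is the entrance law and $\tau_0$ the first hitting time of $(-\infty,0]$. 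Spectral positivity gives $\{\tau_0=r\}\subset\{X_r=0\}$ under $\PR_x$, and $\PR_x(X_r=0)=0$ by \cite[Theorem~27.4]{MR1739520}; hence $n_i(\zeta=s)=0$ for every $s>0$. With this justification in place your proof is complete.
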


\begin{proof}
Since this is just a statement of $X^{i,i}$ we write $X$ for $X^{i,i}$. The proof follows an approach analogous to \cite[Fact 1]{DvdHvLS.20}. Fix a positive rational number $q$. If $q$ is contained in an excursion interval, we will write the excursion interval as $I(q) = (g(q),d(q))$ and write $\zeta(q)$ for its length. If $q$ is an endpoint, then just set these as $\dagger$, some cemetery state.  It clearly suffices to show \begin{equation}\label{eqn:distinctexcurisonintervals}\forall q_1<q_2 \in \Q_+ \qquad \PR(\zeta(q_1) = \zeta(q_2) > 0 \mbox{ with } q_1, q_2\textup{ in distinct excursion intervals}) = 0.
\end{equation}

Let $\F^\circ(t) = \sigma\{X(s):s\le t\}$ and let $\F(t)$ be the standard augmentation of $\F^\circ$. Note that the event $\{q_1<q_2 \mbox{ are contained in distinct excursion intervals}\}$ is $\F(q_2)$ measurable. Also, $\zeta(q_1), g(q_2),$ and $X(q_2) - X(g(q_2)-)$ are all $\F(q_2)$-measurable. It follows that if we set $Y: = X(q_2)-X(g(q_2))$ then 
\begin{align*}
d(q_2) - q_2 = \inf\{t: X(t+q_2) \le  X(g(q_2))\} = \inf\{t: X(t+q_2)-X(q_2) \le - Y\}.
\end{align*} Note that $\tilde{X}(t) =  X(t+q_2)-X(q_2)$ is independent of $\F(q_2)$ by the Markov property. 
Since
\begin{align*}
\{&\zeta(q_2) = \zeta(q_1) \mbox{ with }q_1,q_2, \textup{ in distinct excursion intervals}\}\\
 &= \{g(q_2)> g(q_1)\}\cap \{d(q_2)- q_2 = \underbrace{g(q_2) + \zeta(q_1)-q_2}_{R:=}\},
\end{align*} we see that 
\eqref{eqn:distinctexcurisonintervals} follows from showing that $\PR(\inf\{t: \tilde{X}(t)\le -Y\} = R)$ is $0$. Since $\tilde{X}$ is independent of $-Y$ and $R$ (which are $\F(q_2)$ measurable), it is enough to show that
\begin{align*}
\PR(\inf\{t: \tilde{X}(t)< -y\} =  r) \le \PR(X(r) = -y) = 0 \qquad\textup{ for all }y>0, r>0.
\end{align*}
This follows from \cite[Theorem 27.4]{MR1739520} and \eqref{eqn:ass2Laplace}, since either the Gaussian component of $X$ is non-trivial or the L\'{e}vy measure $\pi$ is infinite as $\int_0^1 r\widetilde{\pi}_i(dr) = +\infty$.
\end{proof}

\section{Convergence of Conditioned MBGW Trees to the Conditioned Multitype L\'evy Tree}\label{sec:convergence}
In this section we prove Theorem \ref{thm:MAIN} by 
coding the MBGW tree with glued decorations (Lemma \ref{lem:decoration}), and proving that the latter satisfy \ref{ass:1thm1}--\ref{ass:3thm1}. 
More precisely, the setup is the following. 
Let $i_0\in [d]$ and $r_n = \fl{b_n^{i_0} r}$ for some $r>0$. 
We consider a sequence of conditioned multitype trees $T_{n,r_n}$ as constructed in Theorem \ref{thm:MAIN}. 
By Lemma \ref{lem:decoration} we can change from the conditioned MBGW tree to the conditioned glued discrete decoration $\sG( \cD^n_{r_n})\sim M^{i_0,\vec \varphi_n}(-|\#\cD^n(\emptyset) \ge r_n)$ (recall \eqref{eqnDecorationsWithBigSubtreeBig} for the notation of $\sG( \cD^n_{r_n})$). 
The decoration $\cD^n_{r_n}$ is composed of the subdecorations $(\cD^n(\bp);\bp\in \bbU)$. 
Also, by Corollary \ref{cor:DecorationSpalf}, we can construct such conditioned glued decoration and subdecorations with the random walks $(\vec X ^j;j\in [d])$ conditioned on $\tau^{i_0}(1) \geq r_{n}$, as described therein. 
We will write $\vec \be_{\bp,n} = (e^1_{\bp,n},\dotsm, e^{d}_{\bp,n})$ for the discrete excursions of $\vec X^j_n$ whenever $\bp = \bq \ell$ and $\ell\equiv j\mod d$, for any $\bp\in \bbU$.  
This encodes the decoration $\cD^n(\bp)$. Similarly write $h_{\bp,n}$ for the corresponding height process. We also let $\zeta_{\bp,n}$ denote the lifetime of the excursion $h_{\bp,n}$.

If $\cD^n(\bp) = (D^n_\bp,\rho_\bp, d_\bp, \vec{\mu}_\bp, (x^n_{\bp;\ell})_{\ell\ge 1})$ we will write $\overline{\cD}^n(\bp)$ for the rescaled subdecoration defined by
\begin{equation}\label{eqnRescaledSubdecoration}
\overline{\cD}^n(\bp) = (D^n_\bp, \rho_\bp,a_n^{-1}d_\bp,  \diag(\vec{b}_n)^{-1} \vec{\mu}_\bp, (x^n_{\bp;\ell})_{\ell\ge 1}),
\end{equation}where $\diag(\vec{b}_n)$ is the diagonal matrix whose $(i,i)^\textup{th}$ entry is $b_n^i$, for all $i\in [d]$.
Note that in the notation of the subdecorations of Theorem \ref{thm:graphConv1}, that is, $\cD^n(\bp): = \left(D^n_\bp, \di^n_\bp , \rho^n_\bp,\vec{\mu}^n_\bp, (x^n_{\bp;\ell})_{\ell\ge 1}\right)$, we have $d^n_\bp:=a_n^{-1}d_\bp$ and $\vec{\mu}^n_\bp:=\diag(\vec{b}_n)^{-1} \vec{\mu}_\bp$. 

Now, for the continuous setting, recall the notation $\bM^{i_0}_r=\bM^{i_0,\vec \Psi}_r$ from Definition \ref{constructionMultitypeLevyTreeGluedDecoration}. 
Let $\sG(\cD_r)\sim \bM^{i_0}_r$ and recall from Construction \ref{cons:continuum2}, that for any $\bp\in \bbU$, the decoration $\cD(\bp)$ is coded by an excursion $(\be,h)$ under $\bN^j$ where $\bp = \bq \ell$ for $\ell\equiv j\mod d$, and $\bq\in \bbU$. 
Note that only $\cD(\emptyset)$ is conditioned, whereas the other subdecorations are not conditioned. 
Given the excursion $(\vec \be_\bp,h_\bp)$, we will say that the decoration $\cD(\bp)$ is coded by $(\vec\be_\bp,h_\bp)$.
Moreover, for each $\bp$ the sequence of marks $(x_{\bp;\ell} ;\ell\equiv j\mod\ d)$  are conditionally i.i.d. with law proportional to the Stieljes measure $d\be^j$.

Let us mention that the verication of \ref{ass:1thm1} is an inductive argument.  Indeed, we use the convergence of $\cD^n(\bp)$ for $\bp\in \bbU$ with $|\bp|\le h$ to get the convergence of $\cD^n(\bq)$ for $\bq\in \bbU$ with $|\bq|= h+1$. The verification of \ref{ass:2thm1} and \ref{ass:3thm1} is done by relating the MBGW tree with spaLf's recalled in Section \ref{sec:levy}. Before getting to that, we prove that the rescaled process $a_n^{-1} H_n^i(\fl{b_n^i\, \cdot })$ has only finitely many excursions whose height or length exceeds any fixed threshold $\eps$, see Lemma \ref{lem:heightExcursions}. 
After that, we show that the rescaled height process and random walk, conditioned on the first subexcursion being large, converges, see Lemma \ref{lem:longExc}. 
Those results depend strongly on Proposition \ref{prop:dlHeight} and Proposition \ref{prop:good}.

\subsection{Single-Type Convergence}

In this subsection, we work under the setting of Proposition \ref{prop:dlHeight}.
Recall the construction of the height process of a tree $\tr$ using the functional \eqref{eqn:HeightDiscFunctional}. 
For any fixed $i\in [d]$, we let $H^{i}_n$ denote the associated height process of the \L ukasiewicz path $X_n^{i,i}$.
Such BGW forest starts with $\fl{b_n^i/a_n}$ roots.  
Recall that under the assumptions \ref{ass:a2} and \ref{ass:a3} there exists a continuous height process $H^i$ associated to $X^{i,i}$.  

For the next lemma, fix any $x>0$, $\eps>0$ and $i\in [d]$.  Define $\gamma_n:=\gamma_n(x,\epsilon)$ as
\begin{align*}
\gamma_n &:= \#\left\{k\leq \frac{b_n^i}{a_n}x: \sup_{\tau^{i}_n(k-1)\le m <\tau^{i}_n(k)} \frac{1}{a_n} H_n^i(m)>\eps\textup{  or  }\frac{\tau^{i}_n(k)-\tau^{i}_n(k-1)}{b^i_n} >\eps  \right\}.
\end{align*}
Note that $\gamma_n$ is the number of excursion intervals $(l,r)$ on $[0,\tau^i_n(\fl{b_n^ix/a_n})]$ such that either the rescaled height on such interval or the rescaled excursion length is bigger than  $\eps$. 

\begin{lem}\label{lem:heightExcursions}
 Suppose the hypotheses of Proposition \ref{prop:dlHeight} hold. Then $(\gamma_n;n\in \mathbb{N})$ 
is tight.
\end{lem}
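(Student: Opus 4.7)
The plan is to reduce tightness of $\gamma_n(x,\epsilon)$ to a tightness statement about excursion counts of the rescaled processes on a fixed compact interval, and then to bound the contributions of ``long'' excursions via goodness of the limiting walk and of ``tall'' excursions via uniform continuity of the limiting height process.

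First I would introduce the rescaled processes
\[
\widetilde X^{i,i}_n(t) := (a_n/b_n^i)\,X^{i,i}_n(\fl{b_n^i t}), \qquad \widetilde H^i_n(t) := a_n^{-1}\,H^i_n(\fl{b_n^i t}),
\]
so that Assumption \ref{ass:a1} and Proposition \ref{prop:dlHeight} give joint convergence $(\widetilde X^{i,i}_n, \widetilde H^i_n) \weakarrow (X^{i,i}, H^i)$, with uniform convergence of the height process on compacts because $H^i$ is continuous. The rescaled stopping time $\tau^i_n(\fl{b_n^i x/a_n})/b_n^i$ is the first time $\widetilde X^{i,i}_n$ hits a level tending to $-x$, and since $X^{i,i}$ is spectrally positive we have $\tau^i(x+)=\tau^i(x)$ a.s.\ for any fixed $x>0$, so Lemma \ref{lem:firstPassing} yields $\tau^i_n(\fl{b_n^i x/a_n})/b_n^i \to \tau^i(x)$ in probability. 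This lets me choose $T=T(\eta)$ with $\PR(\tau^i_n(\fl{b_n^i x/a_n})/b_n^i > T)<\eta$ for all large $n$, reducing the problem to tightness of the count $N_n$ of excursion intervals of $\widetilde X^{i,i}_n$ above its running minimum on $[0,T]$ whose length exceeds $\epsilon$ or whose maximum of $\widetilde H^i_n$ exceeds $\epsilon$.

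Next, Lemma \ref{lem:good} says $X^{i,i}$ is good on $[0,T]$, so Corollary \ref{cor:longExcursion2} directly yields tightness of the count of excursion intervals of $\widetilde X^{i,i}_n$ on $[0,T]$ with length greater than any fixed positive threshold. To rule out many short-but-tall excursions I would pass (via Skorohod's representation theorem) to a coupling in which $\widetilde H^i_n \to H^i$ uniformly on $[0,T]$ a.s. Uniform continuity of $H^i$ on $[0,T]$ then furnishes $\delta>0$ with $|H^i(s)-H^i(t)|<\epsilon/3$ for $|s-t|\le\delta$, and hence $|\widetilde H^i_n(s)-\widetilde H^i_n(t)|<\epsilon/2$ for $|s-t|\le\delta$ and all large $n$.

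The key observation is that every excursion interval of $\widetilde X^{i,i}_n$ above its running minimum starts at a rescaled time $\tau^i_n(k-1)/b_n^i$ where $\widetilde H^i_n$ vanishes, since $H^i_n(\tau^i_n(k-1))=0$ always holds (the tree starts afresh at each new forest tree). Thus on any excursion interval of rescaled length at most $\delta$, uniform continuity forces $\widetilde H^i_n \le \epsilon/2$ throughout, so its maximum cannot exceed $\epsilon$. Consequently, for all large $n$ every excursion contributing to $N_n$ has length exceeding $\min(\delta,\epsilon)$, and a second appeal to Corollary \ref{cor:longExcursion2} bounds the total count. The principal technical point is this last step: pairing uniform continuity of the continuous limit $H^i$ with the vanishing of $H^i_n$ at excursion endpoints is what precludes an abundance of short excursions with large height.
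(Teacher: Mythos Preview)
Your argument is correct and takes a somewhat different route from the paper's. Both proofs begin identically: truncate to a fixed window $[0,T]$ by controlling the rescaled hitting time (the paper phrases this via the running infimum $I^i_n$, you via Lemma~\ref{lem:firstPassing}). From there the paper invokes Proposition~\ref{prop:good} together with the joint convergence of $(\widetilde X_n^{i,i},\widetilde H_n^i)$ to transfer the full excursion count, heights and lengths together, to the limiting pair $(X^{i,i},H^i)$, where it is bounded trivially by $y/\eps$. You instead separate the count: long excursions are dispatched by the deterministic bound $T/\min(\delta,\eps)+1$ (your appeal to Corollary~\ref{cor:longExcursion2} here is a miscitation, since that corollary concerns increments $f_n^j(r)-f_n^j(l)$ rather than lengths $r-l$, but the statement you actually need is immediate), while short-but-tall excursions are ruled out by pairing the exact vanishing $H^i_n(\tau^i_n(k{-}1))=0$ with uniform convergence $\widetilde H^i_n\to H^i$ and uniform continuity of $H^i$. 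Your approach is the more elementary one, as it avoids the ordered-excursion machinery of Proposition~\ref{prop:good} and Lemma~\ref{lem:good} altogether; the paper's approach buys a single unified limit statement at the cost of invoking that machinery.
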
 
\begin{proof}

Define $I_n^i(m): = \min_{k\le m} X_n^{i,i}(k)$ and $I^i(t):= \inf_{s\le t} X^{i,i}(s)$ for any $m\in \mathbb{N}, t\in \mathbb{R}_+$.
Then, jointly with the convergence in Assumption \ref{ass:a1} and jointly with the convergence of the rescaled height processes of Proposition \ref{prop:dlHeight}, it holds that
\begin{equation*}
    \left(\frac{a_n}{b_n^i} I_n^i(\fl{b_n^it});t\ge 0\right)\weakarrow \left(I^i(t);t\ge 0\right).
\end{equation*}
Note that for any $k\in \mathbb{N}$ we have $ \tau^{i}_n(k) = \inf\{m: -I_{n}^i(m)  = k\}$ and so for any $y\in \mathbb{R}_+$
\begin{equation*}
  \left\{\tau_n^{i}\left(\fl{\frac{b_n^{i}}{a_n}x}\right)> b_n^i y \right\} = \left\{ \frac{a_n}{b_n^i}I_n^i(\fl{b_n^iy}) > -x\right\}.
\end{equation*} 
Note also that the jumps of $\tau_n^{i}$ are the excursion lengths of $X^{i,i}_n$. Let $\tilde{\mathcal{I}}_y(X_n^{i,i})$ denote the collection of discrete excursion intervals of $X_n^{i,i}$ which end before time $b_n^iy$.

Now for each $y\in \mathbb{R}_+$, we can define new random variables $\gamma_n^{(y)}$ as
\begin{equation*}
    \gamma_n^{(y)}  := 1_{[\frac{a_n}{b_n^i}I_{n}^i(\fl{b_n^iy}) \leq -x]}\#\left\{(l,r)\in \tilde{\mathcal{I}}_y(X_n^{i,i}): \sup_{m\in \{l,l+1,\dotsm, r-1 \}} \frac{1}{a_n} H_n^i(m)>\eps,\, \text{or}\, \frac{r-l}{b_n^i}> \eps\right\}.
\end{equation*}
Then clearly for all $z>0$ we have
\begin{equation*}
    \PR(\gamma_n>z) \le \PR\left(\frac{a_n}{b_n^i} I_n^i(\fl{b_n^iy}) > -x\right) + \PR\left(\gamma_n^{(y)} >z\right).
\end{equation*}
Recall from Lemma \ref{lem:good} that the limit $\vec \bX^{i}$ is good.
Thus, by the joint convergence in Assumption \ref{ass:a1}, the rescaled height processes of Proposition \ref{prop:dlHeight},  Proposition \ref{prop:good} and the reverse Fatou's lemma we have
\begin{equation}\label{eqnlem:heightExcursions}
    \limsup_{n} \PR(\gamma^{(y)}_n >z ) \le \PR\left(\#\left\{(l,r)\in \mathcal{I}(X^{i,i}):  \sup_{t\in(l,r)} 	H(t)\ge \eps,\, \text{or}\,  r-l\ge\eps, r\le y\right\}>z\right).
\end{equation}
In fact, the random variable appearing in the right-hand side above is finite almost surely on the compact $[0,y]$.
Hence, the probability in \eqref{eqnlem:heightExcursions} converges to $0$ as $z\to\infty$ for each fixed $\eps,x,y>0$. 

Similarly, by the convergence of $X_n^{i,i}$ we get
\begin{equation*}
    \limsup_{n} \PR\left(\frac{a_n}{b_n^i} I_n^i(\fl{b_n^iy}) > -x\right) \le \PR(I^i(y)\ge -x).
\end{equation*} As $\liminf_{t\to\infty} X^{i,i}(t)= -\infty$, one can find $y$ sufficiently large so that the right-hand side above is arbitrarily small. The above discussion about excursion intervals implies the desired tightness.
\end{proof}

We will also use Proposition 2.1(b.5) in \cite[Chapter VI]{JS.13} frequently in the sequel so we recall this here without proof.
\begin{lem}\label{lem:JSlem}
Suppose that $f_n\to f$ in the 
$J_1$ topology, $t_n\to t$, and $f(t) = f(t-)$. Then $f_n(t_n)\to f(t)$.\end{lem}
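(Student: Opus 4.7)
The plan is to unwind the definition of $J_1$ convergence. On any fixed compact interval $[0,T]$ containing $t$ and eventually containing $(t_n)$, the convergence $f_n\to f$ in $J_1$ provides a sequence of strictly increasing continuous bijections $\lambda_n:[0,T]\to[0,T]$ such that $\sup_{s\in[0,T]}|\lambda_n(s)-s|\to 0$ and $\sup_{s\in[0,T]}|f_n(\lambda_n(s))-f(s)|\to 0$ as $n\to\infty$. I would first reduce to this compact setting (choosing $T>\sup_n t_n$, which is finite since $t_n\to t<\infty$).

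The key manipulation is to pull $t_n$ back through the time change. Set $s_n:=\lambda_n^{-1}(t_n)$, so that $f_n(t_n)=f_n(\lambda_n(s_n))$. Since $\lambda_n\to\mathrm{id}$ uniformly, the inverses $\lambda_n^{-1}$ also tend uniformly to the identity (elementary: if $|\lambda_n(s)-s|\le\eps_n$ everywhere, then setting $s=\lambda_n^{-1}(u)$ yields $|u-\lambda_n^{-1}(u)|\le\eps_n$). Consequently $s_n\to t$ along with $t_n$. The triangle inequality gives
\[
|f_n(t_n)-f(t)|\;\le\;|f_n(\lambda_n(s_n))-f(s_n)|+|f(s_n)-f(t)|.
\]
The first term is bounded by $\|f_n\circ\lambda_n-f\|_\infty$, which vanishes by the $J_1$ convergence, and the second vanishes because $s_n\to t$ and $f$ is continuous at $t$, i.e. $f(t)=f(t-)$.

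The role of the continuity hypothesis is precisely to rule out the pathological situation where $s_n\to t$ from one side only and $f(s_n)$ converges to $f(t-)\ne f(t)$; with $f(t-)=f(t)$ this ambiguity disappears. There is no real obstacle beyond this standard $J_1$ manipulation, and the proof closes immediately once the time-change is introduced.
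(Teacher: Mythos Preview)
Your argument is correct and is precisely the standard proof of this fact. The paper itself does not give a proof: the lemma is stated as a recall of Proposition~VI.2.1(b.5) in Jacod--Shiryaev and left unproved. One small technical point worth mentioning is that when passing from $J_1$ convergence on $\R_+$ to a compact interval $[0,T]$, you should choose $T>\sup_n t_n$ to also be a continuity point of $f$ (which is always possible since $f$ is c\`adl\`ag), so that the restriction maps are continuous there; otherwise the reduction is exactly as you wrote.
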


Now we state an important corollary which involves conditioning the first excursion of the decoration to be macroscopic in a certain sense. The full proof is only sketched as the argument is similar to how Proposition 2.5.2 follows from Lemma 2.5.3 in \cite{MR1954248}. See also the remark of Page 73.

First we describe the notation for the conditioned process. 
Fix $i\in [d]$ and $r>0$.
Let $r_n = \fl{b_n^{i} r}$ and let 
\[
\left(\left(X^{i,j}_{n,r_n}(m); j\in[d]\right),H_{n,r_n}^{i}(m) ; m\le \zeta_{n,r_n}\right)
\]be distributed as $\big((X^{i,j}_{n}(m); j\in[d]),H_{n}^{i}(m) ; m\le \zeta_n\big)$ conditioned on $\zeta_n:=\tau^i_n(1)\geq r_n$.
In words, the process is the \L ukasiewicz path and height process of a MBGW type $i$ subtree constructed from $\vec N^{i,\vec \varphi_n}(-\big| \zeta_n\geq r_n)$.
Note that for such a forest, we use the depth-first search order, that is, traverse the first tree in depth-first search order, and then the second, and so on.

\begin{lem}\label{lem:longExc}
{Fix $i\in [d]$ and $r>0$. }Suppose the assumptions of Proposition \ref{prop:dlHeight} hold. Then for each $r> 0$, $\vec \bN^{i}(\zeta =r) = 0$ and provided $r_n/b^i_n\to r$
\begin{equation*}
    \left(  \left(\frac{a_n}{b_n^j}X^{i,j}_{n,r_n}(b_n^{i}t); j\in[d]\right),\frac{1}{a_n} H_{n,r_n}^{i}(b^{i}_nt), \zeta_{n,r_n}/b_n^{i}; t\le \zeta_{n,r_n}/b_n^{i}\right) \weakarrow \left(\vec \be_r,h_r,\zeta\right) \sim \vec \bN^{i}(-|\zeta \ge r).
\end{equation*}

\end{lem}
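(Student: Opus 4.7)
The plan is to reduce the conditional convergence to the unconditional convergence already in hand (Proposition \ref{prop:dlHeight} joint with Assumption \ref{ass:a1}) by identifying the ``conditioned first excursion'' with the ``first long excursion'' in the unconditional process, and then invoking Proposition \ref{prop:good} to transport the convergence across the excursion decomposition. Along the way I need the no-atom property $\vec \bN^{i}(\zeta = r) = 0$ for every $r>0$.

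First I address $\vec \bN^{i}(\zeta = r) = 0$. Under Assumption \ref{ass:a2}, the excursion theory of $(\vec \bX^i, H^i)$ above the running minimum of $X^{i,i}$ gives a Poisson point process of excursions parametrized by the local time $L^i = -I^i$, with intensity $\vec \bN^i$. If one had $c := \vec \bN^{i}(\zeta = r) > 0$, then for any $T>0$ the number of excursions on $[0,T]$ of length exactly $r$ would, conditionally on $L^i(T)$, be Poisson with mean $c\, L^i(T)$, which is $>0$ with positive probability once $T$ is large enough (using $\liminf_{t\to\infty}X^{i,i}(t)=-\infty$). With positive probability one would therefore find two excursions of length exactly $r$, contradicting Lemma \ref{lem:distinctExcursionLengths}.

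Next I convert to the unconditional picture. Enumerate the successive discrete excursions of $X^{i,i}_n$ above its running minimum as $\varepsilon^{(k)}_n = \bigl(\vec X^{i,(k)}_n, H^{i,(k)}_n\bigr)$ with lengths $\zeta^{(k)}_n = \tau^{i}_n(k)-\tau^{i}_n(k-1)$; since they are i.i.d.\ and $K_n := \min\{k\ge 1 : \zeta^{(k)}_n \ge r_n\}$ is a strong stopping index,
\[
\varepsilon^{(K_n)}_n \stackrel{d}{=} \varepsilon^{(1)}_n \,\Big|\, \zeta^{(1)}_n \ge r_n .
\]
Hence proving Lemma \ref{lem:longExc} is equivalent to establishing the convergence of the \emph{first long excursion} (after rescaling by $B_n^{-1}$ in space and $(b_n^i)^{-1}$ in time) to the excursion $(\vec \be_r, h_r)$ of the continuum field $(\vec \bX^i, H^i)$ selected by $S := \inf\{s>0: \zeta(\epsilon_s) \ge r\}$. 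By the Poisson structure of the continuum excursions, $\epsilon_S$ has law $\vec \bN^{i}(-|\zeta \ge r)$.

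By Skorokhod's representation theorem I can assume the joint convergence of Proposition \ref{prop:dlHeight} and Assumption \ref{ass:a1} holds almost surely. Lemma \ref{lem:good} shows $\vec \bX^i$ is good on every compact $[0,T]$, and by Lemma \ref{lem:distinctExcursionLengths} the continuum excursion lengths are a.s.\ distinct. Fix $T>0$ large enough that with probability $>1-\delta$ the first continuum excursion of length $\geq r$ is completed before time $T$; by Lemma \ref{lem:firstPassing} applied to $-I^i_n$ the analogous discrete event holds for large $n$. On $[0,Tb_n^i]$ rescaled to $[0,T]$, Proposition \ref{prop:good} gives joint convergence of the $k$ longest excursions (as rescaled Skorokhod paths together with their endpoints) to the $k$ longest continuum excursions, for each $k$. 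Combining the no-atom fact $\vec \bN^{i}(\zeta = r) = 0$ with Proposition \ref{prop:good} ensures that with probability tending to $1$ the ranking picks out the same excursion discretely and continuously: the first rescaled discrete excursion of length $\ge r_n/b_n^i$ equals, eventually, the first continuum excursion of length $\ge r$. Appending the trivial convergence $\zeta_{n,r_n}/b_n^{i} = \zeta(\varepsilon^{(K_n)}_n)/b_n^{i} \to \zeta(\epsilon_S)$, which is just a readout of the previous Skorokhod-path convergence at the endpoint of the excursion, yields the claimed joint limit.

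The main obstacle is the identification step in the last paragraph: turning the convergence of the ``ranked'' $k$ longest excursions (Proposition \ref{prop:good}) into the convergence of the ``first excursion whose length exceeds the threshold.'' This is precisely where $\vec \bN^i(\zeta = r) = 0$ is indispensable, as it prevents ties at the threshold and makes the threshold selection continuous in the Skorokhod topology; without it, an excursion whose discrete length crosses $r_n/b_n^{i}$ from either side in the limit could be wrongly matched with a continuum excursion of length near but not above $r$.
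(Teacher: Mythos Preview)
Your proposal is correct and follows essentially the same strategy as the paper: both arguments reduce the conditional statement to the unconditional convergence by identifying the conditioned excursion with the \emph{first long excursion} of the unconditional process, then use Skorokhod's representation together with the no-atom property $\vec{\bN}^i(\zeta=r)=0$ to show this selection is stable under the limit. The only notable difference is in how the first long excursion is extracted: the paper works directly with the height process, defining $Q_r^n=\inf\{m: H_n^i(m-k)>0 \text{ for all }k\le b_n^i r\}$ and then $G_r^n,D_r^n$ as the surrounding zeros, whereas you route through Proposition~\ref{prop:good} (convergence of the $k$ longest ranked excursions in order of appearance) and then read off the first one exceeding the threshold. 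Both implementations are valid; the paper's is slightly more direct, while yours reuses machinery already established. One small remark: your invocation of Lemma~\ref{lem:firstPassing} to argue that the first long \emph{discrete} excursion is completed before $Tb_n^i$ is not quite the right citation---that lemma concerns first passage of levels, not existence of long excursions---but the conclusion follows immediately from Proposition~\ref{prop:good} itself (the first continuum long excursion is among the $k$ longest, and the ranked convergence plus $\vec{\bN}^i(\zeta=r)=0$ forces a matching discrete excursion of length exceeding $r_n/b_n^i$ on $[0,Tb_n^i]$).
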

\begin{proof}[Proof Sketch] The fact that $\vec{\bN}^{i}(\zeta =r) = 0$ follows from Lemma \ref{lem:distinctExcursionLengths} and some standard computations involving Poisson processes. 
	
	Let $H_n^i$ be the unconditioned discrete height process which encodes the forest of $\fl{b_n^i/a_n}$ BGW trees, as in the setting of Proposition \ref{prop:dlHeight}. 
Let $$Q_r^n = \inf\{m: H_n^i(m-k) > 0 \textup{ for all }k = 0,1,\dotsm, b_n^ir\}.$$
Note that the tree in which the vertex $Q_r^n$ belongs, is the first in the forest with at least $r_n$ individuals.
Let $G_r^n = \sup\{m\le Q_r^n: H_n^i(m) = 0\}$ and $D_r^n = \inf\{m\ge Q_r^n: H_n^i(m) = 0\}$ be the roots of the aforementioned tree, and the following one, respectively. 
Let $Q_r = \inf\{t: H^i(t-s) > 0 \textup{ for all }s\in[0,r]\}$ and similarly define $G_r$ and $D_r$. Note that these times could have also been formulated in terms of the behavior of the $i^\text{th}$ coordinates $X^{i,i}_n$ and $X^{i,i}$ as well.

Then $\widetilde{H}_n^i(m\wedge \zeta_n) \overset{d}= H_n^i ((G_r^n + m)\wedge D_r^n)$,  $h^i(t) \overset{d}{=} H^i((G_r + t)\wedge D_r)$ and $G_r<\infty$ a.s. by general excursion theory and properties of Poisson random measures. See e.g. Lemma 1.13 in \cite[Chapter XII]{MR1725357}. 

From Proposition \ref{prop:dlHeight}, applying Skorohod's representation theorem we can assume the convergence of the vector-valued process $\vec X_n^{i}$ jointly with $H_n^i$ 
(appropriately rescaled) 
occurs almost surely in the Skorohod space. 
The assumption that $\vec \bN^i(\zeta = r) = 0$ implies that $\PR(h^i (r) = 0) = 0$, and so the convergence of $H^i_n$ to $H$ allows us to see that $Q^n_r\to Q_r$. 
Using standard arguments (see p. 72 in \cite{MR1954248}, and for related convergences Section 2.2 in \cite{mijatovic2022limit}), we can also deduce $(b_n^i)^{-1} G_r^n\to G_r$ and $(b_n^i)^{-1} D_r^n\to D_r$ a.s. under this coupling. 
This implies the convergence of the rescaled excursion length $\zeta_n$. 
Also, by the continuity of $H^i$ and Lemma \ref{lem:JSlem}, it follows that $\frac{1}{a_n} H_n^i(  (G_r^n + b_n^it) \wedge D_r^n) \longrightarrow H^i((G_r+t)\wedge D_r)$ locally uniformly in $t$. 
We leave the details to the reader. 
\end{proof}

In the following subsections we prove that \ref{ass:1thm1}-\ref{ass:3thm1} hold for our conditioned multitype BGW trees. 

\subsection{Establishing \ref{ass:1thm1}}

In the following lemma, we establish the joint convergence of a subdecoration $\overline{\cD}^{n}(\bp)$, together with all the subdecorations pasted to it.

\begin{lem}\label{lem:inductionProof} Assume \ref{ass:a1}--\ref{ass:a4}.

Fix some $\bp\in \bbU$ where $\cD(\bp)$ has a root of type $i$. 
Suppose that
\begin{align}\label{eqn:enConverge1}
    &\left(\left(\frac{a_n}{b_n^1} e_{\bp,n}^{1}(b^i_nt), \dotsm, \frac{a_n}{b_n^d}e^{d}_{\bp,n}(b^i_nt)\right);t\ge 0\right) \weakarrow \vec \be_\bp, \\
& \left(\frac{1}{a_n}h_{\bp,n}(b_n^it);t\ge 0\right)\weakarrow h_\bp,
    \qquad \text{and}\qquad  \frac{\zeta_{\bp,n}}{b_n^i}\weakarrow \zeta_\bp.\nonumber
\end{align}
Then
\begin{enumerate}
    \item $\overline{\cD}^{n}(\bp)\weakarrow\cD(\bp)$ in $\mathfrak{M}^{\ast d}_{\tiny \mbox{mp}}$ where $\cD(\bp)$ is coded by the excursion {pair} $(\vec \be_\bp,h_\bp)$.
    \item Jointly with the convergence in \eqref{eqn:enConverge1}, for each $m\ge 0$ and $j\neq i$, if $\ell = j+md$ then
     \begin{align*}
    &\left(\left(\frac{a_n}{b_n^1} e_{{\bp \ell,n}}^{1}(b^j_nt), \dotsm, \frac{a_n}{b_n^d}e_{\bp \ell,n}^{d}(b^j_nt)\right);t\ge 0\right) \weakarrow \vec \be_{\bp \ell}, \\
& \left(\frac{1}{a_n}h_{\bp \ell,n}(b_n^jt);t\ge 0\right)\weakarrow h_{\bp \ell},\qquad
    \text{and} \qquad\frac{\zeta_{\bp \ell,n}}{b_n^j}\weakarrow \zeta_{\bp \ell},
\end{align*}where $\{(\vec \be_{\bp \ell}, h_{\bp \ell}) : \ell = j+md, m\ge 0\}$ are the ordered atoms (by decreasing lifetime) of a conditionally independent Poisson random measure on $\D(\R_+,\R^d)\times C(\R_+,\R)$ with intensity measure $e_{\bp}^{j}(\zeta_\bp)\times \vec \bN^j(-)$ listed by decreasing lifetimes, conditionally given $e_{\bp}^j(\zeta_\bp)$ for all $j\in [d]$.
\end{enumerate}
\end{lem}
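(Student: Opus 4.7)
My plan is to prove parts (1) and (2) jointly, using the random-walk coupling of Remark \ref{rmrkCorollaryCouplingGluedAndRWs} together with standard techniques for convergence of trees coded by height processes. Write $K_j^n = X^{i,j}_{\bp,n}(\zeta_{\bp,n}) = e^j_{\bp,n}(\zeta_{\bp,n})$ for the number of type-$j$ children attached to $\cD^n(\bp)$; by hypothesis \eqref{eqn:enConverge1} we have $(a_n/b_n^j) K_j^n \to e^j_\bp(\zeta_\bp)$ in distribution.

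\textbf{Part (1).} For the unmarked GHP convergence $\overline{\cD}^n(\bp)\to\cD(\bp)$ in $\fMd$, I would first apply the now-standard principle that convergence of rescaled discrete height processes implies Gromov--Hausdorff convergence of the coded real trees with roots; this is essentially \cite[Proposition 2.5.2]{MR1954248}, adapted to the present setting. The rescaled counting measure $(b_n^i)^{-1}\langle \vec\mu_{\bp}^n, \vec\epsilon_i\rangle$ is the pushforward of normalized counting measure on $\{0,\dots,\zeta_{\bp,n}\}$ under the discrete quotient map, converging to the pushforward of Lebesgue measure on $[0,\zeta_\bp]$, which matches $\langle \vec{\mu}, \vec{\epsilon}_i\rangle$ from \eqref{eqnMeasureMuOnLevyTree}. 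For the marks, Construction \ref{construction:Discretesingletype} shows that, conditionally on $\vec X^i_{\bp,n}$, the mark $x^n_{\bp;j+(m-1)d}$ with $j\ne i$ has distribution on $\{1,\dots,\zeta_{\bp,n}\}$ equal to the normalized size-biased measure $\Delta e^j_{\bp,n}/e^j_{\bp,n}(\zeta_{\bp,n})$, obtained via the independent uniform permutation $\pi^j_n$. The assumed Skorohod convergence $(a_n/b_n^j) e^j_{\bp,n}(b_n^i\cdot) \to e^j_\bp$ implies (when $e^j_\bp(\zeta_\bp)>0$) weak convergence of these normalized Stieltjes measures to $de^j_\bp/e^j_\bp(\zeta_\bp)$; combined with the tree convergence and the continuity of the quotient map at $R_{j+(m-1)d}$, this yields $x^n_{\bp;j+(m-1)d} \to p_{h_\bp}(R_{j+(m-1)d})$ with $R_{j+(m-1)d}$ distributed as in \eqref{eqnDefinitionR_jLevyTree}. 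When $e^j_\bp(\zeta_\bp)=0$, both discrete and continuum marks collapse to the root, trivially matching.

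\textbf{Part (2).} For the children subdecorations, the key structural fact is that, conditionally on $\vec X^i_{\bp,n}$, the collection of type-$j$ subtrees attached to $\cD^n(\bp)$ consists of $K_j^n$ i.i.d.\ samples from $\vec N^{j,\vec\varphi_n}$, by the Bienaym\'e--Galton--Watson independence property. I would analyze the rescaled empirical measure $\sum_{k=1}^{K_j^n}\delta_{(\text{rescaled }\vec e_k,\,\text{rescaled }h_k)}$ on the Polish space $\D(\R_+,\R^d)\times C(\R_+,\R_+)$. Under Assumptions \ref{ass:a1}--\ref{ass:a4}, Proposition \ref{prop:dlHeight} applied to type-$j$ forests implies that the rescaled law under $\vec N^{j,\vec\varphi_n}$ mimics $\vec\bN^j$ locally in the sense that $(b_n^j/a_n)\vec N^{j,\vec\varphi_n}(\text{rescaled}\in A) \to \vec\bN^j(A)$ for sets $A$ bounded away from the zero excursion. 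Combined with $(a_n/b_n^j) K_j^n \to e^j_\bp(\zeta_\bp)$, the classical triangular-array Poisson convergence theorem yields that this empirical measure converges to a Poisson random measure with intensity $e^j_\bp(\zeta_\bp)\cdot\vec\bN^j$. Since Lemma \ref{lem:distinctExcursionLengths} ensures the limiting PRM has distinct atomic lifetimes a.s., listing atoms in decreasing lifetime order is a.s.\ continuous at the limit, giving convergence of the $m$-th largest rescaled excursion to $(\vec e_{\bp;j+md}, h_{\bp;j+md})$. Joint convergence across $j\ne i$ and $m\ge 0$ follows from conditional independence across types.

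\textbf{The main obstacle} is making the triangular-array Poisson convergence rigorous in this functional state space. The standard approach is to truncate to sets $A_k$ of excursions whose supremum or lifetime exceeds $1/k$, reducing to a classical rare-events/binomial-to-Poisson statement; the key input is the intensity convergence $(b_n^j/a_n)\vec N^{j,\vec\varphi_n}(\text{rescaled}\in A_k) \to \vec\bN^j(A_k)<\infty$, which follows from Proposition \ref{prop:dlHeight} and standard fluctuation estimates for spectrally positive L\'evy processes. A more subtle point is that \eqref{eqn:enConverge1} \emph{conditions} on a specific realization of $\vec e_\bp$; this is handled by invoking Skorohod's representation theorem to realize all convergences almost surely on a common probability space, and then applying the triangular-array Poisson convergence conditionally on the almost-sure realization of $\vec e_{\bp,n}$.
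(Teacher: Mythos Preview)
Your approach to Part (1) is essentially the same as the paper's, though the paper is more explicit about one point you gloss over: the marks $(x^n_{\bp;j+(m-1)d})_{m\ge 1}$ are not conditionally i.i.d.\ with the Stieltjes law $\Delta e^j_{\bp,n}/e^j_{\bp,n}(\zeta_{\bp,n})$; they are a uniform permutation of a multiset, so only marginally does each have that distribution. The paper handles the joint limit via a small combinatorial claim (a uniform permutation of $Q_n$ letters with $Q_n/\beta_n\to x$ has first few coordinates converging to i.i.d.\ uniforms on $[0,x]$), then composes with the right-continuous inverse of $e^j_{\bp,n}$ using Whitt's inverse-map theorem to obtain the Stieltjes limit for the times $S^j_n(m)$. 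Your sketch reaches the same conclusion but would need this step to be airtight for joint convergence of the marks.

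For Part (2) you take a genuinely different route from the paper. You treat the $K_j^n$ children as i.i.d.\ samples from $\vec N^{j,\vec\varphi_n}$ and invoke a triangular-array Poisson convergence theorem on the functional state space, which requires establishing the intensity convergence $(b_n^j/a_n)\vec N^{j,\vec\varphi_n}(\text{rescaled}\in A)\to \vec\bN^j(A)$ for sets $A$ bounded away from zero. The paper instead observes that the concatenation of these $K_j^n$ excursions is exactly an independent walk $\vec X^j_n$ run until the hitting time of $-e^j_{\bp,n}(\zeta_{\bp,n})$; since Assumption \ref{ass:a1} already gives convergence of the rescaled $\vec X^j_n$, and the limit $\vec\bX^j$ is good (Lemma \ref{lem:good}) with a.s.\ distinct excursion lengths (Lemma \ref{lem:distinctExcursionLengths}), Proposition \ref{prop:good} immediately yields convergence of the $k$ longest excursions for every $k$. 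This sidesteps the Poisson-approximation machinery entirely: the PRM structure of the limit is a consequence of It\^o excursion theory for $\vec\bX^j$, not something that needs to be established through a limiting argument. Your route is correct in principle, but the intensity convergence you flag as the ``main obstacle'' is precisely what the paper avoids by working at the level of the full process rather than individual excursions.
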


Before moving to the proof of the above lemma, we state a claim whose proof is an easy combinatorial computation and is omitted.
\begin{claim}\label{claim:permutation}
Fix any $x\in \mathbb{R}_+$. Suppose that $(Q_n;n\ge 1)$ are positive random integers, $\beta_n\to\infty$ and $Q_n/\beta_n\weakarrow x$, as $n\to \infty$. Conditionally given $Q_n$, let $\pi_n$ be a uniform permutation in $\mathfrak{S}_{Q_n}$, the symmetric group on $Q_n$ letters. Then, in the product topology on $\R_+^\infty$, $ \left(\beta_n^{-1} \pi_n(\ell); \ell\ge 1 \right)   \weakarrow (U_\ell;\ell\ge 1)$ where $(U_\ell;\ell\ge 1)$ are i.i.d. uniform random variables on $[0,x]$. 
\end{claim}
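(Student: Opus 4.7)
The plan is to verify convergence of finite-dimensional distributions (which is enough for the product topology on $\R_+^\infty$) by a direct Riemann-sum computation, comparing the distribution of the first $k$ values of a uniform permutation to an i.i.d.\ sample.

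First, I would dispose of the degenerate case $x=0$ separately: since $\beta_n^{-1}\pi_n(\ell) \le \beta_n^{-1}Q_n \weakarrow 0$, every finite-dimensional marginal tends in distribution to the zero vector, which matches $k$ independent copies of the uniform law on the degenerate interval $[0,0]$. So assume $x>0$. By the Skorohod representation theorem, I may pass to a coupling where $Q_n/\beta_n \to x$ almost surely; it then suffices, for every fixed $k\ge 1$ and every bounded continuous $g:\R_+^k\to\R$, to show
\begin{equation*}
\E\bigl[g\bigl(\beta_n^{-1}\pi_n(1),\dotsc,\beta_n^{-1}\pi_n(k)\bigr)\bigr] \longrightarrow x^{-k}\int_{[0,x]^k} g(u_1,\dotsc,u_k)\,du_1\dotsm du_k.
\end{equation*}

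Conditionally on $Q_n=q_n$, the tuple $(\pi_n(1),\dotsc,\pi_n(k))$ is uniformly distributed among the $q_n!/(q_n-k)!$ ordered $k$-tuples of distinct elements of $\{1,\dotsc,q_n\}$, so
\begin{equation*}
\E\bigl[g(\beta_n^{-1}\pi_n(1),\dotsc,\beta_n^{-1}\pi_n(k))\mid Q_n=q_n\bigr] = \tfrac{(q_n-k)!}{q_n!}\!\!\sum_{\substack{i_1,\dotsc,i_k\in[q_n]\\ \text{distinct}}}\!\!g(i_1/\beta_n,\dotsc,i_k/\beta_n).
\end{equation*}
Split the sum over distinct tuples as (full sum) $-$ (non-distinct tuples). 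For the full sum I would write
\begin{equation*}
\tfrac{1}{q_n^k}\sum_{i_1,\dotsc,i_k=1}^{q_n} g(i_1/\beta_n,\dotsc,i_k/\beta_n)
\end{equation*}
which, upon setting $\Delta_n = 1/\beta_n$ and viewing the sum as a Riemann sum on the grid $(\Delta_n\Z)^k\cap[0,q_n/\beta_n]^k$ with mesh $\Delta_n$ and cell volume $\Delta_n^k$, multiplied by $(q_n/\beta_n)^{-k}\cdot(q_n/\beta_n)^k/q_n^k = 1/q_n^k$, converges to $x^{-k}\int_{[0,x]^k} g\,du$ whenever $q_n/\beta_n\to x$ and $g$ is bounded and continuous. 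The prefactor satisfies $q_n^k(q_n-k)!/q_n! \to 1$ since $k$ is fixed and $q_n\to\infty$. For the non-distinct tuples, their number is $q_n^k - q_n(q_n-1)\dotsm(q_n-k+1) = O(q_n^{k-1})$, and each contributes at most $\|g\|_\infty$; multiplied by $(q_n-k)!/q_n! = O(q_n^{-k})$, this error tends to $0$.

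Combining these gives, for every realization with $q_n/\beta_n\to x$,
\begin{equation*}
\E\bigl[g(\beta_n^{-1}\pi_n(1),\dotsc,\beta_n^{-1}\pi_n(k))\mid Q_n=q_n\bigr] \longrightarrow x^{-k}\int_{[0,x]^k} g\,du.
\end{equation*}
Since the conditional expectations are uniformly bounded by $\|g\|_\infty$ and $Q_n/\beta_n\to x$ a.s.\ under the Skorohod coupling, bounded convergence lets me take expectation and conclude. Finite-dimensional convergence for every $k$ then yields weak convergence in the product topology on $\R_+^\infty$. The only mildly delicate point is ensuring the Riemann-sum limit is uniform in the realization of $q_n$, but this is immediate from the almost sure convergence of $q_n/\beta_n$ and uniform continuity of $g$ on the compact set $[0,x+1]^k$.
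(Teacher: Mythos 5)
Your proof is correct: the paper states that the claim's proof "is an easy combinatorial computation and is omitted," and your argument — finite-dimensional distributions via the exact count of ordered distinct $k$-tuples, a Riemann-sum limit for the unrestricted sum, an $O(q_n^{-1})$ correction for coincidences, and bounded convergence after a Skorohod coupling of $Q_n/\beta_n\to x$ — is precisely that computation, so it fills the gap in the intended way. The only cosmetic points are that the Skorohod step could be skipped (convergence in law to the constant $x$ is convergence in probability, and the conditional expectation is a deterministic function of $Q_n$), and that for fixed $\ell$ the quantity $\pi_n(\ell)$ is only defined on $\{Q_n\ge \ell\}$, which is harmless when $x>0$ since then $Q_n\to\infty$ in probability.
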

\begin{proof}[Proof of Lemma \ref{lem:inductionProof}] We will fix $\bp\in \bbU$ and omit it from the notation, unless it is needed for clarity. For example we will write $\vec \be_{n}$ for $\vec \be_{\bp,n}$ and $\vec \be_{\ell,n}$ for $\vec \be_{\bp \ell,n}$. Let us first note that Item 2 follows from Proposition \ref{prop:good} and Lemma \ref{lem:good}. 
Indeed, by the results in Section \ref{sec:discreteDecorations}, we know that for each $j\neq i$,
\begin{align*}
 	\left(\vec \be_{\ell,n};\ell\ge 1\textup{ and }\ell\equiv j\mod d\right)
\end{align*}
is equal in law to the ordered excursions of $\vec X_n^j$ run until $\inf\{t: X_n^{j,j}(t) = -e^j_{n}(\zeta_{n})\}$ for the independent process $\vec \be_{n}$. By Lemma \ref{lem:good}, we know that the limiting process $\vec \bX^j$ is good a.s. and so by Lemma \ref{lem:distinctExcursionLengths} and Proposition \ref{prop:good}, we can transfer the rescaled convergence of $\vec X_n^j\to \vec \bX^j$ to the rescaled convergence of the excursions $\vec \be_{\ell,n}\to \vec \be_\ell$ (recall that the convergence of the excursions in the product space is equivalent to the convergence of the longest $k$ excursions, for any $k$ arbitrary). 
Extending this to the height processes and lengths is not difficult.

Recall from Construction \ref{construction:Discretesingletype} of the subdecoration $\cD^n(\bp)$, the points $(x_{\ell};\ell\ge 1, \ell\equiv j\mod d)$ {for $j\neq i$} are constructed as follows: 
\begin{equation*}
\{ x_\ell: \ell = j+md, 0\le m < e^j_n(\zeta_n)\}  = \{v_t: v_t\textup{ appears }e^j_n(t)-e^j_n(t-1) \textup{ many times}\},
\end{equation*}
where $v_t$ are all the vertices of a plane tree with \L ukasiewicz path $e^i_n$ listed in depth-first order. Moreover, if $\pi_n^j$ is uniformly distributed on $\mathfrak{S}_{e^j_n(\zeta_n)}$ then one can show that the sequence $(x_\ell;\ell\geq 1, \ell = j+md, \ell\equiv j\mod d)$ is equal in law to
\begin{equation}\label{eqnV_Sjnm}
\left(v_{S_n^j(1)},\dotms, v_{S^j_n({e^j_n(\zeta_n))}}\right),\qquad \mbox{where }\ S_n^j(m): = \inf\{t: e_n^j(t)\ge \pi_n^j(m)-1\}.
\end{equation}The previous assertion follows easily from the following argument. 
Consider any $t\in [0,\zeta_n]$, and for simplicity we work with $\widetilde S_n^j(m) = \inf\{t: e_n^j(t)\ge m-1\}$. There are $e^j_n(t)-e^j_n(t-1)$ values of $m$ such that $t$ is the first time that $e^j_n(t)\geq m-1$, namely $m=e^j_n(t-1)+2,\dotsm, e^j_n(t)+1$. 
Note that $\widetilde S_n^j(e^j_n(t-1)+2)=\dotsm=\widetilde S_n^j(e^j_n(t)+1)$ in this case. 
If $s:=\widetilde S_n^j(e^j_n(t-1)+2)$, the latter implies $v_s$ appears $e^j_n(t)-e^j_n(t-1)$ times in \eqref{eqnV_Sjnm}. 

By Skorohod's representation theorem, we can and do suppose that the convergence in \eqref{eqn:enConverge1} holds almost surely. 
Lemma \ref{lem:good} {and \ref{G4} imply} that $e^j$ is continuous at both zero and $\zeta$. 
Note that here we use the second part of \ref{lem:good} when $\bp=\emptyset$. 
Also, recall that $\zeta_n/b_n^i\to \zeta$ by assumption. By Lemma \ref{lem:JSlem} we have $\frac{a_n}{b_n^j} e_n^j(\zeta_n) \to e^j(\zeta).$ Applying Claim \ref{claim:permutation}, we see that jointly for every $m\ge 1$ 
\begin{equation*}
\frac{a_n\pi_n^j(m)}{b_n^j} \weakarrow U_m^j,\qquad (U^j_m;m\ge 1)|\vec \be \overset{i.i.d.}\sim \textup{Unif}[0,e^j(\zeta)].
\end{equation*}
Another application of Skorohod's representation theorem implies that we can take the convergence above to be almost sure convergence (which is also jointly with the previous convergences of heights and excursion lengths). By Assumption \ref{ass:a4}, the process $e^j$ is strictly increasing on $[0,\zeta)$ and so $(e^j)^{-1}(u) = \inf\{t: e^j(t)>u\}$ is continuous on $[0,e^j(\zeta))$. 

By Theorem 7.2 in \cite{Whitt.80}, for each $y\in[0,\be^j(\zeta))$ the following convergence holds in the Skorohod space $\D([0,y],\R_+)$ (and hence uniformly since the limit is continuous)
\begin{equation*}
\left(\inf\left\{t: \frac{a_n}{b_n^j} e_n^j(\fl{b_n^i t})> u\right\};u\in[0,y] \right)\longrightarrow\left( \inf\{t: e^j(t)>u\};u\in[0,y]\right).
\end{equation*}
This implies that for each $m$ such that $U^j_m<y$
\begin{equation*}
\frac{S_n^j(m)}{b_n^i} \longrightarrow \inf\{t: e^j(t)> U_m^j\} = \inf\{t: e^j(t)\ge U_m^j\}.
\end{equation*}
Since $y\in [0,e^j(\zeta))$ was arbitrary, the above convergence holds for every $m\ge 1$ a.s. In particular, we have shown that 
\begin{equation*}
\frac{S_n^j(m)}{b_n^i} \longrightarrow S^j(m),\qquad\textup{ where }\,S^j(m)|\vec \be\overset{i.i.d.}\sim \frac{e^j(ds)}{e^j(\zeta)},
\end{equation*}and $(S^j(m))_{m\geq 1}|\vec \be$ are (conditionally) i.i.d.

Recall that we are working with the rescaled distance $a_n^{-1}d_\bp$. 
Let $t_n(\ell),t(\ell)$ be such that
$x_{n;\ell}:=p_{h_n}(t_n(\ell))\in \T_n$ (resp. $x_\ell:=p_{h}(t(\ell))\in \T$) under the canonical quotient $p_{h_n}:[0,\zeta_n]\to \T_n$ (resp. under the canonical quotient $p_{h}:[0,\zeta]\to \T$). 
Consider the rescaled versions $\overline x_{n;\ell}:=\overline p_{h_n}(\cdot):=p_{h_n}(\cdot/b^i_n)$. 
Then, equipping $\T_n$ with the rescaled metric $a_n^{-1}d_{h_n}$, the points $p_{h_n}(S_n^j(m))$ converge to $p_h(S^j(m))$.

The desired convergence of the subdecorations is now a modification of Lemma 2.4 in \cite{MR2203728} and Lemma 21 in \cite{ABBG.12} which we include as a separate lemma.
\end{proof}

\begin{lem}\label{lemmaConvergenceOfMArkedTreesGivenConvergenceOfHeingthFns}
Suppose that $h_n, h$ are continuous non-negative functions such that the support of $h_n$ satisfies $\textup{supp}(h_n) = [0,\zeta_n]$. Suppose also that $h(t)>0$ if and only if $t\in(0,\zeta)$, and that $h_n\to h$ and $\zeta_n\to \zeta$. For each $\ell\ge 1$, let $(t_n(\ell))$ be a sequence with $t_n(\ell)\in[0,\zeta_n]$ such that $t_n(\ell)\to t(\ell)\in[0,\zeta]$. 

Let $\T_n = (\T_n,\mu_n, \rho_n)$ (resp. $\T = (\T,\mu,\rho)$) be the weighted real tree coded by $h_n$ (resp. $h$) and define $x_{n;\ell}:=p_{h_n}(t_n(\ell))\in \T_n$ (resp. $x_\ell:=p_{h}(t(\ell))\in \T$) denote points under the canonical quotient $p_{h_n}:[0,\zeta_n]\to \T_n$ (resp. under the canonical quotient $p_{h}:[0,\zeta]\to \T$). Then
\begin{equation*}
    (\T_n, \mu_n, \rho_n,(x_{n;\ell})_{\ell\ge 1}) \to (\T,\mu,\rho,(x_\ell)_{\ell\ge 1})\qquad \textup{in   } \dghp^{(\infty)}. 
\end{equation*}
\end{lem}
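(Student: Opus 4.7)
The plan is to adapt the standard ``canonical correspondence'' argument used to prove Lemma 2.4 in \cite{MR2203728} and Lemma 21 in \cite{ABBG.12} to the setting of infinitely marked vector-valued (actually scalar here) PMM spaces. The first step is to reparametrize everything on a common interval. Fix a homeomorphism $\phi_n:[0,\zeta]\to [0,\zeta_n]$, for instance $\phi_n(s) = s\zeta_n/\zeta$, and set $\widetilde{h}_n = h_n\circ \phi_n:[0,\zeta]\to\R_+$. Since $\zeta_n\to\zeta$ and $h_n\to h$ uniformly on compacts (both $h$ and $h_n$ are continuous, with $h_n$ vanishing outside $[0,\zeta_n]$ and $h$ outside $[0,\zeta]$), a standard Arzel\`a-Ascoli style argument gives $\|\widetilde{h}_n-h\|_\infty \to 0$. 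Note that $\T_{h_n}$ and $\T_{\widetilde{h}_n}$ are isometric via the map induced by $\phi_n$, which also preserves the measure in the sense that it intertwines the pushforward of Lebesgue measures up to the factor $\zeta_n/\zeta\to 1$. Consequently, it suffices to prove the convergence $\T_{\widetilde h_n}\to \T_h$ with the marks $p_{\widetilde h_n}(\phi_n^{-1}(t_n(\ell)))$, noting that $\phi_n^{-1}(t_n(\ell))\to t(\ell)$.

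The second step is to bound the Gromov-Hausdorff distance $\dgh^{(k)}(\T_{\widetilde h_n}, \T_h)$ for each fixed finite $k\ge 1$. This is done via the canonical correspondence
\begin{equation*}
\mathcal{R}_n = \{(p_{\widetilde h_n}(s),\,p_h(s)): s\in[0,\zeta]\} \subset \T_{\widetilde h_n}\times \T_h.
\end{equation*}
Using the definition of the pseudo-distances $d_{\widetilde h_n}$ and $d_h$, the distortion of $\mathcal{R}_n$ is bounded by $4\|\widetilde h_n - h\|_\infty$, which tends to $0$. The roots $\rho_n = p_{\widetilde h_n}(0)$ and $\rho = p_h(0)$ are matched by $\mathcal{R}_n$, and the marked points $(p_{\widetilde h_n}(\phi_n^{-1}(t_n(\ell))), p_h(t(\ell)))$ for $\ell\le k$ are $\mathcal R_n$-close to pairs in $\mathcal{R}_n$ because $\phi_n^{-1}(t_n(\ell))\to t(\ell)$ and $p_h$ is continuous (being $1$-Lipschitz up to a factor, from the definition of $d_h$ and the continuity of $h$). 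This gives $\dgh^{(k)}(\T_{\widetilde h_n},\T_h)\to 0$ for every $k$.

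Third, for the measure component, one uses the (isometric) embedding of both $\T_{\widetilde h_n}$ and $\T_h$ into a common compact metric space $\mathcal Z$ realizing the correspondence $\mathcal R_n$ up to distortion $o(1)$. Under this embedding, the pushforward measure $\mu_n = (p_{\widetilde h_n})_\#\Leb|_{[0,\zeta]}$ and $\mu = (p_h)_\#\Leb|_{[0,\zeta]}$ have a common ``intermediate'' parametrization on $[0,\zeta]$, and the Prohorov distance between them in $\mathcal Z$ is bounded by the distortion of $\mathcal R_n$, which goes to $0$. Combined with the previous step this gives $\dghp^{(k)}(\T_{\widetilde h_n},\T_h)\to 0$ for every fixed $k$.

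Finally, to upgrade to convergence in $\dghp^{(\infty)}$, one uses its definition as the weighted series $\sum_k 2^{-k}(\dghp^{(k)}\wedge 1)$. Since for each fixed $k$ the summand tends to zero and each is bounded by $1$, dominated convergence gives the full convergence in $\dghp^{(\infty)}$. The main technical subtlety I anticipate is the uniform convergence $\|\widetilde h_n - h\|_\infty\to 0$: one has to be careful near the endpoints $0$ and $\zeta_n$ (respectively $\zeta$), where $h_n$ and $h$ vanish; however, this is controlled by the assumption that $h_n\to h$ in the Skorohod (actually uniform on compacts, since both are continuous) topology together with $\zeta_n\to\zeta$, using that $h$ is continuous and vanishes outside $[0,\zeta]$. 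A secondary but routine issue is measurability/lifting of the representative tree embeddings into the Urysohn space, which is handled as in Section \ref{sssec:metricDecorations}.
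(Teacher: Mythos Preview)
Your proposal is correct and takes essentially the same approach as the paper: both use the canonical correspondence induced by the time parametrization, bound its distortion by $4\|h_n-h\|_\infty$, handle the marked points via the continuity of $h$, and pass to $\dghp^{(\infty)}$ through the defining series. The only cosmetic difference is that you first reparametrize $h_n$ onto $[0,\zeta]$ via $\phi_n$, whereas the paper works directly with the correspondence $\mathfrak{R}_n=\{(p_{h_n}(\zeta_n\wedge t),\,p_h(\zeta\wedge t)):t\ge 0\}$ and controls the mark displacement using the modulus of continuity $\psi$ of $h$; the paper also defers the Prohorov part to \cite[Proposition~2.10]{ADH.14} rather than sketching it as you do.
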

\begin{proof} We will only prove convergence in the Gromov-Hausdorff topology, and not the Gromov-Hausdorff-Prohorov topology. The incorporation of the measure is standard, and we refer to \cite[Proposition 2.10]{ADH.14}.

Let us start more abstractly, and refer the reader to \cite[Lemma 2.4]{MR2203728}, and also \cite{MR2221786}. 
Given two metric spaces $\mathcal{X}$ and $\mathcal{Y}$, consider the disjoint union $\mathcal{Z} = \mathcal{X}\sqcup \mathcal{Y}$. Given any subset $\mathfrak{R}\subset \mathcal{X}\times \mathcal{Y}$ such that the canonical projections $\mathfrak{R}\to \mathcal{X}, \mathcal{Y}$ are each surjective, we can define the \textit{distortion} of $\mathfrak{R}$ as
\begin{equation*}
\operatorname{dis}(\mathfrak{R}) = \sup\left\{ \left| d_{\mathcal{X}}(x_1,x_2) - d_{\mathcal{Y}}(y_1,y_2) \right|:(x_i,y_i)\in \mathfrak{R}\right\}.
\end{equation*}
Note that when working with pointed metric spaces, the corresponding roots should be added to $\mathfrak{R}$, and this gives us a bound on the pointed GH distance. The distortion below includes the corresponding roots, and thus, we omit it in the above definition. 
With this, one can define $d_{\mathcal Z}$ a metric (which depends on $\mathfrak{R}$) on $\mathcal{Z}$ such that the restriction of $d_{\mathcal Z}$ to $\mathcal X$ (resp. $\mathcal Y$) is $d_{\mathcal X}$ (resp. $d_{\mathcal Y}$), and such that
\begin{equation*}
d_{\mathcal{Z}}(x,y)= \inf\left\{d_{\mathcal{X}}(x,x')+d_{\mathcal{Y}}(y,y') + \frac{1}{2} \text{dis}(\mathfrak{R}): (x',y')\in \mathfrak{R} \right\}, \qquad x\in \mathcal{X}, y \in \mathcal{Y}.
\end{equation*}
Note that whenever $(x,y)\in \mathfrak R$, then $d_{\mathcal Z}(x,y)=\frac{1}{2} \text{dis}(\mathfrak{R})$. 
With this, $\mathcal{Z}$ is a metric space, the canonical injections $\mathcal{X}, \mathcal{Y}\hookrightarrow \mathcal{Z}$ are isometric embeddings and
\begin{equation*}
d_H(\mathcal{X}, \mathcal{Y}) \le \frac{1}{2} \text{dis}(\mathfrak{R}).
\end{equation*}
Using the idea above Lemma 17 in \cite{ABBG.12} to construct the distortion between trees, define $\mathcal{Z}_n = \T_n\sqcup \T$ and let 
\begin{equation*}
\mathfrak{R}_n = \left\{(u,v)\in \mathcal \T_n\times \mathcal \T: \exists\ t\geq 0\text{ s.t. }p_{h_n}(\zeta_n \wedge t) = u, p_h(\zeta\wedge  t) = v \right\},
\end{equation*} where recall $p_{h_n}$ and $p_h$ are the canonical quotient maps.

By Equation (18) and Lemma 2.4 in \cite{MR2203728}, we have $\text{dis}(\mathfrak{R}_n)\le 4\sup_{t} |h(t)-h_n(t)|$. 
Also, by the construction of $d_{\mathcal Z_n}$ and since the roots of $\T_n$ and $\T$ are $\rho_n:=p_{h_n}(0)$ and $\rho:=p_h(0)$, we get $d_{\mathcal{Z}_n}(\rho_n,\rho) = \frac{1}{2} \text{dis}(\mathfrak{R}_n)$. 
Consider any $t_n,t\in [0,\zeta]$, and define $v_n:=p_{h_n}(t_n)\in \T_n$ and $v:=p_{h}(t)\in \T$.
Then, for any $s\in [0,\zeta]$, consider $(p_{h_n}(s),p_h(s))\in \mathfrak{R}_n$, then
\begin{equation}\label{eqnConvergenceOfMArkedTreesGivenConvergenceOfHeingthFns}
\begin{split}
 d_{\mathcal{Z}_n}(p_{h_n}(t_n),p_h(t)) \le & \frac{1}{2}\textup{dis}(\mathfrak{R}_n)
+ |h_n(t_n)+h_n(s)-2\inf_{t_n\wedge s\le u\le t_n\vee s} h_n(u)|\\
&+ |h(t)+h(s)-2\inf_{t\wedge s \le u\le t\vee s} h(u)|.
\end{split}
\end{equation}
Denote by $\psi$ the modulus of continuity of $h$, that is $\psi(\delta)=\sup_{|t-s|\leq \delta }|h(t)-h(s)|$ with $\delta\in [0,\zeta]$. 
Note that $\psi(\delta)\to 0$ as $\delta\downarrow 0$, by uniform continuity. 
Thus, taking $(p_{h_n}(t_n),p_h(t_n))\in \mathfrak{R}_n$ in \eqref{eqnConvergenceOfMArkedTreesGivenConvergenceOfHeingthFns}, we simplify 
\begin{equation*}
\begin{split}
 d_{\mathcal{Z}_n}(p_{h_n}(t_n),p_h(t)) \le  &2\sup_{t} |h(t)-h_n(t)|
+ |h(t)+h(t_n)-2\inf_{t\wedge t_n \le u\le t\vee t_n} h(u)|\\
 \leq &2\sup_{t} |h(t)-h_n(t)|+2\psi (|t-t_n|).
\end{split}
\end{equation*}

Now, using the definition \eqref{eqnDefinitionGHPKMArkedSpace}, for any $\epsilon>0$ we fix a large $k$ enough such that $\sum_{n\geq k}2^{-n}<\epsilon$. 
Hence, considering $x_{n;\ell}=p_{h_n}(t_n(\ell))$ and $x_\ell=p_{h}(t(\ell))$, as $n\to\infty$
\begin{equation*}
\begin{split}
& d_H(\T_n,\T) + \max \big\{d_{\mathcal{Z}_n}(\rho_n,\rho) ,\max_{\ell\leq k} d_{\mathcal{Z}_n} (x_{n;\ell}, x_\ell)\big\}\\
&\qquad  \le 4\|h-h_n\|_{\infty} +  2\max_{\ell\leq k} \psi(|t(\ell)-t_n(\ell)|),
\end{split}
\end{equation*}which can be made arbitrarily small for such fixed $k$. 
This proves the result.
\end{proof}

Using Lemma \ref{lem:inductionProof} recursively, it is easy now to show that all the subdecorations $(\overline{\cD}^{n}(\bp);\bp \in \bbU)$ converge jointly.

\begin{cor}
Suppose that $r>0$ and $r_n = \lfloor b_n^{i_0} r\rfloor$. Then $\overline{\cD}^n(\bp)\weakarrow \cD(\bp)$ jointly for all $\bp\in \bbU$ where $\sG(\cD_r)\sim \bM^{i_0}_r$.
\end{cor}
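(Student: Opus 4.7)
The plan is to induct on the height $|\bp|$, using Lemma \ref{lem:longExc} as the base case and Lemma \ref{lem:inductionProof} as the induction step, and then pass from joint convergence over finite antichains to joint convergence over all of $\bbU$ by a standard Kolmogorov/finite-dimensional argument.

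For the base case $\bp=\emptyset$, the assumption $r_n=\fl{b_n^{i_0}r}$ with $\vec \bN^{i_0}(\zeta=r)=0$ puts us exactly in the situation of Lemma \ref{lem:longExc}, which yields the joint convergence of the rescaled encoding triple $(a_n b_n^{-1}\vec e_{\emptyset,n}(b_n^{i_0}\cdot),\, a_n^{-1}h_{\emptyset,n}(b_n^{i_0}\cdot),\, \zeta_{\emptyset,n}/b_n^{i_0})$ to $(\vec\be_\emptyset,h_\emptyset,\zeta_\emptyset)\sim \vec \bN^{i_0}(-\,|\,\zeta\ge r)$. Applying Lemma \ref{lem:inductionProof}(1) to this gives $\overline\cD^n(\emptyset)\weakarrow \cD(\emptyset)$, and Lemma \ref{lem:inductionProof}(2) gives the joint convergence of the rescaled encoding data of all children $\emptyset\ell$ to independent atoms of a Poisson random measure driven by $\vec \be_\emptyset$, precisely as in Construction \ref{cons:continuum2}.

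For the inductive step, suppose that for some $h\ge 0$ we have shown joint convergence of the rescaled encoding data for all $\bp\in \bbU$ with $|\bp|\le h$ to the corresponding excursion data of $\cD^{i_0,\vec\Psi}_r$. Fix any such $\bp$ with $|\bp|=h$. By Corollary \ref{cor:DecorationSpalf} and Remark \ref{rmrkCorollaryCouplingGluedAndRWs}, the encoding processes of the children $(\cD^n(\bp\ell);\ell\ge 1)$ are constructed as ordered excursions of independent $\Z^d$-valued walks indexed by type, independent across different $\bp$ of the same generation and independent of the parent process. Conditional on the rescaled encoding data for generation $h$, Lemma \ref{lem:inductionProof}(2) then applies simultaneously to every such $\bp$, producing joint convergence of the rescaled encoding data of all children $\bp\ell$ to conditionally independent atoms of Poisson random measures with the correct intensities (matching Construction \ref{cons:continuum2}). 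Passing this through Lemma \ref{lem:inductionProof}(1) turns the excursion convergence into subdecoration convergence $\overline\cD^n(\bq)\weakarrow \cD(\bq)$ jointly for $|\bq|\le h+1$.

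The remaining issue is to extend joint convergence from every finite subset of $\bbU$ to all of $\bbU$ simultaneously. This is automatic: since $(\mathfrak{M}^{\ast d}_{\tiny\mbox{mp}})^{\bbU}$ with the product topology is a Polish space whose finite-dimensional distributions determine the law, and the induction above produces convergence of every finite collection of subdecorations, Kolmogorov's extension theorem (combined with the standard fact that convergence in the product topology is equivalent to convergence of all finite marginals) yields $(\overline\cD^n(\bp))_{\bp\in\bbU}\weakarrow (\cD(\bp))_{\bp\in \bbU}$ jointly. The main obstacle in this plan is simply bookkeeping the conditional-independence structure: one must verify that the conditionally independent Poisson limits obtained separately for each parent $\bp$ of generation $h$ truly assemble into the joint object $\cD^{i_0,\vec\Psi}_r$ of Construction \ref{cons:continuum2}, which is where the independence across different $\bp$ in the discrete construction (from the MBGW branching property and Corollary \ref{cor:DecorationSpalf}) is essential.
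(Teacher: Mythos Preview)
Your proposal is correct and follows essentially the same approach as the paper's proof: both use Lemma~\ref{lem:longExc} to verify the hypothesis~\eqref{eqn:enConverge1} at $\bp=\emptyset$, then invoke Lemma~\ref{lem:inductionProof} recursively to propagate the excursion-level convergence (and hence the subdecoration convergence) layer by layer through the Ulam tree, using the independence inherited from the MBGW branching structure. Your version is slightly more explicit about the passage from finite-dimensional joint convergence to convergence in the product space $(\mathfrak{M}^{\ast d}_{\tiny\mbox{mp}})^{\bbU}$, which the paper leaves implicit.
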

\begin{proof}[Sketch of the proof:]
The hypothesis of the convergence \eqref{eqn:enConverge1} in Lemma \ref{lem:inductionProof} for $\bp=\emptyset$, has been proved in Lemma \ref{lem:longExc}.
This accounts for the reduced height $h=0$. 
For each reduced height $h\geq 1$ and every $n$, consider sequences of independent random walks $(X^{i,j}_{h,n})_{i,j}$. The sequence $(X^{i,j}_{h,n})_{i,j}$ codes the subdecorations $\cD^n(\bp)$ with $|\bp|=h$. Since they are independent, by \ref{ass:a1} and Skorohod's representation theorem,  we can rescale them and assume all of them converge jointly a.s. 
Then by Proposition \ref{prop:good} we can assume that the excursions and excursion lengths also converge almost surely. Thus, from Item (1) of Lemma \ref{lem:inductionProof}, we have $\overline{\cD}^{n}(\emptyset)\to \cD(\emptyset)$, where such subdecorations are conditioned on their length. Item (2) shows the convergence of the rescaled excursions, height functions and excursion lengths that grow from 
$\overline{\cD}^{n}(\emptyset)$.
Hence, by Item (1) of the same Lemma \ref{lem:inductionProof} all subdecorations $(\overline{\cD}^{n}(\bp);\bp \in \bbU,|\bp|=1)$ converge.
Continuing in this way, we obtain the result.
\end{proof}

\subsection{Establishing \ref{ass:2thm1}}

In this section we show that Assumptions \ref{ass:a1}--\ref{ass:A6} are sufficient for Assumption \ref{ass:2thm1} to hold. Recall that informally, Assumption \ref{ass:2thm1} tells us that there are not too many long excursions nor too many tall excursions which encode the scaled subdecorations $\overline{\cD}^n(\bp)$. A key tool we use in establishing \ref{ass:2thm1} (and \ref{ass:3thm1}) is Corollary \ref{cor:DecorationSpalf} and so we will first need to introduce some notation.

As before, we fix $i_0\in [d]$ and some $r>0$ such that $\vec \bN^{i_0}(\zeta=r)  = 0$ and let $r_n = \fl{b_n^{i_0}r}$. 
The objective now is to fix the subtree of type $i_0$ connected to the root, and describe how to code the \emph{forest} that grows from it.
Recalling the notation of Lemma \ref{lem:inductionProof}, generate $(\vec e_{\emptyset,n},h_{\emptyset,n})=\big((e_{\emptyset,n}^{j};j\in [d]),h_{\emptyset,n}\big)\sim \vec N^{i_0,\vec \varphi_n}(-|{\zeta_{\emptyset,n} }> r_n)$ and independently, for any $j\in [d]$ let $\vec X_n^j$ denote $\Z^d$-walks starting at zero. 
By Lemma \ref{lem:longExc} and Skorohod's representation theorem, we can and do suppose that almost surely
\begin{align}\label{eqn:emptysetConverge}
a_n\diag(\vec{b}_n)^{-1} \vec e_{\emptyset,n}(\fl{b_n^{i_0}\,\cdot })&\longrightarrow \vec \be_\emptyset(\cdot),&&\frac{1}{a_n} h_{\emptyset,n}(\fl{b_n^{i_0} \,\cdot}) \longrightarrow h_{\emptyset}(\cdot)&&\textup{and} & \frac{\zeta_{\emptyset,n}}{b_n^{i_0}} \to \zeta.
\end{align}
For $j\in [d]$, define 
\begin{equation}\label{eqn:tildeX}    \widetilde{X}^j_n(\cdot) = (e^{j}_{\emptyset,n}(\zeta_{\emptyset,n}) \vee 0) \vec{\epsilon}_j + \vec X^j_n(\cdot),
\end{equation}
where recall that $\vec{\epsilon}_j$ is the $j^{\text{th}}$ unit vector.
The corresponding height functions will be denoted by $\widetilde H^j_n$. 
Similarly to \eqref{eqn:Rrecurse2} and \eqref{eqn:Rrecurse}, define the random times
\begin{equation}\label{eqnTildeUhn}
\begin{split}
\widetilde U_{n}^{(0),j} := \min\{m: \widetilde{X}^{j,j}_n(m) = 0\},  \qquad  \widetilde U_{n}^{(h+1),j} := \min\left\{m: \widetilde{X}^{j,j}_n(m) = - \sum_{\ell\neq j} \widetilde{X}^{\ell,j}_n (\widetilde U^{(h),\ell}_{n})\right\},
\end{split}
\end{equation}for $h\geq 0$. 
Note that they are equivalent to
\begin{align*}
\widetilde U_n^{(0),j} &:= \min\{m: X^{j,j}_n(m) = -(e^{j}_{\emptyset,n}(\zeta_{\emptyset,n}) \vee 0)\},\\
  \widetilde U_n^{(h+1),j} &:= \min\left\{m: X^{j,j}_n(m) =-(e^{j}_{\emptyset,n}(\zeta_{\emptyset,n}) \vee 0) - \sum_{\ell\neq j} X^{\ell,j}_n (\widetilde U^{(h),\ell}_n)\right\}.
\end{align*}That is, the random times $(\widetilde U^{(h),j}_{n};j\in [d],h\in \mathbb{N})$ code the size of a multitype forest starting with $e^{j}_{\emptyset,n}(\zeta_{\emptyset,n})\vee 0$ vertices type $j$, for every $j\in [d]$ (as explained in Remark \ref{rmrkCorollaryCouplingGluedAndRWs}).
Note that such a forest, codes all subtrees growing from the subtree of type $i_0$ connected to the root, of $T_{n,r_n}$. 
Therefore, by Lemma \ref{lem:spalfAndDiscTree} and its proof, jointly over $j\in[d]$ and $h\ge 1$, 
\begin{align*}
    \#&\{v\in T_{n,r_n}: \TYPE(v) = j, v\textup{ is at reduced height }= h\} = \widetilde U^{(h-1),j}_n-\widetilde U^{(h-2),j}_n,
\end{align*}where we set $\widetilde U^{(-1),j}_{n}: = 0$ for all $n$ and $j\in[d]$. 

By Lemma \ref{lem:spalfAndDiscTree} and Corollary \ref{cor:DecorationSpalf}, we can (and do) couple for any $j\in [d]$ the walks $\widetilde{X}^j_n$ with the decorations $\cD^n_{r_n}$ in such a way that for any value $y>0$, and any reduced height 
$h\geq 1$, we have
\begin{equation}\label{eqnCouplingDiscreteRescaledExcursionsWithLength}
\begin{split}
\#\{&\bp: |\bp| = h, \langle \vec{\mu}_\bp,\vec{\epsilon}_j\rangle (\cD^n(\bp)) > b_n^jy ,\,\bp = \bq \ell \textup{ with }\ell\equiv j \mod d\}\\
&{=}\#\bigg\{\textup{excursions of }\frac{a_n}{b_n^j}\widetilde X^{j,j}_n(\cdot)\textup{ which have} \\
&\qquad\qquad\textup{length longer than }b_n^jy \textup{ occurring between }\widetilde U^{(h-2),j}_n \textup{ and }\widetilde U^{(h-1),j}_n\bigg\},
\end{split}
\end{equation}
where recall from Lemma \ref{lem:decoration}, that we are using the counting measure.
The aforementioned coupling also satisfies for any $j\in [d]$, $h\geq 1$ and $y>0$
\begin{align}
\nonumber \#\{&\bp: |\bp| = h, \textup{ht}(\overline{\cD}^n(\bp)) > y,\, \bp = \bq \ell \textup{ with }\ell \equiv j \mod d\}\\
\label{eqnCouplingDiscreteRescaledExcursionsWithHeight}&{=}\#\left\{\textup{excursions of }\frac{1}{a_n}\widetilde H^j_n(\cdot)\textup{ which exceed height }y \textup{ occurring between }\widetilde U^{(h-2),j}_{n} \textup{ and }\widetilde U^{(h-1),j}_{n}\right\},
\end{align}where $\textup{ht}(\mathcal{X}) := \sup_{x\in \mathcal{X}} d(\rho,x)$, for any rooted compact metric space $(\mathcal{X},d,\rho)$, and where we used the rescaling of the subdecorations $\overline \cD^n(\bp)$ given in \eqref{eqnRescaledSubdecoration}. 

In the next lemma, the coupling in \eqref{eqnCouplingDiscreteRescaledExcursionsWithLength} will be used to prove the first part of \ref{ass:2thm1}, whereas the one in \eqref{eqnCouplingDiscreteRescaledExcursionsWithHeight} will be used to prove the second part. 
We emphasize that the reduced height $h = 0$ is not included in such coupling formulas, as this is reserved for the type $i_0$ subtree coded by the excursion $\vec e_{\emptyset,n}$.

\begin{lem}\label{lem:verifyH2}
Under \ref{ass:a1}--\ref{ass:a4}, the condition \ref{ass:2thm1} holds for the rescaled decoration $\overline{\cD}^n$. 
\end{lem}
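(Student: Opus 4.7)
The plan is to prove \ref{ass:2thm1} by induction on $h$ under a Skorohod coupling in which the convergences afforded by \ref{ass:a1}--\ref{ass:a4}, Proposition \ref{prop:dlHeight} and Lemma \ref{lem:longExc} all hold almost surely. In this coupling the rescaled walks $(a_n/b_n^j)\widetilde X^{j,j}_n(b_n^j\,\cdot)$ and height processes $a_n^{-1}\widetilde H^j_n(b_n^j\,\cdot)$ converge a.s.\ for every $j\in[d]$, and the rescaled stopping times $\widetilde U^{(h),j}_n/b_n^j$ converge to finite limits $\widetilde U^{(h),j}$ for every $h\ge 0$. By Lemmas \ref{lem:good} and \ref{lem:distinctExcursionLengths} each limit process is good with almost surely distinct excursion lengths.

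Fix $h\ge 1$ and $\eps>0$. By \eqref{eqnCouplingDiscreteRescaledExcursionsWithLength}--\eqref{eqnCouplingDiscreteRescaledExcursionsWithHeight}, the type $j$ subdecorations at height $h$ are in bijection with the excursions of $\widetilde X^{j,j}_n$ on $[\widetilde U^{(h-2),j}_n,\widetilde U^{(h-1),j}_n]$, and the diameter of each such subdecoration is bounded by twice the maximum of the corresponding excursion of $\widetilde H^j_n$. Proposition \ref{prop:good} then yields joint convergence of the ordered rescaled excursion lengths $L^{j,n}_1\ge L^{j,n}_2\ge\cdots$ to limits $L^j_1\ge L^j_2\ge\cdots$ together with convergence of the total $(\widetilde U^{(h-1),j}_n-\widetilde U^{(h-2),j}_n)/b_n^j\to\sum_k L^j_k<\infty$. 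Since the tail sum $\sum_{k>K}L^j_k\to 0$ as $K\to\infty$, one may choose $K_j$ so that $\sum_{k>K_j}L^{j,n}_k<\eps/(2d)$ for all large $n$, and Lemma \ref{lem:heightExcursions} analogously provides a finite $M_j$ bounding the number of type $j$ rescaled height-process excursions exceeding height $\eps/2$.

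Each of the at most $\max(K_j,M_j)$ \emph{large} or \emph{tall} type $j$ excursions at height $h$ descends from a unique parent $\bq^{(n)}$ at height $h-1$ and has a well-defined rank within $\bq^{(n)}$'s type $j$ children. To obtain a fixed finite index set $I(h,\eps)\subset\bbU$, the core claim to establish is that for $\delta=\delta(h,\eps)>0$ sufficiently small, none of these large or tall excursions can descend from a parent outside $I(h-1,\delta)$. By the inductive hypothesis, subdecorations at height $h-1$ outside $I(h-1,\delta)$ contribute total rescaled mass at most $\delta$. Using the coupling \eqref{eqn:tildeX}--\eqref{eqnTildeUhn} and the Lamperti structure of the walks, the total cross mass $\sum_{\bq} e^j_\bq(\zeta_\bq)$ over type $i$ parents outside $I(h-1,\delta)$ is controlled by the increments of $X^{i,j}$ over time intervals of aggregate length $\le\delta$, which, by uniform continuity of the limiting L\'evy process on compact intervals, can be made arbitrarily small as $\delta\downarrow 0$. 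The expected number of excursions of length $>\eps/(2K_j)$ or height $>\eps/2$ arising from children of such small parents is then bounded by a function of $\delta$ tending to zero, and hence is almost surely zero under the coupling for $\delta$ small enough and $n$ large.

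Having ruled out contributions from parents outside $I(h-1,\delta)$, each of the finitely many large or tall type $j$ excursions descends from some $\bq\in I(h-1,\delta)$. Applying Lemma \ref{lem:inductionProof}(2) to each such $\bq$ and invoking Lemma \ref{lem:distinctExcursionLengths} to rule out ties in the limit, the labels $\bq\ell\in\bbU$ of the top-ranked type $j$ children of each such $\bq$ stabilize for all large $n$, yielding a finite set $J^j\subset\bbU$. Setting $I(h,\eps):=\bigcup_{j\in[d]}J^j$ then closes the induction, since every $\bp\notin I(h,\eps)$ with $|\bp|=h$ corresponds either to a ``small'' excursion in rank space or to a child of a parent outside $I(h-1,\delta)$, both of which have been controlled. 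The principal technical obstacle is the cross-mass estimate in the previous paragraph: converting the inductive $\delta$-bound on total mass at height $h-1$ into a bound on the relevant portion of $\widetilde X^{j,j}_n$ and its height process requires careful identification of the time intervals associated with small parents, combined with the continuity of the limiting multivariate L\'evy fields.
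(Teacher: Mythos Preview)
Your overall inductive strategy is sound: you correctly identify that verifying \ref{ass:2thm1} for $h\ge 2$ requires tracking which $\bbU$-labels the large or tall excursions carry, and that this reduces to showing such excursions descend from parents already in $I(h-1,\delta)$. However, two steps are flawed as written.

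First, the cross-mass estimate cannot rest on ``uniform continuity of the limiting L\'evy process on compact intervals'': the off-diagonal processes $X^{i,j}$ for $i\neq j$ are subordinators and may jump, so a small parent $\bq$ (with $\zeta_\bq$ small) can still have $e^j_\bq(\zeta_\bq)$ of order one. The correct argument is a total-minus-finite-sum: the aggregate cross-mass $\sum_\bq e^j_\bq(\zeta_\bq)$ over all type $i$ parents at height $h-1$ equals the increment of $X^{i,j}$ over $[\widetilde U^{(h-3),i},\widetilde U^{(h-2),i}]$, which converges under the coupling; subtracting the finite sum over $\bq\in I(h-1,\delta)$ (each term converging by \ref{ass:1thm1}) gives convergence of the remainder, and one then chooses $I(h-1,\delta)$ so that the \emph{limiting} remainder is small in both mass and cross-mass simultaneously. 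Second, ``expected number bounded by a function of $\delta$ tending to zero, hence almost surely zero'' is an invalid inference. Instead, once the rescaled cross-mass from small parents is a.s.\ below some $\delta'$, the portion of $X^{j,j}_n$ devoted to their children has total rescaled length at most $\tau^j(\delta' b_n^j/a_n)/b_n^j$, which is small a.s.; this directly rules out long excursions there, and Lemma \ref{lem:heightExcursions} applied to a walk run for a short time rules out tall ones.

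The paper takes a somewhat different route. Rather than inducting on $h$ and tracking parent--child relations, it first bounds the \emph{total} rescaled mass at all heights $\ge 1$ via the multidimensional first hitting time $\widetilde\bT(\eta\vec{\boldsymbol{1}})$ of the limiting additive L\'evy field, which is a.s.\ finite by Theorem \ref{thm:CM} under \ref{ass:a2} and \ref{ass:a4}. On the resulting compact window, Lemma \ref{lem:good} and Proposition \ref{prop:good} show that the $K$ longest excursions of $\widetilde X^{j,j}_n$ capture all but $\eps$ of the mass; at $h=1$ the size-ordering built into the labeling immediately identifies these as the fixed set $\{j+md:0\le m<K\}\subset\bbU$. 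For $h\ge 2$ the paper applies the same excursion-covering argument on the interval $[\widetilde U^{(h-2),j}_n,\widetilde U^{(h-1),j}_n]$, whose endpoints converge, and leans on the already-established \ref{ass:1thm1} to pin down the labels. Your inductive approach makes the label-stability mechanism explicit, at the cost of the cross-mass machinery; the paper's approach is shorter but uses the Chaumont--Marolleau hitting-time theory as a black box and is terser about the labeling for $h\ge 2$.
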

\begin{proof}
By Skorohod's representation theorem, we can and do suppose that the convergence in \ref{ass:a1} and \eqref{eqn:emptysetConverge} both hold almost surely. 
Since Lemma \ref{lem:good} {and \ref{G4} imply} that $e^j_\emptyset$ is continuous at $\zeta$, then for $j\neq i_0$
\[
\frac{a_n}{b_n^j}\widetilde X^{j,j}_n(0)\to \be^j_\emptyset(\zeta)\in  [\be^j_\emptyset(r),\infty),
\]by \eqref{eqn:emptysetConverge}, Lemma \ref{lem:JSlem}  and \ref{ass:a4}.
Furthermore, jointly for all $i,j\in[d]$ almost surely
\begin{equation}\label{eqn:limitXtildeDef}
 \left(\frac{a_n}{b_n^j}\widetilde{X}_n^{i,j}\big(\fl{b_n^it}\big);t\ge 0\right) \to \left(\be^{i}_\emptyset(\zeta)\mathbf{1}_{[i=j]} + X^{i,j}(t);t\ge 0\right)=:\widetilde{\bX}^{i,j}.
\end{equation} 
First we prove the lemma for $h=1$. The idea is to show that for any $\eps>0$, there exists a finite set $I(\eps)\subset \{\bp\in \bbU:|\bp|=1 \}$ independent of $n$, such that all the mass of the subtrees not in $I(\eps)$ is at most $\eps$.
More formally, if $B(\eps):=\{\bp\in \bbU:\bp=\bq\bq',\bq\in I(\eps),\bq'\in\bbU\}$, we show that for any $n$ big enough
\begin{equation}\label{eqnVerifyH2}
    \sum_{\substack{ \bp\notin B(\eps)\\ |\bp| \geq 1}} \|\vec{\mu}_\bp^n\|(D^n_{\bp}) \le\eps \qquad\text{and}\qquad \sup_{\substack{\bp\notin B(\eps)\\ |\bp|\geq 1 }} \textup{diam}(D^n_{\bp}) \le \eps.
\end{equation}To prove \eqref{eqnVerifyH2}, the idea is the following. 
Define $\widetilde r_n$ as the vector with $j^{\mbox{th}}$ entry $e^j_{\emptyset,n}(\zeta_{\emptyset,n})\vee 0$ for all $j\in [d]$. 
Then, from  \eqref{eqn:minSolDiscrete}, the discrete multidimensional hitting time $\widetilde T_n(\widetilde r_n):=(T^1_n(\widetilde r_n),\dotsm ,T^d_n(\widetilde r_n))$ coming from the pair $(\widetilde r_n,X^{i,j}_n)$, codes the total mass of the forest type $j$ growing from the type $i_0$ subtree attached to the root.
By \eqref{eqn:tildeX}, such hitting time can be rewritten as
\begin{align*}
&\min\left\{\vec{m}\in \Z^d_+: \sum_{i=1}^d X^{i,j}_n(m_i) = -e^j_{\emptyset,n}(\zeta_{\emptyset,n})\vee 0,\quad \forall j\textup{ s.t. }m_j<\infty\right\}\\
& =\min\left\{\vec{m}\in \Z^d_+: \sum_{i=1}^d \widetilde X^{i,j}_n(m_i) = 0,\quad \forall j\textup{ s.t. }m_j<\infty\right\}=: \widetilde T_n(\vec 0).
\end{align*}
By Lemma \ref{lem:spalfAndDiscTree} and Corollary \ref{cor:DecorationSpalf} (similarly also to \eqref{eqnCouplingDiscreteRescaledExcursionsWithLength}), for all $j\in [d]$ the total mass $\widetilde T^j_n(\vec 0)$ of vertices type $j$ at reduced height at least one satisfies
\begin{equation}\label{eqnTotalSizeAboveHeightZeroWithSizeOfSubdecorations}
\widetilde T^j_n(\vec 0)=\sum_{\substack{ |\bp| \geq 1\\ \bp\equiv j\mod d}} \|\vec{\mu}_\bp\|(D^n_{\bp}).
\end{equation}Hence, it is enough to show that for any $\eps>0$, we can cover almost all mass of vertices type $j$, with the $K^j_\eps$ longest excursions of $\widetilde X^{j,j}_n$, before hitting $\widetilde T^j_n(\vec 0)$, for some $K^j_\eps$. 

As the first hitting times will be important for the sequel, for every $\vec r\in \mathbb{R}^d_+\setminus \{\vec 0\}$, we will write $\widetilde \bT(\vec{r}):=(\widetilde \bT^1(\vec{r}),\dotsm, \widetilde \bT^d(\vec{r}))$ for the minimal solution to \eqref{eqn:minSol} for the limiting field $\widetilde{\bX}$ (that is, the solution of such equation for the pair $(\vec{r},\widetilde{\bX})$).
The results of Chaumont and Marolleau recalled in Theorem \ref{thm:CM} above, imply that $\PR(\widetilde \bT(\vec{r})\in \R^d_+) = 1$ for every $\vec{r}\in \R^d_+$ under Assumptions \ref{ass:a2} and \ref{ass:a4}. See also Remark \ref{rmk:(H)Satisfies}.

We now work with $\widetilde \bT(\eta\vec{\boldsymbol{1}})$ for fixed $\eta>0$, where $\vec{\boldsymbol{1}}$ is the vector of size $d$ with all of its entries one. 
The value of $\eta$ is not important in this proof, since we just need
$\eta\vec{\boldsymbol{1}}>\vec{\boldsymbol{0}}$. 
The latter, together with \cite[Lemma 2.2(ii)]{MR3449255} or \cite[Lemma 2.3(2)]{MR4193902}, imply the monotonicity of the first hitting times $\widetilde \bT(\vec{\boldsymbol{0}})\leq \widetilde \bT(\eta\vec{\boldsymbol{1}})$.
(In the next subsection, we will choose $\eta$ sufficiently small.)

Since $\PR(\widetilde \bT(\eta\vec{\boldsymbol{1}}) \in \R_+^d) = 1$, then any $\delta\in (0,1)$ there exists a $t_\delta>0$ such that 
\begin{equation*}
    \PR\left(\widetilde \bT(\eta\vec{\boldsymbol{1}})\le t_\delta\vec{\boldsymbol{1}} \right)  \ge 1-\delta.
\end{equation*}

By Theorem \ref{thm:CM} it holds with probability 1 that the field $\widetilde{\bX}$ is continuous to the left at each coordinate $\widetilde \bT^i(\eta\vec{\boldsymbol{1}})$. 
Therefore, on the event $A_\delta = \{\widetilde \bT(\eta\vec{\boldsymbol{1}})\le t_\delta\vec{\boldsymbol{1}}\}$, 
\begin{equation*}
    \frac{a_n}{b_n^j}\widetilde{X}_n^{i,j}\Big(\fl{b_n^i \widetilde \bT^i(\eta\vec{\boldsymbol{1}})}\Big) \longrightarrow \widetilde{\bX}^{i,j}(\widetilde \bT^i(\eta\vec{\boldsymbol{1}}))\qquad \textup{ a.s. on }A_\delta
\end{equation*} by \eqref{eqn:limitXtildeDef} and Lemma \ref{lem:JSlem}.
By the definition of the first hitting time and the above display, we have
\begin{equation*}
    \frac{a_n}{b_n^j}\sum_{i=1}^d \widetilde X_n^{i,j}\Big (\fl{b_n^i \widetilde \bT^i(\eta\vec{\boldsymbol{1}})}\Big) \longrightarrow -\eta \qquad\textup{ a.s. on }A_\delta.
\end{equation*}
So in particular, by the monotonicity of the first hitting times of \cite[Lemma 2.2(ii)]{MR3449255}, a.s. on $A_\delta$ there exists an $n = n(\delta, \omega)$ sufficiently large such that
\begin{equation}\label{eqnBoundingDiscreteFirstHittingTimeAtZeroWithContinuousFirstHittingTimeAtOne}
\min\left\{\vec{t}: \sum_{i=1}^d \widetilde{X}^{i,j}_n(\fl{b_n^i t_i}) = 0\textup{  for all }j\right\} \le \widetilde \bT(\eta\vec{\boldsymbol{1}}). 
\end{equation}
This implies for all $n$ sufficiently large that
\begin{equation*}
  \liminf_{n\to\infty} \PR\left(\min\left\{\vec{t}: \sum_{i=1}^d \widetilde{X}^{i,j}_n(\fl{b_n^i t_i}) = 0\textup{  for all }j\right\} \le t_\delta\vec{\boldsymbol{1}}\right) \ge 1-2\delta.
\end{equation*}
In particular, the size of the subdecorations $(\overline{\cD}^n(\bp))$ with reduced height at least one, is coded by the length of excursions of $(\frac{a_n}{b_n^j}\widetilde{X}_n^{i,j}(\fl{b_n^i t_i}); t_i\in[0,t_\delta])$ via Corollary \ref{cor:DecorationSpalf}, with probability exceeding $1-2\delta$.

Fix $\eps>0$ and $i\in [d]$. 
By Lemma \ref{lem:good}, we can find some random $K  = K(\delta,\eps,\omega)$, such that the length of the $K$ longest excursions of $\widetilde\bX^i$ on $[0,t_\delta]$ including the a.s. unfinished excursion straddling $t_\delta$, cover more than $t_\delta-\eps$.
Consider any $k\in \mathbb{N}$ sufficiently large, such that $K\leq k$ with probability at least $1-\delta$. 
Thus, denoting by $(l_1,r_1),(l_2,r_2),\dotsm,(l_K,r_K)$ such lengths, we have
\begin{equation*}
    \sum_{q=1}^k (r_q-l_q) >t_\delta-\eps,
\end{equation*}with probability at least $1-\delta$.
A similar result holds for the excursions of $(\frac{a_n}{b_n^j}\widetilde{X}_n^{i,j}(\fl{b_n^i t_i}); t_i\in[0,t_\delta])$ by Proposition \ref{prop:good}. Thus, the measure of the length of the union of the $k$ longest excursions of such discrete processes, again possibly included an unfinished excursion at the end, have union covering at least a set of measure $t_\delta-2\eps$ for all $n$ sufficiently large. 
Repeating the same argument for every $i\in [d]$, and since the mass of each type $i$ subtree is precisely the length of an excursion, we have established \eqref{eqnVerifyH2} holds 
on a set of probability at least $1-3\delta$.

The first part of Assumption \ref{ass:2thm1} for general $h\geq 2$, follows the same reasoning: find $K=K(h,\delta,\eps,\omega)$ excursions at reduced height $h$ that contain almost all the mass, in the continuous and then in the discrete setting. 
For that one can use \eqref{eqn:Rrecurse2},  \eqref{eqn:Rrecurse}, the definition of the random times $\widetilde U_{n}^{(h),j}$, the coupling in  \eqref{eqnCouplingDiscreteRescaledExcursionsWithLength} and Proposition \ref{prop:good}.

The second part of Assumption \ref{ass:2thm1} follows from similar reasoning as above. 
For that we use \eqref{eqnCouplingDiscreteRescaledExcursionsWithHeight}, invoke the convergence of the height processes from Proposition \ref{prop:dlHeight} and the inequality $\textup{diam}(\cX)\le 2 \textup{ht}(\cX)$.
These details are omitted. 
\end{proof}

\subsection{Establishing \ref{ass:3thm1}}

Recall the MBGW branching process $\vec Z_n$ defined in Section \ref{sec:associatedBranchingProcesses}, and the single-type process $Y^i_n$. In particular, recall it starts with $\lfloor b^i_n/a_n\rfloor $ many individuals type $i$, for all $i\in[d]$.

Let us note two consequences of Assumption \ref{ass:A6}. The first is that Assumption \ref{ass:a5} follows from Assumption \ref{ass:A6} by a coupling of $Y_n$ and $Z_n$.

The second is that if for every $\eps >0$ we define $\vec Z_{n,\eps}=(\vec Z_{n,\eps}(m);m\in \mathbb{N})$ with $j^{\mbox{th}}$ entry given by 
\begin{equation*}
Z_{n,\eps}^j(m+1) = \fl{\frac{b_n^j \eps}{a_n}} + \sum_{i=1}^d X_{n}^{i,j} \left(\sum_{\ell=0}^m Z_{n,\eps}^i(\ell)\right),
\end{equation*} then by the branching property and since $\fl{b_n^j k^{-1}/a_n}k\leq \fl{b_n^j/a_n}$ for large $n$ and for all $k\geq 1$, we have $\PR\left(\vec Z_n(a_n\delta) = \vec {\boldsymbol{0}} \right) \leq  \PR\left(\vec Z_{n,k^{-1}} (a_n\delta) = \vec {\boldsymbol{0}}\right)^k.$ 
Hence, for all $\delta>0$
\begin{equation}\label{eqnZ_n_eps}
\liminf_{\eps\downarrow 0} \liminf_{n\to \infty} \PR\left(\vec Z_{n,\eps}(a_n\delta) = \vec {\boldsymbol{0}} \right) = 1.
\end{equation}

\begin{lem}\label{lem:verifyH3}
Suppose Assumptions \ref{ass:a1}--\ref{ass:A6} hold. Then with high probability \ref{ass:3thm1} holds for the rescaled decoration $\overline{\cD}^n$. 
\end{lem}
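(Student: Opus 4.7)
The plan is to verify the two parts of \ref{ass:3thm1} for the rescaled decoration $\overline{\cD}^n$ by exploiting the coupling with the MBGW forest provided by Corollary \ref{cor:DecorationSpalf} together with two distinct tools: the first-hitting-time machinery from Section \ref{sec:hittingTimes} (which handles the mass estimate), and the multitype branching-process extinction bound \eqref{eqnZ_n_eps} coming from \ref{ass:A6} (which handles the distance estimate).

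\textbf{Mass bound.} By Lemma \ref{lem:spalfAndDiscTree} and Corollary \ref{cor:DecorationSpalf}, the total type-$j$ mass carried by subdecorations $\cD^n(\bp)$ with $|\bp|\geq h+1$ equals the residual $\widetilde T^j_n(\vec 0)-\widetilde U^{(h-1),j}_n$. Applying the iterative construction of the first hitting time from Section \ref{subsecConstructionFirstHittingTime} to the limit field $\widetilde{\bX}$ of \eqref{eqn:limitXtildeDef}, one has $\widetilde U^{(h),j}\uparrow \widetilde{\bT}^j(\vec 0)<\infty$ a.s. as $h\to\infty$, finiteness being guaranteed by Theorem \ref{thm:CM} (which applies under \ref{ass:a2} and \ref{ass:a4}; see Remark \ref{rmk:(H)Satisfies}). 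Fixing $\eps>0$, I would choose $h_0=h_0(\eps)$ so that $\widetilde{\bT}^j(\vec 0)-\widetilde U^{(h_0-1),j}\leq \eps/(2d)$ for all $j\in[d]$ on an event of probability at least $1-\eps$. Using Skorokhod's representation, the continuity of $\widetilde{\bX}$ at each $\widetilde U^{(h),j}$ (Lemma \ref{lem:firstHitting1}), Lemma \ref{lem:firstPassing}, and the convergence $b_n^{-j}\widetilde T_n^j(\vec 0)\to \widetilde{\bT}^j(\vec 0)$ established inside the proof of Lemma \ref{lem:verifyH2}, this transfers to the discrete: for $n$ large one has $b_n^{-j}(\widetilde T^j_n(\vec 0)-\widetilde U^{(h_0-1),j}_n)\leq \eps/d$ for every $j$ with probability at least $1-2\eps$. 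Summing over $j$ gives the first inequality in \ref{ass:3thm1}.

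\textbf{Distance bound.} In an entirely analogous way, enlarging $h_0$ if necessary and noting that the rescaling of $\widetilde R^{(h),j}_n$ is by $b_n^j/a_n$, I can arrange that the vector of reduced-height-$(h_0+1)$ ancestors $(N_n^j(h_0+1))_j=(\widetilde R^{(h_0+1),j}_n-\widetilde R^{(h_0),j}_n)_j$ satisfies $N_n^j(h_0+1)\leq \lfloor \eps b_n^j/a_n\rfloor$ for every $j$, with probability at least $1-\eps$. For $y\in D^n_\bp$ with $|\bp|>h_0$, I would select $z$ to be the unique point in $D^n_{\bp^{(h_0)}}$ identified by the gluing with the root of the next subdecoration on the path, namely $z=x_{\bp^{(h_0)};\ell}$ where $\bp^{(h_0+1)}=\bp^{(h_0)}\ell$. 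Then $d_n(y,z)$ exceeds by one the graph depth of $y$ inside the MBGW subtree of $T_{n,r_n}$ rooted at the reduced-height-$(h_0+1)$ ancestor of $y$. By the branching property, the union (over $y$) of all such MBGW subtrees is a MBGW forest with initial population $\vec N_n(h_0+1)$, whose extinction time (in generations) is stochastically dominated by that of $\vec Z_{n,\eps}$. Applying \eqref{eqnZ_n_eps} (available thanks to \ref{ass:A6}), namely
\begin{equation*}
\liminf_{\eps\downarrow 0}\liminf_{n\to\infty}\PR\bigl(\vec Z_{n,\eps}(\lfloor a_n\delta\rfloor)=\vec 0\bigr)=1,
\end{equation*}
and choosing $\delta=\eps$ gives that the maximum such depth is at most $\lfloor a_n\eps\rfloor$ on a set of probability at least $1-3\eps$. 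Hence $d_n(y,z)\leq a_n\eps+1\leq 2\eps a_n$ uniformly in $y\in D_\bp^n$ with $|\bp|>h_0$, and rescaling $\eps$ gives the second condition in \ref{ass:3thm1}.

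\textbf{Main obstacle.} The delicate point is to simultaneously control both the mass on scale $b_n^j$ (via $\widetilde U^{(h),j}_n$) and the count of new roots on scale $b_n^j/a_n$ (via $\widetilde R^{(h),j}_n$) using a single large $h_0$. Both quantities stabilize in the continuum at the (finite, random) reduced height $H^*_\infty$ where the iteration defining $\widetilde{\bT}^j(\vec 0)$ terminates; the transfer to the discrete is then a tightness argument combining \ref{ass:a1}, the goodness statement Lemma \ref{lem:good}, Proposition \ref{prop:good}, and Lemma \ref{lem:firstPassing}. Once this choice of $h_0$ is made, the rest of the argument is the marrying of the hitting-time iteration (for mass) with the extinction bound \eqref{eqnZ_n_eps} (for depth).
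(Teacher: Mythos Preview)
Your approach is essentially the same as the paper's: both parts of \ref{ass:3thm1} are handled via the spaLf hitting-time iteration (Section~\ref{sec:hittingTimes}) for the mass, and via the extinction bound \eqref{eqnZ_n_eps} from \ref{ass:A6} for the distance, after bounding the number of reduced-height-$(h_0+1)$ roots by $\eps b_n^j/a_n$.

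Two small caveats. First, the convergence $b_n^{-j}\widetilde T_n^j(\vec 0)\to \widetilde{\bT}^j(\vec 0)$ is \emph{not} established inside the proof of Lemma~\ref{lem:verifyH2}: that proof only gives the upper bound $\limsup_n b_n^{-j}\widetilde T_n^j(\vec 0)\le \widetilde{\bT}^j(\eta\vec{\boldsymbol{1}})$ via \eqref{eqnBoundingDiscreteFirstHittingTimeAtZeroWithContinuousFirstHittingTimeAtOne}. The paper's own proof of the present lemma avoids the convergence altogether by working directly with this $\eta$-perturbed upper bound and the left-continuity of $\eta\mapsto\widetilde{\bT}(\eta\vec{\boldsymbol{1}})$. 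Your route is fine too, but you must supply the missing lower bound yourself (it is immediate from $\widetilde T_n^j(\vec 0)\ge \widetilde U_n^{(h),j}$ for every $h$ together with $b_n^{-j}\widetilde U_n^{(h),j}\to\widetilde U^{(h),j}$).

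Second, your ``Main obstacle'' paragraph asserts that the continuum iteration terminates at a finite random reduced height $H^*_\infty$. Under \ref{ass:a4} the off-diagonal subordinators are strictly increasing, so in fact $\widetilde U^{(h+2),j}>\widetilde U^{(h),j}$ for all $h$ and the iteration never terminates; one only has $\widetilde U^{(h)}\uparrow\widetilde{\bT}(\vec 0)$ as $h\to\infty$. This misconception does not enter your actual argument (you correctly choose $h_0$ large to get within $\eps$), but it is worth correcting.
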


\begin{proof} We first establish \eqref{eqn:massH3}. By Skorohod's representation theorem, we work on a probability space such that the convergence in Assumption \ref{ass:a1} and Equation \eqref{eqn:emptysetConverge} hold almost surely. Recall the definitions of $\widetilde{X}^{i,j}_n$ and $\widetilde{\bX}^{i,j}$, given in \eqref{eqn:tildeX} and \eqref{eqn:limitXtildeDef}, respectively. 
Let $(\widetilde U_n^{(h),j};h\geq 0)$ be the random times defined in \eqref{eqnTildeUhn} for  $(\widetilde{X}^{i,j}_n(t))$. Similarly, define $(\widetilde U^{(h),j};h\geq 0)$ as the random times defined in \eqref{eqn:Rrecurse}, using $(\widetilde \bX^{i,j})$ with initial conditions $r_j = 0$ for all $j\in [d]$.  
Note that for every $h\ge 0$ and $j\in[d]$ we have 
\begin{equation}\label{eqnConvergenceDiscreteUhToUh}
\frac{1}{b_n^j}\widetilde U_n^{(h),j} \to \widetilde U^{(h),j},
\end{equation}as $n\to \infty$, by Lemmas \ref{lem:firstPassing} and \ref{lem:firstHitting1} and a simple induction argument.

Now, we use the same setting for the first hitting times $(\widetilde \bT(\eta \vec{\boldsymbol{1}});\eta>0)$, introduced in the proof of Lemma \ref{lem:verifyH2}. 
That is, we denote by $\widetilde \bT(\vec 0):=(\widetilde \bT^1(\vec 0),\dotsm, \widetilde \bT^d(\vec 0))$ to be the minimal solution to \eqref{eqn:minSol} for the limiting field $\widetilde{\bX}$ (that is, the solution of such equation for the pair $(\vec 0,\widetilde{\bX})$).
Recall that $\eta\mapsto \widetilde \bT(\eta \vec {\boldsymbol{1}})$ is left-continuous by Lemma 2.3 in \cite{MR4193902}, and so on a set of probability one, for any $\eps>0$, there exists $\eta = \eta(\eps,\omega)>0$ such that $\|\widetilde \bT(\eta \vec {\boldsymbol{1}})-\widetilde \bT(\vec {\boldsymbol{0}})\|_1\le \eps$ where $\|\vec{x}\|_1$ is the $\ell^1$ norm. Also, by the construction in Subsection \ref{subsecConstructionFirstHittingTime} we have $\widetilde \bT(\vec {\boldsymbol{0}}):= \lim_{h\to\infty} \widetilde U^{(h)}$, where $\widetilde{U}^{(h)}:=(\widetilde U^{(h),1},\dotsm, \widetilde U^{(h),d})$. 
Note that the previous convergence is monotone, since each component of $\widetilde U^{(h)}$ is non-decreasing.
Thus, there exists $h = h(\eps,\omega)$ sufficiently large such that $\|\widetilde \bT(\vec{\boldsymbol{0}}) - \widetilde {U}^{(h)}\|_1\le\eps$. In particular, the triangle inequality gives \begin{equation}\label{eqnComparingWidetildeTWithUh}
\|\widetilde \bT(\eta\vec {\boldsymbol{1}}) - \widetilde U^{(h)}\|_1\le 2\eps.
\end{equation}  

Similarly as before, for any $\gamma$ small, we can assume \eqref{eqnComparingWidetildeTWithUh} holds for any (deterministic) $h$ large enough, with probability at least $1-\gamma$. 

Recall from \eqref{eqnBoundingDiscreteFirstHittingTimeAtZeroWithContinuousFirstHittingTimeAtOne}, that one can deduce that a.s. for all $n$ large enough (but depending on $\omega$) we have $b_n^j\widetilde \bT^j(\eta\vec{\boldsymbol{1}})\ge \widetilde  T^j_n(\vec{\boldsymbol{0}})$.
Here we used  $\widetilde T_n(\vec{\boldsymbol{0}}):=(\widetilde T^1_n(\vec{\boldsymbol{0}}),\dotsm,\widetilde{T}^d_n(\vec{\boldsymbol{0}}))$ for the multidimensional first hitting time of the pair $(\vec{\boldsymbol{0}},\widetilde X_n)$.
Similarly to \eqref{eqnCouplingDiscreteRescaledExcursionsWithLength}, we can deduce that  $\widetilde U^{(h-1),j}_n$ is the total number of type $j$ vertices in $\bigsqcup_{\bp\equiv j\mod d:0< |\bp|\le h} \cD^n(\bp)$. 
Taking $h\to\infty$ we see that $\widetilde U^{(\infty),j}_n$ is the total number of type $j$ vertices in the subdecoration $\cD^n$, excluding the one with $\bp = \emptyset$. 
The latter is also the $j^{\mbox{th}}$ component of the multidimensional first hitting time, by \eqref{eqnTotalSizeAboveHeightZeroWithSizeOfSubdecorations}. 
A similar observation was given in \cite[Section 2]{MR3449255}. 
Thus, Equation \eqref{eqnBoundingDiscreteFirstHittingTimeAtZeroWithContinuousFirstHittingTimeAtOne}
implies that
\begin{equation}\label{eqnRelatingWidetildeUnInftyWithWidetildeTeta}
\sum_{\substack{ |\bp| \geq 1\\ \bp\equiv j\mod d}} \|\vec{\mu}^n_\bp\|(D^n_{\bp})=\frac{\widetilde{ U}^{(\infty),j}_n}{b_n^j}\leq \widetilde \bT^j(\eta\vec{\boldsymbol{1}}).
\end{equation}

We can conclude from  \eqref{eqnConvergenceDiscreteUhToUh}, \eqref{eqnComparingWidetildeTWithUh} and \eqref{eqnRelatingWidetildeUnInftyWithWidetildeTeta} that for $h$ sufficiently large
\begin{equation}\label{eqn:Ukclose}
\limsup_{n\to\infty}\sum_{ |\bp| > h} \|\vec{\mu}^n_\bp\|(D^n_{\bp})=\limsup_{n\to\infty}\sum_{j=1}^d\left( \frac{1}{b_n^j} \widetilde U_n^{(\infty),j} - \frac{1}{b_n^j}\widetilde U_n^{(h-1),j} \right) \le 4\eps,
\end{equation}with high probability. This proves \eqref{eqn:massH3}.

It remains to prove the second part of \ref{ass:3thm1}. Let us fix $\delta,\eta>0$ arbitrary (and deterministic). By \ref{ass:A6} and \eqref{eqnZ_n_eps}, there exists an $\eps>0$ sufficiently small such that 
\begin{equation*}
\limsup_{n\to\infty} \PR( \vec Z_{n,3\eps}(a_n\delta) \neq \vec{\boldsymbol{0}}) \le \eta.
\end{equation*}

Recall the earlier discussion in which we showed the non-decreasing convergence  $\widetilde \bT(\vec {\boldsymbol{0}}):= \lim_{h\to\infty} \widetilde U^{(h)}$.
Thus, by Theorem \ref{thm:CM}, jointly for all $i, j\in [d]$ we have the a.s. convergence
\begin{equation}\label{eqn:UhtoTi}
\widetilde\bX^{i,j}\left(\widetilde U^{(h),i} \right) \longrightarrow \widetilde\bX^{i,j}(\widetilde \bT^{i}(\vec{\boldsymbol{0}}))\qquad\textup{ as }h\to\infty.
\end{equation}
Consequently, for every $\eta>0$ and $\eps>0$ there exists $h<\infty$ sufficiently large such that
\begin{equation*}
\PR\left(\sum_{i\neq j} \big(\widetilde \bX^{i,j}(\widetilde \bT^i(\vec{\boldsymbol{0}})) - \widetilde \bX^{i,j}(\widetilde U^{(h),i}) \big)\le \eps,\quad \forall \ j\in[d]\right) \ge 1-\eta.
\end{equation*}

Let us prove that for any $h\in \Z_+$, for all $i\neq j$ we have
\begin{equation}\label{eqnConvergenceXtildeUtildeDiscreteToXtildeUtildeContinuous}
\frac{a_n}{b_n^j} \widetilde{X}_{n}^{i,j}  \left(\fl{ b_n^i\frac{1}{b^i_n}\widetilde U^{(h),i}_n}\right)  \to \widetilde \bX^{i,j}(\widetilde U^{(h),i})
\end{equation}almost surely. 
For that, we will use an induction argument over $h$. 
Indeed, first let us show that 
the whole vector $\widetilde \bX^i$ is continuous at $\widetilde U^{(0),i}$. 
Note that $\widetilde \bX^{i,i}(0)>0$,  and $\widetilde U^{(0),i}$ is the first hitting time to zero. 
Using \ref{ass:a2}, by Lemma \ref{lem:firstHitting1}, we have that $\widetilde \bX^{i,j}$ is continuous at $\widetilde U^{(0),i}$.
Hence, the convergence in \eqref{eqnConvergenceXtildeUtildeDiscreteToXtildeUtildeContinuous} holds for $h=0$ by Assumption \ref{ass:a1}, \eqref{eqnConvergenceDiscreteUhToUh}, and Lemma \eqref{lem:JSlem}. 
For the general case, using the same idea we conclude the induction hypothesis.

Then, by Fatou's Lemma for all $h$ large enough
\begin{equation*}
\liminf_{n\to\infty} \PR\left(\frac{a_n}{b_n^j} \sum_{i\neq j}   \left(\widetilde{X}_{n}^{i,j} \left( \widetilde U^{(h-1),i}_n\right)  - \widetilde{X}_n^{i,j}\left( \widetilde U^{(h-2),i}_n\right)\right) \le 2\eps,\quad \forall\ j\in[d]\right) \ge 1-\eta.
\end{equation*}Define $(\widetilde R^{(h),j}_n)$ as in \eqref{eqn:Rrecurse2} for $\widetilde X_n$, and initial condition $\vec r=\vec{\boldsymbol{0}}$.
Then, the above display is simply stating that $\widetilde R^{(h),j}_n - \widetilde R^{(h-1),j}_n \le \frac{2\eps b_n^j}{a_n}$ for all $j$ with probability at least $1-\eta$. By Lemma \ref{lem:spalfAndDiscTree} and similarly as in the derivation of \eqref{eqnCouplingDiscreteRescaledExcursionsWithLength}, this tells us that the number of type $j$ trees added to the glued decoration at height $h+1$ (i.e. $\cD^n(\bp): |\bp| = h+1$, and $\bp=\bq \ell$ with $\ell\equiv j\mod d$) is at most $\frac{b_n^j}{a_n} 2\eps$ with probability $1-\eta$. 

On the decoration $\cD^n$, and for any $h\ge 0$, we can define the process \sloppy$ \widetilde{Z}_{n,(h)}(m) = ( \widetilde{Z}_{n,(h)}^1(m),\dotsm, \widetilde{Z}_{n,(h)}^d(m))\in \Z^d$ by
\begin{equation*}
\widetilde{Z}_{n,(h)}^j(m) = \langle\vec\mu, \vec{\epsilon}_j\rangle\left(\{y\in \sG(\cD^n): \di(y,\sG_{\le h}(\cD^n)) = m+1\}\right).
\end{equation*}
In words, $\widetilde{Z}_{n,(h)}^j(m)$ is the number of vertices at distance $m+1$ of $\sG_{\le h}(\cD^n)$, and on subdecorations of type $j$. 
The latter is a MBGW process starting with $\widetilde Z^j_{n,(h)}(0)=\widetilde R^{(h+1),j}_n-\widetilde R^{(h),j}_n$. 
Choosing $h$ large enough defined in the preceding paragraph, we can couple the MBGW branching process $\vec Z_{n,3\eps}$ and the decoration $\cD^n$ such that there exists a set $A  = A(\eps,\delta,\eta)$ of probability at most $\eta$
\begin{equation*}
\left\{\sup_{z\in D^n_{{\bp}}; |\bp|>h+1} \di \left(z, \sG_{\le h+1}(\cD^n)\right) >a_n \delta\right\} \subset \left\{ Z_{n,3\eps}\big(\lfloor a_n\delta \rfloor \big) \neq \vec{0} \right\}\cup A.
\end{equation*}

Hence for any $\eta>0$ and $\delta>0$ there is some $h\ge 1$ such that 
\begin{equation*}
\PR\left(\sup_{(z,\bp):|\bp|> h+1,z\in D^n(\bp)} \di\left(z, \sG_{\le h+1}\left( \cD^n\right)\right) \le a_n\delta \right) \ge 1-\eta.
\end{equation*} Since $\eta\in(0,1)$ was arbitrary, this implies that the second condition of hypothesis \ref{ass:3thm1} holds with arbitrarily large probability. 

\end{proof}

\begin{proof}[Proof of Theorem \ref{thm:MAIN}]
By Theorem \ref{thm:graphConv1} and Lemmas \ref{lem:inductionProof}, \ref{lem:verifyH2}, and \ref{lem:verifyH3} we know that $\sG(\overline{\cD}^n) \weakarrow \sG(\cD)$. Since the MBGW tree can be constructed by that decoration (Lemma \ref{lem:decoration} and Corollary \ref{cor:DecorationSpalf}), we get the desired result.
\end{proof}


\begin{center}
{\bf NOTATION}
\end{center}

\begin{table}[H]
\begin{tabularx}{\textwidth}{p{0.30\textwidth}X}
	\multicolumn{2}{l}{{\underline{Spaces:}}}\\
	$\bbU$ &Ulam-Harris tree.\\
	$\U = (\U,\ast,\delta)$ &Urysohn space with fixed point $\ast\in \U$ and metric $\delta$.\\
	$\mathfrak{M}$,  $\mathfrak{M}^{(k)}_{\tiny \mbox{mp}}$, $\mathfrak{M}_{\tiny \mbox{mp}}$ &Collection of rooted/pointed compact metric measure spaces with respectively $0$, $k$, $\infty$ many marked points.\\
	$\mathfrak{M}_0$,  $\mathfrak{M}^{(k)}_{\tiny \mbox{mp},0}$,  $\mathfrak{M}_{\tiny \mbox{mp},0}$ &Same as above but equipped with the null measure.\\
		$\mathfrak{M}^{\ast d}$,  $\mathfrak{M}^{(k),\ast d}_{\tiny \mbox{mp}}$ , $\mathfrak{M}^{\ast d}_{\tiny \mbox{mp}}$ &Same as above but equipped with $\R^d_+$-valued measures.\\
$\mathfrak{S}_n$ & Symmetric group on $n$ letters. \\
\, &	\\
\end{tabularx}
\end{table}

\begin{table}[H]
\begin{tabularx}{\textwidth}{p{0.30\textwidth}X}
\multicolumn{2}{l}{{\underline{Stochastic Processes, Paths:}}}\\
$\vec{X}^i = (X^{i,j};j\in[d])$ & $\Z^d$-valued random walk. When necessary, it has subscript $n$.\\
$\vec \bX^i = (X^{i,j};j\in[d])$ & $\R^d$-valued L\'{e}vy process.\\
$\overline{X}^{i,j}$& $X^{i,j }\circ\tau^{i}$.\\
$\Psi_i,\psi_i$ & Laplace exponents of $\vec \bX^i$ and $X^{i,i}$.\\
$\tau^i(k)$ & First hitting time at $-k$ of $X^{i,i}$.\\
$H^i$& Height process of $X^{i,i}$.\\
$\vec T(\vec r)$& Multivariate first hitting/passage times.\\
$U^{(h),j}$& Times approximating $\vec T(\vec r)$.\\
$\vec Z_n$ & MBGW branching process starting from $\lfloor b^i_n/a_n\rfloor $ many individuals 
type $i$, for all $i\in[d]$.\\
$\vec \be^i_n = (e_n^{i,1},\dotms, e_n^{i,d})$ & Excursion of $X^i_n$. When necessary, it has  an additional subscript $\bp\in \bbU$\\
$\vec \be^i = (e^{i,1},\dotms, e^{i,d})$ & Excursion of $\bX^i$. When necessary, it has an additional subscript $\bp\in \bbU$.\\
$g_\ell^i,d_\ell^i, \zeta_\ell^i$ & Endpoints of the excursion interval in $X^{i,i}$ containing $\ell$, and length of the excursion.\\
$(\vec \be,h), \zeta$ & Generic excursion and its length. \\
$\mathcal I(\vec f)$ & Set of excursion intervals of $\vec f$.\\
$\mathcal R(\vec f),\mathfrak L(\vec f)$ & Set of right and left end-points of $\mathcal I(\vec f)$.\\
$\mathcal{L}_{(k)}(\vec f)$  & Length of  the $k^\text{th}$-longest excursion.\\
$\mathcal E_k(\vec f)$ & $k$ longest excursions of $\vec f$ (with ties broken by order of appearance) and these excursions are listed in order of appearance.\\
$\vec{\mathcal{L}}_{k}(\vec f)$ & Sequence of lengths of $k$ longest excursions of $\vec f$.\\
$p_h$ & Canonical quotient map.\\
	\, &	\\
\end{tabularx}
\end{table}

\begin{table}[H]
\begin{tabularx}{\textwidth}{p{0.30\textwidth}X}
\multicolumn{2}{l}{{\underline{Decorations:}}}\\
$\cD$ & Generic decoration.\\
$\cD^{i_0,\vec \varphi}$ & Random decoration using the excursion measures $(\vec N^{i,\vec \varphi})_{i\in [d]}$.\\
$\cD(\bp)$ & Subdecorated metric measure space/subdecoration at $\bp\in \bbU$. \\
$D_\bp$ & Set of values of the subdecoration $\cD(\bp)$. Sometimes also referred as subdecoration.\\
$\widetilde{D}_\bp$ & Planted subdecoration. \\
$\sG^*(\cD)$ & Glued subdecorated space of the decoration $\cD$.\\
$\sG(\cD)$ & Glued decoration.\\ 
$\vec{\#}^{(i)}_jD_\bp$ & Number of marks in the subdecoration $D_\bp$ type $i$, of the form $x_{j+md}$ for some $m$.\\
$\vec{\#}^{(i)}_iD_\bp$ & Size of the subdecoration $D_\bp$.\\
$\cD^{i_0,\vec \Psi}_r$& Continuum decoration from the excursion measures $\vec \Psi$, with base subtree of type $i_0$ and $\zeta\geq r$.\\
$\sG(\cD^{i_0,\vec \Psi}_r)$ & Continuum Glued decoration.\\
$\sG(\cD^{n}_{r_n})$& Glued decoration with law $\sG(\cD^{n,i_0,\vec \varphi_n}_{r_n})\sim 	M^{i_0,\vec \varphi_n}(-| \#\cD^n(\emptyset) \ge r_n)
$.\\
$\sG(\tr, \cD), \sG_{\le h}^*(\cD)$ & Glued decorations over $\bp\in \tr$ and $|\bp|\leq h$, respectively.\\
\, &	\\
\multicolumn{2}{l}{{\underline{Measures:}}}\\
$g_\#\nu$ & Pushforward of the measure $\nu$ under $g$.\\
$\langle \vec{\mu},\vec{\epsilon}_j\rangle$ & $j$th coordinate measure for a vector-valued measure $\vec{\mu}$.\\
$\|\vec{\mu}\|$ & Sum of the coordinate measures for a vector-valued measure $\mu$.\\	
$\vec{\mu}^{(i)}$ & Counting measure on the type $i$ vertices on $\tr$.\\
$\vec \bN^i$ & Excursion measure of $(\vec \bX^i,H^i)$.\\
$\vec {\mathcal{N}^i}$ & Poisson random measure with intensity $\Leb\otimes \vec \bN^i$.\\
$M^{i_0,\vec \varphi}$ & Excursion measure of the glued decoration $\sG(\cD^{i_0,\vec{\varphi}})$.\\
$\bM^{i_0,\vec \Psi}_r$ & Law of the Continuum Glued decoration $\sG(\cD^{i_0,\vec{\Psi}}_r)$.\\
$\vec N^{i,\vec \varphi},\vec N^{\vec \varphi^i}$ & Excursion measure of $\vec X^{i}$.\\
\, &	\\
\end{tabularx}
\end{table}

\begin{table}[H]
\begin{tabularx}{\textwidth}{p{0.30\textwidth}X}
\multicolumn{2}{l}{{\underline{Trees:}}}\\
$\tr$, $\tred$ & Finite plane tree and its reduced tree.\\
$\tr'_{\bp}, \tr'_{j,(k)}$&Subtrees associated with some $w\in \tred$ or  of a given type, in a multitype tree $\tr$.\\
$\widetilde{\tr}$ & Planted planar tree (tree $\tr$ with added additional vertex joined to the (old) root).\\ 
$\left( T_n, \rho_n, d_n,  \vec{\mu}_n\right)$ & MBGW tree, with root $\rho_n$, graph distance $d_n$ and $\mathbb{R}^d_+$-valued measure $\vec{\mu}_n$.\\
$T_{n,r_n}$ & MBGW tree started from a type $i_0$ individual, conditioned on the type $i_0$ subtree containing the root has size at least $r_n$.\\
$\T_{h} = (\T_{h}, \rho, d_h,\vec{\mu})$ & Single-type L\'evy tree coded by $h$.\\
$\sT_t$ & Multitype L\'evy tree with law $\bM^{i_0}_r$.\\  
$\chi_\tr(v)$ & Number of children of $v$ in $\tr$. \\
$X_\tr$ & \L ukasiewicz path of $\tr$.\\
${\rm hgt}(v)$ & Height of vertex $v$.\\  
$H_\tr$ & Height function of $\tr$. \\
$\TYPE(v)$ & Type of vertex $v$ in a multitype tree\\
$\PAR(v)$ & Parent of vertex $v$.\\
$\vec \chi(v)$ & Vector with number of children of $v$ of each type. \\ 
$\varphi$ and $\vec \varphi$ & Offspring distribution of a single-type and multitype BGW tree.\\
$\vec \xi ^i$ & Random vector of number of children from a vertex type $i$ in a MBGW.\\
\end{tabularx}
\end{table}

\begin{funding}
 The first author was supported in part by the Deutsche Forschungsgemeinschaft (through grant DFG-SPP-2265).
 The second author was supported in part by NFS-DMS 2023239.
\end{funding}

\bibliography{references}
\bibliographystyle{amsalpha}

\end{document}